\newtheorem{thm}{Theorem}
\newtheorem{prop}{Proposition}[section]
\newtheorem{lm}[prop]{Lemma}
\newtheorem{cor}[prop]{Corollary}
\newtheorem{cl}[prop]{Proposition}
\theoremstyle{definition}
\newtheorem{dfn}[prop]{Definition}
\theoremstyle{remark}
\newtheorem{rem}[prop]{Remark}
\newtheorem{ex}[prop]{Example}
\DeclareMathOperator{\Coker}{Coker}
\DeclareMathOperator{\Id}{Id}
\DeclareMathOperator{\im}{Im}
\DeclareMathOperator{\ogw}{OGW}
\newcommand{\fix}{\textit{fix}}
\newcommand{\rarr}{\rightarrow}
\newcommand{\lrarr}{\longrightarrow}
\newcommand{\Rarr}{\Rightarrow}
\newcommand{\R}{\mathbb{R}}
\newcommand{\C}{\mathbb{C}}
\newcommand{\Z}{\mathbb{Z}}
\newcommand{\Q}{\mathbb{Q}}
\newcommand{\M}{\mathcal{M}}
\renewcommand{\P}{\mathbb{C}P}
\renewcommand{\L}{\Lambda}
\renewcommand{\l}{\lambda}
\newcommand{\mI}{\mathcal{I}}
\newcommand{\g}{\Gamma}
\newcommand{\Hh}{\widetilde{H}}
\newcommand{\J}{\mathcal{J}}
\newcommand{\qkl}{\mathfrak{q}_{\,k\!,\:l}}
\newcommand{\q}{\mathfrak{q}}
\newcommand{\m}{\mathfrak{m}}
\renewcommand{\d}{\partial}
\newcommand{\at}{\tilde{\alpha}}
\newcommand{\xit}{\tilde{\xi}}
\newcommand{\etat}{\tilde{\eta}}
\newcommand{\bt}{\tilde b}
\newcommand{\gt}{\tilde\gamma}
\newcommand{\mt}{\tilde\m}
\newcommand{\qt}{\tilde\q}
\newcommand{\ct}{\tilde{c}}
\newcommand{\ot}{\tilde{o}}
\newcommand{\mC}{\mathfrak{C}}
\newcommand{\mD}{\mathfrak{D}}
\newcommand{\mR}{\mathfrak{R}}
\newcommand{\evbt}{\widetilde{evb}}
\newcommand{\evit}{\widetilde{evi}}
\newcommand{\evt}{\widetilde{ev}}
\newcommand{\bh}{\hat{b}}
\newcommand{\oh}{\hat{o}}
\renewcommand{\u}{\upsilon}
\newcommand{\mg}{\m^{\gamma}}
\newcommand{\mgp}{\m^{\gamma'}}
\newcommand{\mgt}{\mt^{\gt}}
\renewcommand{\ll}{\langle\!\langle}
\renewcommand{\gg}{\rangle\!\rangle}
\newcommand{\T}{\mathcal{T}}
\renewcommand{\Im}{\im}
\newcommand{\Oh}{\widehat{\Omega}}
\newcommand{\RP}{\mathbb{R}P}
\newcommand{\sly}{\Pi}
\newcommand{\pr}{\varpi}
\newcommand{\D}{\mathcal{D}}
\newcommand{\bb}{\bar{b}}
\newcommand{\sababa}{sababa}
\newcommand{\Ups}{\Upsilon}
\newcommand{\Mt}{\widetilde{\M}}
\newcommand{\s}{\mathfrak{s}}
\renewcommand{\a}{\alpha}
\newcommand{\Ah}{\widehat{A}}
\newcommand{\Hhh}{\widehat{H}}
\newcommand{\tg}{\hat t}
\newcommand{\Tg}{{\hat T}}
\newcommand{\sg}{{\hat s}}
\newcommand{\Gg}{{\hat \g}}
\newcommand{\gag}{{\hat \gamma}}
\newcommand{\ssly}{{S}}
\author[J. Solomon]{Jake P. Solomon}
\address{Institute of Mathematics\\ Hebrew University, Givat Ram\\Jerusalem, 9190401, Israel } \email{jake@math.huji.ac.il}
\author[S. Tukachinsky]{Sara B. Tukachinsky}
\address{School of Mathematical Sciences\\ Tel Aviv University\\Tel Aviv, 6997801, Israel }\email{sarabt1@gmail.com}
\begin{document}
\title{Point-like bounding chains in open Gromov-Witten theory}

\keywords{$A_\infty$ algebra, bounding chain, open Gromov-Witten invariant, Lagrangian submanifold, Gromov-Witten axiom, $J$-holomorphic, stable map, superpotential}
\subjclass[2020]{53D45, 53D37 (Primary) 14N35, 14N10, 53D12 (Secondary)}
\date{Sept. 2021}

\begin{abstract}
We present a solution to the problem of defining genus zero open Gromov-Witten invariants with boundary constraints for a Lagrangian submanifold of arbitrary dimension. Previously, such invariants were known only in dimensions $2$ and $3$ from the work of Welschinger. Our approach does not require the Lagrangian to be fixed by an anti-symplectic involution, but can use such an involution, if present, to obtain stronger results. Also, non-trivial invariants are defined for broader classes of interior constraints and Lagrangian submanifolds than previously possible even in the presence of an anti-symplectic involution. The invariants of the present work specialize to invariants of Welschinger, Fukaya, and Georgieva in many instances.

The main obstacle to defining open Gromov-Witten invariants with boundary constraints in arbitrary dimension is the bubbling of $J$-holomorphic disks. Unlike in low dimensions or for interior constraints, disk bubbles do not cancel in pairs by anti-symplectic involution symmetry. Rather, we use the technique of bounding chains introduced in Fukaya-Oh-Ohta-Ono's work on Lagrangian Floer theory to cancel disk bubbling. At the same time and independently, gauge equivalence classes of bounding chains play the role of boundary constraints, in place of the cohomology classes that usually serve as constraints in Gromov-Witten theory. A crucial step in our construction is to identify a canonical up to gauge equivalence family of ``point-like'' bounding chains, which specialize in dimensions $2$ and $3$ to the point constraints considered by Welschinger.
\end{abstract}

\maketitle
\vspace{-2.5em}

\tableofcontents

\section{Introduction}\label{sec:intro}
\subsection{Overview}\label{ssec:intro^2}
It is a basic problem in symplectic geometry to count $J$-holomorphic curves in a symplectic manifold $X$ with boundary in a Lagrangian submanifold $L \subset X$ representing a fixed homology class $d \in H_2(X,L).$ When the dimension of the moduli space of $J$-holomorphic curves is positive, one obtains a finite number by restricting attention to $J$-holomorphic curves passing through a collection of geometric constraints. The count should only depend on the symplectic manifold $X$, the Lagrangian submanifold $L$ and a collection of cohomology classes or similar topological information that classifies the geometric constraints. The count should not depend on the choice of almost complex structure $J$ so long as it is tamed by the symplectic structure nor on the choice of geometric constraints so long as the topological information is unchanged. For example, one would like to count $J$-holomorphic disks $u: (D^2,\partial D^2) \to (X,L)$ representing $d \in H_2(X,L)$ with boundary constrained to pass through a generic collection of $k$ points on $L$ in such a way that the count does not depend on which $k$ points are chosen. Such a count of $J$-holomorphic curves, when it can be defined, is called an open Gromov-Witten invariant. A major obstacle to defining open Gromov-Witten invariants is that as one varies relevant choices in a one parameter family, $J$-holomorphic curves with boundary can degenerate and disappear through the process of a disk bubbling off at the boundary. See Figure~\ref{fig:bubble}.

\begin{figure}[!ht]
\centering
\includegraphics[width=12cm]{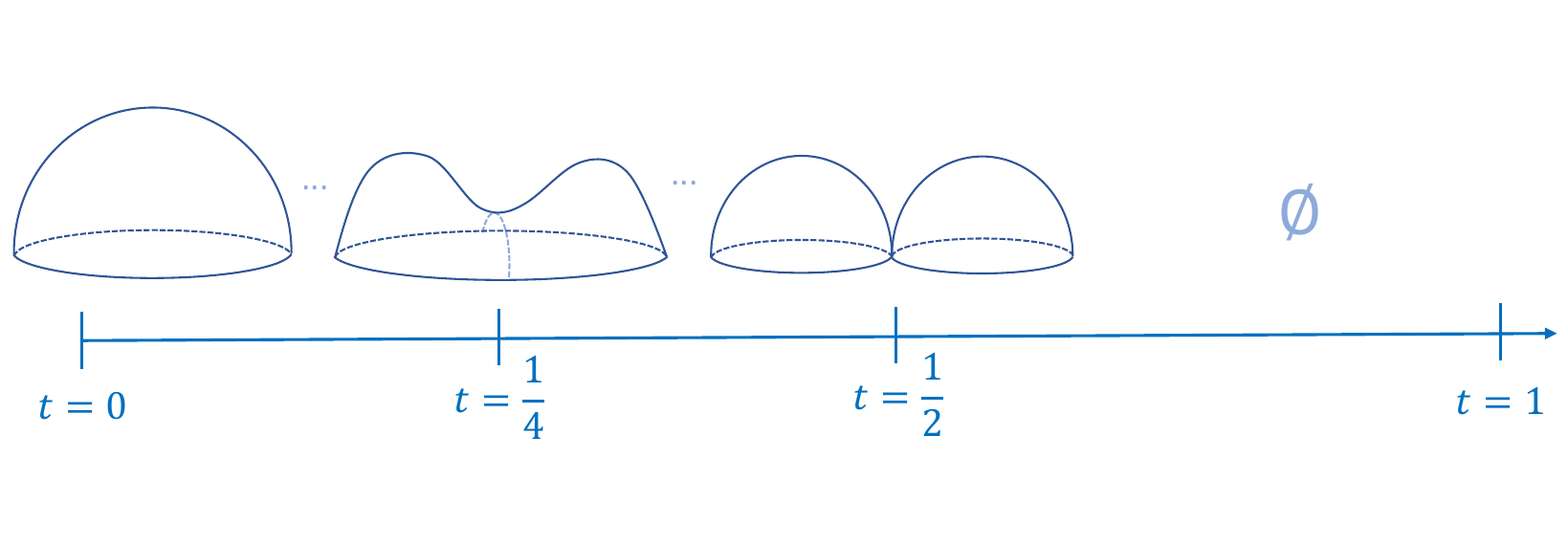}
\caption{Bubbling at the boundary}
\label{fig:bubble}
\end{figure}

An important example of open Gromov-Witten invariants arises from a reformulation~\cite{SolomonThesis} of Welschinger's invariant counts of real rational curves~\cite{Welschinger1,Welschinger2}. In the language of open Gromov-Witten invariants, one can explain the idea of Welschinger's invariants as follows: When the Lagrangian submanifold $L$ is fixed by an anti-symplectic involution, disk bubbles come in pairs that sometimes are counted with opposite signs and thus cancel out. When $\dim L = 2$ or $3,$ this idea indeed gives rise to open Gromov-Witten invariants that count $J$-holomorphic disks with boundary constrained to pass through a given collection of $k$ points on $L$ and interior constrained to pass through a give collection of $l$ invariant pairs of points on $X.$ In higher dimensions, the idea of cancelling disk bubbles in pairs by anti-symplectic involution only works for certain types of interior constraints under certain topological assumptions on $(X,L),$ as shown by Georgieva~\cite{Georgieva}.

Since Welschinger's initial work in 2003, it has remained an open problem to define invariant counts of $J$-holomorphic disks with boundary constraints in dimensions greater than $3.$ We present a solution to this problem using the theory of bounding chains introduced by Fukaya-Oh-Ohta-Ono in their seminal work on Lagrangian Floer cohomology~\cite{FOOO}. We define invariants in arbitrary dimension with both boundary and interior constraints, which specialize to the invariants of Welschinger and Georgieva~\cite{Welschinger1,Welschinger2,Georgieva} when the latter are defined. Moreover, our definition does not require the Lagrangian submanifold $L$ to be fixed by an anti-symplectic involution or real structure. Nonetheless, our method can use an anti-symplectic involution to obtain stronger results. Indeed, such an involution forces some of the obstruction classes of Fukaya-Oh-Ohta-Ono to vanish. In fact, when $\dim L = 2$ or~$3,$ all the obstruction classes are forced to vanish, which explains why obstruction classes did not appear in Welschinger's original work. Even in the case the Lagrangian is fixed by an anti-symplectic involution, our definition of open Gromov-Witten invariants applies to Lagrangian submanifolds for which no invariants could be defined previously, even with only interior constraints.

Our definition of open Gromov-Witten invariants uses the Fukaya-Oh-Ohta-Ono theory of bounding chains somewhat differently from the way it is usually used in Lagrangian Floer theory. In the context of Lagrangian Floer theory, bounding chains are introduced to cancel disk bubbling. They are not necessary in cases where disk bubbling does not obstruct the definition of Lagrangian Floer theory. On the other hand, in the context of open Gromov-Witten invariants, even when disk bubbles do not spoil invariance, it is still useful to introduce bounding chains. Indeed, we show that bounding chains are the natural geometric constraints for the boundary of $J$-holomorphic disks, and thus they should be introduced any time we wish to count $J$-holomorphic disks with boundary constraints. When disk bubbles do not spoil invariance, for example in the situations considered by Welschinger~\cite{Welschinger1,Welschinger2}, bounding chains take a particularly simple form. Namely, they turn out to be cycles.

In Lagrangian Floer theory, for many purposes it suffices to prove the existence of bounding chains, and if there are many, the choice is not important. On the other hand, in the definition of open Gromov-Witten invariants, bounding chains play the role of boundary constraints, so the invariants depend on the choice. Thus, one of our main results is a canonical parameterization of the space of bounding chains up to gauge equivalence. In particular, we identify a family of bounding chains, called point-like, that generalize the point constraints familiar from Welschinger's invariants to arbitrary dimension.  Point-like bounding chains are defined uniquely up to gauge equivalence. We view gauge equivalence as a non-linear analog of cohomology.

The present paper builds on Fukaya's definition of open Gromov-Witten disk invariants for Maslov zero Lagrangians in Calabi-Yau threefolds~\cite{Fukaya2}. However, for $3$-dimensional Maslov zero Lagrangians, the space of $J$-holomorphic disks representing any $d \in H_2(X,L)$ is zero dimensional. So, boundary constraints of positive codimension, which are a major source of complexity in the present paper, would only yield vanishing invariants and are therefore not treated in~\cite{Fukaya2}.

In a companion paper~\cite{ST3}, we show that the generating function of our invariants satisfies a system of non-linear PDE called the open WDVV equations, which allows extensive computations. We also prove a wall crossing formula that relates interior and boundary constraints. In~\cite{ST3}, it is shown that the open WDVV equations combined with the wall crossing formula and the axioms proved in the present paper determine all invariants for $(X,L) = (\C P^n,\R P^n).$ In~\cite{HST}, the same technique is used to determine all invariants for $X$ a quadric hypersurface and $L = S^n$ its real locus as in Example~\ref{ex:XL} below.
This demonstrates that enumerative invariants defined using the Fukaya-Oh-Ohta-Ono theory of bounding chains can be effectively calculated.

The calculations of~\cite{ST3} show that our invariants are non-vanishing in dimensions greater than $3$ for both conjugation invariant and conjugation anti-invariant interior constraints as well as for boundary constraints. On the other hand, the invariants of~\cite{Georgieva} vanish if one or more of the interior constraints is conjugation invariant.

The definition of open Gromov-Witten invariants in the present paper is valid for an arbitrary Lagrangian submanifold given the virtual fundamental class technique of Kuranishi structures~\cite{Fukaya,Fukaya2,FOOO,FOOOtoricI,FOOOtoricII,FOOO1,FOOOinv,FOOOKSII,FOOOspec,FOOOKSVFC,FO}. The same is true for the proofs of four of the six axioms. The unit and divisor axioms require compatibility of the virtual fundamental class with the forgetful map of interior marked points, which has not yet been worked out in the Kuranishi structure formalism in the context of differential forms. Alternatively, it should be possible to use the polyfold theory of~\cite{HoferWysockiZehnder,HoferWysockiZehnder1,HoferWysockiZehnder2,HoferWysockiZehnder3,LiWehrheim}. To make the present paper more accessible, in a companion paper~\cite{ST1}, we give an essentially self-contained treatment of Fukaya $A_\infty$ algebras for Lagrangians which satisfy an analog of the convex condition familiar from algebraic geometry~\cite{FultonPandharipande,Welschinger2}. This treatment covers all properties of Fukaya $A_\infty$ algebras used in the present paper, and also suffices for the calculations of~\cite{ST3}. The treatment is sufficiently general to cover an infinite family of Lagrangian submanifolds for which no open Gromov-Witten invariants were defined previously, as mentioned above. The techniques introduced in this paper are no simpler for the Lagrangians covered by the treatment of~\cite{ST1} than in the general case.

\subsection{Statement of results}
\subsubsection{\texorpdfstring{$A_{\infty}$}{A-infinity} algebras}\label{ssec:iainf}
To formulate our results, we recall relevant notation from~\cite{ST1}. Consider a symplectic manifold $(X,\omega)$ of complex dimension $n$, and a connected, Lagrangian submanifold $L$ with relative spin structure~$\s =\s_L.$ Let $J$ be an $\omega$-tame almost complex structure on~$X.$ Denote by $\mu:H_2(X,L;\Z) \to \Z$ the Maslov index. Denote by $A^*(L)$ the ring of differential forms on $L$ with coefficients in $\R$.
Let $\sly = \sly_L$ be the quotient of $H_2(X,L;\Z)$ by a possibly trivial subgroup $\ssly_L$ contained in the kernel of the homomorphism $\omega \oplus \mu : H_2(X,L;\Z) \to \R \oplus \Z.$ Thus the homomorphisms $\omega,\mu,$ descend to $\sly.$ Denote by $\beta_0$ the zero element of $\sly.$ We use a Novikov ring $\L$ which is a completion of a subring of the group ring of $\sly$. The precise definition follows. Denote by $T^\beta$ the element of the group ring corresponding to $\beta \in \sly$, so $T^{\beta_1}T^{\beta_2} = T^{\beta_1 + \beta_2}.$ Then,
\begin{equation}\label{eq:Nov}
\L=\L_\omega =\left\{\sum_{i=0}^\infty a_iT^{\beta_i}\bigg|a_i\in\R,\beta_i\in \sly,\omega(\beta_i)\ge 0,\; \lim_{i\to \infty}\omega(\beta_i)=\infty\right\}.
\end{equation}
A grading on $\L$ is defined by declaring $T^\beta$ to be of degree $\mu(\beta).$
Denote also
\[
\L^+=\left\{\sum_{i=0}^\infty a_iT^{\beta_i} \in \L \bigg|\;\omega(\beta_i)> 0 \quad\forall i\right\}.
\]
In the case $S_L = \ker(\omega \oplus \mu),$ the above ring $\Lambda$ is a subring of the Novikov ring $\Lambda_{0,nov}(\R)$ from~\cite[Section 1.7]{FOOO}. For the present work, it is desirable to allow some flexibility in the choice of $S_L.$ Indeed, the smaller we take $S_L,$ the more control we have over the class in $H_2(X,L;\Z)$ of the disks being counted in the open Gromov-Witten invariants defined below. On the other hand, to make use of an anti-symplectic involution, it is generally necessary to consider $S_L$ non-trivial.

We use a family of $A_\infty$ structures on $A^*(L)\otimes \L$ following~\cite{Fukaya, FOOO}, based on the results of~\cite{ST1}. Let $\M_{k+1,l}(\beta)$ be the moduli space of genus zero $J$-holomorphic open stable maps $u:(\Sigma,\d \Sigma) \to (X,L)$ of degree $[u_*([\Sigma,\d \Sigma])] = \beta \in \sly$ with one boundary component, $k+1$ boundary marked points, and $l$ interior marked points. The boundary points are labeled according to their cyclic order. The space $\M_{k+1,l}(\beta)$ carries evaluation maps associated to boundary marked points $evb_j^\beta:\M_{k+1,l}(\beta)\to L$, $j=0,\ldots,k$, and evaluation maps associated to interior marked points $evi_j^\beta:\M_{k+1,l}(\beta)\to X$, $j=1,\ldots,l$.

We assume that all $J$-holomorphic genus zero open stable maps with one boundary component are regular, the moduli spaces $\M_{k+1,l}(\beta;J)$ are smooth orbifolds with corners, and the evaluation maps $evb_0^\beta$ are proper submersions.
Examples include $(\P^n,\RP^n)$ with the standard symplectic and complex structures or, more generally, flag varieties, Grassmannians, and products thereof. See~\cite[Example 1.4, Remark 1.5]{ST1}.
Throughout the paper we fix a connected component $\mathcal{J}$ of the space of $\omega$-tame almost complex structures satisfying our assumptions.  All almost complex structures are taken from $\J.$ The definition of open Gromov-Witten invariants in the present paper extends to arbitrary targets $(X,\omega,L)$ and $\mathcal{J}$ the space of all $\omega$-tame almost complex structures given the virtual fundamental class technique of Kuranishi structures~\cite{Fukaya,Fukaya2,FOOO,FOOOtoricI,FOOOtoricII,FOOO1,FOOOinv,FOOOKSII,FOOOspec,FOOOKSVFC,FO}. The same is true for the proofs of four of the six axioms. The unit and divisor axioms require compatibility of the virtual fundamental class with the forgetful map of interior marked points, which has not yet been worked out in the Kuranishi structure formalism in the context of differential forms. See Remark~\ref{rem:intforget}.
Alternatively, it should be possible to use the polyfold theory of~\cite{HoferWysockiZehnder,HoferWysockiZehnder1,HoferWysockiZehnder2,HoferWysockiZehnder3,LiWehrheim}.
The relative spin structure $\s$ determines an orientation on the moduli spaces $\M_{k+1,l}(\beta)$ as in~\cite[Chapter 8]{FOOO}.

Let $s, t_0,\ldots,t_N,$ be formal variables with $\deg s=1-n$.
In our definition of open Gromov-Witten invariants, the $s$ variable will be used to keep track of boundary constraints and the $t_j$ variables will be used to keep track of interior constraints.
For $m>0$ denote by
$A^m(X,L)$ differential $m$-forms on $X$ that vanish on $L$, and denote by $A^0(X,L)$ functions on $X$ that are constant on $L$. The exterior differential $d$ makes $A^*(X,L)$ into a complex. Alternatively, one can consider the complex $\Ah^*(X,L)$ of forms on $X$ with vanishing integral over $L$ as explained in  Section~\ref{ssec:relax}.
Set
\begin{gather*}
R:=\L[[s,t_0,\ldots,t_N]],\quad Q:=\R[t_0,\ldots,t_N] ,\\
C:=A^*(L)\otimes R,\quad\text{and}\quad D:= A^*(X,L)\otimes Q ,
\end{gather*}
where $\otimes$ is understood as the completed tensor product.
For an $\R$-algebra $\Upsilon$, write
\[
 \Hh^*(X,L;\Upsilon):=H^*(A^*(X,L)\otimes\Upsilon, d).
\]
Observe that
\[
\Hh^*(X,L;\Upsilon) \simeq \left(H^0(L;\R)\oplus H^{>0}(X,L;\R)\right)\otimes  \Upsilon,\quad \Hh^*(X,L;Q)=H^*(D).
\]
The gradings on $C,D,$ and $\Hh^*(X,L;Q)$, take into account the degrees of $s,t_j,T^\beta,$ and the degree of differential forms. Given a graded module $M$, we write $M_j$ or $(M)_j$ for the degree~$j$ part.
Let
\[
R^+:=R\L^+\triangleleft R,\quad\mathcal{I}_R:=\left<s,t_0,\ldots,t_N\right>+R^+ \triangleleft R,
\quad \mathcal{I}_Q:=\left<t_0,\ldots,t_N\right>\triangleleft Q,
\]
be the ideals generated by the formal variables.

Let $\gamma\in \mI_QD$ be a closed form with $\deg_D\gamma=2$. For example, given closed differential forms $\gamma_j\in A^*(X,L)$ for $j=0,\ldots,N,$ take $t_j$ of degree $2-\deg \gamma_j$ and $\gamma:=\sum_{j=0}^Nt_j\gamma_j$.
Define structure maps
\[
\mg_k:C^{\otimes k}\lrarr C
\]
by
\begin{multline*}
\m^{\gamma}_k(\alpha_1,\ldots,\alpha_k):=\\
=\delta_{k,1}\cdot d\alpha_1+(-1)^{\sum_{j=1}^kj(\deg \alpha_j+1)+1}
\sum_{\substack{\beta\in\sly\\l\ge0}}T^{\beta}\frac{1}{l!}{evb_0^\beta}_* (\bigwedge_{j=1}^l(evi_j^\beta)^*\gamma\wedge
\bigwedge_{j=1}^k (evb_j^\beta)^*\alpha_j
),
\end{multline*}
where $\delta_{k,1}$ is the Kronecker delta.
The push-forward $(evb_0^\beta)_*$ is defined by integration over the fiber; it is well-defined because $evb_0^\beta$ is a proper submersion. The condition $\gamma\in \mI_QD$ ensures that the infinite sum converges.
Intuitively, $\gamma$ should be thought of as interior constraints, while $\alpha_j$ are boundary constraints. Then the output is a cochain on $L$ that is ``Poincar\'e dual'' to the image of the boundaries of disks that satisfy the given constraints. In~\cite{FOOO,Fukaya,ST1}, as summarized in Proposition~\ref{cl:a_infty_m} below, it is shown that $(C,\{\mg_k\}_{k\ge 0})$ is an $A_\infty$ algebra.
Furthermore, following~\cite{Fukaya2}, define
\[
\m_{-1}^\gamma:=\sum_{\substack{\beta\in\sly\\l\ge 0}}\frac{1}{l!}T^{\beta}\int_{\M_{0,l}(\beta)}\bigwedge_{j=1}^l (evi_j^\beta)^*\gamma.
\]
\subsubsection{Bounding pairs and the superpotential}\label{sssec:bno}
Our strategy is to extract OGW invariants from the superpotential. For us, the superpotential is a function on the space of (weak) bounding pairs:
\begin{dfn}\label{dfn_bd_pair}
A \textbf{bounding pair} with respect to $J$ is a pair $(\gamma,b)$ where $\gamma\in \mathcal{I}_Q D$ is closed with $\deg_D\gamma=2$ and $b\in \mathcal{I}_R C$ with
$
\deg_Cb=1,
$
such that
\begin{equation}\label{eq:bc}
\sum_{k\ge 0}\m_k^\gamma(b^{\otimes k})=c\cdot 1, \qquad c\in \mI_R,\;\deg_Rc=2.
\end{equation}
In this situation, $b$ is called a \textbf{bounding chain} for $\mg$.
\end{dfn}

\begin{rem}
Bounding chains were introduced in~\cite{FOOO}. A bounding pair $(\gamma,b)$ in our terminology translates in the terminology of~\cite{FOOO} to $b$ being a weak bounding cochain for the $A_\infty$ algebra of $L$ bulk-deformed by $\gamma$.
The constant $c$ can be thought of as a function of $b$ and as such is often denoted by~$\mathfrak{PO}$ in~\cite{FOOO}.
\end{rem}

Following~\cite{Fukaya2}, the standard superpotential is given by
\[
\Oh(\gamma,b):=\Oh_J(\gamma,b)
:=(-1)^{n}\big(\sum_{k\ge0}\frac{1}{(k+1)}\langle\m_k^\gamma (b^{\otimes k}),b\rangle
+\m_{-1}^{\gamma}\big).
\]
Intuitively, $\Oh$ counts $J$-holomorphic disks with constraints $\gamma$ in the interior and $b$ on the boundary.
Modification is necessary in order to avoid $J$-holomorphic disks the boundary of which can degenerate to a point, forming a $J$-holomorphic sphere.
We say that a monomial element of $R$ is \textbf{of type $\mathcal{D}$} if it has the form $a\, T^{\beta}s^0t_0^{j_0}\cdots t_N^{j_N}$ with $a\in \R$ and $\beta\in \Im(H_2(X;\Z)\to \sly)$.
In the present paper, the superpotential is defined by
\[
\Omega(\gamma,b):=\Omega_J(\gamma,b) : = \Oh_J(\gamma,b)-\text{ all monomials of type }\mathcal{D}\text{ in }\Oh_J.
\]
Intuitively, $\Omega$ counts $J$-holomorphic disks with an arbitrary number of constraints $\gamma$ in the interior and either a positive number of constraints on the boundary or of degree $\beta \in \sly$ that does not arise from an absolute homology class. So, the boundary of a $J$-holomorphic disk counted in $\Omega$ cannot collapse to a point to form a sphere in a generic one parameter family.

Definition~\ref{dfn_g_equiv} gives a notion of gauge equivalence between a bounding pair $(\gamma,b)$ with respect to $J$ and a bounding pair $(\gamma',b')$ with respect to another almost complex structure~$J'.$ Let $\sim$ denote the resulting equivalence relation.
\begin{thm}[Invariance of the super-potential]\label{thm_inv}
If $(\gamma,b)\sim(\gamma',b')$, then $\Omega_J(\gamma,b)=\Omega_{J'}(\gamma',b')$.
\end{thm}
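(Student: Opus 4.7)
The plan is to run a pseudo-isotopy argument in the spirit of Fukaya--Oh--Ohta--Ono. Unpacking Definition~\ref{dfn_g_equiv}, a gauge-equivalence $(\gamma,b)\sim(\gamma',b')$ should supply a smooth path $\{J_t\}_{t\in[0,1]}\subset \J$ with $J_0=J$ and $J_1=J'$, together with interpolating data packaged as a bounding pair $(\widetilde\gamma,\widetilde b)$ for an $A_\infty$ pseudo-isotopy structure $\{\widetilde\m_k\}$ on an enlarged complex $\widetilde C := A^*(L\times[0,1])\otimes R$, restricting at $t=0$ to $(\gamma,b)$ and at $t=1$ to $(\gamma',b')$. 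All the main calculations should then be performed at the level of this pseudo-isotopy so that the two endpoints can be compared by a single Stokes-type identity on a family of moduli spaces $\widetilde\M_{k+1,l}(\beta)$ that live over $[0,1]$.

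The first step is to verify the analog of Proposition~\ref{cl:a_infty_m} for $\widetilde\m$, and to extend the definitions of $\Oh$ and $\m^{\gamma}_{-1}$ to the pseudo-isotopy in the obvious way, using the moduli $\widetilde\M_{k+1,l}(\beta)$ and their evaluation maps. In particular one needs the bounding chain equation $\sum_k\widetilde\m^{\widetilde\gamma}_k(\widetilde b^{\otimes k})=\widetilde c\cdot 1$ interpolating~\eqref{eq:bc} at the two endpoints. Once this is in place, the difference $\Oh_{J'}(\gamma',b')-\Oh_J(\gamma,b)$ is identified with the $t$-integral of the $t$-derivative of $\Oh_{J_t}(\gamma_t,b_t)$.

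The heart of the argument is this derivative computation. Using the $A_\infty$ relations and the bounding chain equation, one expects an identity of the schematic form
\[
\frac{d}{dt}\Oh_{J_t}(\gamma_t,b_t) = \left\langle \d_t b_t,\, \sum_{k\ge 0}\m^{\gamma_t}_k(b_t^{\otimes k})\right\rangle + \mathcal{E}(t),
\]
where the first summand equals $\langle\d_t b_t,\, c_t\cdot 1\rangle$ by~\eqref{eq:bc} and vanishes on parity/degree grounds ($b_t$ is odd in $C$ while $c_t\cdot 1$ lives in even degree and the pairing against a unit reduces to integration of an exact form on $L$). The error $\mathcal{E}(t)$ collects the contributions of those degenerations of the pseudo-isotopy moduli that do not have a ``free'' boundary output against which $\d_t b_t$ can be paired. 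Integrating from $0$ to $1$ then reduces the theorem to the claim that $\int_0^1\mathcal{E}(t)\,dt$ is a sum of monomials of type $\mathcal{D}$.

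The main obstacle, and the step that justifies the subtraction defining $\Omega$, is precisely the identification of $\mathcal{E}(t)$ with type $\mathcal{D}$ contributions. The codimension-one boundary of $\widetilde\M_{k+1,l}(\beta)$ splits into (a) nodal disk breakings, whose contributions cancel via the $A_\infty$ and bounding chain equations; (b) the endpoint slices $t\in\{0,1\}$, which give the difference of the two superpotentials; and (c) sphere-bubble degenerations, in which the boundary of a disk contracts to a point and the disk class is absorbed by a $J_t$-holomorphic sphere. A careful analysis---including the contribution of $\m^{\gamma_t}_{-1}$ and of the $k=0$ pairing $\langle \m_0^{\gamma_t},b_t\rangle$---should show that strata of type (c) are precisely those carrying no boundary marked point (hence $s^0$) and recording only a class $\beta\in\Im(H_2(X;\Z)\to\sly)$, i.e.\ monomials of type $\mathcal{D}$. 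Pinning down the signs and matching these moduli-theoretic contributions with the algebraic notion of type $\mathcal{D}$ is the most delicate part of the proof. Once it is established, subtracting all type $\mathcal{D}$ monomials from $\Oh$ produces a function $\Omega$ independent of the gauge representative, which is the content of the theorem.
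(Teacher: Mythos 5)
Your overall strategy --- pass to the pseudo-isotopy $(\mgt,\bt)$ supplied by Definition~\ref{dfn_g_equiv}, compare the two endpoints by a Stokes-type identity over $I$, use the weak bounding-chain equation to kill the main terms, and attribute the leftover to sphere bubbling, which the type-$\mathcal{D}$ subtraction then removes --- is exactly the route the paper takes. However, as written the proposal has a genuine gap at its central step: the identification of the error $\mathcal{E}(t)$ with monomials of type $\mathcal{D}$, ``including signs,'' is only announced (``a careful analysis \dots should show''), not carried out. In the paper this step is not a fresh degeneration analysis but is already packaged in the quoted unified $A_\infty$ relations on the isotopy, Proposition~\ref{prop:mgt_a_infty}: the $k\ge 0$ relations express $d\ll\mgt_k(\bt^{\otimes k}),\bt\gg$ through pairings $\ll\mgt_{k_1}(\bt^{\otimes k_1}),\mgt_{k_2}(\bt^{\otimes k_2})\gg$, and the $k=-1$ relation $d\mgt_{-1}=(-1)^n\tfrac12\ll\mgt_0,\mgt_0\gg\pm\widetilde{GW}$ isolates the sphere contribution $\widetilde{GW}=\sum_l p_*i^*\qt_{\emptyset,l}(\gt)$, which manifestly satisfies $\D(\widetilde{GW})=\widetilde{GW}$ since it involves no $s$ and only classes in $\Im(H_2(X;\Z)\to\sly)$. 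You should invoke these relations, together with Lemma~\ref{lm:d_ll_gg} to convert $\int_I d\ll\cdot,\cdot\gg$ into the difference of endpoint pairings, rather than re-deriving the boundary strata and their signs.

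Second, your mechanism for killing the main term is not justified as stated. The pairing $\langle\d_t b_t,c_t\cdot 1\rangle$ equals $\pm\, c_t\,\d_t\int_L b_t$, and its vanishing is not a parity statement: it amounts to the constancy of $\int_L b_t$ along the isotopy, which itself rests on the top-degree property (Proposition~\ref{cl:qt_no_top_deg}), as in the proof of Lemma~\ref{lm_rho}. The paper avoids differentiating $b$ in $t$ altogether: after applying Lemma~\ref{lm:d_ll_gg} and Proposition~\ref{prop:mgt_a_infty}, one symmetrizes the resulting double sum in $(k_1,k_2)$ so that, by the bounding-chain equation $\mgt(e^{\bt})=\ct\cdot 1$, everything collapses to $-\tfrac12\, pt_*\ll\ct\cdot 1,\ct\cdot 1\gg=0$, leaving exactly $\pm\, pt_*\widetilde{GW}$; the subtraction $\Oh\mapsto\Omega$ then finishes the argument. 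With these two points repaired, your outline becomes the paper's proof.
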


To obtain invariants from $\Omega,$ we must understand the space of gauge equivalence classes of bounding pairs.
Assume $n>0$.
Define a map
\[
\varrho:\{\text{bounding pairs}\}/\sim\;\;\lrarr \;(\mI_Q\Hh^*(X,L;Q))_2\oplus (\mI_R)_{1-n}
\]
by
\begin{equation}\label{eqn_rho}
\varrho([\gamma,b]):=\left([\gamma]\,,\int_Lb\right).
\end{equation}
We prove in Lemma~\ref{lm_rho} that $\varrho$ is well defined.
\begin{thm}[Classification of bounding pairs -- rational cohomology spheres]\label{thm1}
Assume $H^*(L;\R)=H^*(S^n;\R)$. Then
$\varrho$ is bijective.
\end{thm}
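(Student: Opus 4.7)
The plan is to establish bijectivity of $\varrho$ by an inductive obstruction-theoretic argument, filtering $R$ by powers of the ideal $\mI_R$ and exploiting the hypothesis $H^*(L;\R)=H^*(S^n;\R)$ at the de Rham cohomology level.

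\textbf{Surjectivity.} Given $\xi\in(\mI_Q\Hh^*(X,L;Q))_2$ and $\tilde b\in(\mI_R)_{1-n}$, I first pick any closed representative $\gamma\in(\mI_QD)_2$ of $\xi$. I then construct $b\in(\mI_RC)_1$ with $\int_Lb=\tilde b$ satisfying~\eqref{eq:bc} stepwise in the filtration $\mI_R\supset\mI_R^2\supset\cdots$. Suppose at stage $k$ I have $b^{(k)}$ satisfying~\eqref{eq:bc} modulo $\mI_R^{k+1}C$ with integral $\equiv\tilde b\pmod{\mI_R^{k+1}}$, and write $b^{(k+1)}=b^{(k)}+b'$ with $b'\in\mI_R^{k+1}C$. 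The $A_\infty$ relations force the residual obstruction
\[
\mathcal O\equiv\sum_j\mg_j((b^{(k)})^{\otimes j})-c^{(k)}\cdot 1\pmod{\mI_R^{k+2}C}
\]
to be $d$-closed in the associated graded, so I must solve $db'+\mathcal O\equiv 0$ in form-degrees $\ge 1$. By the $S^n$-hypothesis $H^j(L;\R)=0$ for $0<j<n$, the pieces of $\mathcal O$ in form-degrees $1,\dots,n-1$ are exact, and primitives become the corresponding pieces of $b'$. The top-form-degree piece $\mathcal O^{(n)}$ has cohomology class detected by $\int_L\mathcal O^{(n)}$, which I reduce to a functional of $\int_Lb^{(k)}\equiv\tilde b\pmod{\mI_R^{k+1}}$ using the cyclic symmetry of the pairing $\langle\alpha,\beta\rangle=\int_L\alpha\wedge\beta$ and the unit axioms of the $A_\infty$ structure. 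Prescribing $\int_Lb=\tilde b$ at the base of the induction, combined with adjusting the degree-$n$ component of $b'$ at each step (which preserves $\int_L b$ modulo $\mI_R^{k+2}$), gives $\int_L\mathcal O^{(n)}\equiv 0$ and closes the induction. Passing to the $\mI_R$-adic limit yields the required bounding pair $(\gamma,b)$.

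\textbf{Injectivity.} Assume $\varrho([\gamma,b])=\varrho([\gamma',b'])$, so $\gamma-\gamma'=d\eta$ for some $\eta\in(\mI_QD)_1$ and $\int_L(b-b')=0$. I invoke the pseudo-isotopy framework from~\cite{ST1} underlying Definition~\ref{dfn_g_equiv}: a gauge-equivalence corresponds to a pseudo-isotopy of $A_\infty$ structures on $A^*(\I\times L)\otimes R$ together with a bounding chain for it. The primitive $\eta$ and any smooth path $J_t$ in $\J$ connecting $J$ to $J'$ produce such a pseudo-isotopy, and constructing the required bounding chain reduces to the same inductive scheme applied over $\I\times L$, since $H^*(\I\times L;\R)=H^*(L;\R)$ is again concentrated in degrees $0$ and $n$. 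The hypothesis $\int_Lb=\int_Lb'$ is precisely what kills the top-form-degree obstruction at the base of this induction, while the higher stages proceed verbatim as in the surjectivity argument.

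\textbf{Main obstacle.} The principal technical difficulty is the vanishing of $\int_L\mathcal O^{(n)}$ at each inductive step. This is where the cyclic symmetry of $\langle\cdot,\cdot\rangle$, the unit axioms for the $A_\infty$ structure $\mg$, and the precise matching between the $(\mI_R)_{1-n}$-component of $\varrho$ and the degree-$n$ freedom available in $b$ conspire to yield the required vanishing; without this, no obstruction-theoretic cohomology argument would suffice to push the induction past the top stratum.
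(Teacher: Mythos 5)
There is a genuine gap, and it sits exactly where you place your ``main obstacle.'' In the paper's argument (Proposition~\ref{prop:exist}) the decisive observation is a parity count, not a cancellation: every monomial $\lambda$ in $R$ has even degree (since $\deg s=1-n$ is even for $n$ odd, $\deg t_j\in2\Z$, and $\mu(\beta)$ is even because $L$ is orientable), so each obstruction chain $o_j=[\lambda_j](\mg(e^{b_{(l)}}))$ has \emph{even} form degree $2-\deg\lambda_j$ (Lemma~\ref{lm:u_even}). Hence under $H^*(L;\R)=H^*(S^n;\R)$ with $n$ odd, all obstructions lie in $H^{even>0}(L;\R)=0$, except those of form degree $n+1$, which vanish for dimension reasons; there is no obstruction in form degree $n$ at all, and $\int_Lb=a$ is a completely free parameter (which is precisely why $\varrho$ surjects onto all of $(\mI_R)_{1-n}$). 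Your proposal instead asserts that a degree-$n$ obstruction $\mathcal O^{(n)}$ appears and that its integral vanishes because of ``cyclic symmetry, the unit axioms, and the precise matching'' with the prescribed value of $\int_Lb$. No such mechanism is exhibited, and it cannot work as described: if killing $\int_L\mathcal O^{(n)}$ required a particular choice of $\int_Lb$, surjectivity for arbitrary $\tilde b$ would fail. The step you flag as the crux is therefore unproven, and the actual reason it is a non-issue (evenness of the obstruction degrees, plus the $n+1$ dimension argument; in the isotopy setting also the top-degree property, Proposition~\ref{cl:qt_no_top_deg}) is absent from your argument. A secondary technical point: the paper filters by the energy valuation via a \sababa{} monoid rather than by powers of $\mI_R$, because $\mI_R$ contains $R^+$ and an $\mI_R$-adic induction does not obviously converge in $\L$.

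The injectivity half has a related imprecision. On the cylinder the corrections $\bt_j$ must vanish on $\d(I\times L)$ so that $j_0^*\bt=b$ and $j_1^*\bt=b'$ are preserved, and the obstructions $\ot_j$ restrict to zero on the boundary (Lemma~\ref{lm:ut_relative}); so the relevant groups are the \emph{relative} ones, $H^{2i}(I\times L,\d(I\times L);\R)\simeq H^{2i-1}(L;\R)$ (Lemma~\ref{B_l}), not $H^*(I\times L;\R)\simeq H^*(L;\R)$ as you claim. Thus the induction over $I\times L$ does not ``proceed verbatim'': it needs vanishing of odd-degree cohomology of $L$ below $n$ (again supplied by the sphere hypothesis, but by a different part of it), and the form-degree-$(n+1)$ obstruction on the cylinder is killed not by cohomology but by arranging $(d\bt)_{n+1}=0$, which is where $\int_Lb=\int_Lb'$ enters through the primitive $\eta$ with $d\eta=(b')_n-(b)_n$ (Lemma~\ref{lm:deg_ut}). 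Your sketch gestures at the last point but, as written, both halves of the proposal rely on obstruction bookkeeping that does not match what actually happens, so the proof does not go through without the missing parity and relative-cohomology arguments.
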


In the presence of an anti-symplectic involution, the assumptions on $L$ can be relaxed. A \textbf{real setting} is a quadruple $(X,L,\omega,\phi)$ where $\phi:X\to X$ is an anti-symplectic involution such that $L\subset \fix(\phi)$. Throughout the paper, whenever we discuss a real setting, we fix a connected subset $\J_\phi \subset \J$ consisting of $J \in \J$ such that $\phi^*J = -J.$ All almost complex structures of a real setting are taken from $\J_\phi.$ If we use virtual fundamental class techniques, we can treat any $\omega$-tame almost complex structure $J$ satisfying $\phi^*J = -J.$ Whenever we discuss a real setting, we take $\ssly_L \subset  H_2(X,L;\Z)$ with $\Im(\Id+\phi_*) \subset \ssly_L,$ so $\phi_*$ acts on $\sly_L = H_2(X,L;Z)/\ssly_L$ as $-\Id.$ Also, the formal variables $t_i$ have even degree. We denote by $\Hh_\phi^{even}(X,L;\R)$ (resp. $H_\phi^{even}(X;\R)$) the direct sum over $k$ of the $(-1)^{k}$-eigenspace of $\phi^*$ acting on $\Hh^{2k}(X,L;\R)$ (resp. $H^{2k}(X;\R)$). Note that the Poincar\'e duals of $\phi$-invariant almost complex submanifolds of $X$ disjoint from $L$ belong to $\Hh_\phi^{even}(X,L;\R).$
Extend the action of $\phi^*$ to $\L,Q,R,C,$ and $D,$ by taking
\begin{equation}\label{eq:phi*ext}
\phi^*T^\beta = (-1)^{\mu(\beta)/2}T^\beta, \qquad \phi^* t_i = (-1)^{\deg t_i/2}t_i, \qquad \phi^* s = -s.
\end{equation}
Elements $a \in \L,Q,R,C,D,$ and pairs thereof are called \textbf{real} if
\begin{equation}\label{eq:relt}
\phi^* a = -a.
\end{equation}
For a group $Z$ on which $\phi^*$ acts, let $Z^{-\phi^*}\subset Z$ denote the elements fixed by $-\phi^*.$  Let
\[
\varrho_\phi:\{\text{real bounding pairs}\}/\sim\;\;\lrarr\;(\mI_Q\Hh^*(X,L;Q))_2^{-\phi^*} \oplus(\mI_R)_{1-n}
\]
be given by the same formula as $\varrho$.
Then, we obtain the following variant of Theorem~\ref{thm1}.
\begin{thm}[Classification of bounding pairs -- real spin case]\label{thm2}
Suppose $(X,L,\omega,\phi)$ is a real setting, $\s$ is a spin structure,
and $n \not \equiv 1 \pmod 4.$ Moreover,
\begin{itemize}
\item
if $n \equiv 3 \pmod 4,$ assume $H^i(L;\R) \simeq H^i(S^n;\R)$ for $i \equiv 0,3 \pmod 4$;
\item
if $n \equiv 2 \pmod 4,$ assume $H^i(L;\R) \simeq H^i(S^n;\R)$ for $i \not \equiv 1 \pmod 4$;
\item
if $n \equiv 0 \pmod 4,$ assume $H^i(L;\R) \simeq H^i(S^n;\R)$ for $i \not \equiv 2 \pmod 4$.
\end{itemize}
Then $\varrho_\phi$ is bijective.
\end{thm}
\begin{rem}\label{rem:dim23}
In the special cases when $n=2,3,$ the cohomological assumption is always satisfied. This explains the significance of dimensions $2$ and $3$ in Welschinger's work~\cite{Welschinger1,Welschinger2}. See Theorem~\ref{Welsch} for a comparison of Welschinger's invariants with the invariants of the present work.
\end{rem}
\begin{rem}
In an earlier version of this paper in the case $n \equiv 3 \pmod 4,$ we used three-typical bounding chains instead of real bounding chains. Lemma~\ref{lm:reC} shows the two notions are equivalent.
\end{rem}

\subsubsection{Open Gromov-Witten invariants and axioms}\label{sssec:intro_OGW}
When the hypothesis of either Theorem~\ref{thm1} or~\ref{thm2} is satisfied, we define open Gromov-Witten invariants as follows. In the case of Theorem~\ref{thm2}, take $\ssly_L$ containing $\Im(\Id+\phi_*).$ In the case of Theorem~\ref{thm1} (resp. Theorem~\ref{thm2}) let $W_L = \Hh^*(X,L;\R)$
(resp. $W_L = \Hh^{even}_\phi(X,L;\R)$). Fix
$\g_0,\ldots,\g_N,$ a basis of $W_L$, set $\deg t_j=2-\deg(\g_j)$, and take
\[
\g:=\sum_{j=0}^Nt_j\g_j \in (\mI_Q\Hh^*(X,L;Q))_2.
\]
By Theorem~\ref{thm1} (resp. Theorem~\ref{thm2}), choose a bounding pair $(\gamma,b)$ such that
\begin{equation}\label{eq:cbp}
\varrho([\gamma,b])=(\g,s) \qquad  \text{(resp. $\varrho_\phi([\gamma,b])=(\g,s)).$}
\end{equation}
By Theorem~\ref{thm_inv}, the superpotential $\Omega = \Omega(\gamma,b)$ is independent of the choice of $(\gamma,b).$
\begin{dfn}\label{def:ogw}
The \textbf{open Gromov-Witten invariants} of $(X,L),$
\[
\ogw_{\beta,k}=\ogw_{\beta,k}^L : W_L^{\otimes l} \to \R,
\]
are defined by setting
\begin{equation*}
\ogw_{\beta,k}(\g_{i_1},\ldots,\g_{i_l}):= \text{ the coefficient of }T^{\beta}\text{ in }
\d_{t_{i_1}}\cdots\d_{t_{i_l}}\d_s^k\Omega|_{s=0,t_j=0}
\end{equation*}
and extending linearly to general input.
\end{dfn}

In Corollary~\ref{cor:indep} we show that the invariants $\ogw_{\beta,k}$ are independent of the choice of basis $\g_j.$ Also, it is not hard to see that the invariants arising from Theorem~\ref{thm1} and Theorem~\ref{thm2} coincide when both hypotheses are satisfied.

\begin{rem}\label{rem1}
Theorems~\ref{thm1} and~\ref{thm2} say that a bounding chain is determined up to equivalence by the cohomology class of its part that has degree $n$ in $A^*(L)$. In general, the degree $n$ part of $b$ must be ``corrected'' by non-closed forms of lower degrees in order to solve equation~\eqref{eq:bc}. The degree $n$ part represents a multiple of the Poincar\'e dual of a point. Equation~\eqref{eq:cbp} says that the degree $n$ part of the bounding chain parameterizes multiples of the point as the formal variable $s$ varies. Thus, we call such a bounding chain \emph{point-like}.
\end{rem}

\begin{ex}\label{ex:XL}
Consider the hypersurfaces
\[
X_{d,n} = \left \{\sum_{i = 0}^n z_i^d - z_{n+1}^d = 0 \right\} \subset \C P^{n+1}
\]
equipped with the symplectic form obtained by restricting the Fubini-Study form. Let $\phi : X_{d,n} \to X_{d,n}$ be the anti-symplectic involution given by complex conjugation of all coordinates and let $L_{d,n} = \fix(\phi).$ We claim that the Lagrangian submanifold $L_{d,n}\subset X_{d,n}$ satisfies the hypothesis of Theorem~\ref{thm1} for $d$ even and $n$ arbitrary or $d$ odd and $n$ odd. Indeed, for $d$ odd, we have $L_{d,n} \simeq \R P^n$ and for $d$ even we have $L_{d,n} \simeq S^n.$ So $L_{d,n}$ is a real cohomology sphere. Since $S^n$ is spin, so is $L_{d,n}$ for $d$ even. When $d$ is odd, we need $n$ odd for $L_{d,n}$ to be orientable. When $n \geq 3,$ it follows from the Lefschetz hyperplane theorem that the map $H^2(X) \to H^2(L)$ is surjective, so $L_{d,n}$ is relatively spin. For $n = 1,$ all manifolds are spin.

Furthermore, consider the Lagrangian submanifold $L = \prod_{j = 1}^k L_{d_j,n_j} \subset \prod_{j = 1}^k X_{d_j,n_j}.$ Supppose $d_j$ is even for $j = 1,\ldots,k.$ If $k = 3$ or $k = 2$ and $n_j \equiv 1 \pmod 4,$ for $j = 1,\ldots,k,$ then $L$ satisfies the hypothesis of Theorem~\ref{thm2}. Also, if $k = 2$ and $n_j \equiv 2 \pmod 4,$ for $j = 1,2,$ then $L$ satisfies the hypothesis of Theorem~\ref{thm2}.

So, in all these cases, the invariants $\ogw_{\beta,k}$ are defined. For $d = 1$ or $d = 2$ and for $n$ arbitrary, the pairs $(X_{d,n},L_{d,n})$ and products thereof satisfy the hypothesis of~\cite[Example~1.4]{ST1}, so the virtual fundamental class is not needed. On the other hand, for $n \geq 3$ the $\phi$-oriented condition of~\cite{Georgieva} is only satisfied for $X_{d,n}$ when either $d$ is even and $n$ is even or $d \equiv 1 \pmod 4$ and $n$ is odd.
Using the techniques of~\cite{ST3}, it is shown in~\cite{HST} that the invariants $\ogw_{\beta,k}$ of $(X_{d,n},L_{d,n})$ are non-trivial for $d = 2$ and $n$ arbitrary.
\end{ex}

\begin{rem}
When $\beta$ lies in the image of the natural map $H_2(X;\Z) \to \sly$ and $k = 0,$ the invariants $\ogw_{\beta,k}$ vanish. This is because monomials of type $\mathcal{D}$ were removed in the definition of the superpotential $\Omega.$ In particular, when $H_1(L;\Z) \simeq 0$ and thus the map $H_2(X;\Z) \to \sly$ is surjective, only the invariants $\ogw_{\beta,k}$ with $k > 0$ can be non-trivial. On the other hand, the calculations of~\cite{HST} mentioned above in Example~\ref{ex:XL} show that even when $L \simeq S^n,$ the invariants $\ogw_{\beta,k}$ are indeed non-trivial when $k > 0.$
\end{rem}

\begin{rem}\label{rem:neven}
When $n$ is even, it follows that $\deg s = 1-n$ is odd, so $s^2=0.$ Thus, for $k > 1,$ we have $\ogw_{\beta,k} = 0.$ In work in progress, the definition of the invariants $\ogw_{\beta,k}$ for $n$ even is modified so that they take non-zero values for $k > 1.$
\end{rem}

We next formulate properties of open Gromov-Witten invariants analogous to several standard properties of closed Gromov-Witten invariants as given, for example, in~\cite{KontsevichManin, MS, RuanTian}.

\begin{thm}[Axioms of the $\ogw$ invariants]\label{axioms}
The invariants $\ogw_{\beta,k}$ defined above have the following properties.
Let $A_j \in W_L$ for $j = 1,\ldots,l$.
\begin{enumerate}
	\item\label{it:deg} (Degree)
		$\ogw_{\beta,k}(A_1,\ldots,A_l)=0$ unless
		\begin{equation}\label{ax_deg}
		n-3+\mu(\beta)+k+2l = kn+\sum_{j=1}^l\deg A_j.
		\end{equation}
    \item (Symmetry)
        For any permutation $\sigma\in S_l$,
        \begin{equation}
        \ogw_{\beta,k}(A_1,\ldots,A_l)= (-1)^{s_\sigma(A)}\ogw_{\beta,k}(A_{\sigma(1)}\ldots,A_{\sigma(l)}),
        \end{equation}
      where $s_\sigma(A):
=
\sum_{\substack{i>j\\ \sigma(i)<\sigma(j)}} \deg A_{\sigma(i)}\cdot\deg A_{\sigma(j)}.
$
	\item (Unit / Fundamental class)
		\begin{equation}\label{ax_unit}
		\ogw_{\beta,k}(1,A_{1},\ldots,A_{l-1})=
		\begin{cases}
			-1, & (\beta,k,l)=(\beta_0,1,1),\\
			0, & \text{otherwise}.
		\end{cases}
		\end{equation}
	\item (Zero)
		\begin{equation}
		\ogw_{\beta_0,k}(A_1,\ldots,A_l)=
		\begin{cases}
		-1, & (k,l)=(1,1)\text{ and } A_1=1,\\
		0, & \text{otherwise}.
		\end{cases}
		\end{equation}
	\item (Divisor)
		If $\deg A_l=2,$ then
		\begin{equation}\label{ax_divisor}
		\ogw_{\beta,k}(A_1,\ldots,A_{l})=\int_\beta A_l \cdot\ogw_{\beta,k}(A_1,\ldots,A_{l-1}).
		\end{equation}
    \item (Deformation invariance) \label{ax:def}
        The invariants $\ogw_{\beta,k}$ remain constant under deformations of the symplectic form $\omega$ for which $L$ remains Lagrangian, the fixed subgroup $S_L \subset H_2(X,L;\Z)$ remains in the kernel of $\omega,$ and in the case of Theorem~\ref{thm2}, the involution $\phi$ remains anti-symplectic.
\end{enumerate}
\end{thm}

\subsubsection{Intuition and motivation}\label{sssec:intuition}

The invariant $\ogw_{\beta,k}(\g_{i_1},\ldots,\g_{i_l})$ counts configurations of $J$-holomorphic disks that collectively have degree $\beta$, $k$ boundary point-constraints, and interior constraints $\gamma_{i_1},\ldots,\gamma_{i_l}$.
Figure~\ref{fig:chain} illustrates a configuration that may contribute to $\ogw_{\beta,3}(\g_1,\g_2,\g_3,\g_4,\g_5,\g_6)$. The notation $b_j,\beta_j,$ comes from a decomposition $b=\sum_jT^{\beta_j}b_j$ with $b_j \in A^*(L)[[s,t_0,\ldots,t_N]].$ The illustration shows chains that intuitively represent the ``Poincar\'e duals" of the non-closed differential forms $b_j.$
When a disk degenerates to a two component stable map, instead of disappearing and spoiling the invariance of the count as in Figure~\ref{fig:bubble}, the two components are allowed to separate while being linked by part of the bounding chain. Thus, the count remains invariant.

\begin{figure}[!ht]
\centering
\includegraphics[width=12cm]{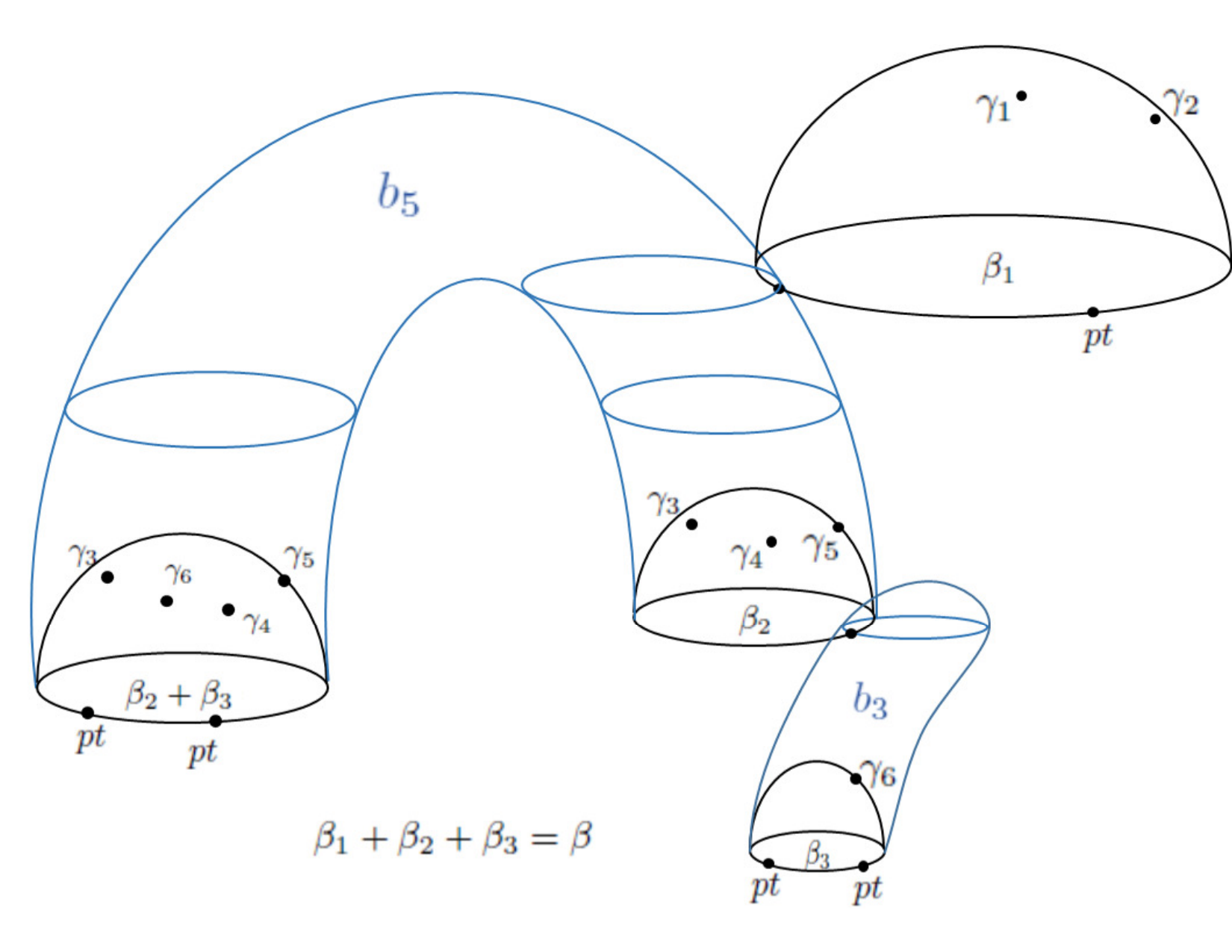}
\caption{A schematic description of one contribution to $\ogw_{\beta,3}(\g_1,\g_2,\g_3,\g_4,\g_5,\g_6)$}
\label{fig:chain}
\end{figure}

A major motivation for using bounding chains in the present work is that they allow the definition of open Gromov-Witten invariants that depend only on the Lagrangian submanifold without any additional data. It may be necessary to impose cohomological constraints on the Lagrangian as in Theorem~\ref{thm1}, but the resulting invariants depend only on the Lagrangian. They do not depend on the existence of an auxiliary geometric structure such as an anti-symplectic involution, or the choice of one if there are several distinct deformation equivalence classes.

As mentioned in Section~\ref{ssec:intro^2}, previous works on open Gromov-Witten invariants for Lagrangian submanifolds fixed by an anti-symplectic involution~\cite{Cho,Georgieva,SolomonThesis,Welschinger1,Welschinger2} use the involution to cancel disk bubbles in pairs.  Examining the proof of Theorem~\ref{thm2} for $n > 3,$ one sees that the involution does not control obstruction chains in degrees $0,n+1 \pmod 4,$ which means that disk bubbles do not cancel in pairs. This motivates the introduction of bounding chains in the presence of boundary constraints or involution invariant interior constraints when $n > 3.$

Even when only involution anti-invariant interior constraints are present, bounding chains appear to be useful. Indeed,
Proposition~\ref{prop:altrealspin} and the associated definition of superpotential invariants in Section~\ref{sssec:geosi} give an involution based disk cancelling argument similar to that of Georgieva~\cite{Georgieva} in the language of the present paper. The sign calculation in the proof of Proposition~\ref{prop:altrealspin} fails for the Lagrangian submanifold $L_{d,n} \subset X_{d,n}$ when $d \equiv 3 \pmod 4$  and $n$ is odd as in Example~\ref{ex:XL}.
Nonetheless, one can define invariants using bounding chains.

\subsubsection{Comparison with other invariants}\label{sssec:comps}
When $X$ is a Calabi-Yau threefold and $L$ has vanishing Maslov class, our invariants are essentially the same as those of Fukaya~\cite{Fukaya2}. Indeed, it follows from the degree axiom of Theorem~\ref{axioms} that $\ogw_{\beta,k}(A_1,\ldots,A_l) = 0$ unless $k = 0$ and $\deg A_j = 2$ for $j = 1,\ldots,l.$ We can further reduce to the case $l = 0$ by the divisor axiom. So, we can set the formal variables $s$ and $t_j$ to zero without losing any information, and the superpotential $\Omega$ is the same as Fukaya's modulo terms of type $\mathcal{D}.$ Terms of type $\mathcal{D}$ are removed so that invariance is not spoiled by the wall-crossing formula of~\cite[Theorem 1.5]{Fukaya2}.

For a symplectic manifold $(X,\omega)$ with an anti-symplectic involution $\phi: X \to X,$ we proceed to compare the invariants defined above with other invariants in the literature. We begin with some relevant notation. Let $Y \subset \fix(\phi)$ be closed and open. Define the doubling map
\begin{equation}\label{eq:chi}
\chi_Y: H_2(X,Y;\Z) \to H_2(X;\Z)
\end{equation}
as follows. For $\beta \in H_2(X,Y;\Z)$ represent $\beta$ by a singular chain $\sigma \in C_2(X,Y;\Z).$ Then $\chi_Y(\beta) = [\sigma - \phi_\#\sigma].$ Let
\[
\hat \chi_Y : H_2(X;\Z) \to H_2(X;\Z)
\]
denote the composition of $\chi_{Y}$ and the natural homomorphism $H_2(X;\Z) \to H_2(X,Y;\Z).$ Observe that $\chi_{Y}$ descends to the quotient of $H_2(X,Y;\Z)$ by any subgroup contained in $\Im(\Id+\phi_*).$ We denote the map on the quotient by $\chi_Y$ as well. In particular, as long as $\ssly_L \subset \Im(\Id+\phi_*),$ we have $\chi_L : \sly_L \to H_2(X;\Z).$ Thus, for the following two theorems, we always take $\ssly_L \subset \Im(\Id+\phi_*)$. When we consider invariants arising from Theorem~\ref{thm2}, the opposite inclusion holds as well, so $\ssly_L = \Im(\Id+\phi_*).$

The following theorem relates Welschinger's invariants~\cite{Welschinger1,Welschinger2} to the invariants $\ogw_{\beta,k}$ associated with a real setting~$(X,L,\omega,\phi)$.
For $d \in H_2(X;\Z), l \in \Z_{\geq 0},$ write
\[
k_{d,l} = \frac{c_1(X)(d) - 2(n-1)l+n-3}2.
\]
For $n = 2,3,$ and either $k_{d,l} \geq 1$ or $d \notin \Im \hat\chi_L,$ denote by $\mathcal{W}_{d,l}$ Welschinger's invariant counting real rational $J$-holomorphic curves in $(X,\phi)$ of degree $d$ with real locus in $L$ passing through $k_{d,l}$ real points and $l$ pairs of $\phi$-conjugate points. Although~\cite{Welschinger2} does not allow the case $k_{d,l} = 0$ and $d \notin \Im\hat \chi_L,$ the definition extends without modification. Recall from Remark~\ref{rem:dim23} that when $n = 2,3,$ the hypothesis of Theorem~\ref{thm2} is always satisfied, so the invariants $\ogw_{\beta,k}$ are defined.
\begin{thm}[Comparison with Welschinger's invariants]\label{Welsch}
Suppose $n=2$ or $3$. Let $A \in \Hh^{2n}_\phi(X,L;\R)$ be the Poincar\'e dual of a point. Let $d \in H_2(X;\Z), l \in \Z_{\geq 0},$ such that if $n = 3$, then either $k_{d,l} \geq 1$ or $d \notin \im \hat \chi_L$, and if $n=2,$ then $k_{d,l} = 1.$ Then,
\[
\sum_{\chi_L(\beta)=d}
\ogw_{\beta,\,k_{d,l}}(A^{\otimes l})
=\pm 2^{1-l}\cdot \mathcal{W}_{d,\,l}.
\]
\end{thm}

\begin{rem}\label{rem:noddlim}
When $n = 2,$ the hypothesis $k_{d,l} = 1$ is necessary. Indeed, by Remark~\ref{rem:neven}, the open Gromov-Witten invariants $\ogw_{\beta,k}$ vanish whenever $k > 1,$ but it is shown in~\cite{ItenbergKharlamovShustin2,ItenbergKharlamovShustin4} that Welschinger's invariants $\mathcal{W}_{d,l}$ do not generally vanish when $k_{d,l} > 1.$ On the other hand, the modified invariants mentioned in Remark~\ref{rem:neven} coincide with Welschinger's invariants also for $k_{d,l} > 1$ when $n = 2.$
\end{rem}

The following theorem relates the invariants of Georgieva~\cite{Georgieva} for $(X,\omega,\phi)$ to the invariants $\ogw_{\beta,k}^L:W_L^{\otimes l} \to \R$ associated to the components  $L\subset fix(\phi).$
The invariants $\ogw_{\beta,k}^L$ may arise from either Theorem~\ref{thm1} or Theorem~\ref{thm2} depending on which hypothesis is satisfied by $L$. The group $\sly_L$ and the vector space $W_L$ are taken accordingly. Assume $(X,\omega,\phi)$ is admissible in the sense of~\cite{Georgieva}, choose a $\phi$-orienting structure as in~\cite{Georgieva}, and let $\s_L$ be the associated relative spin structure for the component $L \subset fix(\phi).$
Recall that $\s_L$ determines a class $w_{\s_L} \in H^2(X;\Z/2\Z)$ such that $w_2(TL) = i^* w_{\s_L}.$
Let $\mathcal A \subset H_2(X;\Q)$ denote the complement of the image of $\hat \chi_{fix(\phi)}$ composed with the natural map $H_2(X;\Z) \to H_2(X;\Q).$ For $d \in \mathcal A$ and $A_j \in H^*(X;\R),$ let $\ogw^{\text{Georgieva}}_{d,0,l}(A_1,\ldots,A_l)$ denote the invariant of~\cite{Georgieva}. Let $W_\phi$ denote $H^*(X;\R)^{-\phi^*}$ if the hypothesis of Theorem~\ref{thm1} holds for each component $L \subset fix(\phi).$ Otherwise, let $W_\phi$ denote $H^{even}_\phi(X;\R)^{-\phi^*} = \oplus_{m \text{ odd}} H^{2m}(X;\R)^{-\phi^*}.$
The natural map $H^*(X,L;\R)^{-\phi^*} \to H^*(X;\R)^{-\phi^*}$ is an isomorphism by the long exact sequence of the pair $(X,L).$ So, we identify $W_\phi$ with a subspace of $W_L$ for each component $L \subset fix(\phi).$

\begin{thm}[Comparison with Georgieva's invariants]\label{thm:penka}
Suppose $(X,\omega,\phi)$ is admissible in the sense of~\cite{Georgieva}, and the hypothesis of either Theorem~\ref{thm1} or Theorem~\ref{thm2} holds for each component $L \subset fix(\phi).$  Assume that $\frac{\mu(\beta)}{2} + w_{\s_L}(\chi_L(\beta)) \equiv 0 \pmod 2$ for all components $L \subset fix(\phi)$ and all $\beta\in \sly_L$. Then, for $d \in \mathcal{A}$ and $A_j \in W_\phi$, we have
\[
\sum_{\substack{\chi_L(\beta) = d, \\ L \subset fix(\phi)}}\ogw^L_{\beta,0}(A_1,\ldots,A_l)=
2^{1-l}\ogw^{\text{Georgieva}}_{d,0,l}(A_1,\ldots,A_l).
\]
\end{thm}

\begin{rem}
Theorem~\ref{thm:penka} does not generalize if we replace $W_\phi$ with $H^*(X;\R)$ or $H^{even}_\phi(X;\R)$. Indeed, the invariants $\ogw^{\text{Georgieva}}_{d,0,l}(A_1,\ldots,A_l)$ are known to vanish if there is a $j$ such that $\phi^* A_j = A_j.$ However, the invariants $\ogw^L_{\beta,0}$ do not generally vanish as shown in~\cite{ST3}.
\end{rem}

\begin{rem}
On the other hand, when $L$ satisfies the hypothesis of Theorem~\ref{thm2} but not of Theorem~\ref{thm1}, the invariants $\ogw_{\beta,k}^L(A_1,\ldots,A_l)$ are defined only for $A_1,\ldots,A_l \subset H_\phi^*(X)$ whereas the invariants $\ogw_{\beta,0,l}^{\text{Georgieva}}(A_1,\ldots,A_l)$ are defined for $A_1,\ldots,A_l \in H^*(X).$ In Section~\ref{sssec:geosuperpot} we show how to use the superpotential to recover the invariants $\ogw_{\beta,0,l}^{\text{Georgieva}}$ in this case as well using a classification result for a modified notion of real bounding pairs given in Section~\ref{ssec:pnksomg}.
\end{rem}

\subsection{Context}
\subsubsection{The symmetry approach}
Several existing approaches utilize symmetries to define open invariants in certain cases.
Katz-Liu \cite{KatzLiu} and Liu \cite{Liu-M} use an $S^1$ action.
Using anti-symplectic involutions, Cho \cite{Cho} and Solomon \cite{SolomonThesis}, both in dimensions 2 and 3, define open invariants that generalize Welschinger's real enumerative invariants~\cite{Welschinger1,Welschinger2}. Georgieva~\cite{Georgieva} uses anti-symplectic involutions to treat higher dimensions in the absence of boundary constraints. A similar result can be deduced from \cite{SolomonThesis}.

\subsubsection{The superpotential}
The problem of defining open Gromov-Witten invariants without using symmetries has a long history. The idea of using the superpotential to define holomorphic disk counting invariants goes back to~\cite{Witten3}. Subsequently, the superpotential has been discussed widely in the physics literature~\cite{HerbstLazariouLerche,Lazaroiu,Tomasiello,Vafa,Walcher}. It is explained in~\cite[Section 0.4]{PandharipandeSolomonWalcher} that the invariants of~\cite{SolomonThesis} in the Calabi-Yau setting can be extracted from a critical value of the superpotential. The use of bounding chains along with the superpotential to define invariants was suggested by Joyce~\cite[Section~6.7]{Joyce}.

\subsubsection{Vanishing Maslov class}
Fukaya~\cite{Fukaya2} uses the superpotential and bounding chains to define open Gromov-Witten invariants for a Lagrangian submanifold $L$ with vanishing Maslov class in a Calabi-Yau threefold. In the Maslov zero setting, the grading on $\Lambda$ is trivial. Moreover, when $\dim L = 3,$ Maslov zero disks have expected dimension zero. So, boundary and interior constraints as well as the corresponding variables $t_i$ and $s$ are irrelevant. Consequently, it is natural to take $R = \Lambda$ and the degree $1$ elements of $C$ are necessarily $1$-forms. Also, bounding chains are necessarily strong, that is, the constant $c$ of equation~\eqref{eq:bc} vanishes. Strong bounding chains are critical points of the superpotential. So, the superpotential is constant on each connected component (in an appropriate non-Archimedean sense) of the space of bounding chains. Thus, if one can identify a connected component of the space of bounding chains, evaluating the superpotential at any point therein will give the same invariant. In favorable situations, there may be only one connected component.

There are several other works in the Maslov zero setting, which should be related to that of Fukaya~\cite{Fukaya2}. Aganagic-Ekholm-Ng-Vafa~\cite{AENV} define open Gromov-Witten invariants for knots, and Ekholm-Shende~\cite{EkholmShende} construct open Gromov-Witten invariants valued in skeins. Cho~\cite{Cho0} defines an invariant function similar to the superpotential on Hochschild cycles.  Iacovino defines a superpotential counting ``multi-curves'' in Calabi-Yau threefolds~\cite{Iacovino1,Iacovino2}. He then rephrases his construction in the language of obstruction theory~\cite{Iacovino3}.

\subsubsection{Non-vanishing Maslov class}
When the Maslov class of the Lagrangian submanifold $L \subset X$ does not vanish, strong bounding chains generally do not exist. Rather, one must consider weak bounding chains~\cite{FOOO} as we do in the present article. Weak bounding chains are not critical points of the superpotential.
So, the value of the superpotential depends on the precise choice of bounding chain, as opposed to its connected component in the Maslov zero case.
On the other hand, the fact that weak bounding chains are not critical points means that the superpotential can give rise to larger families of invariants. Indeed, if one can find a canonically parameterized family of bounding chains on which to evaluate the superpotential, one obtains a parameterized family of invariants. The parameter $k$ in the invariants $\ogw_{\beta,k}(\cdot)$ of the present work arises in this way from the $s$ dependence of $b.$

Fukaya-Oh-Ohta-Ono~\cite{FOOO1} study a potential function on the space of weak bounding chains, which is essentially the derivative of the superpotential discussed here, in the context of Lagrangian tori in compact toric manifolds. They recover quantum cohomology, or equivalently, closed Gromov-Witten theory, as the Jacobian ring of the potential. However, there does not seem to be a canonical parameterization of the space of weak bounding chains in that context, so it is not clear how to extract a numerical invariant by evaluating the superpotential on a bounding chain. Nonetheless, Fukaya~\cite[Section 8.3]{Fukaya2} thinks it likely the superpotential approach can be used to recover the invariants of~\cite{Welschinger1,Welschinger2,SolomonThesis}. The present paper confirms the superpotential approach does in fact recover these invariants and moreover, extends them to arbitrary odd dimension.

\subsubsection{The present work}
Theorems~\ref{thm1} and~\ref{thm2} allow us to choose canonical bounding chains on which the superpotential can be evaluated to obtain invariants. Theorems~\ref{Welsch} and~\ref{thm:penka} show the invariants obtained from the superpotential recover the invariants of Welschinger~\cite{Welschinger1,Welschinger2} and Georgieva~\cite{Georgieva}. In particular, Theorem~\ref{Welsch} shows that bounding chains play the role of boundary point constraints, Poincar\'e dual to $n$-forms, contrary to the intuition from the Calabi-Yau case where bounding chains are $1$-forms.

The proof of Theorem~\ref{thm2} clarifies the importance of anti-symplectic involution symmetry in defining invariants. Namely, such an involution forces part of the obstructions to the existence and uniqueness of bounding chains to vanish. The fact that not all obstructions are forced to vanish explains why the symmetry approach only works in low dimensions~\cite{Welschinger1,Welschinger2} or in the absence of boundary constraints~\cite{Georgieva}. The cohomological hypothesis of Theorem~\ref{thm2} is used to deal with the obstructions that are not forced to vanish by symmetry.

The present work explains why boundary constraints in the open Gromov-Witten invariants of~\cite{Cho,SolomonThesis,Welschinger1,Welschinger2} could only be points and not arbitrary cycles. Indeed, to construct a canonical family of bounding chains, we use the map $\varrho$~\eqref{eqn_rho}, which integrates bounding chains over $L.$ The proof in Lemma~\ref{lm_rho} that $\varrho$ is invariant under gauge equivalence makes crucial use of the top degree property of $A_\infty$ pseudoisotopies, Proposition~\ref{cl:qt_no_top_deg}. In particular, integrating over any cycle on $L$ apart from the fundamental class would not give a well-defined map. As explained in Remark~\ref{rem1}, it follows from the definition of $\varrho$ that our canonical family of bounding chains is point-like. Even if we suppose there exists a canonical family of bounding chains involving more parameters than $\varrho,$ it is shown in Remark 4.16 of~\cite{ST3} that generically the superpotential only depends on the parameters of $\varrho.$

Theorem~\ref{axioms} shows that the superpotential invariants, despite their abstract definition, satisfy simple axioms analogous to those of closed Gromov-Witten theory. To prove Theorem~\ref{axioms}, Definition~\ref{dfn:b_axioms} formulates properties of bounding chains analogous to the fundamental class and divisor axioms. Under the assumptions of Theorem~\ref{thm1} (resp. Theorem~\ref{thm2}), Proposition~\ref{lm:d_t0b} (resp. Proposition~\ref{div_real}) shows that any gauge equivalence class of bounding chains has a representative that satisfies these axioms.

It is known~\cite{BrugalleMikhalkin1,BrugalleMikhalkin2,BrugalleGeorgieva} that Welschinger's invariants for $(\C P^3,\R P^3)$, and thus also the invariants $\ogw_{\beta,k}(\cdot)$ of the present paper, are non-zero for many choices of $\beta,$ interior constraints, and both $k = 0$ and $k > 0.$ It is known~\cite{Tehrani,GZ0} that Georgieva's invariants and thus also $\ogw_{\beta,0}(\cdot)$ are non-zero for $(\C P^n,\R P^n)$ with $n > 3$ odd and various choices of $\beta$ and interior constraints. In~\cite{ST3}, we give recursive formulas that completely determine the invariants $\ogw_{\beta,k}(\cdot)$ in the case $(X,L) = (\C P^n,\R P^n)$ with $n$ odd. In particular, these invariants are shown to be non-zero for many choices of $\beta$ and interior constraints even when both $n > 3$ and $k>0.$
Furthermore, the invariants $\ogw_{\beta,0}(A_1,\ldots,A_l)$ are shown to be non-zero in many cases even when $\phi^*A_j = A_j$ for one or more $j,$ whereas the invariants of Georgieva~\cite{Georgieva} vanish.

\subsubsection{Future plans}
Work in progress proves an analog of Theorem~\ref{thm1} under the weaker cohomological assumption that the restriction map $H^m(X;\R) \to H^m(L;\R)$ is surjective for $0 < m < n.$
As mentioned in Remark~\ref{rem:neven}, work in progress modifies the definition of the invariants $\ogw_{\beta,k}$ in even dimensions so they no longer vanish when $k > 1.$ Theorem~\ref{Welsch} extends accordingly as mentioned in Remark~\ref{rem:noddlim}. Another work in progress extends the definition of the invariants so that $L$ need not be orientable.
Furthermore, we plan to apply the techniques of the present paper to the non-compact Calabi-Yau setting studied in~\cite{GopakumarVafa} and~\cite{AganagicVafa}.

\subsubsection{Related work}
Welschinger \cite{Welschinger4d,Welschinger6d} corrects disk bubbling by taking into account linking numbers, thus obtaining invariants in dimensions $2$ and $3$. The relation with the invariants of the present paper is explained in~\cite{Chen} in dimension $3.$ We expect a similar result in dimension~$2.$ Another related approach is being developed by Tessler~\cite{Tessler}.

Netser Zernik~\cite{Zernik,Zernik1} follows an approach closely related to the present work to define equivariant open Gromov-Witten invariants and give an equivariant localization formula for them.

Biran-Cornea~\cite{BC} define an invariant of Lagrangian $2$-tori counting disks of a given degree through three points. The invariant arises as the discriminant of a quadratic form associated to the Lagrangian quantum product.
Biran-Membrez~\cite{BiranMembrez} define a related invariant for even dimensional Lagrangian spheres.
Cho~\cite{Cho0} gives an example of an open Gromov-Witten invariant for the Clifford torus in $\P^2$. It would be interesting to find a connection between any of these invariants and those of the present paper.

\subsection{Outline}

In Section~\ref{sec:a_infty} we quote some results from~\cite{ST1} that will be useful in the other sections. This includes the construction of the $A_\infty$ structure in Section~\ref{ssec:constr} using $\q$ operators, properties of the $\q$ operators in Section~\ref{ssec:properties}, and the notion of pseudoisotopies between two $A_\infty$ structures in Section~\ref{pseudoisot}.

Section~\ref{sec:bd_chains} contains results concerning bounding pairs, most notably a construction of bounding pairs in Section~\ref{construct_bd_ch} and a construction of gauge equivalences of bounding pairs in Section~\ref{ssec:classification}. Together, these two sections prove Theorem~\ref{thm1}, as detailed in the end of Section~\ref{ssec:classification}.

Section~\ref{sec:real} deals with the real setting. After establishing signs of conjugation in Section~\ref{ssec:prelim}, we move to proving Theorem~\ref{thm2} in Section~\ref{ssec:spin}.
Section~\ref{ssec:pnksomg} proves a classification result for a modified notion of real bounding pairs, which leads to an interpretation of the invariants of Georgieva~\cite{Georgieva} in terms of the superpotential.

Section~\ref{sec:suppot} concerns the superpotential and the open Gromov-Witten invariants $\ogw_{\beta,k}$ derived from it. In Section~\ref{subsec:invar} we show the superpotential is invariant under pseudoisotopy proving Theorem~\ref{thm_inv}. In Section~\ref{ssec:cff}, we give a coordinate free definition of the invariants $\ogw_{\beta,k}$. In Section~\ref{ssec:bd_axioms} we construct bounding chains with properties reminiscent of the Gromov-Witten axioms. Section~\ref{ssec:axioms} proves the axioms of the open Gromov-Witten invariants $\ogw_{\beta,k}$, that is, Theorem~\ref{axioms}. Section~\ref{ssec:relax} shows how to adapt the proofs of Theorems~\ref{thm_inv}-\ref{axioms} to the complex $\Ah^*(X,L)$ in place of $A^*(X,L).$ The complex $\Ah^*(X,L)$ plays an important role in~\cite{ST3}. Section~\ref{ssec:comparison} compares the invariants $\ogw_{\beta,k}$ with those of Welschinger proving Theorem~\ref{Welsch}. Section~\ref{sssec:geosuperpot} applies the superpotential to the modified real bounding pairs of Section~\ref{ssec:pnksomg} to obtain open Gromov-Witten invariants under assumptions similar to those of Georgieva~\cite{Georgieva}. These invariants are related to both Georgieva's invariants and the invariants~$\ogw_{\beta,k}$. The proof of Theorem~\ref{thm:penka} follows.

\subsection{Acknowledgments}
The authors would like to thank M. Abouzaid, D. Auroux, P.~Georgieva, D. Joyce, T. Kimura, E. Kosloff, M. Liu, L. Polterovich, E. Shustin, I. Smith, G.~Tian, and A. Netser Zernik, for helpful conversations.
The authors were partially supported by ERC starting grant 337560 and ISF Grant 1747/13. The first author was partially supported by ISF Grant 569/18.
The second author was partially supported by the Canada Research Chairs Program and NSF grant No. DMS-163852.

\subsection{General notation}\label{ssec:gennote}\leavevmode

We write $I:=[0,1]$ for the closed unit interval.

Use $i$ to denote the inclusion $i:L\hookrightarrow X$. By abuse of notation, we also use $i$ for $\Id\times i:I\times L\to I\times X$. The meaning in each case should be clear from the context.

Denote by $pt$ the map (from any space) to a point.

Define a valuation
\[
\nu:R\lrarr \R_{\ge 0},
\]
by
\[
\nu\left(\sum_{j=0}^\infty a_jT^{\beta_j}s^{k_j}\prod_{a=0}^Nt_a^{l_{aj}}\right)
= \inf_{\substack{j\\a_j\ne 0}} \left(\omega(\beta_j)+k_j+\sum_{a=0}^N l_{aj}\right).
\]

Let $\Upsilon'$ be an $\R$ vector space and let $\Upsilon = \Upsilon' \otimes R.$ Then, equipping $\Upsilon'$ with the trivial valuation, $\nu$ induces a valuation on $\Upsilon$ which we also denote by $\nu.$

Whenever a tensor product (resp. direct sum) of modules with valuation is written, we mean the completed tensor product (resp. direct sum).

Write $A^*(L;R)$ for $A^*(L)\otimes R$. Similarly, $A^*(X;R)$ and $A^*(X,L;R)$ stand for $A^*(X)\otimes R$ and $A^*(X,L)\otimes R$, respectively.

Given $\alpha$, a homogeneous differential form with coefficients in $R$, denote by $|\alpha|$ the degree of the differential form, ignoring the grading of $R$. Denote by $\deg \alpha$ the total grading combining the differential form degree and the grading of $R.$

For a possibly non-homogeneous $\alpha$, denote by $(\alpha)_j$ the form that is the part of degree $j$ in $\alpha$. In particular, $|(\alpha)_j|=j$. Contrariwise, for a graded module $M$, the notation $M_j$ or $(M)_j$ stands for the degree~$j$ part of the module, which in the present context involves degrees of forms as well as degrees of variables.

Let $\Upsilon'$ be an $\R$-vector space, let $\Upsilon''=R,$ $Q,$ or $\L,$ and let $\Upsilon=\Upsilon'\otimes \Upsilon''$. For $x\in \Upsilon$ and a monomial $\lambda\in\Upsilon'',$ denote by $[\lambda](x)\in \Upsilon'$ the coefficient of $\lambda$ in $x$.

\section{\texorpdfstring{$A_{\infty}$ structures}{A-infty structures}}\label{sec:a_infty}
In this section we recall definitions and results from~\cite{ST1}. The notation, as well as sign and orientation conventions, are the same as in~\cite{ST1}, except that we have added the variable $s$ to $R$ (but not to $Q$).
Additionally, we mention relevant earlier references for these results from~\cite{FOOO, Fukaya, Fukaya2, FOOOtoricII, FOOOspec,FOOOKSII,FOOOKSVFC} with slightly different sign conventions but in the full generality offered by Kuranishi structures.

\begin{rem}
In the literature based on Kuranishi structures, it is generally necessary to first construct $A_\infty$ algebras and related structures modulo an arbitrary fixed energy level and then use homological algebra to obtain the full structure. This avoids the problem of the perturbed moduli space running out of the Kuranishi neighborhood as explained in~\cite[Section 7.2.3]{FOOO}. For the results of the present paper, it suffices to construct $A_\infty$ algebras and related structures modulo a fixed energy level. Indeed, it follows from the construction that an open Gromov-Witten invariant $\ogw_{\beta,k}(\cdots)$ depends only on the relevant Fukaya $A_\infty$ algebra truncated to energy level $\omega(\beta) + \epsilon$ for arbitrary $\epsilon > 0.$ Nonetheless, we work with full $A_\infty$ algebras to simplify notation.
\end{rem}

\subsection{Moduli spaces}
Throughout this work $(X,\omega)$ is a symplectic manifold, $L\subset X$ is a connected Lagrangian submanifold with relative spin structure $\s$, and $J$ is an $\omega$-tame almost complex structure. The notion of a relative spin structure appeared in~\cite{FOOO}. See also~\cite{WehrheimWoodward}. In particular, a relative spin structure determines an orientation on $L.$ Let $\dim_\R X=2n$. Write $\beta_0:=0\in \sly$.

A $J$-holomorphic genus-$0$ open stable map to $(X,L)$ of degree $\beta \in \sly$ with one boundary component, $k+1$ boundary marked points, and $l$ interior marked points, is a quadruple $(\Sigma, u,\vec{z},\vec{w})$ as follows. The domain $\Sigma$ is a genus-$0$ nodal Riemann surface with boundary consisting of one connected component. The map
\[
u: (\Sigma,\d\Sigma) \to (X,L)
\]
is continuous, $J$-holomorphic on each irreducible component of $\Sigma,$ and $[u_*([\Sigma,\partial\Sigma])] = \beta.$
The marked points are denoted by
\[
\vec{z} = (z_0,\ldots,z_k), \qquad \vec{w} = (w_1,\ldots,w_l),
\]
with $z_j \in \partial \Sigma, \, w_j \in int(\Sigma),$ distinct. The labeling of the marked points $z_j$ respects the cyclic order given by the orientation of $\partial \Sigma$ induced by the complex orientation of $\Sigma.$
Stability means that
if $\Sigma_i$ is an irreducible component of $\Sigma$, then either $u|_{\Sigma_i}$ is nonconstant or it satisfies the following requirement: If $\Sigma_i$ is a sphere, the number of marked points and nodal points on $\Sigma_i$ is at least 3; if $\Sigma_i$ is a disk, the number of marked and nodal boundary points plus twice the number of marked and nodal interior points is at least $3$.
An isomorphism of open stable maps $(\Sigma,u,\vec{z},\vec{w})$ and $(\Sigma',u',\vec{z}',\vec{w}')$ is a homeomorphism $\theta : \Sigma \to \Sigma'$, biholomorphic on each irreducible component, such that
\[
u = u' \circ \theta, \qquad\qquad  z_j' = \theta(z_j), \quad j = 0,\ldots,k, \qquad w_j' = \theta(w_j), \quad j = 1,\ldots,l.
\]
References for the notion of an open stable map include~\cite[Section 2.1.2]{FOOO} and~\cite[Section 5]{Liu-M}.

Let $\M_{k+1,l}(\beta) = \M_{k+1,l}(\beta;J)$ denote the moduli space of $J$-holomorphic genus zero open stable maps to $(X,L)$ of degree $\beta$ with one boundary component, $k+1$ boundary marked points, and $l$ internal marked points.
Let the evaluation maps
\begin{gather*}
evb_j^\beta:\M_{k+1,l}(\beta)\to L, \qquad  \qquad j=0,\ldots,k, \\
evi_j^\beta:\M_{k+1,l}(\beta) \to X, \qquad \qquad j=1,\ldots,l,
\end{gather*}
be given by $evb_j^\beta((\Sigma,u,\vec{z},\vec{w}))=u(z_j)$ and $evi_j^\beta((\Sigma,u,\vec{z},\vec{w}))= u(w_j).$
We may omit the superscript $\beta$ when the omission does not create ambiguity.

We assume that all $J$-holomorphic genus zero open stable maps with one boundary component are regular, the moduli spaces $\M_{k+1,l}(\beta;J)$ are smooth orbifolds with corners, and the evaluation maps $evb_0^\beta$ are proper submersions.
The definition of open Gromov-Witten invariants in the present paper extends to arbitrary targets $(X,\omega,L)$ and arbitrary $\omega$-tame almost complex structures given the virtual fundamental class technique of Kuranishi structures~\cite{Fukaya,Fukaya2,FOOO,FOOOtoricI,FOOOtoricII,FOOO1,FOOOinv,FOOOKSII,FOOOspec,FOOOKSVFC,FO}. The same is true for the proofs of four of the six axioms. The unit and divisor axioms require compatibility of the virtual fundamental class with the forgetful map of interior marked points, which has not yet been worked out in the Kuranishi structure formalism in the context of differential forms. See Remark~\ref{rem:intforget} below.
Alternatively, it should be possible to use the polyfold theory of~\cite{HoferWysockiZehnder,HoferWysockiZehnder1,HoferWysockiZehnder2,HoferWysockiZehnder3,LiWehrheim}.
The relative spin structure $\s$ determines an orientation on $\M_{k+1,l}(\beta)$ as in~\cite[Chapter 8]{FOOO}.

\subsection{Construction}\label{ssec:constr}

For all $\beta\in\sly$, $k,l\ge 0$,  $(k,l,\beta) \not\in\{ (1,0,\beta_0),(0,0,\beta_0)\}$, define
\[
\qkl^\beta:C^{\otimes k}\otimes A^*(X;Q)^{\otimes l} \lrarr C
\]
by
\begin{align*}
\q^{\beta}_{k,l}(\alpha_1\otimes\cdots\otimes\alpha_k;\gamma_1\otimes\cdots\otimes\gamma_l):=
(-1)^{\varepsilon(\alpha)}
(evb_0^\beta)_* \left(\bigwedge_{j=1}^l(evi_j^\beta)^*\gamma_j\wedge\bigwedge_{j=1}^k (evb_j^\beta)^*\alpha_j\right)
\end{align*}
with
\[
\varepsilon(\alpha)
:=\sum_{j=1}^kj(\deg\alpha_j+1)+1.
\]
The push-forward $(evb_0)_*$ is defined by integration over the fiber; it is well-defined because $evb_0$ is a proper submersion.
The case $\q_{0,0}^{\beta}$ is understood as $-(evb_0^{\beta})_*1.$
Define also $\q_{0,0}^{\beta_0}:=0$ and $\q_{1,0}^{\beta_0}(\alpha):=d\alpha$.
Set
\begin{align*}
\qkl:=\sum_{\beta\in\sly}T^{\beta}\qkl^{\beta}.
\end{align*}
Such operators are defined in~\cite{FOOOspec}, based on ideas from~\cite{FOOO} and~\cite{Fukaya, FOOOtoricII}.
Furthermore, for $l\ge 0$, $(l,\beta)\neq (1,\beta_0),(0,\beta_0),$ define
\[
\q_{-1,l}^\beta:A^*(X;Q)^{\otimes l}\lrarr Q
\]
by
\[
\q_{-1,l}^\beta(\gamma_1\otimes\cdots\otimes\gamma_l)
:=\int_{\M_{0,l}(\beta)} \bigwedge_{j=1}^l (evi_j^\beta)^*\gamma_j,
\]
define $\q_{-1,1}^{\beta_0}:=0, \q_{-1,0}^{\beta_0}:=0,$ and set
\[
\q_{-1,l}(\gamma_1\otimes\cdots\otimes\gamma_l):=\sum_{\beta\in\sly}T^{\beta} \q_{-1,l}^\beta(\gamma_1\otimes\cdots\otimes\gamma_l).
\]
This definition is motivated by ideas from~\cite{Fukaya2}.
Lastly, define similar operations using spheres,
\[
\q_{\emptyset,l}:A^*(X;Q)^{\otimes l}\lrarr A^*(X;Q),
\]
as follows. For $\beta\in H_2(X;\Z)$ let $\M_{l+1}(\beta)$ be the moduli space of genus zero $J$-holomorphic stable maps with $l+1$ marked points indexed from 0 to $l,$ representing the class $\beta$. Denote by $ev_j^\beta:\M_{l+1}(\beta)\to X$ the evaluation map at the $j$-th marked point. Assume that all the moduli spaces $\M_{l+1}(\beta)$ are smooth orbifolds and $ev_0^{\beta}$ is a submersion. Let $\pr: H_2(X;\Z) \to \sly$ denote the projection and let $w_\s\in H^2(X;\Z)$ be the class induced by the relative spin structure on $L$. For $l\ge 0$, $(l,\beta)\ne (1,0),(0,0)$, set
\begin{gather*}
\q_{\emptyset,l}^\beta(\gamma_1,\ldots,\gamma_l):=
(-1)^{w_\s(\beta)}
(ev_0^\beta)_*(\wedge_{j=1}^l(ev_j^\beta)^*\gamma_j),
\qquad
\q_{\emptyset,1}^0:= 0,\qquad \q_{\emptyset,0}^0:= 0,\\
\q_{\emptyset,l}(\gamma_1,\ldots,\gamma_l):=
\sum_{\beta\in H_2(X)}T^{\pr(\beta)} \q_{\emptyset,l}^\beta(\gamma_1,\ldots,\gamma_l).
\end{gather*}

Fix a closed form $\gamma\in \mI_QD$ with $\deg_D\gamma=2$.
Define maps on $C$ by
\[
\m_k^{\beta,\gamma}(\otimes_{j=1}^k\alpha_j)=
\sum_l\frac{1}{l!}T^\beta\qkl^\beta(\otimes_{j=1}^k\alpha_j;\gamma^{\otimes l}),\quad
\m_k^{\gamma}(\otimes_{j=1}^k\alpha_j)=
\sum_l\frac{1}{l!}\qkl(\otimes_{j=1}^k\alpha_j;\gamma^{\otimes l}),
\]
for all $k\ge -1,l\ge 0$. In particular, note that $\m_{-1}^\gamma\in R$.

The following is Proposition~2.6 in~\cite{ST1}. It appears as Lemma~17.10 of~\cite{FOOOspec}. See also Lemma 3.8.39 of~\cite{FOOO} in the singular chains model and~\cite{FOOOtoricII} in the toric context.

\begin{cl}[$A_\infty$ relations]\label{cl:a_infty_m}
The operations $\{\m_k^\gamma\}_{k\ge 0}$ define an $A_\infty$ structure on $C$. That is,
\begin{equation*}\label{eq:a-infty}
\sum_{\substack{k_1+k_2=k+1\\1\le i\le k_1}}(-1)^{\sum_{j=1}^{i-1}(\deg\alpha_j+1)}
\mg_{k_1}(\alpha_1,\ldots,\alpha_{i-1},\mg_{k_2}(\alpha_i,\ldots,\alpha_{i+k_2-1}), \alpha_{i+k_2},\ldots,\alpha_k)=0.
\end{equation*}
\end{cl}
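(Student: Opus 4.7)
The plan is to deduce the $A_\infty$ relations for $\m^\gamma$ from a more primitive ``master equation'' satisfied by the operations $\q_{k,l}$, which encodes the codimension-one boundary structure of the moduli spaces $\M_{k+1,l}(\beta)$ via Stokes' theorem applied to the pushforward by $evb_0^\beta$. Once the master equation is in hand, the $A_\infty$ relation drops out after substituting $\gamma^{\otimes l}/l!$ into the interior slots and summing.

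First I would establish (or quote from \cite{ST1}) the structure equation for $\q$. For each $\beta,k,l$ and inputs $\alpha_i\in C$, $\gamma_j\in A^*(X;Q)$, the codimension-one boundary of $\M_{k+1,l}(\beta)$ is a union of disk-bubbling strata, each identified (up to sign) with a fiber product $\M_{k_1+1,l_1}(\beta_1)\times_L \M_{k_2+1,l_2}(\beta_2)$ with $\beta_1+\beta_2=\beta$, the node sitting at the $0$-th boundary marked point of one factor. Applying Stokes' theorem to $(evb_0^\beta)_*$ of the form $\bigwedge_j (evi_j^\beta)^*\gamma_j\wedge\bigwedge_j (evb_j^\beta)^*\alpha_j$, each such stratum contributes a composition $\q(\ldots,\q(\ldots),\ldots)$, while the interior contribution is $d\q$ together with terms in which $d$ has landed on one of the $\alpha_i$ or $\gamma_j$. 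Writing this out gives an identity of the form $d\q + \q(d(\cdot);\cdot) + \q(\cdot;d(\cdot)) + \sum \pm \q(\ldots,\q(\ldots),\ldots) = 0$ for each $(\beta,k,l)$.

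Next I specialize. Substituting $\gamma^{\otimes l}/l!$ for the interior data and summing over $l\ge 0$, the terms involving $d\gamma$ all vanish because $\gamma$ is closed. A binomial identity redistributes $1/l!$ over the partitions $l=l_1+l_2$ that appear in compositions, so the disk-bubbling sum collapses exactly to $\sum \pm \mg_{k_1}(\ldots,\mg_{k_2}(\ldots),\ldots)$. The exterior derivative term $\q_{1,0}^{\beta_0}(\alpha)=d\alpha$ accounts for the $(k_2,\beta)=(1,\beta_0)$ case, and the degree constraint $\deg_D \gamma=2$ together with the Novikov filtration on $R$ guarantees that the formal sums defining $\mg_k$ and the composition converge. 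The exceptional operation $\q_{-1,\ast}$ produces scalar output and does not enter the relation for $k\ge 0$.

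The main obstacle, and the technical heart of the argument in \cite{ST1}, is the sign computation. Four sources of signs must conspire to yield the $A_\infty$ sign $(-1)^{\sum_{j<i}(\deg_C\alpha_j+1)}$: the sign $\varepsilon(\alpha;\gamma)$ in the definition of $\q$, the sign $(-1)^{\sum_j j(|\alpha_j|+1)+nk+1}$ built into $\mg_k$, the Stokes' theorem sign for integration over the fiber, and the sign from identifying each boundary stratum with the oriented fiber product $\M_{k_1+1,l_1}(\beta_1)\times_L \M_{k_2+1,l_2}(\beta_2)$, which depends on the relative spin structure and on the cyclic reordering needed to place the node at the $0$-th boundary marked point of the appropriate component. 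Matching these is a careful but direct computation; everything else in the proof is formal manipulation.
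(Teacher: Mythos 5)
Your proposal is correct and follows essentially the same route as the actual argument: the paper itself only quotes this statement from~\cite[Proposition 2.7]{ST1}, and the proof there proceeds exactly as you outline, namely a structure equation for the $\q$ operators obtained by applying Stokes' theorem for the pushforward $(evb_0^\beta)_*$ together with the fiber-product description of the codimension-one boundary strata, followed by substituting $\gamma^{\otimes l}/l!$, using $d\gamma=0$ and the redistribution of factorials, and matching the four sign sources you list. Nothing essential is missing from your sketch beyond carrying out that sign computation in detail.
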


\subsection{Properties}\label{ssec:properties}

Denote by $\langle\;,\;\rangle$ the signed Poincar\'e pairing
\begin{equation}\label{eq:pairing}
\langle\xi,\eta\rangle:=(-1)^{\deg\eta}\int_L\xi\wedge\eta.
\end{equation}

The following is Proposition~3.1 in~\cite{ST1}. It is implicit in Theorem~17.1 of~\cite{FOOOspec}. See also Theorem~3.8.32 of~\cite{FOOO} in the singular chains model.
\begin{cl}[Linearity]\label{lm:qlinear}
The $\q$ operators are multilinear in the sense that for $a \in R$ we have
\begin{multline*}
\qquad\q_{k,l}^\beta(\a_1,\ldots,\a_{i-1},a\cdot\a_i,\ldots,\a_k;\gamma_1,\ldots,\gamma_l)=\\
		=(-1)^{\deg a\cdot\big(i+\sum_{j=1}^{i-1}\deg\a_j +\sum_{j=1}^l\deg\gamma_j\big)}
		a\cdot\q_{k,l}^\beta(\a_1,\ldots,\a_k; \gamma_1,\ldots,\gamma_l),
\end{multline*}
and for $a \in Q$ we have
\[
\q_{k,l}^\beta(\a_1,\ldots,\a_k; \gamma_1,\ldots,a\cdot\gamma_i,\ldots,\gamma_l) =(-1)^{\deg a\cdot\sum_{j=1}^{i-1}\deg\gamma_j}
		a\cdot\q_{k,l}^\beta(\a_1,\ldots,\a_k;\gamma_1,\ldots,\gamma_l),
\]
and
\[
\q^\beta_{\emptyset,l}(\gamma_1,\ldots,a\cdot\gamma_i,\ldots,\gamma_l)=
(-1)^{\deg a\cdot\sum_{j=1}^{i-1}\deg\gamma_j} a\cdot\q^\beta_{\emptyset,l}(\gamma_1,\ldots,\gamma_l).
\]
In addition, the pairing $\langle\;,\,\rangle$ defined by~\eqref{eq:pairing} is $R$-bilinear in the sense that
\[
		\langle a.\alpha_1,\alpha_2\rangle=
a\langle\alpha_1,\alpha_2\rangle,\quad
		\langle\alpha_1,a.\alpha_2\rangle=
(-1)^{\deg a\cdot (1+\deg\alpha_1)}a\langle\alpha_1,\alpha_2\rangle.
\]
\end{cl}

The following is Proposition~3.2 in~\cite{ST1}. The part used in the present work is the case when the function $f \in A^0(L)$ is the constant $1,$ which is~\cite[Theorem~17.1(3)]{FOOOspec}. In the singular chains model, a homotopy version is~\cite[Theorem~3.8.32(3)]{FOOO}.

\begin{cl}[Unit of the algebra]\label{cl:unit}
Fix $f\in A^0(L)\otimes R$, $\alpha_1,\ldots,\alpha_{k}\in C,$ and $\gamma_1,\ldots,\gamma_l \in A^*(X;Q).$ Then
\begin{multline*}
\q_{\,k+1\!,l}^{\beta}(\alpha_1,\ldots,\alpha_{i-1},f,\alpha_{i},\ldots,\alpha_k ;\otimes_{r=1}^l\gamma_r)=\\
=
\begin{cases}
df, & (k+1,\;l,\beta)=(1,0,\beta_0),\\
(-1)^{\deg f}f\cdot\alpha_1, & (k+1,l,\beta)=(2,0,\beta_0),\: i=1,\\
(-1)^{\deg\alpha_1(\deg f+1)}f\cdot\alpha_1, & (k+1,l,\beta)=(2,0,\beta_0),\: i=2,\\
0,& \text{otherwise.}
\end{cases}
\end{multline*}
In particular, $1\in A^0(L)$ is a strong unit for the $A_\infty$ operations $\mg:$
\[
\m_{k+1}^\gamma(\alpha_1,\ldots,\alpha_{i-1},1,\alpha_{i},\ldots,\alpha_{k})=
\begin{cases}
0, & k\ge 2 \mbox{ or } k=0,\\
\alpha_1, & k=1,\: i=1,\\
(-1)^{\deg\alpha_1}\alpha_1, & k=1,\: i=2.
\end{cases}
\]
\end{cl}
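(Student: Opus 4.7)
The plan is to separate the statement into the three exceptional cases and a generic case, handle the exceptional cases by direct computation on the moduli spaces that fail to admit a stabilizing forgetful map, and handle the generic case via a degree-counting argument on the fibers of the boundary-point forgetful map.

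First I would dispose of the exceptional cases. For $(k+1,l,\beta)=(1,0,\beta_0)$, the equality $\q_{1,0}^{\beta_0}(f)=df$ is built into the construction. For $(k+1,l,\beta)=(2,0,\beta_0)$, the moduli space $\M_{3,0}(\beta_0)$ consists of constant maps from a disk with three ordered boundary points, and as an oriented orbifold it is canonically identified with $L$ so that $evb_0=evb_1=evb_2=\Id_L$. The pushforward $(evb_0)_\ast$ then reduces to the identity on $A^\ast(L)$ and $\q_{2,0}^{\beta_0}(\alpha_1,\alpha_2)=\pm\,\alpha_1\wedge\alpha_2$. I would verify that the sign $(-1)^{\varepsilon(\alpha;\emptyset)}$, together with the fixed orientation conventions on $\M_{3,0}(\beta_0)$, produces the cases $f\cdot\alpha_2$ for $i=1$ and $(-1)^{|\alpha_1|}f\cdot\alpha_1$ for $i=2$ exactly as stated.

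For the generic case, i.e.\ $(k+1,l,\beta)$ not among the exceptional triples above, I would introduce the forgetful map
\[
\pi:\M_{k+2,l}(\beta)\lrarr \M_{k+1,l}(\beta)
\]
that forgets the $i$-th boundary marked point (where $f$ is inserted) and stabilizes. The non-exceptional hypothesis is exactly what guarantees $\pi$ is defined, is a proper submersion, and has one-dimensional fibres (an arc on a boundary component of the disk, away from the other markings and nodes). All of the evaluation maps $evb_j$ for $j\ne i$ and $evi_j$ factor through $\pi$, so that the form pushed forward has the shape $\pi^\ast\eta\wedge(evb_i)^\ast f$ for some $\eta$ on $\M_{k+1,l}(\beta)$. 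Using $evb_0=\widetilde{evb_0}\circ\pi$ and the projection formula,
\[
(evb_0)_\ast\bigl(\pi^\ast\eta\wedge(evb_i)^\ast f\bigr)=(\widetilde{evb_0})_\ast\bigl(\eta\wedge\pi_\ast(evb_i)^\ast f\bigr),
\]
so everything reduces to showing $\pi_\ast(evb_i)^\ast f=0$. But $(evb_i)^\ast f$ is a $0$-form and the fibres of $\pi$ are $1$-dimensional; integration over the fibre produces only the degree $\ge 1$ part of the integrand, which vanishes. Hence $\q^{\beta}_{k+1,l}$ evaluates to $0$ in all non-exceptional cases.

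The strong-unit statement for $\mg$ follows by summing: at $k+1=1$, only the exceptional triple $(1,0,\beta_0)$ survives and contributes $d(1)=0$; at $k+1=2$ only $(2,0,\beta_0)$ survives and gives the two remaining expressions; for $k+1\ge 3$ the sum is identically zero. The main difficulties I expect are bookkeeping items rather than conceptual: verifying that $\pi$ is a proper submersion with the asserted fibre dimension near corners of the moduli space, and matching the sign $(-1)^{\varepsilon(\alpha;\gamma)}$ coming from the definition with the orientation conventions inherited by $\M_{3,0}(\beta_0)\cong L$ and by the fibres of $\pi$; both are routine but must be traced through the conventions of~\cite{ST1}.
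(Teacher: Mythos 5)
This statement is quoted here from~\cite[Proposition 3.1]{ST1} without proof, and your argument is essentially the standard one used there: dispose of the unstable triples $(1,0,\beta_0)$ and $(2,0,\beta_0)$ by the definition of $\q_{1,0}^{\beta_0}$ and the identification $\M_{3,0}(\beta_0)\cong L$ (consistent with the energy-zero property, Proposition~\ref{q_zero}), and in all remaining cases factor the evaluation maps through the map forgetting the marked point carrying $f$, so the projection formula reduces everything to the pushforward of a $0$-form along $1$-dimensional fibres, which vanishes. Your proposal is correct, with only the acknowledged routine checks (properness/submersion of the forgetful map and the sign bookkeeping) left to trace through the conventions of~\cite{ST1}.
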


The following is Proposition~3.3 in~\cite{ST1}. It appears in~\cite[Theorem 7.1]{Fukaya} when $\gamma = 0,$ and it follows from~\cite[Theorem 17.3(10)]{FOOOspec} for $\gamma$ arbitrary.
\begin{cl}[Cyclic structure]\label{cl:cyclic}
For any $\alpha_1,\ldots,\alpha_{k+1}\in C$ and $\gamma_1,\ldots,\gamma_l\in A^*(X;Q)$,
\begin{align*}
\langle\qkl(\alpha_1,\ldots&,\alpha_k;\gamma_1,\ldots\gamma_l),\alpha_{k+1}\rangle=\\
&(-1)^{(\deg\alpha_{k+1}+1)\sum_{j=1}^{k}(\deg\alpha_j+1)}\cdot
\langle \qkl(\alpha_{k+1},\alpha_1,\ldots,\alpha_{k-1};\gamma_1,\ldots,\gamma_l),\alpha_k\rangle.
\end{align*}
In particular, $(C,\{\m_k^\gamma\}_{k\ge 0})$ is a cyclic $A_\infty$ algebra for any $\gamma$.
\end{cl}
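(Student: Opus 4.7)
The strategy I would follow is to rewrite both sides of the identity as integrals over $\M_{k+1,l}(\beta)$ via the projection formula for the push-forward along $evb_0^\beta$, and then identify them through the natural cyclic rotation automorphism of the moduli space that relabels the boundary marked points. Since the $\q$-operations are defined $\beta$-by-$\beta$ and then summed, it suffices to prove the cyclic identity for each fixed $\beta$.

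Concretely, for the left-hand side I would use that $evb_0^\beta$ is a proper submersion and apply the projection formula to obtain
\[
\langle \qkl^\beta(\alpha_1,\ldots,\alpha_k;\gamma_1,\ldots,\gamma_l),\alpha_{k+1}\rangle
= \pm \int_{\M_{k+1,l}(\beta)} \bigwedge_j (evi_j^\beta)^*\gamma_j \wedge \bigwedge_{j=1}^{k}(evb_j^\beta)^*\alpha_j \wedge (evb_0^\beta)^*\alpha_{k+1},
\]
with sign determined by $\varepsilon(\alpha;\gamma)$, the sign of the Poincar\'e pairing~\eqref{eq:pairing}, and the Koszul sign produced when $(evb_0)^*\alpha_{k+1}$ is moved across forms coming from the moduli space by the projection formula. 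The right-hand side I would rewrite the same way but with roles of $\alpha_k$ and $\alpha_{k+1}$ shifted into the cyclic position. Next, consider the diffeomorphism (equivalently, an isomorphism of oriented orbifolds)
\[
\sigma:\M_{k+1,l}(\beta)\longrightarrow \M_{k+1,l}(\beta),\qquad
(z_0,z_1,\ldots,z_k)\longmapsto (z_k,z_0,\ldots,z_{k-1}),
\]
leaving the interior marked points and the map $u$ fixed. By construction $evb_j^\beta\circ\sigma = evb_{j-1}^\beta$ for $j\ge 1$ and $evb_0^\beta\circ\sigma = evb_k^\beta$, so pulling back the right-hand integral by $\sigma$ yields exactly the same wedge of forms appearing on the left, up to the orientation sign of $\sigma$ and a Koszul reshuffle of the $k+1$ factors $(evb_j^\beta)^*\alpha_j$.

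The heart of the argument is the sign calculation. There are three distinct contributions: (i)~the orientation sign of $\sigma$, which one computes from the explicit description of the orientation on $\M_{k+1,l}(\beta)$ induced by the relative spin structure — a single cyclic rotation of the $k+1$ ordered boundary points picks up a fixed sign depending on $k$ and $n$; (ii)~the Koszul signs from rearranging $(evb_k^\beta)^*\alpha_{k+1}$ past $(evb_0^\beta)^*\alpha_1,\ldots,(evb_{k-1}^\beta)^*\alpha_k$; (iii)~the sign $\varepsilon(\alpha;\gamma)$ in the definition of $\qkl^\beta$, which changes when the boundary inputs are cyclically permuted (only the $j(|\alpha_j|+1)$ terms are affected). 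Each of (ii) and (iii) depends on $|\alpha_{k+1}|$ and the $|\alpha_j|$, and all the other pieces that do not involve $\alpha_{k+1}$ must cancel between the two sides; what survives is precisely $(-1)^{(|\alpha_{k+1}|+1)\sum_{j=1}^k(|\alpha_j|+1)}$.

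I expect the bookkeeping of these signs to be the only real obstacle — the geometric content is completely transparent (it is the cyclic symmetry of the ordered boundary marked points). For the \textit{In particular} statement, one sums the cyclic identity over $\beta$ weighted by $T^\beta$ and averages over insertions of $\gamma^{\otimes l}/l!$: since the sign $(-1)^{(|\alpha_{k+1}|+1)\sum_j(|\alpha_j|+1)}$ does not see the $\gamma_j$ nor the index $l$, it is preserved, and one obtains
\[
\langle \mg_k(\alpha_1,\ldots,\alpha_k),\alpha_{k+1}\rangle
=(-1)^{(|\alpha_{k+1}|+1)\sum_{j=1}^k(|\alpha_j|+1)}
\langle \mg_k(\alpha_{k+1},\alpha_1,\ldots,\alpha_{k-1}),\alpha_k\rangle,
\]
which is the cyclic $A_\infty$ axiom with respect to the pairing $\langle\,,\,\rangle$.
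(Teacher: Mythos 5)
This proposition is not proved in the present paper at all---it is imported verbatim from~\cite[Proposition 3.2]{ST1}---so the comparison can only be with the standard argument, and your outline is exactly that argument: convert both pairings into integrals over $\M_{k+1,l}(\beta)$ using the projection formula for $(evb_0^\beta)_*$, and transport one integrand to the other by the relabeling diffeomorphism $(z_0,\ldots,z_k)\mapsto(z_k,z_0,\ldots,z_{k-1})$, which preserves the cyclic order and intertwines the boundary evaluation maps as you state. The geometric skeleton is correct, and summing over $\beta$ and inserting $\gamma^{\otimes l}/l!$ to get the cyclic property of $\mg$ is unproblematic since the asserted sign involves neither the $\gamma_j$ nor $l$.

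Two caveats, both concerning the part you defer. First, the proposition is purely a sign identity, and the cancellation is not automatic: if one runs your scheme with the most naive conventions (sign-free projection formula $p_*(\xi\wedge p^*\alpha)=p_*\xi\wedge\alpha$, $\int_L(evb_0^\beta)_*\xi=\int_{\M_{k+1,l}(\beta)}\xi$, and orientation change of the relabeling equal to the sign $(-1)^k$ of the $(k+1)$-cycle), then combining the pairing signs $(-1)^{|\alpha_{k+1}|}$ and $(-1)^{|\alpha_k|}$, the Koszul shuffle $(-1)^{|\alpha_{k+1}|\sum_{j\le k}|\alpha_j|}$, and the difference of the two $\varepsilon$-signs leaves a residual factor $(-1)^{k(|\alpha_k|+|\alpha_{k+1}|)}$ relative to the claimed sign. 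This residue has exactly the shape of a convention-dependent contribution $(|\alpha_k|+|\alpha_{k+1}|)\cdot\big(\dim\M_{k+1,l}(\beta)-n\big)$, since $\dim\M_{k+1,l}(\beta)-n=k-2+\mu(\beta)+2l\equiv k\pmod 2$ (recall $\mu(\beta)$ is even as $L$ is orientable); so the identity closes only once the fiber-integration and orientation conventions of~\cite{ST1} are fixed and used consistently, and the ``bookkeeping'' you postpone is in fact the entire content of the statement. Second, your moduli-space argument does not cover the terms defined by fiat rather than by a moduli space, most importantly $\q_{1,0}^{\beta_0}(\alpha)=d\alpha$; for these one must check the cyclic relation separately, e.g.\ for $k=1$ it reduces via Stokes' theorem on the closed manifold $L$ to $\langle d\alpha_1,\alpha_2\rangle=(-1)^{(|\alpha_2|+1)(|\alpha_1|+1)}\langle d\alpha_2,\alpha_1\rangle$, which does hold but is a separate (easy) verification that should be recorded.
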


The following is Proposition~3.5 in~\cite{ST1}. It appears as one of the defining properties of the $\q$ operators in~\cite[Theorem 17.1]{FOOOspec}. See also~\cite[Theorem 3.8.32]{FOOO} in the singular chains model.

\begin{cl}[Degree of structure maps]\label{deg_str_map}
For $\gamma_1,\ldots,\gamma_l\in D_2,$ $k\ge 0,$ the map
\[
\qkl(\; ;\gamma_1,\ldots,\gamma_l):C^{\otimes k}\lrarr C
\]
is of degree $2-k$.
\end{cl}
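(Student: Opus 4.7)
The plan is to derive the degree by a direct count, using the formula for the dimension of the moduli space together with the fact that pushforward along a submersion of relative dimension $d$ lowers total form degree by $d$. All total degrees include both the form degree and the grading of the coefficient ring $R$ (or $Q \subset R$); pullback by $evb_j^\beta$ and $evi_j^\beta$ preserves this total degree since these maps are between manifolds and the grading of coefficients is untouched.

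First I would recall the real dimension of the moduli space:
\[
\dim_\R \M_{k+1,l}(\beta) = (n+\mu(\beta)) - 3 + (k+1) + 2l = n + \mu(\beta) + k - 2 + 2l,
\]
from the index $n+\mu(\beta)$ of the linearized Cauchy-Riemann operator on a disk with Lagrangian boundary, minus $\dim \mathrm{Aut}(D^2)=3$ for the reparameterization quotient, plus $k+1$ real parameters from the boundary marked points and $2l$ from the interior marked points. Since $evb_0^\beta$ is a submersion onto $L$ with $\dim L = n$, its relative dimension is $\mu(\beta) + k - 2 + 2l$.

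Next I would compute the total degree of the integrand. With each $\gamma_j$ of total degree $2$ and each $\alpha_j$ of total degree $\deg_C \alpha_j$, the form
\[
\bigwedge_{j=1}^l (evi_j^\beta)^* \gamma_j \wedge \bigwedge_{j=1}^k (evb_j^\beta)^* \alpha_j
\]
on $\M_{k+1,l}(\beta)$ has total degree $2l + \sum_{j=1}^k \deg_C \alpha_j$. Applying $(evb_0^\beta)_*$ decreases the total degree by $\mu(\beta) + k - 2 + 2l$, so
\[
\deg_C \qkl^\beta(\alpha_1,\ldots,\alpha_k;\gamma_1,\ldots,\gamma_l) = \sum_{j=1}^k \deg_C \alpha_j - \mu(\beta) + 2 - k.
\]
Since $\deg T^\beta = \mu(\beta)$, the summand $T^\beta \qkl^\beta$ in $\qkl = \sum_\beta T^\beta \qkl^\beta$ has total degree $\sum_{j=1}^k \deg_C \alpha_j + 2 - k$, independent of $\beta$. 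Hence $\qkl(\,\cdot\,;\gamma_1,\ldots,\gamma_l)$ raises total degree by $2-k$, as claimed.

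Finally I would dispose of the exceptional cases where the general dimension formula does not apply: $\q_{1,0}^{\beta_0}(\alpha)=d\alpha$ shifts degree by $+1 = 2-1$, and $\q_{0,0}^{\beta_0}=0$, $\q_{-1,0}^{\beta_0}=0$, $\q_{-1,1}^{\beta_0}=0$ trivially respect any degree assignment. The signs $(-1)^{\varepsilon(\alpha;\gamma)}$ play no role in the degree count. There is no substantial obstacle here beyond careful bookkeeping of the three contributions to $\deg_C$ (form degree on $L$, coefficient degree in $R$, and the Novikov degree $\mu(\beta)$); the key observation that makes everything cancel cleanly is that the $T^\beta$-degree precisely cancels the $\mu(\beta)$ from the relative dimension of $evb_0^\beta$.
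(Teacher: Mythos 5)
Your dimension count is correct: the relative dimension of $evb_0^\beta$ is $\mu(\beta)+k-2+2l$, the $T^\beta$-grading cancels $\mu(\beta)$, and the exceptional cases $\q_{1,0}^{\beta_0}=d$, $\q_{0,0}^{\beta_0}=0$ are handled separately. The paper itself offers no argument here, only a citation to \cite[Proposition 3.4]{ST1}, and your proof is exactly the standard bookkeeping that reference carries out, so there is nothing to add.
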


The following is Proposition~3.6 in~\cite{ST1}. It appears as one of the defining properties of the $\q$ operators in~\cite[Theorem 17.1]{FOOOspec}. See also~\cite[Remark 17.7]{FOOOspec}.

\begin{cl}[Symmetry]\label{cl:symmetry}
Let $k\ge -1$. For any permutation $\sigma\in S_l,$
\[
\qkl(\alpha_1,\ldots,\alpha_k;\gamma_1,\ldots,\gamma_l)=
(-1)^{s_\sigma(\gamma)}\qkl(\alpha_1,\ldots,\alpha_k;\gamma_{\sigma(1)},\ldots,\gamma_{\sigma(l)}),
\]
where
$
s_\sigma(\gamma):
=
\sum_{\substack{i>j\\ \sigma(i)<\sigma(j)}} \deg\gamma_{\sigma(i)}\cdot\deg\gamma_{\sigma(j)}\pmod 2
.
$
\end{cl}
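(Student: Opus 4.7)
The plan is to realize the permutation $\sigma\in S_l$ geometrically as a diffeomorphism of the moduli space and then exploit the change-of-variables formula for integration over the fiber. Concretely, define
\[
\sigma_*:\M_{k+1,l}(\beta)\lrarr \M_{k+1,l}(\beta),\qquad \sigma_*[u,\vec{z},(w_1,\ldots,w_l)]:=[u,\vec{z},(w_{\sigma(1)},\ldots,w_{\sigma(l)})].
\]
Since the equivalence relation on stable maps acts simultaneously on all marked points, $\sigma_*$ is a well-defined diffeomorphism. The boundary marked points are untouched, so $evb_i^\beta\circ\sigma_*=evb_i^\beta$ for $i=0,\ldots,k$, whereas on interior marked points $evi_j^\beta\circ\sigma_*=evi_{\sigma(j)}^\beta$.

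Next I would check that $\sigma_*$ is orientation-preserving. It suffices to treat a transposition of two consecutive interior marked points, and it suffices further, by the product decomposition near a point representing a smooth domain, to observe that the transposition acts trivially on the combinatorial type, on the map $u$, and on the position data of all other marked points; it merely swaps two of the complex one-dimensional factors parametrizing the positions of the two affected interior marked points. Swapping two factors of $\C$ is orientation-preserving on the real underlying space, and the natural orientation of $\M_{k+1,l}(\beta)$ coming from the relative spin structure (cf.\ \cite[Chapter 8]{FOOO}) is compatible with this splitting.

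Given this, the change-of-variables formula for proper submersions yields $(evb_0^\beta)_*\omega=(evb_0^\beta)_*\sigma_*^*\omega$ for every form $\omega$ on $\M_{k+1,l}(\beta)$. Applied to
\[
\omega=\bigwedge_{j=1}^l(evi_j^\beta)^*\gamma_j\wedge\bigwedge_{i=1}^k(evb_i^\beta)^*\alpha_i,
\]
one has
\[
\sigma_*^*\omega=\bigwedge_{j=1}^l(evi_{\sigma(j)}^\beta)^*\gamma_j\wedge\bigwedge_{i=1}^k(evb_i^\beta)^*\alpha_i.
\]
Rewriting this wedge so that the evaluation indices on interior marked points appear in the natural order $1,\ldots,l$ reproduces the form used in the definition of $\q_{k,l}^\beta(\alpha_1,\ldots,\alpha_k;\gamma_{\sigma(1)},\ldots,\gamma_{\sigma(l)})$, up to the Koszul sign incurred by graded-commuting pulled-back $\gamma_j$'s past one another. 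Each inversion $(i,j)$ with $i<j$ and $\sigma(i)>\sigma(j)$ contributes a factor of $(-1)^{|\gamma_i||\gamma_j|}$, which is nontrivial exactly when both degrees are odd, so summing over inversions reproduces $(-1)^{sgn(\sigma^\gamma)}$ as defined in the statement.

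Finally, the prefactor $(-1)^{\varepsilon(\alpha;\gamma)}$ in the definition of $\q_{k,l}^\beta$ depends on the interior forms only through $\sum_j|\gamma_j|$, which is permutation invariant, so it contributes no additional sign. The case $k=-1$ is handled identically with $\int_{\M_{0,l}(\beta)}$ in place of $(evb_0^\beta)_*$, since $\sigma_*$ descends to an orientation-preserving self-diffeomorphism of $\M_{0,l}(\beta)$ as well. I expect the main technical point to be the verification of orientation-preservation of $\sigma_*$, particularly at strata corresponding to nodal domains where one must check compatibility of the inherited orientation with the permutation of marked points on individual components; once this is granted, the remainder of the proof is routine Koszul sign bookkeeping.
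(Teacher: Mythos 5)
This proposition is not proved in the paper at all: it is quoted verbatim from~\cite[Proposition 3.5]{ST1}, so there is no internal proof to compare against. That said, your argument is the standard one and is correct in substance: realize the relabeling of interior marked points as a diffeomorphism $\sigma_*$ of $\M_{k+1,l}(\beta)$ commuting with $evb_0^\beta$, note it is orientation-preserving because it permutes complex (hence even-real-dimensional) position coordinates of interior points, apply the change-of-variables formula for integration over the fiber, and collect the Koszul signs; the prefactor $(-1)^{\varepsilon(\alpha;\gamma)}$ is insensitive to the permutation since it sees only $\sum_j|\gamma_j|$, and the $k=-1$ case is identical with $\int_{\M_{0,l}(\beta)}$ in place of $(evb_0^\beta)_*$. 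Your orientation discussion is also consonant with Proposition~\ref{prop:sgn_phit} of this paper, where only the conjugation of the $l$ interior points, not their relabeling, contributes a sign. The one bookkeeping wrinkle: with $\sigma_*$ as you define it, $evi_j^\beta\circ\sigma_*=evi_{\sigma(j)}^\beta$, so pulling back the integrand of the left-hand side and re-sorting the interior factors into the natural order of evaluation indices produces the integrand of $\q_{k,l}^\beta(\alpha_1,\ldots,\alpha_k;\gamma_{\sigma^{-1}(1)},\ldots,\gamma_{\sigma^{-1}(l)})$, not of $\q_{k,l}^\beta(\alpha_1,\ldots,\alpha_k;\gamma_{\sigma(1)},\ldots,\gamma_{\sigma(l)})$; the inversion $(i,j)$ of $\sigma$ then carries the weight $|\gamma_i||\gamma_j|$, which is precisely the displayed $sgn(\sigma^\gamma)$. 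In other words, what you literally establish is the identity with $\sigma^{-1}$ in the arguments; to land on the statement exactly as printed you should relabel by $\sigma^{-1}$ (equivalently, if one keeps $\gamma_{\sigma(j)}$ in the arguments, the Koszul weights attached to inversions are $|\gamma_{\sigma(i)}||\gamma_{\sigma(j)}|$). Since the claim is quantified over all $\sigma\in S_l$, and since in every application in this paper the interior inputs have even degree so the sign is $+1$, this slip is harmless, but it is worth fixing if you write the argument up carefully.
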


The following is Proposition~3.7 in~\cite{ST1}. A homotopy version appears in~\cite[Theorem~3.8.32]{FOOO} in the singular chain model.

\begin{cl}[Fundamental class]\label{q_fund}
For $k\ge 0,$
\[
\qkl^\beta(\alpha_1,\ldots,\alpha_k;1,\gamma_1,\ldots,\gamma_{l-1})=
\begin{cases}
-1, & (k,l,\beta)=(0,1,\beta_0),\\
0, & \text{otherwise}.
\end{cases}
\]
Furthermore,
\[
\q_{-1,l}^\beta(1,\gamma_1,\ldots,\gamma_{l-1})=0.
\]
\end{cl}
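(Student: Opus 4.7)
The plan is to exploit the forgetful map $\pi:\M_{k+1,l}(\beta)\to\M_{k+1,l-1}(\beta)$ that drops the first interior marked point. Whenever the target moduli space is itself stable, $\pi$ is a proper submersion of smooth orbifolds with corners whose generic fiber has real dimension $2$, parameterizing the position of the erased marked point on the domain. By the universal property of the moduli spaces, the evaluation maps commute with $\pi$:
\[
evb_i^{\beta,(l)} = evb_i^{\beta,(l-1)}\circ\pi,\quad 0\le i\le k,\qquad evi_j^{\beta,(l)} = evi_{j-1}^{\beta,(l-1)}\circ\pi,\quad 2\le j\le l.
\]
Since $(evi_1^{\beta,(l)})^*1 = 1$, the entire integrand defining $\q_{k,l}^\beta(\alpha_1,\ldots,\alpha_k;1,\gamma_1,\ldots,\gamma_{l-1})$ is a pullback $\pi^*\eta$, where
\[
\eta := \bigwedge_{j=2}^l (evi_{j-1}^{\beta,(l-1)})^*\gamma_{j-1}\;\wedge\;\bigwedge_{i=1}^k (evb_i^{\beta,(l-1)})^*\alpha_i
\]
lives on the smaller moduli space.

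Next, I would apply the projection formula to the factorization $evb_0^{\beta,(l)} = evb_0^{\beta,(l-1)}\circ\pi$:
\[
(evb_0^{\beta,(l)})_*\pi^*\eta \;=\; (evb_0^{\beta,(l-1)})_*(\pi_*\pi^*\eta) \;=\; (evb_0^{\beta,(l-1)})_*(\eta\wedge\pi_*1).
\]
Because the generic fiber of $\pi$ is $2$-dimensional, fiber integration of the zero-form $1$ produces a form of degree $-2$, so $\pi_*1 = 0$ and the desired vanishing follows. For $\beta\neq\beta_0$ the underlying map is nonconstant on some component, so stability of the target $\M_{k+1,l-1}(\beta)$ is automatic and $\pi$ is defined for every $k\ge 0,\,l\ge 1$.

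The main obstacle is the handful of exceptional cases where $\beta = \beta_0$ and $\M_{k+1,l-1}(\beta_0)$ fails to be stable; the stability bound $k+2(l-1)\ge 2$ isolates $(k,l)\in\{(0,1),(1,1)\}$. In the case $(0,1)$, $\M_{1,1}(\beta_0)$ is canonically identified with $L$ via constant disks, $evb_0^{\beta_0}$ is the identity, and the sign $\varepsilon(\emptyset;1) = 1$ yields $\q_{0,1}^{\beta_0}(\,;1) = -(evb_0^{\beta_0})_*1 = -1$, matching the exceptional value in the statement. In the case $(1,1)$, $\M_{2,1}(\beta_0)$ is a product of $L$ with the $1$-dimensional moduli of disks with two boundary and one interior marked points, and on this component all three of $evb_0^{\beta_0}, evb_1^{\beta_0}, evi_1^{\beta_0}$ agree and factor through the projection to $L$; the projection formula then reduces $\q_{1,1}^{\beta_0}(\alpha;1)$ to a scalar multiple of $\alpha\wedge(evb_0^{\beta_0})_*1$, which vanishes because the $1$-dimensional fiber forces $(evb_0^{\beta_0})_*1$ to have negative degree. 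Finally, $\q_{-1,l}^\beta$ is handled identically using the forgetful map $\M_{0,l}(\beta)\to\M_{0,l-1}(\beta)$, with the only potentially exceptional case $(l,\beta) = (1,\beta_0)$ subsumed by the defining convention $\q_{-1,1}^{\beta_0} := 0$.
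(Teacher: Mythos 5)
Your argument is correct and is essentially the proof of the source the paper relies on: the paper itself gives no proof of Proposition~\ref{q_fund}, quoting it from~\cite[Proposition 3.6]{ST1}, where the same mechanism is used --- the integrand does not involve $evi_1$, hence is pulled back under the forgetful map with two-dimensional fibers, so the push-forward vanishes, with the unstable cases $(k,l,\beta)=(0,1,\beta_0)$, $(1,1,\beta_0)$ treated by hand and $(l,\beta)=(1,\beta_0)$ covered by the convention $\q_{-1,1}^{\beta_0}=0$. The only gloss is your claim that the forgetful map is a proper submersion: it fails to be a submersion along the locus where the forgotten point meets a node or where stabilization contracts a component, but since the pulled-back form has no component of top degree along the fibers (the base of the forgetful map having dimension two less), the vanishing of $(evb_0)_*\pi^*\eta$ and of $\int_{\M_{0,l}(\beta)}\pi^*\eta$ is unaffected.
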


The following is Proposition~3.8 in~\cite{ST1}.
The proof holds verbatim in the setting of Kuranishi structures as long as moduli spaces of $J$-holomorphic disks of energy zero are not perturbed, which is possible by Theorem~2.16(V) of~\cite{FOOOKSII}. See also equation (17.10) of~\cite{FOOOspec} and in the singular chain model, equation (3.8.34.1) of~\cite{FOOO}.

\begin{cl}[Energy zero]\label{q_zero}
For $k\ge 0,$
\[
\qkl^{\beta_0}(\alpha_1,\ldots,\alpha_k;\gamma_1,\ldots,\gamma_l)=
\begin{cases}
d\alpha_1, & (k,l)=(1,0),\\
(-1)^{\deg\alpha_1}\alpha_1\wedge\alpha_2, & (k,l)=(2,0),\\
-\gamma_1|_L, & (k,l)=(0,1),\\
0, & \text{otherwise}.
\end{cases}
\]
Furthermore,
\[
\q_{-1,l}^{\beta_0}(\gamma_1,\ldots,\gamma_l)=0.
\]
\end{cl}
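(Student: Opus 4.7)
The plan is to use the fact that when $\beta=\beta_0$, the $J$-holomorphic open stable maps in $\M_{k+1,l}(\beta_0)$ have zero symplectic area and are therefore constant. Hence, whenever the underlying marked bordered surface is stable, the moduli space decomposes as $\M_{k+1,l}(\beta_0)\cong L\times \mathcal{N}_{k+1,l}$, where $\mathcal{N}_{k+1,l}$ is the (smooth) moduli of stable disks with $k+1$ cyclically ordered boundary marked points and $l$ interior marked points. Under this identification, every evaluation map $evb_j^{\beta_0}$ is the projection to $L$, and every $evi_j^{\beta_0}$ is the projection to $L$ followed by the inclusion $i:L\hookrightarrow X$. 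In particular, $evb_0^{\beta_0}$ is a trivial submersion with fiber $\mathcal{N}_{k+1,l}$ of dimension $k+2l-2$.

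First I would dispose of the cases dictated by definition or by emptiness. The formula $\q_{1,0}^{\beta_0}(\alpha)=d\alpha$ is the one imposed by hand, and $(k,l)=(0,0)$ is excluded. In all remaining cases with $k+2l<2$, the underlying marked disk is unstable, so $\M_{k+1,l}(\beta_0)=\emptyset$ and the operator vanishes. Next, for $k+2l\ge 2$, I would note that every form $(evb_j^{\beta_0})^*\alpha_j$ and $(evi_j^{\beta_0})^*\gamma_j$ is the pullback of a form on $L$ along $L\times\mathcal{N}_{k+1,l}\to L$. Therefore the integrand is itself pulled back from $L$, and integration over the fiber of $evb_0^{\beta_0}$ produces the form on $L$ times $\int_{\mathcal{N}_{k+1,l}}1$. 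This vanishes unless $\dim\mathcal{N}_{k+1,l}=0$, i.e.\ unless $k+2l=2$.

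The two surviving cases are $(k,l)=(2,0)$ and $(k,l)=(0,1)$, for which $\mathcal{N}_{k+1,l}$ is a point and $\M_{k+1,l}(\beta_0)\cong L$ with all evaluation maps equal to $\Id_L$ (or $i$). The pushforward is then the identity, and it only remains to compute $\varepsilon(\alpha;\gamma)$. For $(2,0)$ one finds $\varepsilon=(|\alpha_1|+1)+2(|\alpha_2|+1)+2n+1\equiv |\alpha_1|\pmod 2$, yielding $(-1)^{|\alpha_1|}\alpha_1\wedge\alpha_2$. For $(0,1)$ one has $\varepsilon=|\gamma_1|+1$, yielding $(-1)^{|\gamma_1|+1}\gamma_1|_L$. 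The statement for $\q_{-1,l}^{\beta_0}$ is entirely analogous: for $l\ge 2$ one has $\M_{0,l}(\beta_0)\cong L\times\mathcal{N}_{0,l}$ with $\dim\mathcal{N}_{0,l}=2l-2>0$, and the integrand is again pulled back from $L$, so that $\int_{\M_{0,l}(\beta_0)}=0$ by the same dimensional reason.

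The main obstacle is not conceptual but bookkeeping: one must (i) verify that the product identification $\M_{k+1,l}(\beta_0)\cong L\times\mathcal{N}_{k+1,l}$ respects the orientation induced by the relative spin structure of \cite[Chapter~8]{FOOO}, so that the trivial projection sign agrees with the $(-1)^{\varepsilon(\alpha;\gamma)}$ convention, and (ii) separately handle the borderline $(1,1)$-analogue of stability for disks with one interior and one boundary marked point, which is stable but produces $\mathcal{N}_{1,1}$ of positive dimension only when extra marked points are added. Once these orientation/sign checks are carried out, the case analysis above directly yields all the asserted values.
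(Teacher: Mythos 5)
Your argument is correct and is essentially the argument behind this statement (which the paper only quotes from~\cite[Proposition 3.7]{ST1}): degree-$\beta_0$ stable maps are constant, so $\M_{k+1,l}(\beta_0)\cong L\times\overline{\mathcal N}_{k+1,l}$ with all evaluation maps factoring through the projection to $L$, the pulled-back integrand dies under fiber integration unless the fiber is zero-dimensional, and the surviving cases $(2,0)$, $(0,1)$ reduce to the sign count $\varepsilon(\alpha;\gamma)$, which you computed correctly. Only two cosmetic points: the orientation compatibility you defer is indeed the routine check fixed by the conventions of~\cite[Chapter 8]{FOOO}, and for the $\q_{-1,l}^{\beta_0}$ case the fiber dimension is $2l-3$ rather than $2l-2$ (no boundary marked point), which is still positive for $l\ge 2$, so your conclusion is unaffected.
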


The following is Proposition~3.9 in~\cite{ST1}. It appears in~\cite[Lemma~9.2]{FOOOtoricII} in the toric context.
\begin{cl}[Divisors]\label{cl:q_div}
Assume $\gamma_1\in A^2(X,L)\otimes Q$, $d\gamma_1 = 0$,
and the map $H_2(X,L;\Z)\to Q$ given by $\beta\mapsto\int_\beta\gamma_1$ descends to $\sly$.
Then
	\[
	\qkl^{\beta}(\otimes_{j=1}^k\alpha_j;\otimes_{j=1}^{l}\gamma_j)=
	\left(\int_\beta\gamma_1\right) \cdot\q_{k,l-1}^{\beta} (\otimes_{j=1}^k\alpha_j;\otimes_{j=2}^{l}\gamma_j)
	\]
for $k\ge -1$.
	\end{cl}

\begin{rem}\label{rem:intforget}
The proof of Propositions~\ref{q_fund} and~\ref{cl:q_div} using Kuranishi structures would require compatibility of the Kuranishi structures with the forgetful map at interior marked points. We refer the reader to~\cite[Remark 3.1]{Fukaya} for a discussion of what this would entail.

In the present paper, Propositions~\ref{q_fund} and~\ref{cl:q_div} are used only in the proof of the unit and divisor axioms of Theorem~\ref{axioms}. They are not needed for the definition of open Gromov-Witten invariants or the other axioms.
\end{rem}

The following is Proposition~3.12 in~\cite{ST1}. Alternatively, it follows from an argument similar to the proof of~\cite[Theorem 17.1(3)]{FOOOspec}.
\begin{cl}[Top degree]\label{no_top_deg}
Suppose
\[
(k,l,\beta)\not\in\{(1,0,\beta_0),(0,1,\beta_0),(2,0,\beta_0)\}.
\]
Then $(\qkl^\beta(\alpha;\gamma))_n=0$ for all lists $\alpha,\gamma.$
\end{cl}

\begin{rem}
Proposition~\ref{lm:qlinear} shows the operations $\mg$ are multilinear.
Proposition~\ref{cl:cyclic} shows the $A_\infty$ algebra $(C,\mg)$ is cyclic. Proposition~\ref{cl:unit} shows unitality of $(C,\m^\gamma)$ in the usual sense, and Proposition~\ref{no_top_deg} shows an analog of unitality for $\mg_0.$ Thus, $(C,\mg,\langle\;,\,\rangle,1)$ is a cyclic unital $A_\infty$ algebra in the sense of~\cite[Definition~1.1]{ST1}.
\end{rem}

\subsection{Pseudoisotopies}\label{pseudoisot}
Set
\begin{equation}\label{eq:mR}
\mC:=A^*(I\times L;R), \qquad \mD:=A^*(I\times X,I\times L;Q), \qquad \mR := A^*(I;R).
\end{equation}
We construct a family of $A_\infty$ structures on $\mC$. Let $\{J_t\}_{t\in I}$ be a path in $\J$ from $J = J_0$ to $J'=J_1.$
For each $\beta, k, l,$ set
\[
\Mt_{k+1,l}(\beta):=\{(t,u,\vec{z},\vec{w})\,|\, (u,\vec{z},\vec{w})\in\M_{k+1,l}(\beta;J_t)\}.
\]
We have evaluation maps
\begin{gather*}
\evbt_j:\Mt_{k+1,l}(\beta)\lrarr I\times L, \quad j\in\{0,\ldots,k\},\\
\evbt_j(t,u,\vec{z},\vec{w}):=(t,u(z_j)),
\end{gather*}
and
\begin{gather*}
\evit_j:\Mt_{k+1,l}(\beta)\lrarr I\times X, \quad j\in\{1,\ldots,l\}\\
\evit_j(t,u,\vec{z},\vec{w}):=(t,u(w_j)).
\end{gather*}
It follows from the assumption on $\J$ that all $\Mt_{k+1,l}(\beta)$ are smooth orbifolds with corners, and $\evbt_0$ is a proper submersion.
\begin{ex}
In the special case when $J_t=J = J'$ for all $t\in I$, we have $\Mt_{k+1,l}(\beta)=I\times\M_{k+1,l}(\beta;J)$. The evaluation maps in this case are $\evbt_j=\Id \times evb_j$ and $\evit_j=\Id\times evi_j.$
\end{ex}

Let
\[
p:I\times L\lrarr I,\qquad p_\M: \Mt_{k+1,l}(\beta)\lrarr I
\]
denote the projections.

Define
\[
\qt_{k,l}^{\beta}:\mC^{\otimes k}\otimes A^*(I\times X;Q)^{\otimes l}\lrarr \mC,\quad k,l\ge 0,
\]
by
\begin{gather*}
\qt_{1,0}^{\beta_0=0}(\at)=d\at,\quad \qt_{k,l}^{\beta}(\otimes_{j=1}^k\at_j;\otimes_{j=1}^l\gt_j):= (-1)^{\varepsilon(\at)} (\evbt_0)_*( \wedge_{j=1}^l\evit_j^*\gt_j\wedge\wedge_{j=1}^k\evbt_j^*\at_j),\\
\quad \at,\at_j\in A^*(I\times L),\qquad\gt_j\in A^*(I\times X).
\end{gather*}
Define
\[
\qt_{-1,l}^{\beta}:A^*(I\times X;Q)^l\lrarr A^*(I;Q),\quad l\ge 0,
\]
by
\[
\qt_{-1,l}^{\beta}(\otimes_{j=1}^l\gt_j):= (p_\M)_*\wedge_{j=1}^l\evit_j^*\gt_j.
\]
As before, denote the sum over $\beta$ by
\begin{gather*}
\qt_{k,l}(\otimes_{j=1}^k\at_j;\otimes_{j=1}^l\gt_j):=
\sum_{\beta\in \sly}
T^{\beta}\qt_{k,l}^{\beta}(\otimes_{j=1}^k\at_j;\otimes_{j=1}^l\gt_j),\\
\qt_{-1,l}(\otimes_{j=1}^l\gt_j):=\sum_{\beta\in \sly}T^{\beta}\qt_{-1,l}(\gt^l).
\end{gather*}
Lastly, define similar operations using spheres,
\[
\qt_{\emptyset,l}:A^*(I\times X;Q)^{\otimes l}\lrarr A^*(I\times X;R),
\]
as follows. For $\beta\in H_2(X;\Z)$ let
\[
\Mt_{l+1}(\beta):=\{(t,u,\vec{w})\;|\,(u,\vec{w})\in \M_{l+1}(\beta;J_t)\}.
\]
For $j=0,\ldots,l,$ let
\begin{gather*}
\evt_j^\beta:\Mt_{l+1}(\beta)\to I\times X,\\
\evt_j^\beta(t,u,\vec{w}):=(t,u(w_j)),
\end{gather*}
be the evaluation maps. Assume that all the moduli spaces $\Mt_{l+1}(\beta)$ are smooth orbifolds and $\evt_0$ is a submersion. For $l\ge 0$, $(l,\beta)\ne (1,0),(0,0)$, set
\[
\qt_{\emptyset,l}^\beta(\gt_1,\ldots,\gt_l):=
(-1)^{w_\s(\beta)}
(\evt_0^\beta)_*(\wedge_{j=1}^l(\evt_j^\beta)^*\gt_j),
\]
and define $\qt_{\emptyset,1}^0:= 0, \qt_{\emptyset,0}^0:= 0,$ and
\[
\qt_{\emptyset,l}(\gt_1,\ldots,\gt_l):=
\sum_{\beta\in H_2(X)}T^{\pr(\beta)} \qt_{\emptyset,l}^\beta(\gt_1,\ldots,\gt_l).
\]
Write also
\[
\widetilde{GW}:=\sum_{l\ge 0} \frac{1}{l!}p_* i^* \qt_{\emptyset,l}(\gt).
\]
Define a pairing on $\mC$:
\[
\ll\,,\,\gg:\mC\otimes\mC\lrarr \mathfrak{R},\qquad
\ll\tilde{\xi},\tilde{\eta}\gg:=(-1)^{\deg\etat}p_*(\tilde{\xi}\wedge\tilde{\eta}).
\]
Note that
\begin{equation}\label{eq:pseudopair}
\ll\xit,\etat\gg=(-1)^{\deg\etat}p_*(\xit\wedge\etat)
=(-1)^{\deg\etat+\deg\etat\cdot\deg\xit}p_*(\etat\wedge\xit)
=(-1)^{(\deg\etat+1)(\deg\xit+1)+1}\ll\etat,\xit\gg.
\end{equation}

For each closed $\gt\in \mI_Q\mD$ with $\deg_{\mD}\gt=2,$ define structure maps
\[
\mgt_k:\mC^{\otimes k}\lrarr \mC
\]
by
\[
\mgt_k(\otimes_{j=1}^k\at_j):=\sum_{l}
\frac{1}{l!}\;\qt_{k,l}(\otimes_{j=1}^k\at_j;\gt^{\otimes l}),
\]
and define
\[
\mgt_{-1}:=\sum_{l}\frac{1}{l!}\;\qt_{-1,l}(\gt^{\otimes l})\in A^*(I;R).
\]

\begin{rem}
The operators $\mgt_k$ are called $[0,1]$-parameterized $A_\infty$ operations in Definition~21.29 of~\cite{FOOOKSVFC}. The equivalence between such operations and the pseudoisotopy operations that appear in~\cite{Fukaya} is explained in Lemma 21.31 of~\cite{FOOOKSVFC}.
\end{rem}

\begin{rem}
For the proofs of Propositions~\ref{cl:a_infty_m}-\ref{no_top_deg}, we have for the most part referred to~\cite{FOOOspec}, which follows~\cite{Fukaya} making minor adaptations to extend the results to the setting of bulk deformations. However, the results on pseudoisotopy in~\cite{Fukaya} are not needed and thus not treated in~\cite{FOOOspec}. So, for the following propositions concerning pseudoisotopies, we reference~\cite{Fukaya} directly, with the understanding that similar minor adaptations suffice to extend the results to the setting of bulk deformations.
\end{rem}

The following is Proposition~4.5 in~\cite{ST1}.
Alternatively, it follows from the proof of the $A_\infty$ relations in Theorem 11.1 of~\cite{Fukaya}.
\begin{cl}[$A_\infty$ structure]\label{cl:mgt_str}
The maps $\mgt$ define an $A_\infty$ structure on $\mC$. That is,
\[
\sum_{\substack{k_1+k_2=k+1\\k_1,k_2\ge 0}}
(-1)^{\sum_{j=1}^{i-1}(\deg\at_j+1)}
\mgt_{k_1}(\at_1,\ldots,\at_{i-1},\mgt_{k_2}(\at_i,\ldots,\at_{i+k_2-1}),\at_{i+k_2},\ldots,\at_k)=0
\]
for all $\at_j\in \mC$.
\end{cl}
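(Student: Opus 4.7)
The plan is to mimic the proof of Proposition~\ref{cl:a_infty_m} in the family setting. Since $\evbt_0:\Mt_{k+1,l}(\beta)\to I\times L$ is still a proper submersion of orbifolds with corners, fiber integration $(\evbt_0)_*$ obeys Stokes' theorem, and the only contribution to $d\,(\evbt_0)_*$ beyond $(\evbt_0)_*\,d$ comes from the \emph{vertical} codimension-one boundary $\partial^v\Mt_{k+1,l}(\beta)$, namely those codimension-one strata of $\Mt_{k+1,l}(\beta)$ that are not mapped by $\evbt_0$ into $\partial(I\times L)=\{0,1\}\times L$. The crucial observation is that the fibers of $\Mt_{k+1,l}(\beta)$ over $t=0$ and $t=1$ project into $\partial(I\times L)$, so they contribute nothing to the pushforward boundary; in particular the endpoint $A_\infty$ structures $\mg$ and $\mgp$ do not appear as obstructions to the $A_\infty$ relation for $\mgt$.

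The next step is to identify $\partial^v\Mt_{k+1,l}(\beta)$ with (a quotient by finite groups of) two familiar types of strata: (i) disk bubbling, in which the domain breaks at a boundary node, giving a fibered product $\Mt_{k_1+1,l_1}(\beta_1)\times_{I\times L}\Mt_{k_2+1,l_2}(\beta_2)$ with $\beta_1+\beta_2=\beta$ and the $k$ boundary and $l$ interior marked points distributed appropriately; and (ii) interior sphere bubbling, producing fibered products of a smaller disk moduli space with a sphere moduli space underlying $\qt_{\emptyset,l'}$. Applying Stokes' theorem to $(\evbt_0)_*\!\left(\bigwedge_{j=1}^k \evbt_j^*\at_j\wedge\bigwedge_{j=1}^l \evit_j^*\gt\right)$, summing over $\beta$ and $l$ with weights $T^\beta/l!$, and using the symmetry of $\qt_{k,l}$ in the $\gt$ inputs (Proposition~\ref{cl:symmetry}) together with $d\gt=0$, the type (i) strata assemble into the $A_\infty$ expression
\[
\sum_{\substack{k_1+k_2=k+1\\ 1\le i\le k_1}}(-1)^{\sum_{j<i}(|\at_j|+1)}\mgt_{k_1}(\at_1,\ldots,\mgt_{k_2}(\at_i,\ldots,\at_{i+k_2-1}),\ldots,\at_k),
\]
while the type (ii) terms cancel in pairs by the same mechanism that kills the corresponding sphere-bubbling contribution in Proposition~\ref{cl:a_infty_m}: each sphere-bubbling stratum appears twice, once for each of two opposing orientations induced by the interior node, and these contributions differ by a sign.

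The main obstacle, as is usual for $A_\infty$ arguments, will be sign bookkeeping: one must combine the $(-1)^{\varepsilon(\at,\gt)}$ prefactor in the definition of $\qt$, the Stokes sign, the sign arising from the identification of boundary strata with fibered products (which depends on the relative spin orientation on Lagrangian moduli spaces, unchanged under adding the $I$-factor), and the Koszul signs $(-1)^{\sum_{j<i}(|\at_j|+1)}$ that appear in the target relation. The entire sign computation is parallel to the one carried out in~\cite[Proposition 2.7]{ST1}: the $I$-direction enters uniformly on both sides of Stokes' formula and only shifts degrees by the degree of the $dt$ factor, which is absorbed into the $\mC$-grading. Hence no new phenomena beyond the closed-string and disk-bubbling balance of Proposition~\ref{cl:a_infty_m} are required, and the $A_\infty$ relations for $\mgt$ follow.
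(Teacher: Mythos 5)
You should first note that the paper does not prove this statement at all: it is quoted from \cite[Proposition~4.7]{ST1}, so the ``proof'' in this paper is by citation. Your outline is essentially the argument behind that cited result (and its non-family analog, Proposition~\ref{cl:a_infty_m}): apply the generalized Stokes theorem to the proper submersion $\evbt_0$, identify the vertical codimension-one boundary of $\Mt_{k+1,l}(\beta)$ with fibered products $\Mt_{k_1+1,l_1}(\beta_1)\times_{I\times L}\Mt_{k_2+1,l_2}(\beta_2)$ coming from boundary-node (disk) bubbling, use $d\gt=0$ and the symmetry in the $\gt$ inputs, and observe that the faces over $t=0,1$ are horizontal (mapped into $\partial(I\times L)$), which is exactly why the endpoint structures $\mg,\mgp$ do not appear as correction terms and the relations hold on the nose. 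That part of your sketch is correct and is the same route as the cited proof.

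Your step (ii), however, is wrong as stated. Interior sphere bubbling does not contribute to the $k\ge 0$ relations because those strata are not codimension-one faces of $\Mt_{k+1,l}(\beta)$ at all: smoothing an interior node is a complex, i.e.\ two-real-parameter, deformation, so sphere-bubbling loci sit in real codimension two and never enter the Stokes boundary term. There is no cancellation ``in pairs, once for each of two opposing orientations induced by the interior node,'' and no such cancellation occurs in the proof of Proposition~\ref{cl:a_infty_m} either; the codimension count is the whole mechanism. If taken literally, your argument would require exhibiting an orientation-reversing involution on codimension-one sphere-bubbling faces that simply do not exist in the corner structure. Sphere contributions do appear, but only in the $k=-1$ relation, as the $\widetilde{GW}$ term of Proposition~\ref{prop:mgt_a_infty} (which is precisely why the superpotential needs the type-$\mathcal{D}$ subtraction); they are absent from the relation you are proving. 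Replace the claimed pairwise cancellation by the codimension-two statement and the rest of your outline matches the cited proof.
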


The following is Proposition 4.6 in~\cite{ST1}. It is implicit in Theorem~11.1 of~\cite{Fukaya} and explicit in~\cite[Remark 21.28]{FOOOKSVFC}.

\begin{cl}[Linearity]\label{cl:t_linear}
The operations $\qt$ are $\mR$-multilinear in the sense that for $f\in \mR,$
\begin{multline*}
\qt_{k,l}^\beta(\at_1,\ldots,\at_{i-1},f.\at_i,\ldots,\at_k;\gt_1,\ldots,\gt_l) =\\		=(-1)^{\deg f\cdot \big(i+\sum_{j=1}^{i-1}\deg\at_j+\sum_{j=1}^l\deg\gt_j\big)}	f.\qt_{k,l}^\beta(\at_1,\ldots,\at_k;\gt_1,\ldots,\gt_l)+\delta_{1,k}\cdot df.\at_1,
\end{multline*}
and for $f\in A^*(I;Q),$
\[
\qt_{k,l}^\beta(\at_1,\ldots,\at_k;\gt_1,\ldots,f.\gt_i,\ldots,\gt_l)
=(-1)^{\deg f\cdot\sum_{j=1}^{i-1}\deg\gt_j}		f.\qt_{k,l}^\beta(\at_1,\ldots,\at_k;\gt_1,\ldots,\gt_l),
\]
and
\[
\qt_{\emptyset,l}^\beta(\gt_1,\ldots,f.\gt_i,\ldots,\gt_l)
=(-1)^{\deg f\cdot\sum_{j=1}^{i-1}\deg\gt_j}f.\qt^\beta_{\emptyset,l}(\gt_1,\ldots,\gt_l).
\]
In addition, the pairing $\ll\;,\,\gg$ is $\mR$-bilinear in the sense that
\[
	\ll f.\at_1,\at_2\gg= f\ll\at_1,\at_2\gg,\quad
	\ll\at_1,f.\at_2\gg= (-1)^{\deg f\cdot(1+\deg\at_1)}f\ll\at_1,\at_2\gg.
\]
\end{cl}

The following is Proposition 4.10 in~\cite{ST1}. Alternatively, it follows from an argument similar to the proof of unitality in Theorem~11.1 of~\cite{Fukaya}.
\begin{cl}[Unit of the algebra]\label{cl:qt_unit}
Fix $f\in A^0(I\times L)\otimes R$, $\at_1,\ldots,\at_{k}\in \mC,$ and $\gt_1,\ldots,\gt_l \in A^*(I\times X;Q).$ Then
\begin{multline*}
\qt_{\,k+1\!,l}^{\beta}(\at_1,\ldots,\at_{i-1},f,\at_{i},\ldots,\at_k ;\otimes_{r=1}^l\gt_r)=\\
=
\begin{cases}
df, & (k+1,\;l,\beta)=(1,0,\beta_0),\\
(-1)^{\deg f}\cdot\at_2, & (k+1,l,\beta)=(2,0,\beta_0),\: i=1,\\
(-1)^{\deg\at_1(\deg f+1)}f\cdot\at_1, & (k+1,l,\beta)=(2,0,\beta_0),\: i=2,\\
0,& \text{otherwise.}
\end{cases}
\end{multline*}
In particular, $1\in A^0(I\times L)$ is a strong unit for the $A_\infty$ operations $\mg$:
\[
\mt_{k+1}^\gamma(\at_1,\ldots,\at_{i-1},1,\at_{i},\ldots,\at_{k})=
\begin{cases}
0, & k+1\ge 3 \mbox{ or } k+1=1,\\
\at_1, & k+1=2,\: i=1,\\
(-1)^{\deg\at_1}\at_1, & k+1=2,\: i=2.
\end{cases}
\]
\end{cl}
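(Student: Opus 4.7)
The plan is to mirror the proof of the non-parameterized analog, Proposition~\ref{cl:unit} [ST1, Proposition~3.1], by exploiting that $\Mt_{k+1,l}(\beta)$ is fibered over $I$ with fibers $\M_{k+1,l}(\beta;J_t)$, and the evaluations $\evbt_j, \evit_j$ are obtained by prepending the $t$-coordinate to the non-parameterized evaluations. The additional $I$-factor commutes with integration over the fibers of $\evbt_0$, which projects to $I\times L$, so the geometric and combinatorial arguments from the static case carry over essentially verbatim.

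First I would handle the degenerate cases directly. For $(k+1,l,\beta)=(1,0,\beta_0)$ the asserted identity $\qt_{1,0}^{\beta_0}(f)=df$ is the definition. For $(k+1,l,\beta)=(2,0,\beta_0)$ the moduli space of constant stable disks with two boundary marked points is canonically $I\times L$, with all three evaluations $\evbt_0,\evbt_1,\evbt_2$ agreeing under this identification; the pushforward reduces to the wedge product on $I\times L$, and tracking the sign $\varepsilon(\at,\gt)$ recovers $f\cdot\at_2$ or $(-1)^{|\at_1|}f\cdot\at_1$ as claimed.

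For all other $(k+1,l,\beta)$, I would use the forgetful map $\pi_i:\Mt_{k+1,l}(\beta)\to\Mt_{k,l}(\beta)$ dropping the $i$-th boundary marked point; this is well defined precisely because the degenerate cases have been excluded. The evaluations $\evbt_j$ for $j\neq i$ and all $\evit_r$ factor through $\pi_i$ as pullbacks of corresponding evaluations $\evbt_j',\evit_r'$ on $\Mt_{k,l}(\beta)$. Using $\evbt_0=\evbt_0'\circ\pi_i$ together with the projection formula gives
\[
(\evbt_0)_*\!\left(\bigwedge_{j\neq i}\evbt_j^*\at_j \wedge \evbt_i^*f \wedge \bigwedge_r\evit_r^*\gt_r\right) = \pm(\evbt_0')_*\!\left(\bigwedge_{j\neq i}\evbt_j'^*\at_j \wedge \bigwedge_r\evit_r'^*\gt_r \wedge (\pi_i)_*(\evbt_i^*f)\right).
\]
The fiber of $\pi_i$ is one-dimensional, parameterizing the position of $z_i$ along the boundary circle of the disk, while $\evbt_i^*f$ is a zero-form; hence $(\pi_i)_*(\evbt_i^*f)$ has degree $-1$ and vanishes, proving $\qt_{k+1,l}^\beta(\at_1,\ldots,f,\ldots,\at_k;\otimes_r\gt_r)=0$.

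The formulas for $\mgt_{k+1}$ then follow by summing the $\qt$ identities against $T^\beta/l!$; only the two degenerate values of $(k+1,l,\beta)$ contribute, and specializing to $f=1$ kills the $(1,0,\beta_0)$ contribution since $d(1)=0$, leaving precisely the listed cases. I expect the main subtlety to be tracking signs under the projection formula and verifying the forgetful-map factorization at boundary strata of the moduli spaces, but both are mechanical translations from the static case, since the pseudo-isotopy construction preserves the requisite geometric structure (stability, orientation, properness of $\evbt_0$, smoothness of $\pi_i$) fiberwise over $I$.
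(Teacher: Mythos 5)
Your proposal is correct, and it is essentially the argument behind this statement: the paper itself gives no proof here, quoting the result from~\cite[Proposition 4.8]{ST1}, where the proof runs exactly along your lines — the exceptional triples $(1,0,\beta_0)$ and $(2,0,\beta_0)$ are handled by definition and by the identification of the constant-map moduli with $I\times L$, while in all remaining cases one forgets the marked point carrying $f$, applies the push--pull (projection) formula along the forgetful map, and observes that integrating the degree-zero form $\evbt_i^*f$ over the one-dimensional fibers gives zero. The only points to be careful about are the ones you already flag: that the forgetful map is a proper submersion compatible with the other evaluation maps (including after stabilization), and the sign bookkeeping in $\varepsilon(\at,\gt)$ for the $(2,0,\beta_0)$ case.
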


The following is Proposition 4.11 in~\cite{ST1}.
Alternatively, it follows from an argument similar to the proof of cyclicity in Theorem 11.1 of~\cite{Fukaya}.
\begin{cl}[Cyclic structure]\label{cl:qt_cyclic}
The $\qt$ are cyclic in the sense of~\cite[Definition 1.1(7)]{ST1} with respect to the inner product $\ll\;,\,\gg.$ That is,
\begin{multline*}
\ll\qt_{k,l}(\at_1,\ldots,\at_k;\gt_1,\ldots\gt_l),\at_{k+1}\gg=\\
=(-1)^{(\deg\at_{k+1}+1)\sum_{j=1}^{k}(\deg\at_j+1)}\cdot
\ll \qt_{k,l}(\at_{k+1},\at_1,\ldots,\at_{k-1};\gt_1,\ldots,\gt_l),\at_k\gg
+\delta_{1,k}\cdot d\ll\at_1,\at_2\gg.
\end{multline*}
In particular, $(\mC,\{\mgt_k\}_{k\ge 0})$ is a cyclic $A_\infty$ algebra over $\mR$ for any $\gt$.
\end{cl}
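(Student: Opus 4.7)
The plan is to mimic the proof of Proposition~\ref{cl:cyclic} in the pseudo-isotopy setting, with an additional step isolating the unique summand $\qt_{1,0}^{\beta_0}(\at)=d\at$, which has no push-forward interpretation and is the source of the extra correction $\delta_{1,k}\cdot d\ll\at_1,\at_2\gg$. Accordingly, I would split the identity into two claims: (a) the cyclicity of $\qt_{k,l}^\beta$ for all $(k,l,\beta)\ne(1,0,\beta_0)$, which matches the non-parametric case verbatim; and (b) a direct Stokes-type identity accounting for the $d$-summand when $k=1$.

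For (a), I would use the diffeomorphism $c:\Mt_{k+1,l}(\beta)\to\Mt_{k+1,l}(\beta)$ cyclically relabeling boundary marked points, sending $[t,u,(z_0,\ldots,z_k),\vec{w}]$ to $[t,u,(z_k,z_0,\ldots,z_{k-1}),\vec{w}]$. Under $c$ one has $\evbt_0\circ c=\evbt_k$, $\evbt_j\circ c=\evbt_{j-1}$ for $j=1,\ldots,k$, and $\evit_j\circ c=\evit_j$. The projection formula together with the behavior of push-forward under composition lets me rewrite $(\evbt_0)_*\bigl(\wedge_{j=1}^k\evbt_j^*\at_j\wedge\wedge_{j=1}^l\evit_j^*\gt_j\bigr)$ in terms of push-forward along $\evbt_k$ acting on the reordered integrand, wedge with $\evbt_k^*\at_{k+1}$, and then apply $p_*$. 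The Koszul signs from permuting the pulled-back $\at_j$-factors, combined with the orientation sign of $c$, reproduce the asserted exponent $(|\at_{k+1}|+1)\sum_{j=1}^k(|\at_j|+1)$. This calculation is formally identical to that of Proposition~\ref{cl:cyclic} because the $t$-parameter is preserved by $c$ and commutes with the wedge-products and push-forwards throughout.

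For (b), I would verify directly, using the Leibniz rule on $I\times L$ and the compatibility of $d$ with fiber integration along the closed fiber $L$ of $p:I\times L\to I$, the identity
\[
\ll d\at_1,\at_2\gg-(-1)^{(|\at_1|+1)(|\at_2|+1)}\ll d\at_2,\at_1\gg=d\ll\at_1,\at_2\gg.
\]
Since $\qt_{k,l}^\beta$ for every other $(k,l,\beta)$ is a push-forward in which the parameter $t$ rides along inertly, no analogous correction is produced elsewhere; this explains the factor $\delta_{1,k}$. Adding the contributions from (a) and (b), summing over $\beta\in\sly$ and over $l\ge0$ with weights $\gt^{\otimes l}/l!$, yields cyclicity of $\mgt_k$. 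Combined with Proposition~\ref{cl:mgt_str}, this establishes the ``in particular'' statement that $(\mC,\{\mgt_k\}_{k\ge 0},\ll\,,\,\gg)$ is a cyclic $A_\infty$ algebra in the sense that accommodates the degree-one correction in its $d$-term.

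The main obstacle is sign bookkeeping. One must verify that the orientation sign of the cyclic relabeling $c$, the Koszul signs from reordering wedge-factors, the overall sign $(-1)^{\varepsilon(\at,\gt)}$ built into $\qt_{k,l}^\beta$, and the sign from $\ll\,,\,\gg=(-1)^{|\cdot|}p_*(\cdot\wedge\cdot)$ all combine to yield precisely the exponent $(|\at_{k+1}|+1)\sum_{j=1}^k(|\at_j|+1)$, with the Stokes defect compatible with the stated $d$-correction. Since the cyclic exponent has exactly the same form as in Proposition~\ref{cl:cyclic}, no genuinely new sign phenomenon appears beyond the Stokes-type identity computed in (b).
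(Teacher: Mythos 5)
Your proposal is correct and matches the intended argument for this statement, which the paper itself only quotes from~\cite[Proposition 4.9]{ST1} without reproving: all push-forward terms $(k,l,\beta)\ne(1,0,\beta_0)$ are handled exactly as in Proposition~\ref{cl:cyclic}, since the cyclic relabeling diffeomorphism of $\Mt_{k+1,l}(\beta)$ preserves the parameter $t$ and hence interacts with $p_*$ and the orientation conventions just as in the non-parametric case. The only genuinely new feature is the summand $\qt_{1,0}^{\beta_0}=d$, and your identity $\ll d\at_1,\at_2\gg-(-1)^{(|\at_1|+1)(|\at_2|+1)}\ll d\at_2,\at_1\gg=d\ll\at_1,\at_2\gg$ does verify against the pairing $\ll\xit,\etat\gg=(-1)^{|\etat|}p_*(\xit\wedge\etat)$, because $d$ commutes with $p_*$ for $p:I\times L\to I$ (closed fibers, no fiberwise boundary term), which is exactly the source of the $\delta_{1,k}\cdot d\ll\at_1,\at_2\gg$ correction.
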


\begin{rem}
The term $\delta_{1,k}\cdot d\ll\at_1,\at_2\gg$ in the preceding proposition appears to be necessary in general when working with an $A_\infty$ algebra over a differential graded algebra.
\end{rem}

The following is Proposition~4.12 in~\cite{ST1}.
Alternatively, it follows from an argument similar to the proof of Theorem~11.1 in~\cite{Fukaya}.

\begin{cl}[Degree of structure maps]\label{qt_deg_str_map}
For $\gt_1,\ldots,\gt_l\in \mD_2$, $k\ge 0,$ the map
\[
\qt_{k,l}(\; ;\gt_1,\ldots,\gt_l):\mC^{\otimes k}\lrarr \mC
\]
is of degree $2-k$ in $\mC$.
\end{cl}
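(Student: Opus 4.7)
The plan is to reduce to a direct dimension count in a fixed homology class, paralleling Proposition~\ref{deg_str_map} exactly but absorbing one extra dimension from the parameter interval $I$. First I would fix $\beta\in\sly$; since $\qt_{k,l}=\sum_\beta T^\beta\qt^\beta_{k,l}$ respects the grading on $\mC$, it suffices to show each summand $T^\beta\qt^\beta_{k,l}(\,;\gt_1,\ldots,\gt_l)$ is homogeneous of degree $2-k$. The sign prefactor $(-1)^{\varepsilon(\at,\gt)}$ and the $s$-dependence in the $\at_j$ play no role in the degree count, so they can be ignored.

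Next I would carry out the dimension computation. The standard formula gives $\dim\M_{k+1,l}(\beta;J_t)=n+\mu(\beta)+k+2l-2$, so the total space $\Mt_{k+1,l}(\beta)$ over $I$ has dimension $n+\mu(\beta)+k+2l-1$. Since $\evbt_0:\Mt_{k+1,l}(\beta)\to I\times L$ is a proper submersion by our standing assumption on $\J$, its fibers have dimension $\mu(\beta)+k+2l-2$, and integration over the fiber lowers form degree by exactly this amount.

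Finally I would tally the contributions to the total $\mC$-degree. Pullback preserves coefficient degrees in $R$ and $Q$, and form degrees add, so the form $\bigwedge_{j=1}^k\evbt_j^*\at_j\wedge\bigwedge_{j=1}^l\evit_j^*\gt_j$ carries total degree $\sum_j\deg_{\mC}\at_j+\sum_j\deg_{\mD}\gt_j$ in the combined grading. Push-forward by $\evbt_0$ decreases this by $\mu(\beta)+k+2l-2$, and multiplication by $T^\beta$ of degree $\mu(\beta)$ adds back $\mu(\beta)$, for a net total degree
\[
\sum_{j=1}^k\deg_{\mC}\at_j+\sum_{j=1}^l\deg_{\mD}\gt_j+2-k-2l.
\]
Substituting $\deg_{\mD}\gt_j=2$ for each $j$ cancels the $2l$ and yields $\sum_j\deg_{\mC}\at_j+2-k$, proving the claim. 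The only mild obstacle is the bookkeeping of keeping form degree, the coefficient gradings from $R$ and $Q$, and the Novikov grading $\deg T^\beta=\mu(\beta)$ cleanly separated; once those are tracked, the argument is a one-line dimension count.
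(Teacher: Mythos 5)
Your argument is correct and is essentially the same degree count that underlies the quoted result: the paper itself defers the proof to \cite[Proposition 4.10]{ST1}, where one computes exactly as you do that the relative dimension of $\evbt_0:\Mt_{k+1,l}(\beta)\to I\times L$ is $\mu(\beta)+k+2l-2$ (the extra dimension from $I$ appearing in both source and target, so the shift matches the unparametrized case of Proposition~\ref{deg_str_map}), and then cancels $\mu(\beta)$ against $\deg T^\beta$ and $2l$ against $\sum_j\deg\gt_j$. The only cases not covered by the push--pull formula, such as $\qt_{1,0}^{\beta_0}=d$ of degree $1=2-1$, are consistent by inspection, so there is no gap.
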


The following is Proposition~4.15 in~\cite{ST1}. Alternatively, it follows from an argument similar to the proof of Theorem~11.1 of~\cite{Fukaya}.

\begin{cl}[Energy zero]\label{cl:qt_zero}
For $k\ge 0,$
\[
\qt_{k,l}^{\beta_0}(\at_1,\ldots,\at_k;\gt_1,\ldots,\gt_l)=
\begin{cases}
d\at_1, & (k,l)=(1,0),\\
(-1)^{\deg\at_1}\at_1\wedge\at_2, & (k,l)=(2,0),\\
-\gt_1|_L, & (k,l)=(0,1),\\
0, & \text{otherwise}.
\end{cases}
\]
Furthermore,
\[
\qt_{-1,l}^{\beta_0}(\gt_1,\ldots,\gt_l)=0.
\]
\end{cl}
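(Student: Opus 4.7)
The plan is to parallel the non-parameterized analog in Proposition~\ref{q_zero}, accounting for the extra factor of $I$. The key geometric fact is that a $J_t$-holomorphic map of class $\beta_0$ has zero energy and is therefore constant, and any constant map is $J_t$-holomorphic for every $t \in I$. So the parameterized moduli space decomposes as $\Mt_{k+1,l}(\beta_0) \simeq I \times \M_{k+1,l} \times L$, where $\M_{k+1,l}$ is the (possibly empty) moduli space of stable genus-zero marked disks with $k+1$ boundary and $l$ interior marked points, and the $L$-factor records the image of the constant map. Under this identification every evaluation map factors through the projection $\pi : I \times \M_{k+1,l} \times L \to I \times L$: namely $\evbt_j = \pi$ and $\evit_j = (\Id_I \times i)\circ \pi$. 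Consequently the integrand $\wedge_j \evbt_j^* \at_j \wedge \wedge_j \evit_j^* \gt_j$ is itself a $\pi$-pullback of a form on $I \times L$.

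With this setup, the stable cases for $k \geq 0$ follow from the fact that integration over the fiber of a submersion annihilates any pullback whenever the fiber is positive-dimensional. Since $\evbt_0 = \pi$ has fiber $\M_{k+1,l}$ of dimension $k+1+2l-3$, the push-forward vanishes unless this dimension is zero. The only stable options are $(k+1,l) \in \{(3,0), (1,1)\}$, i.e.\ $(k,l) \in \{(2,0), (0,1)\}$. For $(k,l) = (2,0)$ the moduli space of marked disks is a point, $\evbt_0$ is the identity on $I \times L$, and a direct computation gives $\qt_{2,0}^{\beta_0}(\at_1, \at_2) = (-1)^{\varepsilon(\at,\gt)} \at_1 \wedge \at_2$ with $\varepsilon(\at,\gt) = (|\at_1|+1) + 2(|\at_2|+1) + 2n + 1 \equiv |\at_1| \pmod 2$. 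For $(k,l) = (0,1)$ the moduli space of marked disks is again a point, $\evit_1 = \Id_I \times i$, and $\varepsilon = |\gt_1| + 1$, yielding $(-1)^{|\gt_1|+1} \gt_1|_L$. The unstable cases $(k,l) \in \{(0,0),(1,0)\}$ reduce to the special definitions $\qt_{0,0}^{\beta_0} = 0$ and $\qt_{1,0}^{\beta_0}(\at) = d\at$, matching the claim.

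For the final assertion on $\qt_{-1,l}^{\beta_0}$, the cases $l \in \{0,1\}$ are handled by the analogous special definitions. For $l \geq 2$ one has $\Mt_{0,l}(\beta_0) \simeq I \times \M_{0,l} \times L$ with $\dim \M_{0,l} = 2l-3 > 0$, and the integrand is again a pullback from $I \times L$; integration along the positive-dimensional $\M_{0,l}$-direction in the fiber of $p_\M$ therefore kills it.

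The step I expect to require the most care is the bookkeeping of orientations: verifying that the identification $\Mt_{k+1,l}(\beta_0) \simeq I \times \M_{k+1,l} \times L$ respects the orientations induced by the relative spin structure together with the canonical orientation on $I$, so that the signed fiber integrals produce precisely the signs stated. This parallels the orientation analysis in the non-parameterized case of Proposition~\ref{q_zero}, with an extra $I$-factor, but must be checked explicitly in each of the two nontrivial stable cases.
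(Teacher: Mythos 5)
Your argument is correct and is essentially the argument behind the result: the paper only quotes this statement from~\cite[Proposition 4.11]{ST1}, and the proof there proceeds exactly as you do in the unparameterized analog (Proposition~\ref{q_zero}) — constant maps give $\Mt_{k+1,l}(\beta_0)\simeq I\times\M_{k+1,l}\times L$ with all evaluation maps factoring through the projection to $I\times L$, so fiber integration kills everything except the zero-dimensional stable cases $(k,l)=(2,0),(0,1)$, which are computed directly, with the unstable cases fixed by convention. Your sign computations via $\varepsilon$ match the stated formulas, and you rightly flag the orientation of the identification as the only point needing the conventions of~\cite{FOOO}/\cite{ST1}.
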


The following is Proposition~4.17 in~\cite{ST1}.
Alternatively, it follows from an argument similar to the proof of unitality in Theorem~11.1 of~\cite{Fukaya}.
\begin{cl}[Top degree]\label{cl:qt_no_top_deg}
Suppose
\[
(k,l,\beta)\not\in\{(1,0,\beta_0),(0,1,\beta_0),(2,0,\beta_0)\}.
\]
Then $(\qt_{k,l}^\beta(\at;\gt))_{n+1}=0$ for all lists $\at,\gt$.
\end{cl}

The other properties formulated for the usual $\q$-operators also hold for $\qt$. Namely, there are analogs for Propositions~\ref{cl:symmetry}, \ref{q_fund}, \ref{cl:q_div}.
The next results relate the cyclic $A_\infty$ structure on $\mC$ to the one on $C$. It will be useful in Section~\ref{subsec:invar}.
For any $t\in I$ and $M=L,X,$ denote by $j_t:M\hookrightarrow I\times M$ the inclusion $p\mapsto (t,p)$. Denote by $\qkl^t$ the $\q$-operators associated to the complex structure $J_t$.
Then following is Lemma~4.7 of~\cite{ST1}. Alternatively, it follows from the proof of Theorem~11.1 of~\cite{Fukaya}.

\begin{lm}[Pseudoisotopy]\label{lm:pseudo}\label{rem:pseudo}
For $t\in I$, we have
\[
j_t^*\qt_{k,l}(\at_1,\ldots,\at_k;\gt_1,\ldots,\gt_l)=
\qkl^t(j_t^*\at_1,\ldots,j_t^*\at_k;j_t^*\gt_1,\ldots,j_t^*\gt_l).
\]
\end{lm}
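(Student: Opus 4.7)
The plan is to reduce the identity to a base-change formula for integration along fibers, applied to a natural Cartesian diagram associated with the fiber of $p_\M$ over $t$. Fix $\beta \in \sly$ and write $\M^t := \M_{k+1,l}(\beta;J_t)$, with its evaluation maps $evb_j^{\beta,t}, evi_j^{\beta,t}$. By the very definition of $\Mt_{k+1,l}(\beta)$, the preimage $p_\M^{-1}(t)$ coincides with $\M^t$ as an oriented suborbifold with corners; let $\tilde\jmath_t : \M^t \hookrightarrow \Mt_{k+1,l}(\beta)$ denote the inclusion. The definitions of $\evbt_j, \evit_j$ immediately give the compatibilities
\[
\evbt_j \circ \tilde\jmath_t = j_t \circ evb_j^{\beta,t}, \qquad \evit_j \circ \tilde\jmath_t = j_t \circ evi_j^{\beta,t}.
\]

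With these in hand, I would proceed in two steps. First, naturality of pullback and of the wedge product yields
\[
\tilde\jmath_t^*\Big(\bigwedge_{j=1}^k \evbt_j^* \at_j \wedge \bigwedge_{j=1}^l \evit_j^* \gt_j\Big) = \bigwedge_{j=1}^k (evb_j^{\beta,t})^* j_t^* \at_j \wedge \bigwedge_{j=1}^l (evi_j^{\beta,t})^* j_t^* \gt_j.
\]
Second, I would invoke the base-change formula for pushforward along a proper submersion: the square with vertical arrows $evb_0^{\beta,t}$ and $\evbt_0$, and horizontal arrows $\tilde\jmath_t$ and $j_t$, is Cartesian; $\evbt_0$ is a proper submersion; and $j_t$ is a closed embedding transverse to $\evbt_0$ because $p_\M$ is itself a submersion. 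Consequently
\[
j_t^* \circ (\evbt_0)_* = (evb_0^{\beta,t})_* \circ \tilde\jmath_t^*.
\]
Combining the two ingredients $\beta$-by-$\beta$ and summing against $T^\beta$ proves the stated identity. The prefactor $(-1)^{\varepsilon(\at,\gt)}$ is preserved because $\varepsilon$ depends only on the degrees of the forms, and $j_t^*$ preserves these degrees.

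The main technical obstacle is verifying the base-change identity in the oriented orbifold-with-corners category, and, crucially, checking that the orientation on the fiber $\M^t = p_\M^{-1}(t)$ inherited from the Cartesian square matches the orientation used to define $\qkl^t$. Both points reduce to standard facts: restriction of a proper submersion along a transverse embedding remains a proper submersion, and the orientation conventions of~\cite{ST1} are set up so that fiber-slicing of $\Mt_{k+1,l}(\beta)$ recovers $\M_{k+1,l}(\beta;J_t)$ with its natural orientation. Once these are in place, no further analysis is required.
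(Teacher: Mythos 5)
The paper itself states this lemma without proof, treating it as an immediate consequence of the base-change (pullback--pushforward commutation) property of integration along the fiber in the conventions of~\cite{ST1}, and your argument is exactly that standard one: slice the family moduli space at $t$, use the compatibility of the evaluation maps with the inclusions, and apply base change for the Cartesian square whose vertical arrows are the proper submersions $\evbt_0$ and $evb_0^{\beta,t}$, the sign prefactor being unaffected because $j_t^*$ preserves the degree of any homogeneous form with nonzero restriction (and when a restriction vanishes both sides vanish by the same base-change identity). The only inputs your write-up does not literally cover are the formally defined terms such as $(k,l,\beta)=(1,0,\beta_0)$, where the claimed identity reduces to $j_t^*\circ d=d\circ j_t^*$, which is trivial.
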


The following is Lemma~4.9 of~\cite{ST1}.
\begin{lm}\label{lm:d_ll_gg}
For any $\xit,\etat\in A^*(I\times L),$
\[
(-1)^{\deg \xit+\deg \etat +n}
\int_I d\ll\xit,\etat\gg=\langle j_1^*\xit,j_1^*\etat\rangle- \langle j_0^*\xit,j_0^*\etat\rangle.
\]
\end{lm}

The following is Proposition~4.20 of~\cite{ST1}. It uses the cyclic structure $\ll\;,\,\gg$ to rephrase
the $A_\infty$ relations so the case $k=-1$ fits more uniformly. For $k \geq 0,$ the proof is a formal computation combining Propositions~\ref{cl:mgt_str} and~\ref{cl:qt_cyclic}. The case $k = -1$ uses also Proposition~4.4 of~\cite{ST1}. Alternatively, it follows from an argument similar to the proof of Proposition~4.4 of~\cite{Fukaya2}.

\begin{prop}[Unified $A_\infty$ relations on an isotopy]\label{prop:mgt_a_infty}
For $k\ge 0$,
\begin{multline*}
d\ll\mgt_k(\at_1,\ldots,\at_k),\at_{k+1}\gg=\\
=\sum_{\substack{k_1+k_2=k+1\\k_1\ge 1,k_2\ge 0\\1\le i\le k_1}}
(-1)^{\nu(\at;k_1,k_2,i)}\ll\mgt_{k_1}(\at_{i+k_2},\ldots,\at_{k+1},\at_1,\ldots,\at_{i-1}), \mgt_{k_2}(\at_i,\ldots,\at_{k_2+i-1})\gg
\end{multline*}
with
\[
\nu(\at;k_1,k_2,i):=\sum_{j=1}^{i-1}(\deg\at_j+1)+
\sum_{j=i+k_2}^{k+1}(\deg\at_j+1)\Big(\sum_{\substack{m\ne j\\1\le m\le k+1}}(\deg\at_m+1)+1\Big)+1
\]
For $k = -1,$
\[
d\mgt_{-1} =-\frac{1}{2}\ll\mgt_0,\mgt_0\gg+\widetilde{GW}.
\]
\end{prop}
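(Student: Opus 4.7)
The plan is to treat the algebraic case $k \geq 0$ and the geometric case $k = -1$ separately. The $k \geq 0$ case is a standard consequence of the $A_\infty$ relations (Proposition~\ref{cl:mgt_str}) combined with the cyclic identity (Proposition~\ref{cl:qt_cyclic}), where the $d$-term on the left arises as an accumulated anomaly from cyclic rotation through $\mgt_1$. The $k = -1$ case is computed directly by Stokes' theorem for the fiber integral $\mgt_{-1} = \sum_{\beta, l} \frac{(-1)^{\varepsilon(\gt)}}{l!}\, T^\beta (p_\M)_* \bigwedge_j \evit_j^*\gt$ over the parameterized moduli $\Mt_{0,l}(\beta)$, which reduces to an analysis of the codimension-one boundary strata.

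For $k \geq 0$, I would take the $A_\infty$ relation of Proposition~\ref{cl:mgt_str} applied to $(\at_1, \ldots, \at_k)$, which vanishes in $\mC$, and pair with $\at_{k+1}$ using $\ll-,-\gg$, obtaining a vanishing identity of the shape $\sum \pm \ll \mgt_{s_1}(\at_1, \ldots, \mgt_{s_2}(\ldots), \ldots, \at_k), \at_{k+1}\gg = 0$. Applying the cyclic identity of Proposition~\ref{cl:qt_cyclic} repeatedly reshuffles each summand into the symmetric form $\ll \mgt_{k_1}(\at_{i+k_2},\ldots,\at_{i-1}),\mgt_{k_2}(\at_i,\ldots,\at_{i+k_2-1})\gg$ appearing on the right-hand side. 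Each rotation is strictly exact except when it passes through an $\mgt_1$ factor, in which case it generates the anomaly $d\ll-,-\gg$ prescribed by Proposition~\ref{cl:qt_cyclic}. Careful tracking shows the anomalies assemble into the single term $d\ll\mgt_k(\at_1,\ldots,\at_k),\at_{k+1}\gg$ on the left-hand side, while the remaining exact rotations produce the sum on the right. The sign $\nu(\at; k_1, k_2, i)$ is obtained by composing the $A_\infty$ sign $(-1)^{\sum_{j<i}(|\at_j|+1)}$ with the Koszul signs from the successive cyclic rotations.

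For $k = -1$, since $\gt$ is $d$-closed, Stokes' theorem for fiber integration gives $d\mgt_{-1}$ as a sum of pushforwards along the codimension-one boundary strata of $\Mt_{0,l}(\beta)$. Two types of strata contribute: (i) \emph{disk bubbling at a boundary node}: since $\Mt_{0,l}(\beta)$ carries no boundary marked points, each of the two disk components acquires the node as its sole boundary marked point, identifying the stratum (modulo the $\Z/2$ symmetry swapping the bubbles and the choice of distribution of the $l$ interior markings) with $\bigsqcup_{\beta_1+\beta_2=\beta,\, l_1+l_2=l} \Mt_{1,l_1}(\beta_1) \times_L \Mt_{1,l_2}(\beta_2)$, whose total pushforward is $(-1)^n \tfrac{1}{2} \ll \mgt_0, \mgt_0\gg$; and (ii) \emph{sphere bubbling at an interior node}: the codimension-one contribution factors through evaluation at a point of $L$ (where the disk component is pinned by its boundary), so after summing over classes and distributions of markings the stratum pushes forward to $\pm \sum_l p_* i^* \qt_{\emptyset,l}(\gt) = \pm \widetilde{GW}$.

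The principal obstacle in executing this plan is the orientation and combinatorial bookkeeping, especially in the sphere-bubbling analysis of case $k = -1$: one must match the orientation on the sphere-bubbling stratum of $\partial \Mt_{0,l}(\beta)$ with the one induced on the fiber product $\Mt_{l+1}(\beta) \times_X L$ via evaluation at the node, and verify that the factor $\frac{1}{l!}$ combines correctly with the multinomial splitting of interior markings between the disk and sphere components to repackage cleanly into a single $\widetilde{GW}$. The sign $\pm$ in the statement records the outcome of this orientation calculation, which is pinned down through the signed orientation conventions developed in the predecessor paper.
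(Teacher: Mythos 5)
This proposition is quoted verbatim from~[ST1, Proposition 4.14]; the present paper contains no proof of it, so your argument has to stand on its own rather than be compared line by line with the source. For $k\ge 0$ your route is correct and is the natural one: pair the $A_\infty$ relation of Proposition~\ref{cl:mgt_str} applied to $(\at_1,\dots,\at_k)$ with $\at_{k+1}$, then rotate each summand using Proposition~\ref{cl:qt_cyclic}. Since the anomaly in Proposition~\ref{cl:qt_cyclic} occurs only when the operator being cycled has a single input (and really only via its $d$-part $\qt^{\beta_0}_{1,0}$), the unique anomalous summand is the one with $(k_1,k_2,i)=(1,k,1)$, namely $\ll\mgt_1(\mgt_k(\at_1,\dots,\at_k)),\at_{k+1}\gg$, whose rotation produces both the right-hand term $\ll\mgt_1(\at_{k+1}),\mgt_k(\at_1,\dots,\at_k)\gg$ and the single correction $d\ll\mgt_k(\at_1,\dots,\at_k),\at_{k+1}\gg$; all rotations of outer operators with $k_1\ge 2$ inputs are exact. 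So the $k\ge0$ half goes through, modulo the sign bookkeeping you explicitly defer.

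For $k=-1$ the Stokes strategy and the final answer are right, but your identification of the second boundary stratum is not. Sphere bubbling at an interior node attached to a nonconstant disk is a real codimension-two phenomenon and contributes nothing to $d\mgt_{-1}$. The codimension-one stratum responsible for $\widetilde{GW}$ is the one where the entire boundary of the disk collapses to a point, which is possible precisely because $\Mt_{0,l}(\beta)$ carries no boundary marked points: the limit is a $J_t$-holomorphic sphere with one special point constrained to lie on $L$ (equivalently, a ghost disk attached to the sphere at that point). A dimension count confirms codimension one: such configurations form a space of dimension $n-4+\mu(\beta)+2l$ over each $t$, one less than $\dim\M_{0,l}(\beta;J_t)$. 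Pushing forward evaluation at that special point, restricted to $I\times L$, is exactly what yields $p_*i^*\qt_{\emptyset,l}(\gt)$, i.e.\ $\widetilde{GW}$; it also explains why no such term appears in the $k\ge0$ relations, where boundary marked points preclude this degeneration in codimension one. Your parenthetical about the disk being ``pinned by its boundary'' suggests you have this picture in mind, but as written the stratum is misdescribed, and a careful proof must argue via boundary collapse (not interior-node bubbling). The disk-bubbling term, the $\tfrac12$ from the bubble-swapping symmetry, and the orientation issues you flag are otherwise as you say.
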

Write $\mgp$ for operations defined using the almost complex structure $J'$ and a closed form $\gamma' \in \mI_QD$ with $\deg_D \gamma' = 2.$
\begin{rem}\label{rem:py}
Propositions~\ref{cl:t_linear},~\ref{cl:qt_unit},~\ref{cl:qt_cyclic},~\ref{qt_deg_str_map}, and~\ref{cl:qt_no_top_deg}, show that $(\mC,\mgt,\ll\;,\,\gg,1)$ is a cyclic unital $A_\infty$ algebra.
Set $\gamma=j_0^*\gt$ and $\gamma'=j_1^*\gt$. By Lemma~\ref{lm:pseudo} the algebra $(\mgt,\ll\;,\,\gg,1)$ is a pseudoisotopy from $(\mg,\langle\;,\,\rangle,1)$ to $(\mgp,\langle\;,\,\rangle,1)$ in the sense of~\cite[Definition 1.3]{ST1}.
\end{rem}

\section{Classification of bounding pairs}\label{sec:bd_chains}

\subsection{Additional notation}\label{ssec:add_not}
This section will describe inductive constructions. To enable the induction process, we arrange the elements of $\sly$ that are represented by $J$-holomorphic curves into a countable list as follows. By Gromov compactness, for each fixed value $E$ the set
\[
\sly_E:=\sly_{J,E}:=\{\beta\;|\;\omega(\beta)\le E,\,\M_{3,0}(\beta)\ne \emptyset\}
\]
is finite. Thus we can order $\sly_\infty$ as a list $\beta_0,\beta_1,\ldots,$ where $i<j$ implies $\omega(\beta_i)\le \omega(\beta_j).$ The notation $\beta_0$ is consistent with the one used above.

In addition, let
\[
T(C): = \bigoplus_{k \ge 0} C^{\otimes k}
\]
denote the completed tensor algebra,
and for $x \in \mI_RC,$ abbreviate
\[
e^x = 1 \oplus x \oplus (x\otimes x) \oplus (x \otimes x \otimes x) \oplus \ldots \in T(C).
\]
Moreover, define
\[
\mg : T(C) \to C
\]
by
\[
\mg(\oplus_{k \geq 0} \eta_k) = \sum_{k \geq 0} \mg_k(\eta_k).
\]
Recall the definition of $\nu$ from Section~\ref{ssec:gennote}.
Let
$\Ups =C$ or $\Ups = R$. Decompose $\Ups$ as
$\Ups = \Ups'\otimes_{\R} R$ where $\Ups'=A^*(L)$ or $\Ups'=\R$, respectively.
Any element $\alpha\in \Ups$ can be written as
\begin{equation}\label{eq:n_decomp}
\alpha=\sum_{i=0}^\infty \lambda_i \alpha_i,\qquad \alpha_i\in \Ups',\quad\lambda_i=T^{\beta_i}s^{k_i}\prod_{a=0}^Nt_a^{l_{ai}},\quad \lim_i\nu(\lambda_i)=\infty.
\end{equation}
Note that
\[
\lambda\in \mI_R \iff \nu(\lambda)>0.
\]
Denote by $F^{E}$ the filtration on $R,C,\mC,$ defined by $\nu$. That is,
\[
\lambda\in F^{E}\!R\iff \nu(\lambda)> E
\]
and similarly for $C$ and $\mC.$
Since $\nu$ is non-negative, we have
$F^EC = F^E\!R\cdot C$ and $F^E\mC = F^E\!R\cdot\mC,$ where the product has implicitly been completed.
\begin{dfn}
A multiplicative submonoid $G\subset R$ is \textbf{\sababa{}} if it can be written as a list
\begin{equation}\label{eq:list}
G=\{\pm\lambda_0=\pm T^{\beta_0}, \pm\lambda_1,\pm\lambda_2,\ldots\}
\end{equation}
such that $i<j\Rarr \nu(\lambda_i)\le \nu(\lambda_j).$
\end{dfn}

\begin{rem}
As is detailed below, the constructions in this section follow the ideas of obstruction theory introduced in~\cite{FOOO}.
Our sababa property is an adaptation of the gapped condition of~\cite{FOOO} to the Novikov ring $\Lambda$ used in the present work.
\end{rem}

For $j=1,\ldots,m,$ and elements $\alpha_j=\sum_i\lambda_{ij}\alpha_{ij}\in \Ups$ decomposed as in~\eqref{eq:n_decomp}, denote by $G(\alpha_1,\ldots,\alpha_m)$ the multiplicative monoid generated by $\{\pm T^\beta\,|\,\beta\in \sly_\infty\}$, $\{t_j\}_{j=0}^N$, and  $\{\lambda_{ij}\}_{i,j}$.
\begin{lm}\label{lm:sababa}
For $\alpha_1,\ldots,\alpha_m\in \mI_R$, the monoid $G(\alpha_1,\ldots,\alpha_m)$ is \sababa{}.
\end{lm}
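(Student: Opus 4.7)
The plan is to enumerate $G = G(\alpha_1,\ldots,\alpha_m)$ in non-decreasing $\nu$-order. Such an enumeration exists as soon as, for every $E \ge 0$, the level set $G_E := \{g \in G : \nu(g) \le E\}$ is finite: one then lists the elements of $G$ by running through the finite increments $G_E \setminus G_{E'}$ for increasing values of $E$, in any order within each increment. So the whole task reduces to proving that $G_E$ is finite for each $E$.

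The crucial observation I would establish first is that every generator of $G$ has $\nu \ge 0$, and only the identity $1 = T^{\beta_0}$ among the generators has $\nu = 0$. Indeed, since $\alpha_j \in \mI_R$, every monomial $\lambda_{ij}$ appearing in its decomposition already lies in $\mI_R$, so $\nu(\lambda_{ij}) > 0$. The formal variables satisfy $\nu(t_j) = 1$. For $T^\beta$ with $\beta \in \sly_\infty$, we have $\nu(T^\beta) = \omega(\beta)$; if this vanishes, then $\beta$ is represented by a $J$-holomorphic disk of zero symplectic area, which must be constant, forcing $\beta = \beta_0$.

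Next I would check that only finitely many generators lie below a given level $E$: finiteness of $\{T^\beta : \beta \in \sly_E\}$ is Gromov compactness, finiteness of $\{\lambda_{ij} : \nu(\lambda_{ij}) \le E\}$ follows from the Novikov-type condition $\lim_i \nu(\lambda_{ij}) = \infty$ built into each decomposition, and $\{t_j\}_{j=1}^N$ is a priori finite. Letting $\nu^\ast$ denote the positive minimum of $\nu$ over these finitely many nontrivial generators, any element of $G_E$ is a product of such generators with at most $\lfloor E/\nu^\ast\rfloor$ factors, whence $|G_E| < \infty$ and the enumeration follows. The main (if mild) obstacle is the geometric step that no nonzero class in $\sly_\infty$ has vanishing $\omega$; without it, one could generate infinitely many elements of $G$ at the bottom level $\nu = 0$, which would defeat any \sababa{} enumeration. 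The remainder is routine bookkeeping combining Gromov compactness with the Novikov growth condition on the $\lambda_{ij}$.
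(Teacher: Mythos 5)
Your argument is correct and follows essentially the same route as the paper: reduce to finiteness of $G_E$, observe that the generators with valuation at most $E$ form a finite set (Gromov compactness for the $T^\beta$, the convergence condition in the decomposition~\eqref{eq:n_decomp} for the $\lambda_{ij}$), and that every nontrivial generator has strictly positive valuation, so only finitely many products land below level $E$. Your additional details --- the zero-energy argument showing $\omega(\beta)=0$ forces $\beta=\beta_0$ for classes in $\sly_\infty$, and the explicit bound $\lfloor E/\nu^\ast\rfloor$ on the number of factors --- merely make explicit what the paper's proof leaves implicit.
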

\begin{proof}
It is enough to prove that for any fixed $E\in\R$ there are only finitely many elements $\lambda\in G:=G(\alpha_1,\ldots,\alpha_m)$ with $\nu(\lambda)\le E.$

Decompose $\alpha_j=\sum_i\lambda_{ij}\alpha_{ij}$ as in~\eqref{eq:n_decomp}. By definition of convergence in $\Ups$,
the set
\[
\widehat{G}_E:=\{\lambda_{ij}\,|\,\nu(\lambda_{ij})\le E\}\cup\{T^\beta\,|\,\beta\in \sly_E\}\cup\{t_j\}_{j=0}^N
\]
is finite. Each $\lambda\in \widehat{G}_E$ is either the identity element $T^{\beta_0}$ or has positive valuation. So, the set
\[
G_E:=\left\{\lambda\in G\,|\,\nu(\lambda)\le E\right\}
\]
is finite.
\end{proof}

For $\alpha_1,\ldots,\alpha_m\in\mI_R$ write the image of $G=G(\{\alpha_j\}_j)$ under $\nu$ as the sequence $\nu(G)=\{E^G_0=0,E^G_1,E^G_2,\ldots\}$ with $E^G_i<E^G_{i+1}$.
Let $\kappa^G_i\in\Z_{\ge 0}$ be the largest index such that $\nu(\lambda_{\kappa^G_i})=E^G_i.$ In future we omit $G$ from the notation and simply write $E_i,\kappa_i,$ since $G$ will be fixed in each instance and no confusion should occur.

\subsection{Existence of bounding chains}\label{construct_bd_ch}
\subsubsection{Statement}
Recall the notion of a bounding pair $(\gamma,b)$  given in Definition~\ref{dfn_bd_pair}.
It is our objective to prove the following result.
\begin{prop}\label{prop:exist}
Assume $H^{i}(L;\R)=0$ for $i\ne 0,n$. Then for any closed $\gamma \in (\mI_QD)_2$ and any $a \in (\mI_R)_{1-n}$, there exists a bounding chain $b$ for $\mg$ such that
$
\int_L b = a.
$
\end{prop}

The proof of Proposition~\ref{prop:exist} is given in Section~\ref{sssec:constr}. We start with a candidate bounding chain and analyze the Maurer-Cartan equation~\eqref{eq:bc} modulo finite valuation.
Lemma~\ref{lm:sababa} ensures we only need to consider a discrete set of filtration values. In each step we find the obstruction to~\eqref{eq:bc}, namely, the excess contribution on the left-hand side. Then we use the cohomological assumptions on $L$ to add corrections to the candidate bounding chain in the given filtration level. Proceeding inductively, the limit is a bounding chain. This is a modification of the technique of~\cite{FOOO}.

\subsubsection{Obstruction chains}\label{sssec:oj}
Fix a \sababa{} multiplicative monoid $G=\{\lambda_j\}_{j=0}^\infty\subset R$ ordered as in~\eqref{eq:list}.
Let $l\ge 0.$ Suppose we have $b_{(l)}\in C$ with $\deg_Cb_{(l)}=1,$ $G(b_{(l)})\subset G,$ and
\begin{equation}\label{assumption}
\mg(e^{b_{(l)}})\equiv c_{(l)} \cdot 1\pmod{F^{E_l}C},\quad c_{(l)}\in (\mI_R)_2.
\end{equation}
Define the obstruction chains $o_j\in A^*(L)$ for $j=\kappa_l+1,\ldots,\kappa_{l+1}$ by
\begin{equation}\label{eq:oj_dfn}
o_j:=[\lambda_j](\mg(e^{b_{(l)}})).
\end{equation}

\begin{lm}\label{lm:u_even}
$|o_j|=2-\deg\lambda_j$.
\end{lm}
\begin{proof}
Recall that $\deg_C b_{(l)}=1$ and that by Proposition~\ref{deg_str_map} we have
$\deg_C\mg_k=2-k$. So,
\[
\deg_C \mg(e^{b_{(l)}})=2.
\]
In particular, $\deg\lambda_jo_j=2$ and $|o_j|=2-\deg\lambda_j$.
\end{proof}

\begin{lm}\label{oj_closed}
$do_i=0$.
\end{lm}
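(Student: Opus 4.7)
The plan is to apply the $A_\infty$ relations (Proposition~\ref{cl:a_infty_m}) with all inputs equal to $b = b_{(l)}$. Since $\deg_C b = 1$, the sign $(-1)^{(i-1)(\deg_C b+1)}$ is trivial, and after summing over $k$ and rearranging, the $A_\infty$ relations collapse to the single identity
\[
\sum_{p,q\ge 0}\mg_{p+q+1}(b^{\otimes p},M(b),b^{\otimes q})=0,\qquad M(b):=\mg(e^b).
\]
By hypothesis $M(b)=c_{(l)}\cdot 1+\rho$ with $\rho\in F^{E_l}C$, and for $j\in\{\kappa_l+1,\ldots,\kappa_{l+1}\}$ one has $[\lambda_j]\rho = o_j$.

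The main step is to eliminate the $c_{(l)}\cdot 1$ contribution. By $R$-linearity together with the unit axiom (Proposition~\ref{cl:unit}), the only surviving terms in $\sum_{p,q}\mg_{p+q+1}(b^{\otimes p},c_{(l)}\cdot 1,b^{\otimes q})$ are those with $(p+q+1,\beta)=(2,\beta_0)$, producing
\[
c_{(l)}\cdot\bigl(\mg_2^{\beta_0}(b,1)+\mg_2^{\beta_0}(1,b)\bigr)=c_{(l)}\cdot\sum_{\lambda}\lambda\,\bigl(1+(-1)^{|b_\lambda|}\bigr)\,b_\lambda,
\]
where $b=\sum_\lambda\lambda\,b_\lambda$. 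Exactly as in the proof of Lemma~\ref{lm:u_even}, every $\lambda\in G$ satisfies $\deg\lambda\in 2\Z$, hence $|b_\lambda|=1-\deg\lambda$ is odd for each $\lambda$; each summand therefore vanishes, leaving
\[
\sum_{p,q\ge 0}\mg_{p+q+1}(b^{\otimes p},\rho,b^{\otimes q})=0.
\]

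Finally, I extract the $\lambda_j$ coefficient from this identity. The term $(p,q,\beta)=(0,0,\beta_0)$ is $\mg_1^{\beta_0}(\rho)=d\rho$, whose $\lambda_j$ coefficient is $do_j$. Every other term $\mg_{p+q+1}^{\beta,\gamma}(b^{\otimes p},\rho,b^{\otimes q})$ carries either a factor $T^\beta$ with $\omega(\beta)\ge E_1$ or at least one factor of $b\in\mI_RC$ of valuation $\ge E_1$, on top of an insertion $\rho$ with $\nu(\rho)\ge E_{l+1}$; its total valuation is therefore $\ge E_1+E_{l+1}>E_{l+1}=\nu(\lambda_j)$, so its $\lambda_j$ coefficient is zero. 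We conclude $do_j=0$.

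The crux of the argument is the parity cancellation $\mg_2^{\beta_0}(b,1)+\mg_2^{\beta_0}(1,b)=0$; this is a pure parity statement resting on the evenness of $\deg\lambda$ for every $\lambda\in G$, which in turn uses both the odd dimension of $L$ and the evenness of the Maslov index. Once this cancellation is in hand, the remaining filtration estimate is a routine bookkeeping exercise.
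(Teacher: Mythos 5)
Your argument is correct and follows the paper's own proof in all essentials: both apply the $A_\infty$ relation $\mg(e^{b_{(l)}}\otimes\mg(e^{b_{(l)}})\otimes e^{b_{(l)}})=0$, dispose of the $c_{(l)}\cdot 1$ insertion via the unit property (Proposition~\ref{cl:unit}), and identify the surviving lowest-order contribution as $\sum_j \lambda_j\, do_j$ by the energy-zero property together with the valuation bookkeeping. The only difference is presentational: you make explicit the parity cancellation $\mg_2(b,1)+\mg_2(1,b)=0$ (each $|b_\lambda|$ odd, since $\deg\lambda$ is even when $n$ is odd and $\mu$ is even), which the paper subsumes in its citation of the unit property.
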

\begin{proof}
By the $A_\infty$ relations and assumption~\eqref{assumption},
\begin{align*}
0=&\mg(e^{b_{(l)}}\otimes\mg(e^{b_{(l)}})\otimes e^{b_{(l)}})\\
\equiv&\mg(e^{b_{(l)}}\otimes c_{(l)} \cdot 1\otimes e^{b_{(l)}})+\sum_{i=\kappa_l+1}^{\kappa_{l+1}}\mg(e^{b_{(l)}}\otimes \lambda_i o_i\otimes e^{b_{(l)}}) \pmod{F^{E_{l+1}}C}.
\end{align*}
The first summand in the second row vanishes by the unit property, Proposition~\ref{cl:unit}. Apply the energy zero property, Proposition~\ref{q_zero}, to compute
\[
0\equiv \sum_{i=\kappa_l+1}^{\kappa_{l+1}}\mg(e^{b_{(l)}}\otimes \lambda_io_i\otimes e^{b_{(l)}})
\equiv \sum_{i=\kappa_l+1}^{\kappa_{l+1}}\m_1^{\beta_0,\gamma}(\lambda_io_i) =\sum_{i=\kappa_l+1}^{\kappa_{l+1}}(-1)^{\deg \lambda_i}\lambda_ido_i\pmod{F^{E_{l+1}}C}.
\]
\end{proof}

\begin{lm}\label{lm:oj-cj}
If $\deg\lambda_j=2,$ then $o_j=c_j\cdot 1$ for some $c_j\in\R.$ If $\deg\lambda_j\ne 2,$ then $o_j\in A^{>0}(L)$.
\end{lm}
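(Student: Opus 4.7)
My plan is to deduce the statement directly from Lemmas~\ref{lm:u_even} and~\ref{oj_closed}, together with the standing assumption from Section~\ref{ssec:constr} that $L$ is connected. By Lemma~\ref{lm:u_even}, $|o_j| = 2 - \deg\lambda_j$, and this number is an even integer. The natural split into cases is therefore $\deg\lambda_j = 2$ versus $\deg\lambda_j \ne 2$, corresponding to $|o_j| = 0$ versus $|o_j|$ a nonzero even integer.

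For the first case, $|o_j| = 0$ puts $o_j$ in $A^0(L)$, i.e., $o_j$ is a smooth $\R$-valued function on $L$. Lemma~\ref{oj_closed} gives $do_j = 0$, so $o_j$ is locally constant. Since $L$ is connected, $o_j$ is a constant function, which is exactly the statement $o_j = c_j \cdot 1$ for some $c_j \in \R$. Note that the coefficient lives in $\R$ rather than in $R$ because $o_j$ is defined as the $R$-coefficient $[\lambda_j](\mg(e^{b_{(l)}}))$, which by construction lies in the untensored space $A^*(L)$.

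For the second case, $|o_j|$ is a nonzero even integer. Using the convention $A^m(L) = 0$ for $m < 0$ or $m > n$, if $|o_j|$ lies outside $[0,n]$ then $o_j = 0$, which lies in the subspace $A^{even>0}(L)$. Otherwise $|o_j|$ is a positive even integer at most $n$, and $o_j \in A^{|o_j|}(L) \subset A^{even>0}(L)$. Either way the second conclusion holds.

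The argument is essentially a degree bookkeeping step glued to the use of connectedness, so I do not anticipate a genuine obstacle; its role in the paper is to decompose obstructions into the ``scalar'' part (captured by $c_j$) and the positive-degree part, which will be treated separately in the subsequent obstruction-theoretic construction of a bounding chain.
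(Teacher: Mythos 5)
Your proof is correct and follows essentially the same route as the paper: apply Lemma~\ref{lm:u_even} to get $|o_j|=2-\deg\lambda_j$ even, then in the case $\deg\lambda_j=2$ use $do_j=0$ from Lemma~\ref{oj_closed} together with connectedness of $L$ to conclude $o_j$ is a constant multiple of $1$, and otherwise note $o_j$ sits in positive even degree (or vanishes). The paper's proof is just a terser version of this same degree bookkeeping, so there is nothing to add.
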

\begin{proof}
If $\deg\lambda_j=2,$ Lemma~\ref{lm:u_even} implies $|o_j|=0.$ By Lemma~\ref{oj_closed}, in this case $o_j=c_j\cdot 1$ for some $c_j\in\R.$
Otherwise, $o_j\in A^{>0}(L).$
\end{proof}

\begin{lm}\label{lm:td}
If $\deg\lambda_j = 2-n$ and $(db_{(l)})_n = 0,$ then $o_j = 0$.
\end{lm}
\begin{proof}
Since $\deg \lambda_j =2-n$, it follows that $|o_j|=n$.
Thus,
$
o_j=\left([\lambda_j](\mg(e^{b_{(l)}}))\right)_{n}.
$
Since $\deg b_{(l)} = 1,$ $\gamma|_L=0$, Propositions~\ref{no_top_deg} and~\ref{q_zero} give
\[
(\mg(e^{b_{(l)}}))_{n} = (\q_{0,1}^{\beta_0}(\gamma)+\q_{1,0}^{\beta_0}(b_{(l)}) +\q_{2,0}^{\beta_0}(b_{(l)},b_{(l)}))_{n}
=(-i^*\gamma+db_{(l)}- b_{(l)} \wedge b_{(l)})_{n}=0.
\]
Therefore $o_j=0.$
\end{proof}

\subsubsection{Bounding modulo \texorpdfstring{$F^{E_{l+1}}C$}{E}}\label{sssec:bdmd}
\begin{lm}\label{lm:inductive}
Suppose for all $j\in\{\kappa_l+1,\ldots,\kappa_{l+1}\}$ such that $\deg\lambda_j\ne 2$, there exist $b_j\in A^{1-\deg\lambda_j}(L)$ such that $(-1)^{\deg\lambda_j}db_j=-o_j.$ Then
\[
b_{(l+1)}:=b_{(l)}+\sum_{\substack{\kappa_l+1\le j\le \kappa_{l+1}\\ \deg\lambda_j\ne 2}}\lambda_jb_j
\]
satisfies
\[
\mg(e^{b_{(l+1)}})\equiv c_{(l+1)}\cdot 1\pmod{F^{E_{l+1}}C},\qquad c_{(l+1)}\in (\mI_R)_2.
\]
\end{lm}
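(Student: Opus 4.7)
The plan is to set $\Delta := \sum_{\kappa_l < j \le \kappa_{l+1},\,\deg \lambda_j \ne 2} \lambda_j b_j$, so that $b_{(l+1)} = b_{(l)} + \Delta$ with $\nu(\Delta) = E_{l+1}$, and to compute $\mg(e^{b_{(l+1)}})$ modulo $F^{E_{l+1}}C$ (the natural strengthening of the stated conclusion) by expanding $e^{b_{(l+1)}}$ multilinearly and grouping the resulting tensors by the number of appearances of $\Delta$.

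Contributions containing two or more copies of $\Delta$ have valuation at least $2E_{l+1}$ and so drop out modulo $F^{E_{l+1}}C$. The zero-$\Delta$ contribution is $\mg(e^{b_{(l)}})$, whose $\lambda_j$-coefficient for $\kappa_l < j \le \kappa_{l+1}$ is, by definition, the obstruction chain $o_j$. For the one-$\Delta$ contribution, the key observation is that modulo $F^{E_{l+1}}C$ only the piece $\m_1^{\beta_0,\gamma}(\Delta) = d\Delta$ survives: any additional factor of $b_{(l)}$ raises the valuation by $\nu(b_{(l)}) > 0$ (since $b_{(l)} \in \mI_R C$); each $\gamma$-insertion raises it by at least $1$ (since $\gamma \in \mI_Q D$); and any nontrivial disk class contributes an extra $\omega(\beta) > 0$. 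The identification $\m_1^{\beta_0,\gamma}(\Delta) = d\Delta$ then follows from the energy-zero identity $\q_{1,0}^{\beta_0}(\alpha) = d\alpha$ of Proposition~\ref{q_zero}.

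Assembling the three ingredients yields, modulo $F^{E_{l+1}}C$,
\[
\mg(e^{b_{(l+1)}}) \;\equiv\; c_{(l)} \cdot 1 \;+\; \sum_{\substack{\kappa_l < j \le \kappa_{l+1}\\ \deg \lambda_j \ne 2}} \lambda_j (o_j + db_j) \;+\; \sum_{\substack{\kappa_l < j \le \kappa_{l+1}\\ \deg \lambda_j = 2}} \lambda_j o_j.
\]
The hypothesis $-db_j = o_j$ annihilates the middle sum, and Lemma~\ref{lm:oj-cj} replaces each $o_j$ in the last sum with $c_j \cdot 1$ for some $c_j \in \R$. Hence the claim holds with
\[
c_{(l+1)} := c_{(l)} + \sum_{\substack{\kappa_l < j \le \kappa_{l+1}\\ \deg \lambda_j = 2}} \lambda_j c_j,
\]
which lies in $(\mI_R)_2$ because $c_{(l)}$ does and each added term has degree $2$ and positive valuation. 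The step that warrants genuine care is the valuation bookkeeping for the one-$\Delta$ term, where it is essential that every ``optional'' ingredient—extra $b_{(l)}$ factors, $\gamma$-insertions, and nontrivial moduli-space classes—carries strictly positive valuation, so only the bare differential $d\Delta$ contributes at exact level $E_{l+1}$.
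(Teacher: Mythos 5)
Your proof is correct and follows essentially the same route as the paper's: write $b_{(l+1)}=b_{(l)}+\Delta$, use valuation bookkeeping to keep only $\mg(e^{b_{(l)}})$ and the energy-zero single-insertion term $\m_1^{\beta_0,\gamma}(\Delta)=d\Delta$ (Proposition~\ref{q_zero}), cancel against $-db_j=o_j$, and invoke Lemma~\ref{lm:oj-cj} for the $\deg\lambda_j=2$ obstructions, proving the stronger congruence modulo $F^{E_{l+1}}C$. The only (harmless) difference is that the paper first normalizes $[\lambda_j](c_{(l)})=0$ for $\kappa_l<j\le\kappa_{l+1}$, which your displayed congruence implicitly needs in order not to double count those coefficients; since $c_{(l)}$ contributes only degree-$2$ monomials, this does not affect the conclusion.
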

\begin{proof}
Without loss of generality, assume that $[\lambda_j](c_{(l)})=0$ for all $j=\kappa_l+1,\ldots,\kappa_{l+1}$. Use Proposition~\ref{q_zero} and Lemma~\ref{lm:oj-cj} to deduce
\begin{align*}
\mg(e^{b_{(l+1)}})
\equiv &\sum_{\substack{\kappa_l+1\le j\le \kappa_{l+1}\\ \deg\lambda_j\ne 2}}\m_1^{\gamma,\beta_0}(\lambda_jb_j)+ \mg(e^{b_{(l)}})\\
\equiv&
\sum_{\substack{\kappa_l+1\le j\le \kappa_{l+1}\\ \deg\lambda_j\ne 2}}(-1)^{\deg\lambda_j}\lambda_jdb_j+
\sum_{\kappa_l+1\le j\le \kappa_{l+1}}\lambda_jo_j+c_{(l)} \cdot 1\\
=&\sum_{\substack{\kappa_l+1\le j\le \kappa_{l+1}\\ \deg\lambda_j= 2}}\lambda_jc_j\cdot 1+ c_{(l)} \cdot 1\pmod{F^{E_{l+1}}C}.
\end{align*}
The lemma now follows with
\[
c_{(l+1)}=\sum_{\substack{\kappa_l+1\le j\le \kappa_{l+1}\\ \deg\lambda_j= 2}}\lambda_jc_j+c_{(l)}\;\in (\mI_R)_2.
\]
\end{proof}

\begin{lm}\label{lm:init}
Let $\zeta\in \mI_R C$. Then
$\mg(e^\zeta)\equiv 0\pmod{F^{E_0}C}.$
\end{lm}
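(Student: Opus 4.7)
The plan is to verify that $\mg(e^\zeta)$ lies in $F^{E_0}C$ by showing every contribution carries an $R$-coefficient of strictly positive valuation. Expanding
\[
\mg(e^\zeta) = \sum_{k \geq 0} \mg_k(\zeta^{\otimes k}) = \sum_{k,l,\beta} \frac{(\pm 1)}{l!}\, T^\beta\, \q_{k,l}^\beta(\zeta^{\otimes k}; \gamma^{\otimes l}),
\]
any contribution with $\beta \neq \beta_0$ carries a factor $T^\beta$ of valuation $\omega(\beta) > 0$, so all such terms automatically lie in $F^{E_0}C$. Thus it suffices to show that the $T^{\beta_0}$-coefficient of each $\q_{k,l}^{\beta_0}(\zeta^{\otimes k}; \gamma^{\otimes l})$ either vanishes or already has positive valuation coming from $\gamma \in \mI_Q D$.

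For the $l \geq 1$ contributions with $\beta = \beta_0$, each factor of $\gamma$ supplies a coefficient in $\mI_Q$ with $\nu > 0$, except possibly when $\q_{k,l}^{\beta_0}$ is itself nonzero at energy zero, which by Proposition~\ref{q_zero} only happens for $(k,l) = (0,1)$; in that case one has $\q_{0,1}^{\beta_0}(\gamma) = (-1)^{|\gamma|+1}\gamma|_L$, and this vanishes because $\gamma \in D = A^*(X,L) \otimes Q$ consists of forms that restrict to zero on $L$. For the purely boundary contributions ($l = 0$) with $\beta = \beta_0$, Proposition~\ref{q_zero} together with the convention $\q_{0,0}^{\beta_0} = 0$ leaves only $(k,l) \in \{(1,0),(2,0)\}$ as potentially nonzero operators.

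I would then dispatch these two remaining cases using the hypotheses on $\zeta$. For $(k,l) = (1,0)$, $\q_{1,0}^{\beta_0}(\zeta) = d\zeta = 0$ by the closedness assumption. For $(k,l) = (2,0)$, Proposition~\ref{q_zero} gives $\q_{2,0}^{\beta_0}(\zeta,\zeta) = (-1)^{|\zeta|}\zeta\wedge\zeta$; graded-commutativity of the wedge product forces $\zeta \wedge \zeta = (-1)^{|\zeta|^2}\zeta \wedge \zeta = -\zeta \wedge \zeta$ since $|\zeta|$ is odd, so $\zeta \wedge \zeta = 0$.

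Combining, every term in $\mg(e^\zeta)$ has coefficient of valuation strictly greater than $E_0 = 0$, which is the assertion. The substance of the argument is the reduction via Proposition~\ref{q_zero} to three specific energy-zero cases; the hypotheses $d\zeta = 0$ and $|\zeta|$ odd are precisely what is needed to eliminate the two cases that involve $\zeta$, while the structural requirement $\gamma|_L = 0$ handles the third. No step is genuinely difficult — this lemma serves as the base case for the obstruction-theoretic induction of Section~\ref{construct_bd_ch}, with $c_{(0)} = 0$.
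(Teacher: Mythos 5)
Your proof is correct and takes essentially the same route as the paper: Proposition~\ref{q_zero} reduces everything at $\beta_0$ to the cases $(k,l)=(1,0),(2,0),(0,1)$, which are handled by $d\zeta=0$, by $\zeta\wedge\zeta=0$ for odd $|\zeta|$, and by the $\gamma$-insertion term being negligible, while all other contributions carry positive valuation. One minor imprecision: $\gamma|_L$ need not vanish identically, since the degree-zero part of $A^*(X,L)$ consists of functions constant (not zero) on $L$ — e.g.\ $\gamma$ may contain $t_0\cdot 1$ — but this is immaterial because $\gamma\in\mI_QD$ forces that term to have positive valuation, so it is $\equiv 0\pmod{F^{E_0}C}$, exactly as your earlier valuation remark (and the paper's own one-line computation) already ensures.
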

\begin{proof}
By the energy zero property, Proposition~\ref{q_zero},
\[
\mg(e^{\zeta})\equiv \m_0^{\beta_0,\gamma}+\m_1^{\beta_0,\gamma}(\zeta)+\m_2^{\beta_0,\gamma}(\zeta,\zeta)
=-\gamma|_L+d\zeta-\zeta\wedge\zeta \equiv 0
\pmod{F^{E_0}C}.
\]
\end{proof}

\subsubsection{Construction}\label{sssec:constr}
\begin{proof}[Proof of Proposition~\ref{prop:exist}]

Fix $a\in (\mI_R)_{1-n}$ and $\gamma \in (\mI_QD)_2$. Write $G(a)$ in the form of a list as in~\eqref{eq:list}.

Take $\bar{b}_0\in A^n(L)$ any representative of the Poincar\'e dual of a point, and let $b_{(0)}:=a\cdot \bar{b}_0$.
By Lemma~\ref{lm:init}, the chain $b_{(0)}$ satisfies
\[
\mg(e^{b_{(0)}})\equiv 0=c_{(0)}\cdot 1\pmod{F^{E_0}C},\qquad c_{(0)}=0.
\]
Moreover,  $\int_L b_{(0)}=a$, $db_{(0)}=0$, and $\deg_{C}b_{(0)}=n+1-n=1$.

Proceed by induction. Suppose we have $b_{(l)}\in C$ with $\deg_Cb_{(l)}=1$, $G(b_{(l)})\subset G(a),$ and
\[
(db_{(l)})_n =0,\qquad\int_L b_{(l)}=a,\qquad\mg(e^{b_{(l)}})\equiv c_{(l)}\cdot 1\pmod{F^{E_l}C},\quad c_{(l)}\in (\mI_R)_2.
\]
Define the obstruction chains $o_j$ by~\eqref{eq:oj_dfn}.
By Lemma~\ref{oj_closed}, we have $do_j=0$, and by Lemma~\ref{lm:u_even} we have $o_j\in A^{2-\deg\lambda_j}(L).$ To apply Lemma~\ref{lm:inductive}, it is necessary to choose forms $b_j\in A^{1-\deg\lambda_j}(L)$ such that $(-1)^{\deg\lambda_j}db_j=-o_j$ for all $j\in\{\kappa_l+1,\ldots,\kappa_{l+1}\}$ such that $\deg\lambda_j\ne 2.$
If $\deg \lambda_j = 2-n,$ Lemma~\ref{lm:td} gives $o_j = 0,$ so we choose $b_j = 0.$ If $2-n < \deg \lambda_j<~2,$ then $0 < |o_j| < n.$ So, the cohomological assumption on $L$ implies $[o_j] = 0 \in H^*(L;\R),$ and we choose $b_j$ such that $(-1)^{\deg\lambda_j}db_j = -o_j.$ For other possible values of $\deg \lambda_j,$ degree considerations imply $o_j = 0,$ so we choose $b_j =0.$

Lemma~\ref{lm:inductive} now guarantees that $b_{(l+1)}:=b_{(l)}+\sum_{\substack{\kappa_l+1\le j\le \kappa_{l+1}\\ \deg\lambda_j\ne 2}}\lambda_jb_j$
satisfies
\[
\mg(e^{b_{(l+1)}})\equiv c_{(l+1)}\cdot 1\pmod{F^{E_{l+1}}C},\qquad c_{(l+1)}\in (\mI_R)_2.
\]
Since $b_j = 0$ when $\deg \lambda_j = 2-n,1-n,$ it follows that $(db_{(l+1)})_n = (db_{(l)})_n=0$ and $\int_L b_{(l+1)}=\int_L b_{(l)}=a.$

Thus, the inductive process gives rise to a convergent sequence $\{b_{(l)}\}_{l=0}^\infty$ where $b_{(l)}$ is bounding modulo $F^{E_l}C$.
Taking the limit as $l$ goes to infinity, we obtain
\[
b=\lim_l b_{(l)},\quad\deg_C b=1, \quad \int_L b = a, \quad \mg(e^b)= c\cdot 1,\quad c=\lim_lc_{(l)}\in (\mI_R)_2.
\]
\end{proof}

\subsubsection{J-dependent Novikov ring}\label{sssec:Jnov}

The content of this section will be useful in proving the symplectic deformation invariance axiom~\eqref{ax:def} of Theorem~\ref{axioms}.

Recall from Section~\ref{ssec:iainf} the group $\sly,$ which is the quotient of $H_2(X,L;\Z)$ by the subgroup~$S_L.$
The Novikov ring $\L=\L_\omega$ is a completion of a subring of the group ring of $\sly.$  For a fixed $\omega$-tame almost complex structure $J$,
let $\sly^J$ be the smallest submonoid of $\sly$ that contains all classes that can be represented by a $J$-holomorphic disk.
In other words, $\sly^J$ is the submonoid generated by $\sly_{J,\infty}$ of Section~\ref{ssec:add_not}.
Denote by $\L^J =\L^J_\omega \subset \L_\omega$ the subring obtained by completing the group ring of $\sly^J$. Our motivation for introducing $\L^J$ is that, while $\L_\omega$ depends on $\omega$, we show in Lemma~\ref{lm:smNov} that $\L^J$ does not depend on $\omega$ so long as $\omega$ tames~$J$.
Denote by $R^J$ and $Q^J$ the rings defined analogously to $R,Q,$ with $\L^J$ in place of $\L$, and denote by $\mI_{R^J}\triangleleft R^J,\mI_{Q^J}\triangleleft Q^J,$ the ideals defined analogously to $\mI_R,\mI_Q$. Set $C^J:=A^*(L;R^J)$ and $D^J:=A^*(X,L;Q^J)$. Note that the evaluation maps used to define the $A_\infty$ structure
make use only of classes $\beta\in \sly_{J,\infty}$. Therefore,
the restricted operators
\[
\m_k^J:(C^J)^{\otimes k}\lrarr C^J
\]
are well defined and give a cyclic unital $A_\infty$ structure on $C^J$. More generally, for $\gamma\in \mI_{Q^J}D^J$, the bulk deformed operators $\m_k^{J,\gamma}$ again define a cyclic unital $A_\infty$ structure on $C^J$. Bounding pairs and bounding chains in this context are defined as in Definition~\ref{dfn_bd_pair}.

The content of Sections~\ref{sssec:oj},~\ref{sssec:bdmd}, and~\ref{sssec:constr}, is valid verbatim with the $A_\infty$ algebra $(C^J,\{\m_k^{J,\gamma}\}_{k\ge 0})$ in place of $(C, \{\mg_k\}_{k\ge 0})$.
Thus, we have the following analog of Proposition~\ref{prop:exist}.

\begin{prop}\label{prop:existJ}
Assume $H^{i}(L;\R)=0$ for $i\ne 0,n$. Then for any closed $\gamma \in (\mI_{Q^J}D^J)_2$ and any $a \in (\mI_{R^J})_{1-n}$, there exists a bounding chain $b$ for $\m^{J,\gamma}$ such that
$
\int_L b = a.
$
\end{prop}

\subsection{Gauge equivalence of bounding pairs}\label{ssec:classification}
In the following we use the notation of Section~\ref{pseudoisot}. As in Remark~\ref{rem:py}, write $\mgp$ for operations defined using the almost complex structure $J'$ and a closed form $\gamma' \in \mI_QD$ with $\deg_D \gamma' = 2.$
Recall also the definition~\eqref{eq:mR} of $\mR=A^*(I;R)$. Denote by $\pi:I\times X\to X$ the projection.

\begin{dfn}\label{dfn_g_equiv}
We say a bounding pair $(\gamma,b)$ with respect to $J$ is \textbf{gauge equivalent} to a bounding pair $(\gamma',b')$ with respect to $J'$, if there exist
a path $\{J_t\}$ in $\mathcal{J}$ from $J$ to $J'$,
$\gt\in (\mI_Q\mD)_2$, and $\bt\in (\mI_R\mC)_1$ such that
\begin{gather}
j_0^*\gt=\gamma,\quad j_1^*\gt=\gamma',\quad  d\gt=0,\notag\\
j_0^*\bt=b,\quad j_1^*\bt=b',\notag \\
\mgt(e^{\bt})=\ct\cdot 1,\qquad \ct\in (\mI_R\mR)_2, \qquad d\ct = 0.\label{eq:mct}
\end{gather}
In this case, we say that $(\mgt,\bt)$ is a pseudoisotopy from $(\mg,b)$ to $(\m^{\gamma'},b')$ and write $(\gamma,b)\sim(\gamma',b')$. In the special case $J_t= J = J',$ $\gamma=\gamma'$, and $\gt=\pi^*\gamma$, we say $b$ is gauge equivalent to $b'$ as a bounding chain for $\mg$.
\end{dfn}
\begin{rem}
Suppose $(\mgt,\bt)$ is a pseudoisotopy from $(\mg,b)$ to $(\m^{\gamma'},b')$ with $\mg(e^b)=c\cdot 1,$ $\m^{\gamma'}(e^{b'})=c'\cdot 1$, and $\mgt(e^{\bt})=\ct\cdot 1$. The assumption $d\ct=0$ implies $\ct=\hat{c}+f(t) \,dt$ with $\hat{c}$ constant.
Lemma~\ref{rem:pseudo} implies
\[
j_0^* \mgt(e^{\bt}) = \mg(e^b),\qquad j_1^*\mgt(e^{\bt})=\m^{\gamma'}(e^{b'}).
\]
Therefore, $c=j_0^*\ct =\hat{c} = j_1^*\ct =c'$.
\end{rem}

\begin{rem}
This notion of gauge-equivalence is somewhat more general than~\cite[Definition~4.3.1]{FOOO}. Namely, we do not require $\ct$ to be constant, so $\bt$ is not a weak bounding cochain for $\mt^{\gt}$ in the sense of~\cite{FOOO}.
However, this definition is sufficient to ensure invariance of the superpotential. In fact, the extra flexibility is necessary for our purposes, as seen in Lemma~\ref{lm:ob1} below.
This observation indicates that~\eqref{eq:mct} is the appropriate version of the Maurer-Cartan equation for an $A_\infty$ algebra over a differential graded algebra (in the sense of~\cite{ST1}).
\end{rem}

We first prove that the map $\varrho$ given by~\eqref{eqn_rho} is well defined.

\begin{lm}\label{lm:homotopy}
Let $M$ be a manifold with $\d M=\emptyset$ and let $\tilde{\xi}\in A^*(I\times M)$ such that $d\tilde{\xi}=0.$ Then
\[
[j_0^*\xit]=[j_1^*\xit]\in H^*(M).
\]
\end{lm}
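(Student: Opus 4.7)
The plan is to invoke the classical homotopy invariance of de Rham cohomology by exhibiting an explicit chain homotopy between $j_0^*$ and $j_1^*$. Concretely, I would introduce the projection $q:I\times M\to M$ onto the second factor, and the corresponding integration-along-the-fiber operator $q_*:A^k(I\times M)\to A^{k-1}(M)$. Using the decomposition of any form as $\xit=\alpha+dt\wedge\beta$ with $\alpha,\beta$ containing no $dt$, this operator is given by $q_*\xit=\int_0^1\beta(t,\cdot)\,dt$.

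The key tool is the fiber-wise Stokes formula
\[
d(q_*\xit)+(-1)^{|\xit|}q_*(d\xit)=j_1^*\xit-j_0^*\xit,
\]
which holds because $\d I=\{1\}-\{0\}$ and $\d M=\emptyset$, so the only boundary contributions come from the two endpoint fibers. Given the hypothesis $d\xit=0$, the identity collapses to $j_1^*\xit-j_0^*\xit=d(q_*\xit)$, exhibiting $j_0^*\xit$ and $j_1^*\xit$ as cohomologous forms on $M$; taking cohomology classes yields $[j_0^*\xit]=[j_1^*\xit]$ in $H^*(M)$.

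The Stokes identity itself would be checked by a direct calculation on the decomposition $\xit=\alpha+dt\wedge\beta$: one computes $d\xit=d_M\alpha+dt\wedge(\d_t\alpha-d_M\beta)$, applies $q_*$ termwise, and compares with the values $\alpha|_{t=0},\alpha|_{t=1}$ picked up by $j_0^*$ and $j_1^*$. There is no real geometric obstacle; the lemma is the textbook statement that homotopic smooth maps induce equal maps on de Rham cohomology, specialized to the straight-line homotopy between the inclusions $j_0,j_1:M\hookrightarrow I\times M$, and it is recorded here mainly to fix notation for its later use in establishing well-definedness of $\varrho$.
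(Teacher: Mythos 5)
Your proposal is correct and follows essentially the same route as the paper: the paper applies the generalized Stokes' theorem (fiber integration over the projection $I\times M\to M$) to get $d(\pi^M_*\xit)=\pm(j_1^*\xit-j_0^*\xit)$ when $d\xit=0$, which is exactly your fiber-wise Stokes identity, the only difference being that you verify it by direct computation on the decomposition $\xit=\alpha+dt\wedge\beta$ while the paper cites it from Kupferman--Solomon.
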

\begin{proof}
Without loss of generality, suppose $\xi$ is homogeneous.
Apply the generalized Stokes' theorem~\cite[Proposition 2.3]{ST1}
to the projection $p^M:I\times M\to M$ to obtain
\[
d(p^M_*\xit)=(-1)^{\dim M+1+|\xit|} (j_1^*\xit-j_0^*\xit).
\]
\end{proof}

\begin{lm}\label{lm_rho}
Assume $n>0$. If $(\gamma,b)\sim(\gamma',b')$, then $[\gamma]=[\gamma']$ and $\int_Lb=\int_Lb'$.
\end{lm}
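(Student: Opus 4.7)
My plan is to treat the two assertions separately. The cohomological assertion $[\gamma]=[\gamma']$ is a direct relative-version of Lemma~\ref{lm:homotopy}: the projection $\pi^X\colon I\times X\to X$ gives a fiber-integration operator $\pi^X_*$ on $A^*(I\times X;Q)$ that produces a chain homotopy
\[
d\,\pi^X_*\gt \pm \pi^X_* d\gt = \pm(j_1^*\gt-j_0^*\gt).
\]
Because $\gt\in\mI_Q\mD$ vanishes on $I\times L$, one checks directly (using that $\partial_t$ is tangent to $I\times L$) that $\pi^X_*\gt$ also vanishes on $L$, so the chain homotopy takes place in $A^*(X,L;Q)$. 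Together with $d\gt=0$ this gives $[\gamma]=[\gamma']$ in $\Hh^*(X,L;Q)$.

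For the second claim, the plan is to use Stokes on $I\times L$ to reduce to an integral over $I\times L$, and then to use the bounding-pair equation together with the top-degree vanishing of $\qt$ to show the integrand is zero. Explicitly, since $\dim(I\times L)=n+1$, Stokes gives
\[
\int_L b' - \int_L b = \int_L j_1^*\bt - \int_L j_0^*\bt = \pm\int_{I\times L}(d\bt)_{n+1},
\]
so it suffices to show $(d\bt)_{n+1}=0$.

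The key step is to take the form-degree $n+1$ part of the bounding equation $\mgt(e^{\bt})=\ct\cdot 1$. On the right hand side, $\ct\cdot 1$ has form-degree $0$, so its degree-$(n+1)$ component is $0$. On the left hand side, Proposition~\ref{cl:qt_no_top_deg} kills every term except those with $(k,l,\beta)\in\{(1,0,\beta_0),(0,1,\beta_0),(2,0,\beta_0)\}$, and Proposition~\ref{cl:qt_zero} identifies the survivors as $d\bt$, $(-1)^{|\gt|+1}\gt|_{I\times L}$, and $(-1)^{|\bt|}\bt\wedge\bt$. The $\gt$-term vanishes because $\gt$ is a relative form, leaving
\[
(d\bt)_{n+1} + \bigl((-1)^{|\bt|}\bt\wedge\bt\bigr)_{n+1}=0.
\]

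The main obstacle, and the place where the odd dimension of $L$ enters, is showing that $\bt\wedge\bt=0$. Here I would observe that every generator of $R$ is of even degree: $\deg T^\beta=\mu(\beta)\in 2\Z$ since $L$ is orientable, $\deg t_j\in 2\Z$ by hypothesis, and $\deg s=1-n$ is even because $n$ is odd. Consequently $\deg_{\mC}\bt=1$ forces every form-degree summand of $\bt$ to have odd form-degree, and the usual graded anti-commutativity of odd forms (with $R$-coefficients in a commutative even-degree subring) gives $\bt\wedge\bt=0$. Combining, $(d\bt)_{n+1}=0$ and $\int_L b=\int_L b'$.
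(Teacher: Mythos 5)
Your proposal is correct and follows essentially the same route as the paper: the fiber-integration homotopy over $I$ for $[\gamma]=[\gamma']$, and Stokes on $I\times L$ combined with the degree-$(n+1)$ part of $\mgt(e^{\bt})=\ct\cdot 1$ and Proposition~\ref{cl:qt_no_top_deg} for $\int_L b=\int_L b'$. Your explicit treatment of the exceptional terms $(0,1,\beta_0)$ and $(2,0,\beta_0)$ (the relative vanishing of $\gt|_{I\times L}$ and $\bt\wedge\bt=0$ from the odd form-degree of $\bt$, which uses $n$ odd and $\mu$ even) is a welcome sharpening of the paper's terser citation, not a different argument.
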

\begin{proof}
By definition of gauge equivalence, there exists a form $\gt\in \mI_Q\mD$ with $d\gt = 0$ such that $j_0^*\gt=\gamma$ and $j_1^*\gt=\gamma'$.
Lemma~\ref{lm:homotopy} therefore implies that $[\gamma]=[\gamma'].$

Choose $\bt$ as in the definition of gauge equivalence. Then equation~\eqref{eq:mct} implies
\[
\int_Lb'-\int_Lb=\int_{\d(I\times L)}\bt
=\int_{I\times L}d\bt=\int_{I\times L}\bigg(\ct\cdot 1-\sum_{(k,l,\beta)\ne (1,0,\beta_0)}\qt_{k,l}^{\beta}(\bt^k;\gt^l)\bigg)_{n+1}.
\]
Since $n>0$, we have $(\ct)_{n+1}=0$.
Since $\deg \bt = 1$ and $\gt|_{I\times L}=0$, Proposition~\ref{cl:qt_no_top_deg}, and Proposition~\ref{cl:qt_zero} imply that
the right hand side vanishes, so $\int_Lb'=\int_Lb$.
\end{proof}

\begin{prop}\label{prop_g_equiv}
Assume $H^{i}(L;\R)=0$ for $i\ne 0,n$.
Let $(\gamma,b)$ be a bounding pair with respect to $J$ and let $(\gamma',b')$ be a bounding pair with respect to $J'$ such that $\varrho([\gamma,b])=\varrho([\gamma',b']).$
Then $(\gamma,b)\sim(\gamma',b')$.
\end{prop}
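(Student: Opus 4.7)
The strategy is to mirror the obstruction-theoretic construction in the proof of Proposition~\ref{prop:exist}, but carried out on $I\times L$ with prescribed boundary conditions at $t=0,1$ given by $b$ and $b'$.

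First, since $\J$ is connected, pick a path $\{J_t\}_{t\in I}$ from $J$ to $J'$, giving the operators $\qt$ and hence the $A_\infty$ structure $\mgt$ on $\mC$. Next, since $[\gamma]=[\gamma']$ in $\Hh^*(X,L;Q)$, a standard de Rham argument (using a bump function and a primitive for $\gamma'-\gamma$) produces a closed $\gt\in\mI_Q\mD$ of degree $2$ with $j_0^*\gt=\gamma$ and $j_1^*\gt=\gamma'$. Then build $\bt=\lim_l \bt_{(l)}$ inductively, requiring at each stage that
\[
\mgt(e^{\bt_{(l)}})\equiv \ct_{(l)}\cdot 1\pmod{F^{E_l}\mC},\qquad j_0^*\bt_{(l)}\equiv b,\quad j_1^*\bt_{(l)}\equiv b'\pmod{F^{E_l}C}.
\]
For the base case, set $\bt_{(0)}=a\cdot p^*\bar b_0$ where $a=\int_L b=\int_L b'$ and $\bar b_0\in A^n(L)$ represents the Poincar\'e dual of a point; closedness plus Proposition~\ref{cl:qt_zero} yields the estimate at level $l=0$, and the boundary restrictions agree with $b_{(0)},b'_{(0)}$ from the existence construction.

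For the inductive step, define the obstruction chains $\ot_j:=[\lambda_j]\mgt(e^{\bt_{(l)}})$ for $j\in\{\kappa_l+1,\ldots,\kappa_{l+1}\}$; the analog of Lemma~\ref{oj_closed} on $\mC$ shows each $\ot_j$ is closed on $I\times L$, and its boundary restrictions $j_0^*\ot_j,\, j_1^*\ot_j$ coincide with the obstructions $o_j,\,o'_j$ occurring in the proofs that $b,b'$ are bounding. Since $b,b'$ are given bounding chains, there exist $b_j,b'_j\in A^{1-\deg\lambda_j}(L)$ with $db_j=-o_j$ and $db'_j=-o'_j$ (with $b_j=b'_j=0$ when $|o_j|=n+1$). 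One then seeks $\bt_j\in A^{1-\deg\lambda_j}(I\times L)$ satisfying
\[
d\bt_j=-\ot_j,\qquad j_0^*\bt_j=b_j,\qquad j_1^*\bt_j=b'_j,
\]
and sets $\bt_{(l+1)}:=\bt_{(l)}+\sum\lambda_j\bt_j$ over the relevant $j$ (omitting those with $\deg\lambda_j=2$, which are absorbed into $\ct_{(l+1)}$).

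The main obstacle is producing this relative primitive on the cylinder. Start with any primitive $\bt'_j$ of $-\ot_j$, which exists because $[\ot_j]\in H^*(I\times L)=H^*(L)$ restricts to the zero class $[o_j]=0$. The discrepancies $\eta_j:=b_j-j_0^*\bt'_j$ and $\eta'_j:=b'_j-j_1^*\bt'_j$ are closed forms on $L$ of odd degree $1-\deg\lambda_j$. When $1-\deg\lambda_j<n$, the hypothesis $H^{2i-1}(L;\R)=0$ forces both to be exact, and choosing primitives one adjusts $\bt'_j$ via a bump function in $t$ so that its boundary restrictions become $b_j$ and $b'_j$. The critical case is $1-\deg\lambda_j=n$: here $\eta_j,\eta'_j$ are top-degree and exactness demands $\int_L\eta_j=\int_L\eta'_j=0$. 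To secure this, one tracks $\int_L j_t^*\bt_{(l)}$ inductively: by Proposition~\ref{cl:qt_no_top_deg} no correction other than those at degree $n$ contributes, and the base case gives $\int_L j_0^*\bt_{(0)}=\int_L j_1^*\bt_{(0)}=a$; by the same reasoning as in Lemma~\ref{lm_rho}, the pseudo-isotopy equation modulo $F^{E_l}$ forces the two endpoint integrals to agree at every stage, yielding $\int_L\eta_j=\int_L\eta'_j$, hence $[\eta_j]=[\eta'_j]$ in $H^n(L)$, and the interpolation argument applies.

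Finally, one checks that the constants $\ct_{(l)}\in (\mI_R)_2$ can be chosen independent of $t$. This follows by comparing with the constants $c,c'\in(\mI_R)_2$ of the given bounding pairs: inductively one arranges $j_0^*(\ct_{(l)}\cdot 1)=c\pmod{F^{E_l}}$ and $j_1^*(\ct_{(l)}\cdot 1)=c'\pmod{F^{E_l}}$; since each $\ct_{(l)}$ lies in $R$, the equality $c=c'$ propagates and the pseudo-isotopy equation~\eqref{eq:mct} is satisfied in the limit. Passing to $\bt=\lim_l\bt_{(l)}$ then exhibits the desired gauge equivalence $(\gamma,b)\sim(\gamma',b')$.
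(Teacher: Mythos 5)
Your route is genuinely different from the paper's, and it is worth saying how. The paper never rebuilds $\bt$ from $a\cdot\bar{b}_0$: it starts the induction from the interpolation $\bt_{(-1)}=b+t(b'-b)+dt\wedge\eta$, where $d\eta=(b')_n-(b)_n$ exists precisely because $\int_Lb=\int_Lb'$. Consequently $j_0^*\bt_{(l)}=b$ and $j_1^*\bt_{(l)}=b'$ hold exactly at every stage, the obstructions $\ot_j$ vanish on $\d(I\times L)$ by Lemma~\ref{lm:ut_relative}, they are killed in \emph{relative} cohomology $H^{2i}(I\times L,\d(I\times L);\R)\simeq H^{2i-1}(L;\R)$ via Lemma~\ref{B_l}, and the top-degree case is disposed of by Lemma~\ref{lm:deg_ut} because $(d\bt_{(l)})_{n+1}=0$ is preserved. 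Your scheme (absolute primitives on $I\times L$ plus a bump-function interpolation to prescribed boundary values, with the hypothesis entering through $H^{2i-1}(L;\R)=0$ and an integral-matching argument in degree $n$) can be made to work, but only if the prescribed boundary values are pinned down correctly, and that is where your write-up has a real gap.

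For the limit to satisfy $j_0^*\bt=b$, $j_1^*\bt=b'$, as Definition~\ref{dfn_g_equiv} demands, the values you impose at stage $l$ must be exactly the coefficient discrepancies $b_j=[\lambda_j]\big(b-j_0^*\bt_{(l)}\big)$ and $b'_j=[\lambda_j]\big(b'-j_1^*\bt_{(l)}\big)$: an error committed at valuation level $E_{l+1}$ is never repaired later, since all subsequent corrections have strictly larger valuation. Saying ``there exist $b_j,b'_j$ with $db_j=-o_j$, $db'_j=-o'_j$'' does not determine them -- two primitives differ by a closed form, and closed discrepancies are exactly what the cohomological hypothesis and the degree-$n$ bookkeeping must control. (That the discrepancies above do satisfy $db_j=-o_j$ follows from $\mg(e^b)=c\cdot 1$ and $j_0^*\bt_{(l)}\equiv b \pmod{F^{E_l}C}$, which is the correct replacement for your appeal to ``the obstructions occurring in the proofs that $b,b'$ are bounding'' -- no such canonical obstructions are attached to the given chains.) The fatal point is the critical case $1-\deg\lambda_j=n$, where you set $b_j=b'_j=0$, importing the convention of Proposition~\ref{prop:exist}. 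With that choice no correction ever alters the degree-$n$ part of the boundary restrictions, so $(j_0^*\bt)_n=a\cdot\bar{b}_0$, which in general differs from $(b)_n$ as a form (they agree only in cohomology); the limit is then a pseudo-isotopy between two \emph{other} bounding chains, not between $b$ and $b'$, and gauge equivalence of the given pairs does not follow (transitivity of $\sim$ is not available to you). The repair is exactly the integral-matching argument you sketch, applied with the correct $b_j,b'_j$: one finds
\[
\int_L\eta_j-\int_L\eta'_j=\int_{I\times L}\Big([\lambda_j](d\bt_{(l)})-\ot_j\Big),
\]
which vanishes by Proposition~\ref{cl:qt_no_top_deg} together with $i^*\gt=0$ and the fact that all form components of $\bt_{(l)}$ have odd degree, using $\int_Lb=\int_Lb'$; hence $[\eta_j]=[\eta'_j]$ (equality of classes, not their vanishing, is what the interpolation needs) and the construction closes. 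As written, however, the degree-$n$ case -- the point-like part of the bounding chain, which is the heart of the statement -- is mishandled.
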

The proof of Proposition~\ref{prop_g_equiv} is given toward the end of the section based on the construction detailed in the following.

\begin{lm}\label{B_l}
$H^{j}(I\times L, \d(I\times L);\R) \simeq H^{j-1}(L;\R).$
\end{lm}
\begin{proof}
Abbreviate $M:=I\times L$.
Observe that $M$ deformation retracts to $L$, and $\d M\simeq L\sqcup L$. Substituting this in the long exact sequence
\[
\cdots \rarr H^{j-1}(\d M)\rarr H^j (M,\d M)\rarr H^j(M)\rarr H^j(\d M)\rarr \cdots
\]
and using the injectivity of $H^j(M)\to H^j(\d M)$, we see that for all $j$,
\begin{align*}
H^j(M,\d M)\simeq&\Coker\left(H^{j-1}(M)\to H^{j-1}(\d M)\right)\\
\simeq& \Coker\left(H^{j-1}(L)\to H^{j-1}(L)^{\oplus 2}\right)
\simeq H^{j-1}(L).
\end{align*}
\end{proof}

\begin{lm}\label{lm:ob1}
Suppose $\alpha \in A^1(I\times L),\,\alpha|_{\d(I\times L)} = 0,$ and $d\alpha = 0.$ Then, there exists $\eta \in A^0(I\times L),\,\eta|_{\d(I\times L)} = 0,$ such that $d \eta = \alpha + r\, dt$ for some $r\in \R.$
\end{lm}
\begin{proof}
By Lemma~\ref{B_l}, we have $H^1(I \times L,\d(I\times L);\R) = \R,$ so it is generated by the class~$[dt].$ Consequently, there exists $r \in \R$ such that $[\alpha + r\,dt] = 0 \in H^1(I \times L,\d(I\times L);\R).$ Thus, there exists $\eta \in A^0(I\times L)$ such that $d\eta = \alpha + r\,dt$ and $\eta|_{\d(I\times L)} = 0.$
\end{proof}

Fix a \sababa{} multiplicative monoid $G\subset R$ ordered as in~\eqref{eq:list}.
Suppose $\gt\in (\mI_Q{\mD})_2$ is closed. Let $l\ge 0,$ and suppose we have $\bt_{(l-1)}\in \mC$ such that $G(\bt_{(l-1)})\subset G$ and $\deg_{\mC}\bt_{(l-1)}=1.$
If $l\ge 1,$ assume in addition that
\begin{gather*}
\mgt(e^{\bt_{(l-1)}})\equiv \ct_{(l-1)}\cdot 1\pmod{F^{E_{l-1}}\mC}, \quad \ct_{(l-1)}\in(\mI_R\mR)_2, \quad d\ct_{(l-1)}=0.
\end{gather*}
Define the obstruction chains $\ot_j \in A^*(I\times L)$ by
\begin{equation}\label{eq:ojt_dfn}
\ot_j:=[\lambda_j](\mgt(e^{\bt_{(l-1)}})),\qquad j=\kappa_{l-1}+1,\ldots,\kappa_l.
\end{equation}

\begin{lm}\label{lm:ojt_closed}
$d\ot_j=0.$
\end{lm}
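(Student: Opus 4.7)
The plan is to mirror the proof of Lemma~\ref{oj_closed} verbatim in the pseudo-isotopy setting, substituting $\qt,\mgt,\ct_{(l-1)},\bt_{(l-1)},\ot_j$ for the corresponding objects on $C$, and using the propositions of Section~\ref{pseudoisot} in place of their $C$-analogs.

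First I would invoke the $A_\infty$ relations for $\mgt$ from Proposition~\ref{cl:mgt_str} in the compact form
\[
\mgt\bigl(e^{\bt_{(l-1)}}\otimes\mgt(e^{\bt_{(l-1)}})\otimes e^{\bt_{(l-1)}}\bigr)=0,
\]
which is legitimate because $\deg_{\mC}\bt_{(l-1)}=1$ so shifted degrees are even and no reordering signs arise. Then I would substitute the inductive hypothesis together with the definition~\eqref{eq:ojt_dfn}, namely
\[
\mgt(e^{\bt_{(l-1)}})\;\equiv\;\ct_{(l-1)}\cdot 1+\sum_{j=\kappa_{l-1}+1}^{\kappa_l}\lambda_j\ot_j\pmod{F^{E_l}\mC},
\]
to obtain
\[
0\;\equiv\;\mgt\bigl(e^{\bt_{(l-1)}}\otimes\ct_{(l-1)}\cdot 1\otimes e^{\bt_{(l-1)}}\bigr)+\sum_{j=\kappa_{l-1}+1}^{\kappa_l}\mgt\bigl(e^{\bt_{(l-1)}}\otimes\lambda_j\ot_j\otimes e^{\bt_{(l-1)}}\bigr)\pmod{F^{E_l}\mC}.
\]

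Next, the first summand vanishes by the unit property, Proposition~\ref{cl:qt_unit}, since $\ct_{(l-1)}$ is a scalar in $R$ and the only surviving contributions from inserting $1$ cancel in pairs (or for $k+1\ge 3$ are zero). For the remaining sum I would invoke the energy zero property, Proposition~\ref{cl:qt_zero}, and observe that all $\beta\ne\beta_0$ contributions carry an extra factor $T^\beta$ of positive valuation, while all $\beta_0$ contributions from $\qt_{2,0}^{\beta_0}$ or $\qt_{0,1}^{\beta_0}$ necessarily pair $\lambda_j\ot_j$ with either another $\bt_{(l-1)}\in\mI_R\mC$ or with $\gt\in\mI_Q\mD$, which also have positive valuation. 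Hence the only term not in $F^{E_l}\mC$ is $\qt_{1,0}^{\beta_0}(\lambda_j\ot_j)=\lambda_j d\ot_j$, yielding
\[
\sum_{j=\kappa_{l-1}+1}^{\kappa_l}\lambda_j\,d\ot_j\;\equiv\;0\pmod{F^{E_l}\mC}.
\]
Since $\nu(\lambda_j)=E_l$ for each $j$ in this range, the monomials $\lambda_j$ survive in $\mC/F^{E_l}\mC$ and are $R$-linearly independent, so I may extract coefficients to conclude $d\ot_j=0$.

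The only step requiring a moment of attention is the filtration bookkeeping in the energy-zero analysis: one must check that mixed wedge products of $\lambda_j\ot_j$ with $\bt_{(l-1)}$ (arising from $\qt_{2,0}^{\beta_0}$) and all positive-energy contributions genuinely land in $F^{E_l}\mC$. This is immediate from $\nu(\lambda_j)=E_l>0$ combined with $\bt_{(l-1)}\in\mI_R\mC$, so the argument goes through without any genuine obstacle; the whole proof is a direct translation of the $C$-level argument.
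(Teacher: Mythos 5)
Your proposal is correct and is essentially the paper's own argument: the paper proves Lemma~\ref{lm:ojt_closed} by declaring it "similar to Lemma~\ref{oj_closed}," i.e.\ exactly the transplantation you carry out, using Proposition~\ref{cl:mgt_str} for the $A_\infty$ relation, Proposition~\ref{cl:qt_unit} for the unit term, and Proposition~\ref{cl:qt_zero} plus the filtration bookkeeping to isolate $\qt^{\beta_0}_{1,0}(\lambda_j\ot_j)=\lambda_j\,d\ot_j$. Your explicit check that the mixed $\qt^{\beta_0}_{2,0}$ and positive-energy terms lie in $F^{E_l}\mC$ is just the detail the paper leaves implicit, so there is nothing genuinely different here.
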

The proof is by an argument similar to that of Lemma~\ref{oj_closed}.

\begin{lm}\label{lm:ut_even}
$|\ot_j|=2-\deg\lambda_j$.
\end{lm}
The proof is similar to that of Lemma~\ref{lm:u_even}.

\begin{lm}\label{lm:deg_ut}
If $\deg \lambda_j = 1-n$ and $(d\bt_{(l-1)})_{n+1}=0$, then $\ot_j=0.$
\end{lm}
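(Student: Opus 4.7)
The plan is to directly compute the top form-degree part of $\mgt(e^{\bt_{(l-1)}})$ using the top-degree restriction of Proposition~\ref{cl:qt_no_top_deg} together with the energy-zero formulas of Proposition~\ref{cl:qt_zero}. By~\eqref{eq:ojt_dfn}, $\ot_j$ is the $\lambda_j$-coefficient of $\mgt(e^{\bt_{(l-1)}})$. Since $|\ot_j|=n+1$ is the top form degree on $I\times L$, only the degree-$(n+1)$ component of $\mgt(e^{\bt_{(l-1)}})$ matters, so it suffices to show $(\mgt(e^{\bt_{(l-1)}}))_{n+1}=0$.

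Expanding $\mgt_k = \sum_l \frac{1}{l!}\qt_{k,l}(\,\cdot\,;\gt^{\otimes l})$ and applying Proposition~\ref{cl:qt_no_top_deg}, the only triples $(k,l,\beta)$ producing a nonzero contribution in form degree $n+1$ are $(1,0,\beta_0)$, $(0,1,\beta_0)$, and $(2,0,\beta_0)$. Using Proposition~\ref{cl:qt_zero}, the first gives $(d\bt_{(l-1)})_{n+1}$, which vanishes by hypothesis; the second gives $(-1)^{|\gt|+1}\gt|_{I\times L}$, which vanishes because $\gt\in\mD=A^*(I\times X, I\times L;Q)$ restricts to zero on $I\times L$; and the third gives the wedge term $(-1)^{|\bt_{(l-1)}|}(\bt_{(l-1)}\wedge\bt_{(l-1)})_{n+1}$.

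To handle the wedge term, write $\bt_{(l-1)}=\sum_i \lambda_i\alpha_i$ with $\alpha_i$ form-homogeneous. Since $\deg_{\mC}\bt_{(l-1)}=1$ and each $\deg\lambda_i$ is even ($\deg s = 1-n$ is even as $n$ is odd, $\deg t_j$ is even by hypothesis, and $\deg T^\beta=\mu(\beta)$ is even by the relative spin structure, cf.\ Lemma~\ref{lm:u_even}), each $|\alpha_i|$ is odd. The $\lambda_j$-coefficient of $(\bt_{(l-1)}\wedge\bt_{(l-1)})_{n+1}$ is then a sum over ordered pairs $(i,k)$ with $\lambda_i\lambda_k=\lambda_j$ and $|\alpha_i|+|\alpha_k|=n+1$ of signed terms $\alpha_i\wedge\alpha_k$. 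Off-diagonal pairs $(i,k)$ and $(k,i)$ cancel because odd forms anticommute ($\alpha_i\wedge\alpha_k=-\alpha_k\wedge\alpha_i$ when both are odd), and diagonal terms vanish since $\alpha_i\wedge\alpha_i=0$ when $|\alpha_i|$ is odd. Thus $(\bt_{(l-1)}\wedge\bt_{(l-1)})_{n+1}=0$, yielding $\ot_j=0$.

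The main obstacle is the sign bookkeeping in the wedge cancellation, in particular tracking the sign $(-1)^{|\alpha_i|}$ inherited from $\qt_{2,0}^{\beta_0}$ across the swap $(i,k)\leftrightarrow(k,i)$; once the parity of each $|\alpha_i|$ is pinned down via the even-grading of $R$, the cancellation is a direct computation.
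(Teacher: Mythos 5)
Your proof is correct and follows essentially the same route as the paper: apply Proposition~\ref{cl:qt_no_top_deg} to reduce $(\mgt(e^{\bt_{(l-1)}}))_{n+1}$ to the exceptional triples, identify them via Proposition~\ref{cl:qt_zero}, and kill them using the hypothesis $(d\bt_{(l-1)})_{n+1}=0$ and the fact that $\gt$ lies in the relative complex. The only difference is that you explicitly dispose of the $(2,0,\beta_0)$ wedge term via the odd-form-degree parity of $\bt_{(l-1)}$ (even grading of $R$ since $n$ is odd and $\mu$ is even), whereas the paper drops this term without comment; your extra care here is sound and harmless.
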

\begin{proof}
By definition,
\[
(\ot_j)_{n+1}=\left([\lambda_j](\mgt(e^{\bt_{(l-1)}}))\right)_{n+1}.
\]
By Propositions~\ref{cl:qt_no_top_deg} and~\ref{cl:qt_zero},
\begin{multline*}
(\mgt(e^{\bt_{(l-1)}}))_{n+1} = (\qt_{0,1}^{\beta_0}(\gt)+\qt_{1,0}^{\beta_0}(\bt_{(l-1)})
+ \qt_{2,0}^{\beta_0}(\bt_{(l-1)},\bt_{(l-1)}))_{n+1}= \\
=(-i^*\gt+d\bt_{(l-1)} - \bt_{(l-1)}\wedge \bt_{(l-1)})_{n+1}=0.
\end{multline*}
Therefore $(\ot_j)_{n+1}=0.$
\end{proof}

\begin{lm}\label{lm:ut_relative}
Let $i=0$ or $i=1.$
Write
\[
b_{(l-1)}=j_i^*\bt_{(l-1)}, \qquad \gamma=j_i^*\gt.
\]
Suppose
\begin{gather*}
\mg(e^{b_{(l-1)}})\equiv c_{(l)}\cdot 1\pmod{F^{E_{l}}C},\quad c_{(l)}\in(\mI_R)_2.
\end{gather*}
If $\deg\lambda_j\ne 2$, then $j_i^*\ot_j=0$. If $\deg\lambda_j=2$, then $\ot_j=\ct_j\cdot 1$ with $\ct_j =  [\lambda_j](c_{(l)}).$
\end{lm}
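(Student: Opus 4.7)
The plan is to pull the claim back via $j_i^*$ and then leverage connectedness of $I\times L$ together with the closedness of the obstruction chains.

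First, I would apply Lemma~\ref{lm:pseudo} to the entire operation, giving
\[
j_i^*\mgt(e^{\bt_{(l-1)}}) = \mg(e^{j_i^*\bt_{(l-1)}}) = \mg(e^{b_{(l-1)}}).
\]
Since $j_i^*$ is $R$-linear, it commutes with the coefficient extraction $[\lambda_j]$, so
\[
j_i^*\ot_j = [\lambda_j]\bigl(\mg(e^{b_{(l-1)}})\bigr).
\]
Next, because $j\in\{\kappa_{l-1}+1,\dots,\kappa_l\}$ we have $\nu(\lambda_j)=E_l$, so $\lambda_j$ cannot appear in any element of $F^{E_l}C$. Consequently the hypothesis $\mg(e^{b_{(l-1)}})\equiv c_{(l)}\cdot 1\pmod{F^{E_l}C}$ yields
\[
[\lambda_j]\bigl(\mg(e^{b_{(l-1)}})\bigr) = [\lambda_j](c_{(l)})\cdot 1.
\]

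Now split on the degree of $\lambda_j$. If $\deg\lambda_j\ne 2$, then since $c_{(l)}\in(\mI_R)_2$ is purely of $R$-degree $2$, we have $[\lambda_j](c_{(l)})=0$, hence $j_i^*\ot_j=0$, which is the first assertion.

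If $\deg\lambda_j=2$, set $\ct_j:=[\lambda_j](c_{(l)})\in\R$. By Lemma~\ref{lm:ut_even} we have $|\ot_j|=2-\deg\lambda_j=0$, so $\ot_j$ is a function on $I\times L$ (with scalar coefficient). By Lemma~\ref{lm:ojt_closed} it is closed, and since $I\times L$ is connected (as $L$ is connected), $\ot_j$ must be a constant multiple of $1$. The value of that constant is determined by pulling back to either endpoint: $j_i^*\ot_j=\ct_j\cdot 1$ forces $\ot_j=\ct_j\cdot 1$, establishing the second assertion. No step looks hard; the only mildly delicate point is keeping straight that $[\lambda_j]$ annihilates $F^{E_l}C$ because $\nu(\lambda_j)=E_l$ sits exactly at the boundary of the filtration, and this is precisely what makes the hypothesis modulo $F^{E_l}C$ strong enough to pin down the $\lambda_j$-coefficient.
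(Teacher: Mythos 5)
Your proof is correct and follows essentially the same route as the paper: pull back via Lemma~\ref{lm:pseudo}, extract the $\lambda_j$-coefficient using that the hypothesis holds modulo $F^{E_l}C$ while $\nu(\lambda_j)=E_l$, and in the $\deg\lambda_j=2$ case use Lemmas~\ref{lm:ut_even} and~\ref{lm:ojt_closed} plus connectedness to see $\ot_j$ is a constant multiple of $1$, pinned down by restriction to an endpoint. The only difference is that you make explicit the valuation argument justifying the coefficient extraction, which the paper leaves implicit.
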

\begin{proof}
It follows from Lemma~\ref{lm:ut_even} and Lemma~\ref{lm:ojt_closed} that if $\deg\lambda_j=2$, then $\ot_j=\ct_j\cdot 1$ for some constant $\ct_j\in \R.$
Lemma~\ref{rem:pseudo} implies
\[
j_i^*\ot_j =j_i^*[\lambda_j](\mgt(e^{\bt_{(l-1)}}))=
[\lambda_j](\mg(e^{b_{(l-1)}}))=
[\lambda_j](c_{(l)}\cdot 1).
\]
Since $\deg c_{(l)} = 2,$ in the case $\deg \lambda_j = 2,$ we have $\ct_j=[\lambda_j](c_{(l)})$, and otherwise $j_i^*\ot_j=0$.
\end{proof}

\begin{lm}\label{lm:ut_exact}
Suppose for all $j\in\{\kappa_{l-1}+1,\ldots,\kappa_{l}\}$ such that $\deg\lambda_j\ne 2$, there exist $\bt_j\in A^{1-\deg\lambda_j}(I\times L)$ such that $(-1)^{\deg\lambda_j}d\bt_j=-\ot_j + \ct_j \, dt$ with $\ct_j \in \R.$ Then
\[
\bt_{(l)}:=\bt_{(l-1)}+\sum_{\substack{\kappa_{l-1}+1\le j\le\kappa_l \\ \deg\lambda_j\ne 2}}\lambda_j\bt_j
\]
satisfies
\begin{gather*}
\mgt(e^{\bt_{(l)}})\equiv \ct_{(l)}\cdot 1\pmod{F^{E_l}\mC},\quad \ct_{(l)}\in (\mI_R\mR)_2, \quad d\ct_{(l)} = 0.
\end{gather*}
\end{lm}
\begin{proof}
Without loss of generality, assume that $[\lambda_j](\ct_{(l-1)})=0$ for all $j=\kappa_{l-1}+1,\ldots,\kappa_{l}$. Use Proposition~\ref{cl:qt_zero} and Lemma~\ref{lm:ut_relative} to deduce
\begin{align*}
\mgt(e^{\bt_{(l)}})
\equiv &\sum_{\substack{\kappa_{l-1}+1\le j\le \kappa_{l}\\ \deg\lambda_j\ne 2}}\mt_1^{\gamma,\beta_0}(\lambda_j\bt_j)+ \mgt(e^{\bt_{(l-1)}})\\
\equiv&
\sum_{\substack{\kappa_{l-1}+1\le j\le \kappa_{l}\\ \deg\lambda_j\ne 2}}
(-1)^{\deg\lambda_j}
\lambda_jd\bt_j+
\sum_{\kappa_{l-1}+1\le j\le \kappa_{l}}\lambda_j\ot_j+\ct_{(l-1)} \cdot 1\\
=&
\sum_{\substack{\kappa_{l-1}+1\le j\le \kappa_{l}\\ \deg\lambda_j= 1}}\lambda_j \ct_j dt+
\sum_{\substack{\kappa_{l-1}+1\le j\le \kappa_{l}\\ \deg\lambda_j= 2}}\lambda_j\ct_j\cdot 1+ \ct_{(l-1)} \cdot 1\pmod{F^{E_{l}}C}.
\end{align*}
The lemma now follows with
\[
\ct_{(l)}=
\sum_{\substack{\kappa_{l-1}+1\le j\le \kappa_{l}\\ \deg\lambda_j= 1}}\lambda_j \ct_j dt+ \sum_{\substack{\kappa_{l-1}+1\le j\le \kappa_{l}\\ \deg\lambda_j= 2}}\lambda_j \ct_j+\ct_{(l-1)}\;\in (\mI_R\mR)_2.
\]
\end{proof}

\begin{proof}[Proof of Proposition \ref{prop_g_equiv}]
We construct a pseudoisotopy $(\mgt,\bt)$ from $(\mg,b)$ to $(\m^{\gamma'},b').$
Let $\{J_t\}_{t \in I}$ be a path from $J$ to $J'$ in $\J$.
Let $\xi\in (\mI_QD)_1$
be such that
\[
\gamma'-\gamma=d\xi
\]
and define
\[
\gt:=\gamma+t(\gamma'-\gamma)+dt\wedge\xi\in (\mI_Q\mD)_2.
\]
Then
\begin{gather*}
d\gt=dt\wedge\d_t(\gamma+t(\gamma'-\gamma))-dt\wedge d\xi=0,\\
j_0^*\gt=\gamma,\quad j_1^*\gt=\gamma',\quad i^*\gt=0.
\end{gather*}

We now move to constructing $\bt$. Write $G(b,b')$ in the form of a list as in~\eqref{eq:list}.
Since $\int_Lb'=\int_Lb,$ there exists $\eta\in \mI_RC$ such that $G(\eta)\subset G(b,b')$, $|\eta|=n-1$, and $d\eta=(b')_n-(b)_n.$
Write
\[
\bt_{(-1)}:=b+t(b'-b)+dt\wedge\eta\in\mC.
\]
Then
\begin{gather*}
(d\bt_{(-1)})_{n+1}=dt\wedge(b'-b)_n-dt\wedge d\eta=0,\\
j_0^*\bt_{(-1)}=b,\qquad j_1^*\bt_{(-1)}=b'.
\end{gather*}
Let $l\ge 0.$ Assume by induction that we have constructed $\bt_{(l-1)}\in \mC$ with $\deg_{\mC}\bt_{(l-1)}=1,$ $G(\bt_{(l-1)})\subset G(b,b'),$ such that
\[
(d\bt_{(l-1)})_{n+1}=0,\qquad
j_0^*\bt_{(l-1)}=b,\qquad j_1^*\bt_{(l-1)}=b',
\]
and if $l\ge 1$, then
\[
\mgt(e^{\bt_{(l-1)}})\equiv \ct_{(l-1)}\cdot 1\pmod{F^{E_{l-1}}\mC}, \qquad \ct_{(l-1)}\in(\mI_R\mR)_2, \quad d\ct_{(l-1)} = 0.
\]
Define the obstruction chains $\ot_j$ by ~\eqref{eq:ojt_dfn}.
By Lemma~\ref{lm:ut_even} we have $\ot_j\in A^{2-\deg\lambda_j}(I\times L),$ by Lemma~\ref{lm:ojt_closed} we have $d\ot_j=0$, and by Lemma~\ref{lm:ut_relative} we have $\ot_j|_{\d(I\times L)}=0$ whenever $\deg \lambda_j \ne 2$.
To apply Lemma~\ref{lm:ut_exact}, it is necessary to choose forms $\bt_j\in A^{1-\deg\lambda_j}(I\times L)$ such that $(-1)^{\deg\lambda_j}d\bt_j=-\ot_j+ \ct_j \,dt,\, \ct_j \in \R,$ for $j\in\{\kappa_l+1,\ldots,\kappa_{l+1}\}$ such that $\deg\lambda_j\ne 2.$
If $\deg \lambda_j = 1-n,$ Lemma~\ref{lm:deg_ut} implies that $\ot_j = 0,$ so we choose $\bt_j = 0.$
If $\deg \lambda_j = 1,$ Lemma~\ref{lm:ob1} gives $\bt_j$ such that $(-1)^{\deg\lambda_j}d \bt_j = -\ot_j + \ct_j\, dt,\,\ct_j \in \R,$ and $\bt_j|_{\d(I\times L)} = 0.$
If $1-n < \deg \lambda_j< 1,$ then $1 < |\ot_j| < n+1.$ So, the cohomological assumption on $L$ and Lemma~\ref{B_l} imply that $[\ot_j] = 0 \in H^*(I\times L,\d(I\times L);\R).$ Thus, we choose $\bt_j$ such that $(-1)^{\deg\lambda_j}d\bt_j = -\ot_j$ and $\bt_j|_{\d(I\times L)} = 0.$ For other possible values of $\deg \lambda_j,$ degree considerations imply $\ot_j = 0,$ so we choose $\bt_j =0.$
By Lemma~\ref{lm:ut_exact}, the form
\[
\bt_{(l)}:=\bt_{(l-1)}+\sum_{\substack{\kappa_{l-1}+1\le j\le\kappa_l \\ \deg\lambda_j\ne 2}}\lambda_j\bt_j
\]
satisfies
\[
\mgt(e^{\bt_{(l)}})\equiv \ct_{(l)}\cdot 1\pmod{F^{E_{l}}\mC}, \qquad \ct_{(l)}\in(\mI_R\mR)_2, \qquad d\ct_{(l)} = 0.
\]
Since we have chosen $\bt_j =0$ when $\deg \lambda_j = 1-n,$ it follows that
\[
(d\bt_{(l)})_{n+1}= (d\bt_{(l-1)})_{n+1}=0.
\]
Since $\bt_j|_{\d(I\times L)}=0$ for all $j\in\{\kappa_l+1,\ldots,\kappa_{l+1}\}$ such that $\deg\lambda_j\ne 2$, we have
\[
j_0^*\bt_{(l)}=j_0^*\bt_{(l-1)}=b,\qquad j_1^*\bt_{(l)}=j_1^*\bt_{(l-1)}
= b'.
\]

Taking the limit $\bt=\lim_l\bt_{(l)},$ we obtain a bounding chain for $\mgt$ that satisfies $j_0^*\bt=b$ and $j_1^*\bt=b'$. So, $(\mgt,\bt)$ is a pseudoisotopy from $(\mg,b)$ to $(\m^{\gamma'},b').$
\end{proof}

We are now ready to prove the classification result for bounding chains.
\begin{proof}[Proof of Theorem~\ref{thm1}]
Surjectivity of $\varrho$ is guaranteed by Proposition~\ref{prop:exist}, and injectivity by Proposition~\ref{prop_g_equiv}.
\end{proof}

\section{Real bounding pairs}\label{sec:real}

\subsection{Preliminaries}\label{ssec:prelim}
Let $\phi:X\to X$ be an anti-symplectic involution, that is, $\phi^*\omega=-\omega.$ Let $L\subset \fix(\phi)$ and $J\in\J_\phi$. In particular, $\phi^*J=-J$.
For the entire Section~\ref{sec:real}, we take $\ssly_L \subset  H_2(X,L;\Z)$ with $\Im(\Id+\phi_*) \subset \ssly_L,$ so $\phi_*$ acts on $\sly_L = H_2(X,L;Z)/\ssly_L$ as $-\Id.$ Also, we take $\deg t_j\in 2\Z$ for all $j=0,\ldots,N$.
Denote by $\tilde\phi_\beta$ the involution induced on $\M_{k+1,l}(\beta)$, defined as follows. Given a nodal Riemann surface with boundary $\Sigma$ with complex structure $j,$ denote by $\overline \Sigma$ a copy of $\Sigma$ with the complex structure $-j.$ Denote by $\psi_\Sigma: \overline \Sigma \to\Sigma$ the anti-holomorphic map given by the identity map on points. Then
\begin{multline*}
\tilde\phi_\beta\,[u : (\Sigma,\d\Sigma)\to (X,L),\vec{z}=(z_0,z_1,\ldots,z_k),\vec{w}=(w_1,\ldots,w_l)]:=\\
=[\phi\circ u\circ \psi_\Sigma, (\psi^{-1}_\Sigma(z_0),\psi^{-1}_\Sigma(z_k),\ldots,\psi^{-1}_\Sigma(z_1)) ,(\psi^{-1}_\Sigma(w_1),\ldots, \psi^{-1}_\Sigma(w_l))].
\end{multline*}
Thus, for $[u:(\Sigma,\d \Sigma)\to(X,L),\vec{z},\vec{w}]\in\M_{k+1,l}(\beta),$ recalling the choice of $\sly,$ we have
\[
[(\phi\circ u\circ \psi_\Sigma)_*[\overline\Sigma,\d \overline\Sigma]]=[-\phi_*u_*[\Sigma,\d \Sigma]]=-\phi_*\beta=\beta.
\]
Therefore, $\tilde{\phi}_\beta$ indeed acts on $\M_{k+1,l}(\beta)$.
For $\sigma \in S_k$ and $\alpha = (\alpha_1,\ldots,\alpha_k)$ with $\alpha_j \in C,$ let
\[
s_\sigma^{[1]}(\alpha)=\sum_{\substack{i<j\\ \sigma(i)>\sigma(j)}} (\deg\alpha_{\sigma(i)}+1)(\deg\alpha_{\sigma(j)}+1).
\]
Let $\tau\in S_k$ denote the permutation $(k,k-1,\ldots,1).$

\begin{prop}\label{prop:q_sgn_c}
Let $\alpha = (\alpha_1,\ldots,\alpha_k)$ with $\alpha_j \in A^*(L)$ and let $\eta_1,\ldots,\eta_l \in A^*(X).$ Then
\[
\qkl^{\beta} (\alpha_1,\ldots,\alpha_k;\eta_1,\ldots,\eta_l)=
(-1)^{sgn(\tilde{\phi}_\beta)+s_\tau^{[1]}(\alpha) +\frac{k(k-1)}{2}}
\qkl^{\beta}(\alpha_k,\ldots,\alpha_1; \phi^*\eta_1,\ldots,\phi^*\eta_l).
\]
\end{prop}

\begin{proof}
First compute the contribution to the change on the differential form level.
\begin{align*}
(evb_0)_*(\wedge_{j=1}^k&evb_j^*\alpha_{k+1-j}\wedge \wedge_{j=1}^levi_j^*\phi^*\eta_{j})= \\
&=(evb_0)_* (\wedge_{j=1}^k(evb_{k+1-j}\circ\tilde{\phi}_\beta)^*\alpha_{k+1-j} \wedge \wedge_{j=1}^l(evi_j\circ\tilde\phi_\beta)^*\eta_{j})\\
&=(evb_0)_* \tilde\phi_\beta^*(\wedge_{j=1}^kevb_{k+1-j}^*\alpha_{k+1-j} \wedge\wedge_{j=1}^levi_j^*\eta_{j})\\
&=(-1)^{sgn(\tilde{\phi}_\beta) +\sum_{\substack{i<j\\\tau(i)>\tau(j)}} |\alpha_{\tau(i)}||\alpha_{\tau(j)}|}
(evb_0)_*(\wedge_{j=1}^kevb_j^*\alpha_j\wedge \wedge_{j=1}^levi_j^*\eta_{j}).
\end{align*}
Now compute the sign coming from the definition of the $\q$-operators.
\begin{multline*}
\varepsilon(\alpha_k,\ldots,\alpha_1)=
\sum_{j=1}^kj(|\alpha_{k+1-j}|+1)+1
=\sum_{j=1}^k(k+1-j)(|\alpha_j|+1)+1\equiv \\
\equiv (k+1)\sum_{j=1}^k(|\alpha_j|+1) +\varepsilon(\alpha)\equiv(k+1)\sum_{j=1}^k|\a_j| +k(k+1) +\varepsilon(\alpha)\equiv\\
\equiv (k+1)\sum_{j=1}^k|\a_j| +\varepsilon(\alpha) \pmod 2.
\end{multline*}
Since
\begin{align*}
s_\tau^{[1]}(\alpha)
=&
\sum_{\substack{i<j\\\tau(i)>\tau(j)}} |\alpha_{\tau(i)}||\alpha_{\tau(j)}|
+
\sum_{\substack{i<j\\\tau(i)>\tau(j)}} (| \alpha_{\tau(i)}|+|\alpha_{\tau(j)}|)
+
sgn(\tau)\\
=&
\sum_{\substack{i<j\\\tau(i)>\tau(j)}} |\alpha_{\tau(i)}||\alpha_{\tau(j)}|
+
\sum_{i=1}^k(k-1)|\alpha_i|
+
\frac{k(k-1)}{2}\\
\equiv &
\sum_{\substack{i<j\\\tau(i)>\tau(j)}} |\alpha_{\tau(i)}||\alpha_{\tau(j)}|
+
(k+1) \sum_{j=1}^k|\a_j|
+
\frac{k(k-1)}{2} \pmod 2,
\end{align*}
the result follows.
\end{proof}

Recall that the relative spin structure $\s$ on $L$ determines a class $w_\s \in H^2(X;\Z/2\Z)$ such that $w_2(TL) = i^* w_\s.$ Recall also the definition of the doubling map $\chi: \sly \to H_2(X;\Z)$ from~\eqref{eq:chi}.

\begin{prop}[Sign of conjugation on the moduli space]\label{prop:sgn_phit}
The sign of the involution $\tilde\phi_\beta:\M_{k+1,l}(\beta)\to \M_{k+1,l}(\beta)$ is given by
\[
sgn(\tilde\phi_\beta)
\equiv \frac{\mu(\beta)}{2} +  w_\s(\chi(\beta)) +k+l+1+\frac{k(k-1)}{2} \pmod 2.
\]
\end{prop}
\begin{proof}
Denote by $\M_{k+1,l}^S(\beta)$ the moduli space of $J$-holomorphic stable maps $u:(D^2,\d D^2) \to (X,L)$ of degree $\beta \in \sly$ with $k+1$ unordered boundary points and $l$ interior marked points. Thus, $\M_{k+1,l}(\beta)$ is an open and closed subset of $\M^S_{k+1,l}(\beta).$
The space $\M_{k+1,l}^S(\beta)$ carries a natural orientation induced by the spin structure on $L$ as in~\cite[Chapter~8]{FOOO} or~\cite[Chapter~5]{SolomonThesis}. Denote by
\[
\varphi_\sigma:\M_{k+1,l}^S(\beta)\lrarr \M_{k+1,l}^S(\beta)
\]
the diffeomorphism
corresponding to relabeling boundary marked points by a permutation $\sigma \in S_{k+1}$. The orientation of $\M_{k+1,l}^S(\beta)$ changes by $(-1)^{sgn(\sigma)}$ under $\varphi_\sigma$.

Let
\[
\tilde\phi^S_\beta:\M_{k+1,l}^S(\beta)\lrarr\M_{k+1,l}^S(\beta)
\]
be given by
\begin{multline*}
\tilde\phi^S_\beta\,[u:(\Sigma,\d\Sigma)\to(X,L),\vec{z}=(z_0,z_1,\ldots,z_k),\vec{w}=(w_1,\ldots,w_l)]:=\\
=[\phi\circ u\circ \psi_\Sigma, (\psi^{-1}_\Sigma(z_{0}),\psi^{-1}_\Sigma(z_{1}),\ldots,\psi^{-1}_\Sigma(z_{k})) ,(\psi^{-1}_\Sigma(w_1),\ldots, \psi^{-1}_\Sigma(w_l))].
\end{multline*}
By~\cite[Proposition 5.1]{SolomonThesis}, the sign of $\tilde\phi^S$ is given by
\[
sgn(\tilde\phi^S_\beta)
\equiv \frac{\mu(\beta)}{2} + w_\s(\chi(\beta)) +k+l+1
\pmod 2.
\]
The additional summand $(n+1)\frac{k(k-1)}{2}$ in the sign of Proposition 5.1 of~\cite{SolomonThesis} arises from the twisting of the orientation of the moduli space $\M_{k+1,l}^S(\beta)$ that results from Definition 3.2 of~\cite{SolomonThesis}. Thus it is not relevant here. Let $\sigma=(0,k,k-1,\ldots,1)\in S_{k+1}$. Then $sgn(\sigma)=\frac{k(k-1)}{2}$ and
$\tilde\phi_\beta=\varphi_\sigma\circ\tilde\phi^S_\beta|_{\M_{k+1,l}(\beta)}$.
\end{proof}

\begin{rem}
Proposition~\ref{prop:sgn_phit} is almost a special case of Theorem 1.4 in~\cite{FOOOinv}. However, since~\cite{FOOOinv} considers fiber products of the moduli space $\M_{k+1,l}(\beta)$ with singular cochains rather than the moduli space itself, we include a proof of the proposition for the reader's convenience. Alternately, one can deduce the proposition from the proof in~\cite{FOOOinv}. That proof treats the fully general setting of the virtual fundamental class.
\end{rem}

\begin{cor}\label{sgn2}
Let $\alpha = (\alpha_1,\ldots,\alpha_k)$ with $\alpha_j \in C$ and let $\eta_1,\ldots,\eta_l \in A^*(X;Q).$ Then
\begin{multline*}
\phi^*\qkl^\beta(\alpha_1,\ldots,\alpha_k; \eta_1,\ldots,\eta_l) =\\
=(-1)^{\frac{\mu(\beta)}{2} +w_\s(\chi(\beta)) +l+k+1+s_\tau^{[1]}(\alpha)} \qkl^\beta(\phi^*\alpha_k,\ldots,\phi^*\alpha_1; \phi^*\eta_1,\ldots,\phi^*\eta_l).
\end{multline*}
\end{cor}
\begin{proof}
Combine Propositions~\ref{lm:qlinear},~\ref{prop:q_sgn_c}, and~\ref{prop:sgn_phit}.
\end{proof}

\subsection{The real spin case}\label{ssec:spin}

Recall the action of $\phi^*$ on $\L,Q,R,C,D,$ defined in equation~\eqref{eq:phi*ext} and the definition of real elements~\eqref{eq:relt}.
\begin{prop}\label{prop:sd}
Suppose the relative spin structure $\s$ on $L$ is in fact a spin structure, and $\gamma \in \mathcal{I}_Q D$ is real. Then, for all $\alpha = (\alpha_1,\ldots,\alpha_k)$ with $\alpha_j \in C,$ we have
\[
\phi^* \mg_k(\alpha_1,\ldots,\alpha_k) = (-1)^{1+k + s_\tau^{[1]}(\alpha)}\mg_k(\phi^*\alpha_k,\ldots,\phi^*\alpha_1).
\]
\end{prop}
\begin{proof}
Since $\s$ is a spin structure, $w_\s = 0.$ Using Corollary~\ref{sgn2} and the fact that $\gamma$ is real, we calculate
\begin{align*}
\phi^* \mg_k(\alpha_1,\ldots,\alpha_k) &= \sum_{\beta \in \sly,\,l \geq 0}(-1)^{\frac{\mu(\beta)}2}\frac{T^\beta}{l!} \phi^*\qkl^\beta(\alpha_1,\ldots,\alpha_k; \gamma^{\otimes l}) \\
&= \sum_{\beta \in \sly,\,l \geq 0}(-1)^{k+1+l+s_\tau^{[1]}(\alpha)}\frac{T^\beta}{l!} \qkl^\beta(\phi^*\alpha_k,\ldots,\phi^*\alpha_1;(\phi^*\gamma)^{\otimes l})\\
&= \sum_{\beta \in \sly,\,l \geq 0}(-1)^{k+1+s_\tau^{[1]}(\alpha)}\frac{T^\beta}{l!} \qkl^\beta(\phi^*\alpha_k,\ldots,\phi^*\alpha_1;\gamma^{\otimes l}) \\
&= (-1)^{1+k + s_\tau^{[1]}(\alpha)}\mg_k(\phi^*\alpha_k,\ldots,\phi^*\alpha_1).
\end{align*}
\end{proof}

\begin{lm}\label{lm:reob}
Suppose $\s$ is a spin structure and $\gamma \in \mathcal{I}_Q D,\,b \in C,$ are real with $\deg b = 1.$ Then $\mg_k(b^{\otimes k})$ is real.
\end{lm}
\begin{proof}
Since $\deg b = 1,$ it follows that $s_\tau^{[1]}(b,\ldots,b)=0.$ So, by Proposition~\ref{prop:sd}, we have
\[
\phi^* \mg_k(b^{\otimes k}) = (-1)^{1+k + s_\tau^{[1]}(b,\ldots,b)}\mg_k((\phi^*b)^{\otimes k}) = - \mg_k(b^{\otimes k}).
\]
\end{proof}
\begin{rem}
It is instructive to compare Proposition~\ref{prop:sd} and Lemma~\ref{lm:reob} above to Theorem~5.1 and Lemma~6.6 of~\cite{Solomon2018} respectively. In the terminology of~\cite{Solomon2018}, Proposition~\ref{prop:sd} says that the $A_\infty$ algebra $(C,\mg)$ is $\phi^*$ self-dual.
\end{rem}

\begin{lm}\label{lm:reR}
Suppose $n \not \equiv 1 \pmod 4.$ An element $a \in R$ is real if and only if $\deg a \equiv 2$ or $1-n \pmod 4.$
\end{lm}
\begin{proof}
Recall that $\phi^* t_i = -t_i$ if and only if $\deg t_i \equiv 2 \pmod 4$. Moreover, $\phi^* s = -s$ and $\deg s = 1-n.$ In particular, when $n$ is even, $s^2 = 0,$ and when $n$ is odd, since $n \not\equiv 1,$ we have $\deg s = 2.$ The lemma follows.
\end{proof}

\begin{lm}\label{lm:reC}
Suppose $\alpha \in C$ is real with $\deg \alpha = d$ and $n \not \equiv 1 \pmod 4.$ Write
\[
\alpha = \sum_i f_i \alpha_i
\]
where $0\neq f_i \in R$ and $\alpha_i \in A^*(L).$ Then, $|\alpha_i| \equiv d-2$ or $d-1+n \pmod 4.$
\end{lm}
\begin{proof}
Since $\phi^*$ acts trivially on forms on $L$ and $\alpha$ is real, it follows that $f_i$ is real for all~$i.$ Since $\deg \alpha = d,$ it follows from Lemma~\ref{lm:reR} that
\[
\deg \alpha_i = d - \deg f_i \equiv d-2 \text{ or } d - 1 + n \pmod 4.
\]
\end{proof}

\begin{prop}\label{cor:spin_b_exist}
Suppose $\s$ is a spin structure, $n \not \equiv 1 \pmod 4,$ and $H^k(L;\R) = 0$ for $k \equiv 0,n+1\pmod 4, \, k \neq 0,n.$ Then for any real closed $\gamma \in (\mI_QD)_2$ and any $a \in (\mI_R)_{1-n}$, there exists a real bounding chain $b$ for $\mg$ such that
$
\int_L b = a.
$
\end{prop}
\begin{proof}
Fix $a\in (\mI_R)_{1-n}$ and a real $\gamma \in (\mI_QD)_2$. Write $G(a)$ in the form of a list as in~\eqref{eq:list}.

Take $\bar{b}_0\in A^n(L)$ any representative of the Poincar\'e dual of a point, and let $b_{(0)}:=a\cdot \bar{b}_0$.
By Lemma~\ref{lm:init}, the chain $b_{(0)}$ satisfies
\[
\mg(e^{b_{(0)}})\equiv 0=c_{(0)}\cdot 1\pmod{F^{E_0}C},\qquad c_{(0)}=0.
\]
Moreover, $\int_L b_{(0)} = a$, $db_{(0)}=0$, $\deg_{C}b_{(0)}=n+1-n=1$, and Lemma~\ref{lm:reR} implies $b_{(0)}$ is real.

Proceed by induction. Suppose we have a real $b_{(l)}\in C$ with $\deg_Cb_{(l)}=1$, $G(b_{(l)})\subset G(a),$ and
\[
(db_{(l)})_n = 0, \qquad
\int_L b_{(l)} = a, \qquad \mg(e^{b_{(l)}})\equiv c_{(l)}\cdot 1\pmod{F^{E_l}C},\qquad c_{(l)}\in (\mI_R)_2.
\]
Define obstruction chains $o_j$ by~\eqref{eq:oj_dfn}.
By Lemma~\ref{oj_closed}, we have $do_j=0$, and by Lemma~\ref{lm:u_even} we have $o_j\in A^{2-\deg\lambda_j}(L).$ To apply Lemma~\ref{lm:inductive}, it is necessary to choose forms $b_j\in A^{1-\deg\lambda_j}$ such that $(-1)^{\deg\lambda_j}db_j=-o_j$
for $j\in\{\kappa_l+1,\ldots,\kappa_{l+1}\}$ such that $\deg\lambda_j\ne 2.$
If $\lambda_j$ is not real, Lemma~\ref{lm:reob} implies that $o_j = 0$, so we choose $b_j = 0.$ If $\deg \lambda_j = 2-n,$ Lemma~\ref{lm:td} implies $o_j = 0,$ so we choose $b_j = 0.$
If $\lambda_j$ is real, then Lemma~\ref{lm:reR} implies $|o_j| \equiv 0$ or $1+n \pmod 4$. If $2-n < \deg \lambda_j< 2$, then $0 < |o_j| < n$.
So, the cohomological assumption on $L$ implies $[o_j] = 0 \in H^*(L;\R),$ and we choose $b_j$ such that $(-1)^{\deg\lambda_j}db_j = -o_j.$ For other possible values of $\deg \lambda_j,$ degree considerations imply $o_j = 0,$ so we choose $b_j =0.$

Lemma~\ref{lm:inductive} now guarantees that $b_{(l+1)}:=b_{(l)}+\sum_{\substack{\kappa_l+1\le j\le \kappa_{l+1}\\ \deg\lambda_j\ne 2}}\lambda_jb_j$
satisfies
\[
\mg(e^{b_{(l+1)}})\equiv c_{(l+1)}\cdot 1\pmod{F^{E_l}C},\qquad c_{(l+1)}\in (\mI_R)_2.
\]
Since $b_j = 0$ when $\lambda_j$ is not real and when $\deg \lambda_j = 2-n,1-n,$ it follows that $b_{(l+1)}$ is real and satisfies
\[
(db_{(l+1)})_n = (db_{(l)})_n=0, \qquad \int_Lb_{(l+1)}=\int_Lb_{(l)}=a.
\]

Thus, the inductive process gives rise to a convergent sequence $\{b_{(l)}\}_{l=0}^\infty$ where $b_{(l)}$ is real and bounding modulo $F^{E_l}C$.
Taking the limit as $l$ goes to infinity, we obtain
\[
b=\lim_l b_{(l)},\quad\deg_C b=1, \quad\int_L b = a, \quad \mg(e^b)= c\cdot 1,\quad c=\lim_lc_{(l)}\in (\mI_R)_2.
\]
\end{proof}

In order to prove the symplectic deformation invariance axiom~\eqref{ax:def} of Theorem~\ref{axioms}, we will use the following analog of Proposition~\ref{cor:spin_b_exist}, the proof of which is verbatim the same. Recall the notation $R^J, Q^J, \mI_{R^J}, \mI_{Q^J}, C^J, D^J,$ and $\m^{J,\gamma},$ introduced in Section~\ref{sssec:Jnov}.

\begin{prop}\label{cor:spin_b_existJ}
Suppose $\s$ is a spin structure, $n \not \equiv 1 \pmod 4,$ and $H^k(L;\R) = 0$ for $k \equiv 0,n+1\pmod 4, \, k \neq 0,n.$ Then for any real closed $\gamma \in (\mI_{Q^J}D^J)_2$ and any $a \in (\mI_{R^J})_{1-n}$, there exists a real bounding chain $b$ for $\m^{J,\gamma}$ such that
$
\int_L b = a.
$
\end{prop}

Proposition~\ref{cor:spin_b_exist} can be simplified in the case $n=2,3.$
\begin{prop}\label{prop:3d}
If $n =2$ or $3$, and $\gamma \in (\mI_QD)_2$ is real and closed, any form $b \in A^{n}(L)\otimes(\mI_R)_{1-n}$ is a real bounding chain for $\mg$.
\end{prop}
\begin{proof}
Since $\dim L \leq 3,$ it follows that $\s$ is a spin structure. Lemma~\ref{lm:reR} implies that $b$ is real. Since $\deg b = 1,$ it follows from Lemma~\ref{lm:reob} that $\mg(e^b)$ is real and from Proposition~\ref{deg_str_map} that $\deg \mg(e^b) = 2.$
So, Lemma~\ref{lm:reC} implies
\[
\mg(e^{b})=f\in A^0(L)\otimes (\mI_R)_2.
\]
By Propositions~\ref{cl:a_infty_m} and~\ref{cl:unit},
\begin{multline*}
d\mg(e^{b})=-\sum_{(k_1,\beta)\ne (1,\beta_0)}\m^{\gamma,\beta}_{k_1}(b^{\otimes(i-1)}\otimes \mg_{k_2}(b^{\otimes k_2})\otimes b^{\otimes(k_1-i)})=\\
=-\sum_{(k_1,\beta)\ne (1,\beta_0)}\m^{\gamma,\beta}_{k_1}(b^{\otimes(i-1)}\otimes f \otimes b^{\otimes(k_1-i)})
=(-1)^{\deg f}f\cdot(b-b)=0.
\end{multline*}
Therefore,
\[
\mg(e^{b})=c\cdot 1,\qquad c\in (\mI_R)_2.
\]
\end{proof}

The proofs of the following proposition and lemma are similar to those of Proposition~\ref{prop:sd} and Lemma~\ref{lm:reob} respectively.
\begin{prop}\label{prop:sdt}
Suppose the relative spin structure $\s$ on $L$ is in fact a spin structure, and $\gt \in \mathcal{I}_Q \mD$ is real. Then, for all $\at = (\at_1,\ldots,\at_k)$ with $\at_j \in \mC,$ we have
\[
\phi^* \mgt_k(\at_1,\ldots,\at_k) = (-1)^{1+k + s_\tau^{[1]}(\at)}\mgt_k(\phi^*\at_k,\ldots,\phi^*\at_1).
\]
\end{prop}

\begin{lm}\label{lm:reobt}
Suppose $\s$ is a spin structure and $\gt \in \mathcal{I}_Q \mD,\,\bt \in \mC,$ are real with $\deg \bt = 1.$ Then $\mgt_k(\bt^{\otimes k})$ is real.
\end{lm}

\begin{prop}\label{cor:spin_unique}
Suppose $\s$ is a spin structure, $n \not \equiv 1 \pmod 4,$ and $H^k(L;\R) = 0$ for $k \equiv 3,n\pmod 4, \, k \neq 0,n.$
Let $(\gamma,b)$ be a bounding pair with respect to $J$ and let $(\gamma',b')$ be a bounding pair with respect to $J'$, both real, such that $\varrho_\phi([\gamma,b])=\varrho_\phi([\gamma',b']).$
Then $(\gamma,b)\sim(\gamma',b')$.
\end{prop}

\begin{proof}
We construct a pseudoisotopy $(\mgt,\bt)$ from $(\mg,b)$ to $(\m^{\gamma'},b').$
Let $\{J_t\}_{t \in [0,1]}$ be a path from $J$ to $J'$ in $\J_\phi$.
Choose a real $\xi\in \mI_Q D$ with $\deg_D\xi=1$ such that
\[
\gamma'-\gamma=d\xi
\]
and define
\[
\gt:=\gamma+t(\gamma'-\gamma)+dt\wedge\xi\in \mD.
\]
Then,
\begin{gather*}
d\gt=dt\wedge\d_t(\gamma+t(\gamma'-\gamma))-dt\wedge d\xi=0,\\
j_0^*\gt=\gamma,\quad j_1^*\gt=\gamma',\quad i^*\gt=0.
\end{gather*}
Moreover, $\gt$ is real and $\deg_\mD \gt = 2.$

We now move to constructing $\bt$.
Write $G(b,b')$ in the form of a list as in~\eqref{eq:list}.
Since $\int_Lb'=\int_Lb,$ there exists $\eta\in \mI_RC$ such that $|\eta|=n-1$ and $d\eta=(b')_n-(b)_n.$
Write
\[
\bt_{(-1)}:=b+t(b'-b)+dt\wedge\eta\in\mC.
\]
Then
\begin{gather*}
(d\bt_{(-1)})_{n+1}=dt\wedge(b'-b)_n-dt\wedge d\eta=0,\\
j_0^*\bt_{(-1)}=b,\qquad j_1^*\bt_{(-1)}=b'.
\end{gather*}
Let $l\ge 0$. Assume by induction that we have constructed $\bt_{(l-1)}\in \mC$ with $\deg_{\mC}\bt=1,$ $G(\bt_{(l-1)})\subset G(b,b'),$ such that
\[
(d\bt_{(l-1)})_{n+1}=0,\qquad j_0^*\bt_{(l-1)}=b,\qquad j_1^*\bt_{(l-1)}=b',
\]
and if $l\ge 1$, then
\[
\mgt(e^{\bt_{(l-1)}})\equiv \ct_{(l-1)} \cdot 1\pmod{F^{E_{l-1}}\mC}, \qquad \ct_{(l-1)}\in(\mI_R\mR)_2, \qquad d\ct_{(l-1)} = 0.
\]
Define the obstruction chains $\ot_j$ by ~\eqref{eq:ojt_dfn}.
By Lemma~\ref{lm:ut_even} we have $\ot_j\in A^{2-\deg\lambda_j}(I\times L),$ by Lemma~\ref{lm:ojt_closed} we have $d\ot_j=0$, and by Lemma~\ref{lm:ut_relative} we have $\ot_j|_{\d(I\times L)}=0$ whenever $\deg \lambda_j \ne 2$.
To apply Lemma~\ref{lm:ut_exact}, it is necessary to choose forms $\bt_j\in A^{1-\deg\lambda_j}(I\times L)$ such that $(-1)^{\deg\lambda_j}d\bt_j=-\ot_j+\ct_j\, dt, \ct_j \in \R,$ for $j\in\{\kappa_{l-1}+1,\ldots,\kappa_{l}\}$ such that $\deg\lambda_j\ne 2.$
If $\lambda_j$ is not real, Lemma~\ref{lm:reobt} implies that $\ot_j = 0$, so we choose $\bt_j = 0.$ If $\deg \lambda_j = 1-n,$ Lemma~\ref{lm:deg_ut} implies that $\ot_j = 0,$ so again we choose $\bt_j = 0.$
If $\deg \lambda_j = 1,$ Lemma~\ref{lm:ob1} gives $\bt_j$ such that $-d \bt_j = -\ot_j+\ct_j \,dt,\,\ct_j \in \R,$ and $\bt_j|_{\d(I\times L)} = 0$.
If $\lambda_j$ is real, then Lemma~\ref{lm:reR} implies $|\ot_j| \equiv 0$ or $1+n \pmod 4$. If $1-n < \deg \lambda_j< 1$, then $1 < |\ot_j| < n+1$.
So, the cohomological assumption on $L$ and Lemma~\ref{B_l} imply that $[\ot_j] = 0 \in H^*(I\times L,\d(I\times L);\R).$ Thus, we choose $\bt_j$ such that $(-1)^{\deg\lambda_j}d\bt_j = -\ot_j$ and $\bt_j|_{\d(I\times L)} = 0.$ For other possible values of $\deg \lambda_j,$ degree considerations imply $\ot_j = 0,$ so we choose $\bt_j =0.$

Lemma~\ref{lm:ut_exact} now guarantees that
\[
\bt_{(l)}:=\bt_{(l-1)}+\sum_{\substack{\kappa_{l-1}+1\le j\le\kappa_l \\ \deg\lambda_j\ne 2}}\lambda_j\bt_j
\]
satisfies
\[
\mgt(e^{\bt_{(l)}})\equiv \ct_{(l)}\cdot 1\pmod{F^{E_l}\mC},\qquad \ct_{(l)}\in (\mI_R\mR)_2, \qquad d\ct_{(l)} = 0.
\]
Since $\bt_j = 0$ when $\lambda_j$ is not real and when $\deg \lambda_j = 1-n,$ it follows that $\bt_{(l)}$ is real and satisfies
\[
(d\bt_{(l)})_{n+1}= (d\bt_{(l-1)})_{n+1}=0.
\]
Since $\bt_j|_{\d(I\times L)}=0$ for all $j\in\{\kappa_l+1,\ldots,\kappa_{l+1}\}$ such that $\deg\lambda_j\ne 2$, we have
\[
j_0^*\bt_{(l)}=j_0^*\bt_{(l-1)}=b,\qquad j_1^*\bt_{(l)}=j_1^*\bt_{(l-1)} = b'.
\]
Taking the limit $\bt=\lim_l\bt_{(l)},$ we obtain a real bounding chain for $\mgt$ that satisfies $j_0^*\bt=b$ and $j_1^*\bt=b'$. So, $(\mgt,\bt)$ is a pseudoisotopy from $(\mg,b)$ to $(\m^{\gamma'},b').$
\end{proof}

We now move to the proof of the main results of this section.

\begin{proof}[Proof of Theorem~\ref{thm2}]
The cohomological conditions are equivalent to $H^i(L,\R) \simeq H^i(S^n,\R)$ for $i \equiv 0,3,n,n+1 \pmod 4$. Thus,
surjectivity is guaranteed by Proposition~\ref{cor:spin_b_exist} and injectivity is given by Proposition~\ref{cor:spin_unique}.
\end{proof}

\subsection{The case of Georgieva}\label{ssec:pnksomg}

This section explores the classification of bounding pairs in a real setting under different assumptions.
The results are used in Section~\ref{sssec:geosuperpot} to express Georgieva's invariants in terms of the superpotential.

Let $\sly^G = \sly_L^G = H_2(X,L;\Z)/\Im(\Id + \phi_*)$.
Let $\L^{G},Q^G,R^G,C^G,D^G,\mC^G,$ and $\mD^G,$ be defined similarly to $\L,Q,R,C,D,\mC$ and $\mD,$ but with the variables $\Tg,\tg_1,\ldots,\tg_{N^G},\sg,$ in place of $T,t_1,\ldots,t_N,s,$ and with the trivial $\phi^*$ action,
\begin{equation*}
\phi^*\Tg^\beta = \Tg^\beta, \qquad \phi^* \tg_i = \tg_i, \qquad \phi^* \sg = \sg.
\end{equation*}
Since the action of $\phi^*$ on $A^*(L)$ is also trivial, the real condition
$
\phi^* a = -a
$
for $a\in C^G$ implies $a=0$.
Let
\[
\varrho_\phi^G:\{\text{real bounding pairs}\}/\sim\;\;\lrarr\;(\mI_Q H^*(X;Q^G))_2^{-\phi^*}
\]
be given by
\[
\varrho_\phi^G(\gamma,0)=[\gamma].
\]
Note that $\varrho_\phi^G$ is well defined by Lemma~\ref{lm:homotopy}.
Then, we obtain the following variant of Theorem~\ref{thm1}.
\begin{prop}[Classification of bounding pairs -- the case of Georgieva]\label{prop:altrealspin}
Suppose $(X,L,\omega,\phi)$ is a real setting and
$\frac{\mu(\beta)}{2}+w_\s(\beta)\equiv 0 \pmod 2$ for all $\beta \in \sly^G.$
Then $\varrho_\phi^G$ is bijective.
\end{prop}
The proof is given at the end of the section based on the following lemmas.

\begin{lm}\label{prop:sdt_v}\label{lm:even}
Suppose $\frac{\mu(\beta)}{2}+w_\s(\beta)\equiv 0 \pmod 2$ for all $\beta \in \sly^G$, and $\gamma \in \mathcal{I}_{Q^G} D^G$ is real. Then, for all $\alpha = (\alpha_1,\ldots,\alpha_k)$ with $\alpha_j \in C^G,$ we have
\[
\phi^* \mg_k(\alpha_1,\ldots,\alpha_k) = (-1)^{1+k + s_\tau^{[1]}(\alpha)}\mg_k(\alpha_k,\ldots,\alpha_1).
\]
\end{lm}
The proof is similar to Proposition~\ref{prop:sd}.

\begin{cor}\label{lm:even_v}
Suppose $\frac{\mu(\beta)}{2}+w_\s(\beta)\equiv 0 \pmod 2$ for all $\beta \in \sly^G$ and $\gamma \in \mathcal{I}_{Q^G} D^G$ is real. Then $\mg_0 = 0$.
In particular, $(\gamma,0)$ is a bounding pair.
\end{cor}

\begin{lm}\label{prop:sdt_t_v}
Suppose $\frac{\mu(\beta)}{2}+w_\s(\beta)\equiv 0 \pmod 2$ for all $\beta \in \sly^G$ and $\gt \in \mathcal{I}_{Q^G} \mD^G$ is real. Then, for all $\at = (\at_1,\ldots,\at_k)$ with $\at_j \in \mC^G,$ we have
\[
\phi^* \mgt_k(\at_1,\ldots,\at_k) = (-1)^{1+k + s_\tau^{[1]}(\at)}\mgt_k(\at_k,\ldots,\at_1).
\]
\end{lm}
The proof is again similar to Proposition~\ref{prop:sd}.

\begin{cor}\label{lm:reobt_v}
Suppose $\frac{\mu(\beta)}{2}+w_\s(\beta)\equiv 0 \pmod 2$ for all $\beta \in \sly^G$ and $\gt \in \mathcal{I}_{Q^G} \mD^G$ is real. Then $\mgt_0 = 0$.
In particular, $(\gt,0)$ is a bounding pair.
\end{cor}

\begin{lm}\label{cor:spin_unique_v}
Suppose $\frac{\mu(\beta)}{2}+w_\s(\beta)\equiv 0 \pmod 2$ for all $\beta \in \sly^G$.
Let $(\gamma,b)$ be a bounding pair with respect to $J$ and let $(\gamma',b')$ be a bounding pair with respect to $J'$, both real, such that $\varrho_\phi^G([\gamma,0])=\varrho_\phi^G([\gamma',0]).$
Then $(\gamma,0)\sim(\gamma',0)$.
\end{lm}
The proof is a simplified version of Proposition~\ref{cor:spin_unique}, as follows.
\begin{proof}
Let $\{J_t\}_{t \in [0,1]}$ be a path from $J$ to $J'$ in $\J_\phi$.
Let $\xi\in (\mI_{Q^G}D^G)_1$
be real such that
\[
\gamma'-\gamma=d\xi
\]
and define
\[
\gt:=\gamma+t(\gamma'-\gamma)+dt\wedge\xi\in (\mI_{Q^G}\mD^G)_2.
\]
Then $\gt$ is real and satisfies
\begin{gather*}
d\gt=dt\wedge\d_t(\gamma+t(\gamma'-\gamma))-dt\wedge d\xi=0,\\
j_0^*\gt=\gamma,\quad j_1^*\gt=\gamma',\quad i^*\gt=0.
\end{gather*}
Thus, by Corollary~\ref{lm:reobt_v}, $(\mgt,0)$ is a pseudoisotopy from $(\mg,0)$ to $(\m^{\gamma'},0)$.
\end{proof}

\begin{proof}[Proof of Proposition~\ref{prop:altrealspin}]
For $\g\in (\mI_{Q^G}H^*(X;Q^G))_2^{-\phi^*}$, choose a real representative $\gamma$. Then, $\gamma|_L=0$, so $\gamma\in D^G$.
Surjectivity follows from Corollary~\ref{lm:even_v}. Injectivity follows from Lemma~\ref{cor:spin_unique_v}.
\end{proof}

\section{Superpotential}\label{sec:suppot}
Denote
\[
\L_c:=\left\{\sum_iT^{\beta_i}a_i\in \L\;\big|\;\beta_i\in \Im(H_2(X;\Z)\to \sly) \right\}.
\]
Define a map of $\Lambda_c[[t_0,\ldots,t_N]]$-modules
\[
\mathcal{D}:R\lrarr \L_c[[t_0,\ldots,t_N]]
\]
by $\mathcal{D}(\Theta)=\sum(\text{type-$\mathcal{D}$ summand in $\Theta$}).$ In other words,
\[
\mathcal{D}(\Theta)=\sum_{\substack{\lambda=T^\beta\prod_{j=1}^Nt_j^{r_j}\\ \beta\in\Im(H_2(X;\Z)\to \sly)}}[\lambda](\Theta)\cdot \lambda.
\]

Recall the definition of the superpotential:
\[
\Omega(s,t)=\Oh(s,t)-\mathcal{D}(\Oh),\qquad \Oh(s,t)=(-1)^n\big(\sum_{k\ge 0}\frac{1}{k+1}\langle\mg_k(b^k),b\rangle
+\m_{-1}^{\gamma}\big).
\]

\subsection{Invariance under pseudoisotopy}\label{subsec:invar}

\begin{proof}[Proof of Theorem~\ref{thm_inv}]
If $n = 0,$ we have $\sly=\{\beta_0\}$.
The energy zero property, Proposition~\ref{cl:qt_zero}, gives $\Omega(\gamma,b)=0,$ and the theorem is a tautology. So, we may assume $n > 0.$
Let $\gt$ and $\bt$ be as in the definition of gauge equivalence, Definition~\ref{dfn_g_equiv}. Recall from~\cite[Remark 2.4]{ST1} that Stokes' theorem for differential forms with coefficients in a graded ring comes with a sign,
\[
\int_M d\xi = (-1)^{\dim M+|\xi|+1}\int_{\d M} \xi.
\]
Using this, Lemma~\ref{lm:pseudo}, Lemma~\ref{lm:d_ll_gg},
and Proposition~\ref{prop:mgt_a_infty},
\begin{align*}
\Oh_{J'}&(\gamma',b')-\Oh_J(\gamma,b)=
(-1)^n\Big(\sum_{k\ge 0}\frac{1}{k+1}\left(\langle\m^{\gamma'}_k(b'^{\otimes k}),b'\rangle-\langle\mg_k(b^{\otimes k}),b\rangle\right)+ \m^{\gamma'}_{-1}-\mg_{-1}\Big)\\
=&(-1)^{n+n+1}\big(pt_*\sum_{k\ge 0}\frac{1}{k+1}d\ll\mgt_k(\bt^{\otimes k}),\bt\gg+pt_*d\mgt_{-1}\big)\\
=&pt_*\sum_{\substack{k_1+k_2=k+1\\k\ge 0, k_1\ge 1,k_2\ge 0}}\frac{k_1}{k+1}\cdot
\ll\mgt_{k_1}(\bt^{\otimes k_1}), \mgt_{k_2}(\bt^{\otimes k_2})\gg
+pt_*\frac{1}{2}\ll \mgt_0,\mgt_0\gg  -pt_*\widetilde{GW}.
\shortintertext{We may relax the limit of summation to $k_1 \geq 0$ since $k_1$ multiplies the summands.
Interchanging the summation indices $k_1$ and $k_2$ and using the symmetry of the pairing~\eqref{eq:pseudopair}, we continue}
=&pt_*\frac{1}{2}\sum_{\substack{k_1+k_2=k+1\\ k, k_1,k_2\ge 0}}\frac{k_1}{k+1}\cdot
\ll\mgt_{k_1}(\bt^{\otimes k_1}), \mgt_{k_2}(\bt^{\otimes k_2})\gg+\\
&+pt_*\frac{1}{2} \sum_{\substack{k_1+k_2=k+1\\k,k_1,k_2\ge 0}}\frac{k_2}{k+1}\cdot
\ll\mgt_{k_1}(\bt^{\otimes k_1}),\mgt_{k_2}(\bt^{\otimes k_2})\gg
+pt_*\frac{1}{2}\ll \mgt_0,\mgt_0\gg  -pt_*\widetilde{GW}\\
=&pt_*\frac{1}{2}\sum_{k_1,k_2\ge 0}\ll \mgt_{k_1}(\bt^{\otimes k_1}),\mgt_{k_2}(\bt^{\otimes k_2})\gg-
pt_*\widetilde{GW}.\\
\shortintertext{By the Maurer-Cartan equation~\eqref{eq:mct}, we continue}
=&pt_*\frac{1}{2}\ll \ct \cdot 1,\ct \cdot 1\gg
-pt_*\widetilde{GW}.\\
\shortintertext{Using $n > 0$ and the bilinearity of $\ll \cdot, \cdot \gg$ over $\mR$ from Proposition~\ref{cl:t_linear}, we conclude}
=&
- pt_*\widetilde{GW}.
\end{align*}
Since $\mathcal{D}(\widetilde{GW}) = \widetilde{GW},$ it follows that
\[
\Omega_{J'}(\gamma',b')-\Omega_J(\gamma,b)=\Oh_{J'}(\gamma',b')-\Oh_J(\gamma,b)
-\D\left(\Oh_{J'}(\gamma',b')-\Oh_J(\gamma,b)\right)=0.
\]
\end{proof}

\subsection{A coordinate free formulation}\label{ssec:cff}
In this section we modify Definition~\ref{def:ogw} to obtain open Gromov-Witten invariants that are manifestly independent of the choice of a basis. We show the modified definition coincides with the original one. This invariant language is also used extensively in~\cite{ST3}.

Let $W$ be a graded real vector space of dimension $N+1.$ Let $\R[[W]]$ denote the ring of formal functions on the completion of $W$ at the origin and let $m_W \subset \R[[W]]$ denote the unique maximal ideal. More explicitly, let $\{w_i\}_{i =0}^N$ be a homogeneous basis of $W,$ let $\{w_i^*\}_{i =0}^N$ be the dual basis of $W^*$, and take $t_i$ to be of degree $-|w_i|.$ Then we have an isomorphism $\R[[t_0,\ldots,t_N]] \overset{\sim}\to \R[[W]]$ taking $t_i$ to $w_i^*.$ Under this isomorphism, the ideal $\langle t_0,\ldots,t_N\rangle$ corresponds to the ideal $m_W.$ Since each tangent space of $W$ is canonically isomorphic to~$W,$ the $\R[[W]]$ module of formal vector fields on $W$ is canonically isomorphic to $W\otimes \R[[W]]$. Each formal vector field $w \in W \otimes\R[[W]]$ gives rise to a derivation $\partial_w : \R[[W]] \to \R[[W]].$  In coordinates, if $w = \sum_i f_i w_i $ with $f_i \in \R[[W]]$, then $\partial_w = \sum_i f_i \d_i.$ For $m \in \Z,$ let $W[m]$ denote the shift of $W$ by $m.$ That is, $W[m]$ is the graded vector space with $W[m]^i = W^{i+m}.$

Write
\begin{gather*}
R_W:=\Lambda[[s]]\otimes \R[[W[2]]],\\
Q_W:=\L_c \otimes \R[[W[2]]] \leq R_W.
\end{gather*}
Define ideals $\mI_R^W \triangleleft R_W$
and $\mI_Q^W \triangleleft Q_W$ by
\[
\mI_R^W = \L^+R_W + m_WR_W + \left<s\right> R_W,\qquad \mI_Q^W = \L_c^+Q_W + m_W Q_W.
\]
Denote by $\Gamma_W \in W\otimes Q_W$ the vector field corresponding to the parity operator $P:W \to W$ given by $P(w) = (-1)^{\deg w}w.$ In a basis as above, this means
\[
\Gamma_W = \sum_{i =0}^N  (-1)^{\deg w_i} w_i \otimes w_i^* = \sum_{i = 0}^N w_i^* \cdot w_i.
\]
Note that $\deg\Gamma_W = 2.$
Let
\[
C_W:=A^*(L)\otimes R_W,\quad\text{and}\quad D_W:= A^*(X,L)\otimes Q_W.
\]

Consider the isomorphism $\R[[W[2]]] \to \R[[t_0,\ldots,t_N]]$ given by $w_i^* \mapsto t_i.$ Tensoring with the identity map, we obtain isomorphisms
\[
\varphi_R : R_W \to R, \qquad \varphi_Q : Q_W \to Q, \qquad \varphi_C : C_W \to C, \qquad \varphi_D : D_W \to D.
\]
We denote these isomorphisms collectively by $\varphi.$ Consider a real setting as in Section~\ref{sssec:bno}. For $W$ concentrated in even degree, define an action of $\phi^*$ on $W[2]$ by $\phi^*(w) = (-1)^{\deg w/2}w.$ The action of $\phi^*$ on $W[2]$ induces actions on $R_W,Q_W,C_W,$ and $D_W,$ such that $\varphi$ commutes with $\phi^*.$

For $\gamma_W \in \mI_Q^W D_W,$ we define $A_\infty$ operations $\m_k^{\gamma_W} : C_W^{\otimes k} \to C_W$ and $\m_{-1}^{\gamma_W} \in R_W$ just as in Section~\ref{ssec:iainf}.
We define a bounding pair (chain) over $W$ by replacing $R,Q,\mI_R,\mI_Q,C,D,\mg_k,$ in Definition~\ref{dfn_bd_pair} with $R_W,Q_W,\mI_R^W,\mI_Q^W,C_W,D_W,\m^{\gamma_W}_k,$ respectively. Similarly, we define gauge equivalence over $W$ by modifying Definition~\ref{dfn_g_equiv}. We extend the classifying maps $\varrho$ (resp. $\varrho_\phi$) to bounding (resp. real bounding) pairs over $W$ by modifying the definition in Section~\ref{sssec:bno}. Then, $\varphi$ establishes a bijective correspondence between bounding pairs over $W$ and bounding pairs, and the correspondence respects gauge equivalence. Moreover, the classifying maps $\varrho,\varrho_\phi,$ commute with $\varphi.$ Thus, Theorem~\ref{thm1} (resp. Theorem~\ref{thm2}) extends to bounding (resp. real bounding) pairs over $W.$

For $(\gamma_W,b_W)$ a bounding pair over $W$, we define
\[
\Oh^W(\gamma_W,b_W)
=(-1)^{n}\big(\sum_{k\ge0}\frac{1}{(k+1)}\langle\m_k^{\gamma_W} (b_W^{\otimes k}),b_W\rangle
+\m_{-1}^{\gamma_W}\big).
\]
Let $D : R_W \to Q_W$ denote the unique $Q_W$ module homomorphism such that $D|_{Q_W} = \Id,$ $D(\left < s\right>) = 0$ and $D(T^\beta) = 0$ for $\beta \notin \im \varpi.$ Let $q : R_W \to R_W/Q_W$ denote the $Q_W$-module quotient map. Define
\[
\Omega^W(\gamma_W,b_W)\in R_W
\]
to be the unique element such that $q(\Omega^W(\gamma_W,b_W)) =  q(\Oh^W(\gamma_W,b_W))$ and $D(\Omega^W(\gamma_W,b_W)) =~0.$ Then
\begin{equation}\label{eq:vpsp}
\varphi_R(\Oh^W(\gamma_W,b_W)) = \Oh(\varphi_D(\gamma_W),\varphi_C(b_W)), \quad \varphi_R(\Omega^W(\gamma_W,b_W)) = \Omega(\varphi_D(\gamma_W),\varphi_C(b_W)).
\end{equation}
Thus, Theorem~\ref{thm_inv} extends to $\Omega^W.$

Take $W = W_L$ from Section~\ref{sssec:intro_OGW}.
By Theorem~\ref{thm1} (resp. Theorem~\ref{thm2}), choose a bounding pair $(\gamma_W,b_W)$ over $W$ such that
\begin{equation}\label{eq:cbpW}
\varrho([\gamma_W,b_W])=(\g_W,s) \qquad  \text{(resp. $\varrho_\phi([\gamma_W,b_W])=(\g_W,s)).$}
\end{equation}
By Theorem~\ref{thm_inv}, the superpotential $\Omega^W = \Omega^W(\gamma_W,b_W)$ is independent of the choice of $(\gamma_W,b_W).$

To define open Gromov-Witten invariants over $W,$ we proceed as follows.
An element $A\in W$ determines a constant coefficient formal vector field $A\otimes 1\in W[2]\otimes\R[[W[2]]]$. Thus, every $A\in W$ determines a derivation $\d_A: R_W \to R_W$.
Let
\[
z: \L[[W]] \lrarr \L
\]
be the extension of scalars of the homomorphism $\R[[W]] \to \R$ given by evaluation at the origin.
Define
\[
\ogw^W_{\beta,k}: W^{\otimes l} \lrarr \R
\]
by
\[
\ogw^W_{\beta,k}(A_1,\ldots,A_l)
=[T^\beta]\big(z(\d_{A_l}\cdots\d_{A_1}\d_s^k\Omega^{W})\big).
\]

Observe that nowhere in the definition of $\ogw^W_{\beta,k}$ have we chosen a basis of $W.$ On the other hand, recall the invariants
\[
\ogw_{\beta,k}: W^{\otimes l} \to \R
\]
of Definition~\ref{def:ogw}, which a priori depend on the choice of a basis $\{\Gamma_i\}_{i=0}^N$ of $W$.
\begin{lm}
We have
\[
\ogw_{\beta,k} = \ogw^W_{\beta,k}.
\]
\end{lm}

\begin{proof}
Recall the definition of $\Gamma$ from Section~\ref{sssec:intro_OGW}.
Take $w_i = \Gamma_i.$ Then $\varphi_D(\Gamma_W) = \Gamma.$ So, in the case of Theorem~\ref{thm1} (resp. Theorem~\ref{thm2}), it follows from equation~\eqref{eq:cbpW} that
\begin{equation*}
\varrho([\varphi_D(\gamma_W),\varphi_C(b_W)])=(\g,s) \qquad  \text{(resp. $\varrho_\phi([\varphi_D(\gamma_W),\varphi_C(b_W)])=(\g,s))$}
\end{equation*}
as in equation~\eqref{eq:cbp}. So, in the definition of $\ogw_{\beta,k},$ we may take $(\gamma,b) = (\varphi_D(\gamma_W),\varphi_C(b_W)).$ Abbreviating $\Omega = \Omega(\gamma,b),$ it follows from equation~\eqref{eq:vpsp} that
\begin{equation}\label{eq:vpsp2}
\varphi_R(\Omega^W) = \Omega.
\end{equation}
Observe also that $\varphi_R^*\d_{t_i} = \d_{w_i}.$

Since $\ogw^W_{\beta,k}$ and $\ogw_{\beta,k}$ are both linear, it suffices to verify equality on the basis $w_i.$ Indeed, by equation~\eqref{eq:vpsp2} we calculate,
\begin{multline*}
\ogw_{\beta,k}(w_{i_1},\ldots,w_{i_l}):= [T^{\beta}]\d_{t_{i_1}}\cdots\d_{t_{i_l}}\d_s^k\Omega|_{s=0,t_j=0}
=[T^\beta] \varphi_R(\d_{w_{i_1}}\cdots\d_{w_{i_l}}\d_s^k\Omega^W)|_{s=0,t_j=0} = \\
= [T^\beta]z\big((\d_{w_{i_1}}\cdots\d_{w_{i_l}}\d_s^k\Omega^W)\big)
= \ogw_{\beta,k}^W(w_{i_1},\ldots,w_{i_l}).
\end{multline*}
Thus, $\ogw^W_{\beta, k}=\ogw_{\beta,k}$.
\end{proof}

\begin{cor}\label{cor:indep}
The invariants $\ogw_{\beta,k}$ of Definition~\ref{def:ogw} are independent of the choice of a basis.
\end{cor}

\subsection{Properties of bounding chains}\label{ssec:bd_axioms}
In what follows, we restrict our attention to $\gamma\in \mathcal{I}_Q D$ of the form $\gamma=\sum_{j=0}^Nt_j\gamma_j$ with $\gamma_j \in A^*(X,L)$ as in Section~\ref{sssec:intro_OGW}.
Without loss of generality, assume $\gamma_0=1$.
In order to prove the axioms of Theorem~\ref{axioms} we first formulate parallel properties for bounding chains.

Let
$\Ups =C$ or $\Ups = R$. Decompose $\Ups$ as
$\Ups = \Ups'\otimes_{\R} R$ where $\Ups'=A^*(L)$ or $\Ups'=\R$, respectively.
We now define analogs of the Gromov-Witten unit and divisor axioms for elements of $\Upsilon$.
Fix a \sababa{} monoid $G=\{\pm\lambda_j\}_{j=1}^\infty$ indexed as in~\eqref{eq:list}.
Let $b\in \Ups$ such that $G(b)\subset G$. Write $b=\sum_{j=0}^\infty\lambda_j b_j$ with $b_j \in \Ups'$.
For $\lambda=\pm T^\beta s^k\prod_{m=0}^Nt_m^{l_m},$ define
\[
\gamma_i(\lambda):=\int_\beta\gamma_i,\qquad \ell_i(\lambda):=l_i,\qquad \forall i\in\{0,\ldots,N\}.
\]
In particular, $\gamma_i(\lambda) = 0$ if $\deg \gamma_i \neq 2.$ Each $j\in\{0,\ldots,N\}$ defines a map
\[
j:\Z_{\ge 0}\to \Z_{\ge 0}\cup \{ \diamond\}
\]
as follows.
For $r$ such that $\d_{t_j}\lambda_r\in G(b),$ define $j(r)$ by $\d_{t_j}\lambda_r=\ell_j(\lambda_r)\lambda_{j(r)}.$ Otherwise, set $j(r)=\diamond.$ Define also $\lambda_\diamond=1$ and $b_\diamond=0.$

\begin{dfn}\label{dfn:b_axioms}
With the preceding notation, we say $b \in \Upsilon$ satisfies the \textbf{unit} property modulo $F^{E_l}\Ups$ if
\begin{equation}\label{eq:b_unit}
\ell_0(\lambda_j)b_j=0,
\end{equation}
for all $j\le \kappa_l$.
We say $b$ satisfies the \textbf{divisor} property modulo $F^{E_l}\Ups$ if for any $i$ with $|\gamma_i|=2$,
\begin{equation}\label{eq:b_div}
	\ell_i(\lambda_j)b_j=\gamma_i(\lambda_j)\cdot b_{i(j)},
\end{equation}
for all $j\le \kappa_l$.
We say $b$ satisfies the unit (resp. divisor) property if it satisfies the unit (resp. divisor) property modulo $F^{E_l}\Ups$ for all $l$.
\end{dfn}

\begin{rem}
For motivation, consider the following partial reformulation of Definition~\ref{dfn:b_axioms}. The operation $\gamma_i$ defined above extends uniquely to a derivation $\gamma_i : \Upsilon \to \Upsilon$ over $R.$ An element $b \in \Upsilon$ satisfies the unit property if
\[
\partial_{t_0} b = 0,
\]
and it satisfies the divisor property if for any $i$ with $|\gamma_i| = 2,$
\[
(\partial_{t_i} - \gamma_i)b = 0.
\]
Following~\cite[Theorem 3]{ST1}, we can reformulate Proposition~\ref{q_fund} as
\[
[\partial_{t_0} ,\mg_k] = 0,
\]
and we can reformulate Proposition~\ref{cl:q_div} as
\[
[\partial_{t_i} - \gamma_i, \mg_k] = 0,
\]
for $i$ such that $|\gamma_i|= 2.$ Since both $\partial_{t_i}$ and $\gamma_i$ are derivations, if $b \in \mI_R C$ is a bounding chain for $\mg$ such that $b$ satisfies the unit (resp. divisor) property, then the superpotential $\Omega(\gamma,b)$ also satisfies the unit (resp. divisor) property. If $\Omega(\gamma,b)$ satisfies the unit (resp. divisor) property, the associated $\ogw$ invariants satisfy the unit (resp. divisor) axiom of Theorem~\ref{axioms}.

Definition~\ref{dfn:b_axioms} is designed to work smoothly with truncation modulo $F^{E_l}\Upsilon$ and thus with the obstruction theoretic arguments below. The index $\diamond$ allows a uniform formulation of the divisor property for $j \in \Z_{\geq 0}$ such that $\partial_{t_i} \lambda_j \notin G(b).$
\end{rem}

\begin{prop}\label{lm:d_t0b}\label{div}
Assume $H^{i}(L;\R)=0$ for $i\ne 0,n$, and let $b \in \mI_R C$ be a bounding chain for $\mg$ such that
$\int_Lb \in R$ satisfies the unit and divisor properties. Then there exists a bounding chain $b'$, gauge equivalent to $b$, that satisfies the unit and the divisor properties.
\end{prop}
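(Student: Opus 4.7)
The plan is to realize $b'$ as the $t=1$ endpoint $j_1^*\bt$ of a pseudo-isotopy $\bt\in\mC$ with the constant interior form $\gt=\pi^*\gamma$, refining the obstruction-theoretic induction of Proposition~\ref{prop_g_equiv} so that the endpoint satisfies the unit and divisor properties term by term. Since $\gt$ does not depend on $t$, the resulting pair $(\gt,\bt)$ automatically provides the gauge equivalence $(\gamma,b)\sim(\gamma,b')$.

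Fix a \sababa{} monoid $G$ containing $G(b)$. Set $a:=\int_L b$ and take $b'_{(0)}:=a\cdot\bar b_0$, where $\bar b_0$ is a fixed representative of the Poincar\'e dual of a point. By hypothesis $a$ satisfies the unit and divisor identities of Definition~\ref{dfn:b_axioms}, so $b'_{(0)}$ inherits them. Construct an initial interpolation $\bt_{(-1)}\in\mC$ with $j_0^*\bt_{(-1)}=b$, $j_1^*\bt_{(-1)}=b'_{(0)}$, and $(d\bt_{(-1)})_{n+1}=0$, exactly as at the start of the proof of Proposition~\ref{prop_g_equiv}. Assume inductively that $\bt_{(l-1)}$ has been built with $j_0^*\bt_{(l-1)}=b$, $\mgt(e^{\bt_{(l-1)}})\equiv\ct_{(l-1)}\cdot 1\pmod{F^{E_{l-1}}\mC}$, and $b'_{(l-1)}:=j_1^*\bt_{(l-1)}$ satisfying the unit and divisor properties modulo $F^{E_{l-1}}C$. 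For $\kappa_{l-1}<j\le\kappa_l$ form the obstructions $\ot_j:=[\lambda_j](\mgt(e^{\bt_{(l-1)}}))$, which are closed and of even degree by Lemmas~\ref{lm:ojt_closed} and~\ref{lm:ut_even}.

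The inductive step seeks corrections $\bt_j$ with $-d\bt_j=\ot_j$, $j_0^*\bt_j=0$ (to preserve $j_0^*\bt=b$), and with $j_1^*\bt_j$ prescribed so as to enforce unit and divisor for $b'_{(l)}$: namely, $j_1^*\bt_j=0$ when $\ell_0(\lambda_j)>0$, and $j_1^*\bt_j=\gamma_i(\lambda_j)\,j_1^*\bt_{i(j)}/\ell_i(\lambda_j)$ when $\ell_i(\lambda_j)>0$ and $|\gamma_i|=2$. By Lemma~\ref{rem:pseudo} one has $j_1^*\ot_j=[\lambda_j](\mg(e^{b'_{(l-1)}}))$. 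Using the fundamental-class property (Proposition~\ref{q_fund}) together with the inductive unit property of $b'_{(l-1)}$, I would verify that $j_1^*\ot_j=0$ whenever $\ell_0(\lambda_j)>0$ and $\deg\lambda_j\ne 2$; the exceptional case $\deg\lambda_j=2$ yields only a constant, absorbed into $\ct_{(l)}$ as in Lemma~\ref{lm:ut_relative}, and we set $\bt_j=0$. Analogously, Proposition~\ref{cl:q_div} combined with the inductive divisor property yields $\ell_i(\lambda_j)\,j_1^*\ot_j=\gamma_i(\lambda_j)\,j_1^*\ot_{i(j)}$ for $|\gamma_i|=2$, making the prescribed boundary value compatible with $-d\bt_j=\ot_j$. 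Existence of $\bt_j$ satisfying the prescribed boundary data then follows by the relative-cohomology argument of Proposition~\ref{prop_g_equiv}, invoking Lemma~\ref{B_l}, Lemma~\ref{lm:deg_ut} for the top-degree case, and the cohomological hypothesis $H^{2i-1}(L;\R)=0$ for $2i-1\ne n$ (which suffices since $H^{2i-1}(L;\R)=0$ automatically for $2i-1>n$).

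The hardest part will be establishing the two compatibility identities for $\ot_j$ at $t=1$. These encode how the $t_0$- and $t_i$-dependence carried by $\gamma=\sum_jt_j\gamma_j$ interacts with the $\q$-operator identities: the $t_0$-insertions via $\gamma_0=1$ are killed by the fundamental-class property except in the trivial case, while the $t_i$-insertions with $|\gamma_i|=2$ propagate the divisor formula from $\q$ onto the obstructions. Combined with the inductive unit and divisor properties of $b'_{(l-1)}$ itself, these produce the required relations on $\mg(e^{b'_{(l-1)}})$, allowing the induction to close and yielding a bounding chain $b'=\lim_l b'_{(l)}$ that is gauge equivalent to $b$ and satisfies both properties.
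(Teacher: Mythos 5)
Your proposal follows essentially the same route as the paper: construct $b'$ as the $t=1$ endpoint of a pseudo-isotopy with $\gt=\pi^*\gamma$, start from $b'_{(-1)}=a\cdot\bar b$, and induct over the \sababa{} filtration, choosing corrections $b'_j$ compatible with the unit and divisor properties and then lifting them to isotopy corrections $\bt_j$ with prescribed boundary values via the relative-cohomology argument (Lemmas~\ref{B_l}, \ref{lm:deg_ut}, \ref{lm:bd_cond}). The ``compatibility identities'' you defer are exactly the paper's Lemma~\ref{lm:oj_axioms} (in the form $\ell_i(\lambda_j)o'_j=-\gamma_i(\lambda_j)\,db'_{i(j)}$ and $\ell_0(\lambda_j)o'_j=0$ for $\deg\lambda_j\ne 2$), proved there by the computation combining Propositions~\ref{q_zero}, \ref{cl:symmetry}, \ref{q_fund}, and~\ref{cl:q_div} with the inductive hypotheses, and your sketch correctly identifies these ingredients.
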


\begin{prop}\label{div_real}
In a real setting, assume $L$ is spin, $\gamma$ is real, and $H^{i}(L;\R)=0$ for $i\equiv 3,n\pmod 4,$ $i\ne 0,n$. Let $b$ be a real bounding chain for $\mg$ such that
$\int_Lb \in R$ satisfies the unit and divisor properties. Then there exists a real bounding chain $b'$, gauge equivalent to $b$, that satisfies the unit and the divisor properties.
\end{prop}

The simultaneous proof of Propositions~\ref{div} and~\ref{div_real} is given at the end of this section. It uses induction on $l$ to construct $b'_{(l)}$, a bounding chain for $\mg$ modulo $F^{E_l}C$, together with $\bt_{(l)}$, a pseudoisotopy  modulo $F^{E_l}\mC$ from $b$ to $b'_{(l)}$. The proof is based on the following lemmas. We refer to the situation where $L$ is spin and belongs to a real setting as the real spin case.

Fix a \sababa{} monoid $G$ indexed as in~\eqref{eq:list}. Let $\gamma\in (\mI_QD)_2$ with $d\gamma=0$. Let $l\ge 0$. Assume we have $b'_{(l-1)}\in C$ with $\deg_{C} b'_{(l-1)}=1$, $G(b'_{(l-1)})\subset G$, and if $l\ge 1,$ then
\begin{equation}\label{eq:bd_mod_l}
\mg(e^{b'_{(l-1)}})\equiv c'_{(l-1)}\cdot 1\pmod{F^{E_{l-1}}C},\qquad c'_{(l-1)}\in(\mI_R)_2.
\end{equation}
Write $b'_{(l-1)}=\sum_{j=0}^{\kappa_{l-1}}\lambda_jb'_j+\u$ with $\u\equiv 0\pmod{F^{E_{l-1}}C}$.
For $j=\kappa_{l-1}+1,\ldots,\kappa_l,$ set
\begin{equation}\label{eq:o'j}
o'_j:=[\lambda_j](\mg(e^{b'_{(l-1)}})).
\end{equation}

\begin{lm}\label{lm:oj_axioms}
Assume $d\u=0$.
If $b'_{(l-1)}$ satisfies the divisor property modulo $F^{E_{l-1}}C$, then
\[
\ell_i(\lambda_j)o'_j=-(-1)^{\deg \lambda_j}\gamma_i(\lambda_j)db'_{i(j)}\qquad \forall j\in\{\kappa_{l-1}+1,\ldots,\kappa_l\},\quad\deg\lambda_j\ne 2.
\]
If $b'_{(l-1)}$ satisfies the unit property modulo $F^{E_{l-1}}C$, then
\[
\ell_0(\lambda_j)o'_j=0,\qquad \forall j\in\{\kappa_{l-1}+1,\ldots,\kappa_l\},\quad \lambda_j\ne t_0.
\]
In particular, $\ell_0(\lambda_j)o'_j=0$ if $\deg\lambda_j\ne 2$.
\end{lm}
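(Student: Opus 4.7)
The strategy is to isolate the contributions to $o'_j = [\lambda_j]\mg(e^{b'_{(l-1)}})$ that produce a $t_i$-factor in the accompanying $R$-monomial. A term from $\q^\beta_{k,p}((b'_{(l-1)})^{\otimes k};\gamma^{\otimes p})$ carries the monomial $T^\beta\prod_a\lambda_{r_a}\prod_q t_{m_q}$, so a $t_i$-factor arises either from (I) an interior input with $m_q=i$, contributing $\gamma_i$, or (II) a $t_i$-factor inside some $\lambda_{r_a}$, i.e., $\ell_i(\lambda_{r_a})\ge 1$. I plan to treat these separately.

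For the unit case ($i=0$, $\gamma_0=1$, $\lambda_j\ne t_0$): contributions of type (I) are controlled by Proposition~\ref{q_fund}, which forces $(k,p,\beta)=(0,1,\beta_0)$ and produces the monomial $t_0$ with coefficient $-1$; since $\lambda_j\ne t_0$, these do not contribute to $o'_j$. Contributions of type (II) with $r_a\le\kappa_{l-1}$ are killed by the unit property modulo $F^{E_{l-1}}C$, which gives $\ell_0(\lambda_{r_a})b'_{r_a}=0$. For contributions of type (II) involving a $b$-input from $\u$, the assumption $d\u=0$ together with Proposition~\ref{q_zero} (energy-zero axiom) causes the only candidate energy-zero term $\q^{\beta_0}_{1,0}(\u)=d\u$ to vanish, while all higher-valuation contributions pick up strictly positive extra valuation from the additional $T^\beta$, $\gamma$, or $b'$ factors, and thus do not contribute to $[\lambda_j]$ in the relevant range.

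For the divisor case ($|\gamma_i|=2$): Proposition~\ref{cl:q_div} allows us to factor out $\int_\beta\gamma_i=\gamma_i(T^\beta)$ from every interior $\gamma_i$-insertion, which reduces the type (I) sum to $\gamma_i(\lambda_j)\cdot[\lambda_j/t_i]\mg(e^{b'_{(l-1)}})$. Using $\mg(e^{b'_{(l-1)}})\equiv c_{(l-1)}\cdot 1\pmod{F^{E_{l-1}}C}$ and the degree relation $\deg(\lambda_j/t_i)=\deg\lambda_j-2\ne 0$ when $\deg\lambda_j\ne 2$, this piece becomes $\gamma_i(\lambda_j)$ times a precise term that, combined with type (II), I will rearrange using the divisor property $\ell_i(\lambda_r)b'_r=\gamma_i(\lambda_r)b'_{i(r)}$ for $r\le\kappa_{l-1}$. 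The surviving contribution at valuation $\nu(\lambda_j)$ is the energy-zero piece $\q^{\beta_0}_{1,0}(b'_{i(j)})=db'_{i(j)}$, multiplied by $-\gamma_i(\lambda_j)$; higher-energy pieces cancel by the inductive structure of the $A_\infty$ operations, analogously to the unit case.

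Finally, the parenthetical ``in particular'' follows from $\deg t_0=2-|\gamma_0|=2$: the exclusion $\lambda_j\ne t_0$ is vacuous once $\deg\lambda_j\ne 2$, so the unit conclusion applies with no extra hypothesis. The principal obstacle I anticipate is the careful bookkeeping of the $\u$-contributions in both cases; the essential input there is the interplay between $d\u=0$, Proposition~\ref{q_zero}, and valuation counting relative to the \sababa{} filtration $\{E_l\}$.
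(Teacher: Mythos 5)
Your proposal takes essentially the same route as the paper's proof: the same split of $\ell_i(\lambda_j)\,o'_j$ according to whether the $t_i$ comes from an interior $\gamma_i$-insertion or from the monomial of a boundary input (the paper's $r_i+\sum_a\ell_i(\lambda_{z_a})$ decomposition in~\eqref{eq:decomp}), Proposition~\ref{q_fund} plus the unit property for $i=0$ with the $\lambda_j\ne t_0$ exclusion, Proposition~\ref{cl:q_div} plus the divisor property of $b'_{(l-1)}$ for $|\gamma_i|=2$, and $d\u=0$ with valuation counting for the $\u$-terms. One small point of phrasing: the positive-energy terms are not ``cancelled by the $A_\infty$ relations''; rather, after combining your types (I) and (II) they reassemble into $\gamma_i(\lambda_j)\cdot[\lambda_{i(j)}]\bigl(\mg(e^{b'_{(l-1)}})-\m^{\gamma,\beta_0}(e^{b'_{(l-1)}})\bigr)$, which the congruence $\mg(e^{b'_{(l-1)}})\equiv c_{(l-1)}\cdot 1\pmod{F^{E_{l-1}}C}$ together with the energy-zero property (Proposition~\ref{q_zero}) and $\deg\lambda_{i(j)}\ne 0$ converts into $-\gamma_i(\lambda_j)\,db'_{i(j)}$ --- which is exactly the mechanism you invoke one sentence earlier, so this is a presentational slip rather than a gap.
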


\begin{proof}
For $\lambda,\lambda_{i_1},\ldots,\lambda_{i_m}\in R$ such that $\prod_{j=1}^m\lambda_{i_j}=\pm \lambda$, define $s^{\lambda}(\prod_{j=1}^m\lambda_{i_j})$ by $\prod_{j=1}^m\lambda_{i_j} =(-1)^{s^\lambda(\prod_{j=1}^m\lambda_{i_j})}\lambda$.
Write $\prod_{j=m}^1\lambda_{i_j}$ for the product taken in reverse order, that is, $\prod_{j=m}^1\lambda_{i_j}= \lambda_{i_m}\cdots\lambda_{i_1}$.
 Abbreviate
\begin{gather*}
s_1:=s^{\lambda_j}(T^\beta\prod_{m=k}^1\lambda_{z_m} \prod_{h=l}^{1}t_{i_h}),
\quad
s_2:=s^{\lambda_j}(T^\beta\prod_{m=k}^1\lambda_{z_m} \prod_{j=N}^{0}t_j^{r_j}),\\
s_1':=s_1+\sum_{m=1}^k\deg \lambda_{z_m},\qquad s_2':=s_2+\sum_{m=1}^k\deg \lambda_{z_m}.
\end{gather*}
Note that $s_2$ satisfies
$
s_2 =
s^{\lambda_{j}}(T^\beta\prod_{m=k}^1\lambda_{z_m}t_i^{r_i} \prod_{j\ne i}t_j^{r_j})
=
s^{\lambda_{i(j)}}(T^\beta\prod_{m=k}^1\lambda_{z_m}t_i^{r_i-1} \prod_{j\ne i}t_j^{r_j}).
$
By Propositions~\ref{lm:qlinear},~\ref{q_zero}, and~\ref{cl:symmetry}, compute
\begin{equation}\label{eq:decomp}
\begin{split}
\ell_i(\lambda_j&)\cdot o'_j
=
\ell_i(\lambda_j)\cdot\hspace{-3em}\sum_{\substack{k,l,\beta \ne \beta_0 \\T^\beta\prod_{m=k}^1\lambda_{z_m} \prod_{h=l}^{1}t_{i_h} =\\
\quad = (-1)^{s_1} \lambda_j}} \hspace{-2em}\frac{1}{l!}
(-1)^{s_1'}
\qkl^{\beta}(\otimes_{m=1}^kb'_{z_m};\otimes_{h=1}^{l}\gamma_{i_h})
+\ell_i(\lambda_j)[\lambda_j](d\u)\\
=&\hspace{-1.5em}
\sum_{\substack{k,l,\beta\ne \beta_0  \\ \sum_{m=0}^N r_m=l\\ T^\beta\prod_{m=k}^1\lambda_{z_m}\prod_{j=N}^0t_j^{r_j} = \\ \quad=(-1)^{s_2}\lambda_j}} \hspace{-2.5em}	 (-1)^{s_2'} (r_i+\sum_{a=1}^k\ell_i(\lambda_{z_a}))
\cdot\prod_{m=1}^N\frac{1}{r_m!}
\qkl^{\beta}(\otimes_{m=1}^kb'_{z_m};\otimes_{m=0}^{N}\gamma_{m}^{\otimes r_m})\\
=&\sum_{\substack{k,l,\beta\ne \beta_0  \\ \sum_{m=0}^N r_m=l \\T^\beta\prod_{m=k}^1\lambda_{z_m}\prod_{j=N}^{0}t_j^{r_j} =\\
\quad =(-1)^{s_2} \lambda_j}}
 \hspace{-2.5em}
 (-1)^{s_2'}
	 r_i\cdot\prod_{m=1}^N\frac{1}{r_m!}
\qkl^{\beta}(\otimes_{m=1}^kb'_{z_m};\otimes_{m=0}^{N}\gamma_{m}^{\otimes r_m})+\\
&+\hspace{-1.5em}\sum_{\substack{k,l,\beta\ne \beta_0  \\ \sum_{m=0}^N r_m=l \\T^\beta\prod_{m=k}^1\lambda_{z_m}\prod_{j=N}^{0}t_j^{r_j} =\\ \quad =(-1)^{s_2} \lambda_j}}\hspace{-2.5em}
(-1)^{s_2'} \cdot
\sum_{a=1}^k
	\prod_{m=1}^N\frac{1}{r_m!}
\qkl^{\beta}(\otimes_{m=1}^{a-1}b'_{z_m}\otimes \ell_i(\lambda_{z_a})b'_{z_a} \otimes \otimes_{m=a+1}^kb'_{z_m} ;\otimes_{m=0}^{N}\gamma_{m}^{\otimes r_m}).
\end{split}
\end{equation}
For $i$ such that $|\gamma_i|=2,$ apply the divisor property, Proposition~\ref{cl:q_div}, to the first summand  and the assumption on $b_j$ to the second summand to obtain
\begin{align*}
\ell_i(&\lambda_j)\cdot o'_j
=\hspace{-3.5em}
\sum_{\substack{k,l,\beta\ne \beta_0  \\ \sum_{m=1}^N r_m=l \\T^\beta\prod_{m=k}^1\lambda_{z_m}\prod_{j=N}^{0}t_j^{r_j}
 =\\
\quad = (-1)^{s_2}
\lambda_j}}\hspace{-.5em}
(-1)^{s_2'}
\gamma_i(T^\beta)
\frac{1}{(r_i-1)!}\prod_{m\ne i}\frac{1}{r_m!}
\q_{k,l-1}^{\beta}(\otimes_{m=1}^kb'_{z_m}; \gamma_i^{\otimes(r_i-1)}\otimes\otimes_{m\ne i}\gamma_{m}^{\otimes r_m} )+\\
&+
\hspace{-2em}\sum_{\substack{k,l,\beta\ne \beta_0  \\ \sum_{m=1}^N r_m=l \\T^\beta\prod_{m=k}^1\lambda_{z_m}\prod_{j=N}^{0}t_j^{r_j} =\\
\quad=(-1)^{s_2}\lambda_j}}\hspace{-2.5em}\sum_{a=1}^k
(-1)^{s_2'}
\gamma_i(\lambda_{z_a})
	\prod_{m=0}^N\frac{1}{r_m!}
\qkl^{\beta}(\otimes_{m=1}^{a-1}b'_{z_m}\otimes b'_{i(z_a)} \otimes \otimes_{m=a+1}^kb'_{z_m} ;\otimes_{m=0}
^N\gamma_{m}^{\otimes r_m})\\
=&
\sum_{\substack{k,l,\beta\ne \beta_0 \\ \sum_{m=0}^N r_m=l \\T^\beta\prod_{m=k}^1\lambda_{z_m}\prod_{j=N}^{0}t_j^{r_j}
 =(-1)^{s_2}\lambda_{i(j)}}}\hspace{-2.5em}
 (-1)^{s_2'}
 \gamma_i(T^\beta)
\prod_{m=0}^N\frac{1}{r_m!}
\qkl^{\beta}(\otimes_{m=1}^kb'_{z_m};\otimes_{m=0}^{N}\gamma_{m}^{\otimes r_m})+\\
&+
\hspace{-2em}\sum_{\substack{k,l,\beta\ne \beta_0  \\ \sum_{m=0}^N r_m=l \\T^\beta\prod_{m=k}^1\lambda_{z_m}\prod_{j= N}^{0}t_j^{r_j}
 =(-1)^{s_2}\lambda_{i(j)}}}\hspace{-2.5em}
 (-1)^{s_2'}
 \sum_{a=1}^k \gamma_i(\lambda_{z_a})
	\prod_{m=0}^N\frac{1}{r_m!}
\qkl^{\beta}(\otimes_{m=1}^{k}b'_{z_m} ;\otimes_{m=0}^{N}\gamma_{m}^{\otimes r_m})\\
=&
\sum_{\substack{k,l,\beta \ne \beta_0 \\ \sum_{m=0}^N r_m=l \\T^\beta\prod_{m=k}^1\lambda_{z_m}\prod_{j=N}^{0}t_j^{r_j}
 =(-1)^{s_2}\lambda_{i(j)}}}\hspace{-2.5em}
 (-1)^{s_2'}
 \gamma_i(T^\beta\prod_{a=1}^k\lambda_{z_a})
\prod_{m=0}^N\frac{1}{r_m!}
\qkl^{\beta}(\otimes_{m=1}^kb'_{z_m};\otimes_{m=0}^{N}\gamma_{m}^{\otimes r_m})\\
=&
\gamma_i(\lambda_j)\cdot[\lambda_{i(j)}](\mg(e^{b'_{(l-1)}})-\m^{\gamma,\beta_0}(e^{b'_{(l-1)}})).
\end{align*}
By equation~\eqref{eq:bd_mod_l}, Proposition~\ref{q_zero}, and the assumption $2 \neq \deg \lambda_j = \deg \lambda_{i(j)},$ the last expression equals
$-(-1)^{\deg \lambda_j}\gamma_i(\lambda_j)\cdot db_{i(j)},$
as required.

Now consider equation~\eqref{eq:decomp} with $i=0.$
By the induction hypothesis the last sum vanishes. Proposition~\ref{q_fund} then gives
\[
\ell_0(\lambda_j)o'_j
=\q^{\beta_0}_{0,1}(1)\cdot \delta=-1\cdot \delta,
\]
where $\delta=1$ if $\lambda_j=t_0$ and $\delta=0$ otherwise.
In particular, $\ell_0(\lambda_j)o'_j= 0$ unless $\deg\lambda_j=2$.
\end{proof}

\begin{lm}\label{lm:b'j}
Assume $\u \in A^n(L)\otimes R$ and $b'_{(l-1)}$ satisfies the unit and divisor properties modulo $F^{E_{l-1}}C$. In addition, assume $(b'_{(l-1)})_n$ satisfies the unit and divisor properties.
 For $j\in\{\kappa_{l-1}+1,\ldots,\kappa_l\}$ such that $\deg\lambda_j\ne 2,$ suppose $[o'_j]=0$.
Then there exist $b'_j\in A^{1-\deg\lambda_j}(L)$ such that $(-1)^{\deg\lambda_j}db'_j=-o'_j$ and $b'_{(l)}:=b'_{(l-1)}+\sum_{\substack{\kappa_{l-1}+1\le j\le \kappa_l\\ \deg\lambda_j\ne 2}}\lambda_jb'_j$ satisfies the unit and divisor properties modulo $F^{E_l}C$. Moreover, we can take $b'_j = 0$ when $\deg \lambda_j = 1-n$, so $\big(b'_{(l)}\big)_n=\big(b'_{(l-1)}\big)_n$.
In the real spin case, if $b'_{(l-1)}$ is real, then we can take $b'_j = 0$ when $\lambda_j$ is not real,  so $b'_{(l)}$ is real.
\end{lm}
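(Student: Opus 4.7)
The plan is to define $b'_j$ for each new $j\in\{\kappa_{l-1}+1,\ldots,\kappa_l\}$ with $\deg\lambda_j\ne 2$ via a priority scheme designed so that the unit and divisor constraints at $\lambda_j$ are automatically satisfied. In decreasing priority: (I) if $\deg\lambda_j=1-n$, set $b'_j:=0$ to preserve the top-degree part; (II) else if $\ell_0(\lambda_j)>0$, set $b'_j:=0$ as forced by the unit property; (III) else if some index $i$ satisfies $|\gamma_i|=2$ and $\ell_i(\lambda_j)>0$, set $b'_j:=(\gamma_i(\lambda_j)/\ell_i(\lambda_j))\,b'_{i(j)}$, as forced by the divisor property; (IV) otherwise choose $b'_j$ freely subject to $-db'_j=o'_j$. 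Note $\deg\lambda_j$ is always even, so $|o'_j|=2-\deg\lambda_j$ is even and hence never equals $n$.

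I next verify the obstruction equation $-db'_j=o'_j$ in each of the four cases. In case (I), $|o'_j|=n+1>\dim L$ forces $o'_j=0$; in case (II), Lemma~\ref{lm:oj_axioms} yields $\ell_0(\lambda_j)o'_j=0$, hence $o'_j=0$; in case (III), Lemma~\ref{lm:oj_axioms} gives $\ell_i(\lambda_j)o'_j=-\gamma_i(\lambda_j)db'_{i(j)}$, which after dividing by $\ell_i(\lambda_j)$ coincides with $-db'_j$; in case (IV), $do'_j=0$ (the argument of Lemma~\ref{oj_closed}) together with the hypothesis $[o'_j]=0$ furnishes a primitive. Unit and divisor modulo $F^{E_l}C$ then hold for old indices by the inductive hypothesis, and for new $j$ the only nontrivial check is the divisor identity $\ell_i(\lambda_j)b'_j=\gamma_i(\lambda_j)b'_{i(j)}$: in case (I) it reduces to $\gamma_i(\lambda_j)b'_{i(j)}=0$, which follows from the assumed divisor property of $(b'_{(l-1)})_n$ applied at the new index $j$ (both $\lambda_j$ and $\lambda_{i(j)}$ lie in degree $1-n$ since $\deg t_i=0$, while $(b'_{(l-1),j})_n=0$ as $j$ is new); in case (II) it reduces to the same equation, which follows by applying the inductive unit property to $\lambda_{i(j)}$, which still carries a $t_0$ factor, to conclude $b'_{i(j)}=0$. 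Preservation of the top-degree part is automatic since only case (I) can produce an $n$-form.

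The main obstacle is the well-definedness of case (III) when more than one admissible index $i$ exists. Given two admissible $i\ne i'$, I will apply the inductive divisor property to $\lambda_{i(j)}$ along $t_{i'}$ and symmetrically to $\lambda_{i'(j)}$ along $t_i$, expressing
\[
b'_{i(j)}=\frac{\gamma_{i'}(\lambda_j)}{\ell_{i'}(\lambda_j)}\,b'_{i(i'(j))},\qquad b'_{i'(j)}=\frac{\gamma_i(\lambda_j)}{\ell_i(\lambda_j)}\,b'_{i'(i(j))},
\]
using the identities $\ell_{i'}(\lambda_{i(j)})=\ell_{i'}(\lambda_j)$, $\gamma_{i'}(\lambda_{i(j)})=\gamma_{i'}(\lambda_j)$, and the corresponding ones for $i'(j)$. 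Since $i(i'(j))=i'(i(j))$, substitution into either candidate formula for $b'_j$ yields the symmetric expression $(\gamma_i(\lambda_j)\gamma_{i'}(\lambda_j)/(\ell_i(\lambda_j)\ell_{i'}(\lambda_j)))\,b'_{i(i'(j))}$, so the definition is independent of the choice of $i$. The indices $i(j),i'(j),i(i'(j))$ invoked here all lie in the previous induction range because each $t$-derivative strictly decreases the valuation, so the inductive divisor property is available. Setting $b'_{(l)}:=b'_{(l-1)}+\sum_j\lambda_jb'_j$ and invoking Lemma~\ref{lm:inductive} completes the construction.
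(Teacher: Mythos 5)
Your construction follows the paper's own proof in all essentials: take $b'_j=0$ when $|o'_j|=n+1$, otherwise define $b'_j$ through the relation forced by an index with positive $\ell$-exponent (Lemma~\ref{lm:oj_axioms} then gives $-db'_j=o'_j$), and take any primitive of $o'_j$, using $[o'_j]=0$, when no such index exists. Your explicit check that two admissible indices $i\ne i'$ yield the same formula for $b'_j$, via the inductive divisor property at the old indices $i(j)$, $i'(j)$, $i(i'(j))$, is precisely what the paper compresses into ``by assumption on $b'_{(l-1)}$,'' and that part is correct and welcome.

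There is, however, a genuine error in your case (I). You assert that the degree-$n$ coefficient of $b'_{(l-1)}$ at a new index $j$ vanishes ``as $j$ is new,'' and on that basis reduce the divisor identity at $j$ to $\gamma_i(\lambda_j)\,b'_{i(j)}=0$. This vanishing is false in general: writing $b'_{(l-1)}=\sum_{j\le\kappa_{l-1}}\lambda_jb'_j+\u$, the closed tail $\u$ sits exactly at the new indices, and in the intended application (proof of Proposition~\ref{div}) $\u$ is the high-valuation part of $a\cdot\bar b$, a nonzero degree-$n$ form at new indices of degree $1-n$ whenever $a$ contains such monomials. This is the very reason the lemma carries the extra hypothesis that $(b'_{(l-1)})_n$ satisfies the unit and divisor properties in full rather than only modulo $F^{E_{l-1}}C$. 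Moreover the statement you try to prove, $\gamma_i(\lambda_j)b'_{i(j)}=0$, is itself false in general; what is actually needed at a new top-degree index is $\ell_i(\lambda_j)\,[\lambda_j](\u)=\gamma_i(\lambda_j)\,b'_{i(j)}$, which is verbatim an instance of that hypothesis. The repair is the paper's simpler route: since every newly chosen $b'_j$ of form-degree $n$ is $0$, one has $(b'_{(l)})_n=(b'_{(l-1)})_n$, and all degree-$n$ instances of the unit and divisor identities hold directly by hypothesis, with no vanishing claim required. (In form-degrees below $n$ you identify the new-index coefficient of $b'_{(l)}$ with your chosen $b'_j$, i.e.\ you tacitly use that $\u$ contributes only in degree $n$; the paper's proof makes the same identification, so I count that as a shared gloss rather than an error of yours.) With case (I) replaced by this observation, the remainder of your argument goes through and, as you say, Lemma~\ref{lm:inductive} completes the induction step.
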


\begin{proof}
Let $j\in\{\kappa_{l-1}+1,\ldots,\kappa_l\}.$  If $\deg \lambda_j > 2$ or $\deg \lambda_j < 2-n,$ then for degree reasons $o'_j = 0,$ so we choose $b'_j = 0.$ The case $2-n \leq \deg \lambda_j < 2$ is treated in several steps as follows.

If $\ell_0(\lambda_j) \neq 0,$ then Lemma~\ref{lm:oj_axioms} gives $o'_j = 0.$ So, we choose $b'_j = 0,$ and thus, $\ell_0(\lambda_j) b'_j= 0.$ For all $i$ with $|\gamma_i| = 2,$ we claim that $\ell_i(\lambda_j)b'_j=\gamma_i(\lambda_j)\cdot b'_{i(j)}.$ Indeed, $\ell_0(\lambda_{i(j)}) \neq 0$ and $\nu(\lambda_{i(j)}) \leq E_{l-1},$ so the assumption that $b'_{(l-1)}$ satisfies the unit and divisor properties modulo $F^{E_{l-1}}C$ implies that $b'_{i(j)} = 0.$

If $\ell_0(\lambda_j) = 0$ but there exists $i_0$ with $|\gamma_{i_0}| = 2$ such that $\ell_{i_0}(\lambda_j)>0$, then Lemma~\ref{lm:oj_axioms} implies
\[
o'_j=-(-1)^{\deg \lambda_j}
\frac{\gamma_{i_0}(\lambda_{j})}{\ell_{i_0}(\lambda_j)}\cdot db'_{i_0(j)}.
\]
Take
\[
b'_j:=
\frac{1}{\ell_{i_0}(\lambda_j)}\gamma_{i_0}(\lambda_{j})
\frac{\gamma_{i_0}(\lambda_{j})}{\ell_{i_0}(\lambda_j)} \cdot
b'_{i_0(j)}.
\]
Clearly, $\ell_0(\lambda_j)b'_j = 0.$ We claim that $\ell_i(\lambda_j)b'_j=\gamma_i(\lambda_j)\cdot b'_{i(j)}$ for all $i$ with $|\gamma_i|=2$. Indeed, for $i = i_0,$ this is immediate. For $i \neq i_0,$ since $\nu(\lambda_{i_0(j)}) \leq E_{l-1}$ and we have assumed $b'_{(l-1)}$ satisfies the divisor property modulo $F^{E_{l-1}}C$, it follows that
\begin{multline*}
\ell_i(\lambda_j)b_j' = \frac{\gamma_{i_0}(\lambda_j) }{\ell_{i_0}(\lambda_j)}\ell_i(\lambda_j) b'_{i_0(j)} = \frac{\gamma_{i_0}(\lambda_j )}{\ell_{i_0}(\lambda_j)}\ell_i(\lambda_{i_0(j)}) b'_{i_0(j)} = \\
= \frac{\gamma_{i_0}(\lambda_j) }{\ell_{i_0}(\lambda_j)}\gamma_i(\lambda_{i_0(j)}) b'_{i(i_0(j))}
= \gamma_i(\lambda_{i_0(j)}) \frac{\gamma_{i_0}(\lambda_{i(j)}) }{\ell_{i_0}(\lambda_{i(j)})} b'_{i_0(i(j))} =
\gamma_{i}(\lambda_{i_0(j)})b'_{i(j)} =
\gamma_{i}(\lambda_{j})b'_{i(j)}.
\end{multline*}
In the real spin case, if $\lambda_j$ is not real, then $b'_j =0.$ Indeed, since $|\gamma_i| = 2,$ it follows that $\lambda_{i(j)}$ is also not real. So, the assumption that $b'_{(l-1)}$ is real implies that $b'_{i(j)} = 0.$

It remains to consider the case that for all $i$ with $|\gamma_i| = 2$ and for $i = 0$ we have $\ell_i(\lambda_j)=0.$ Let $b'_j$ be any form that satisfies $-(-1)^{\deg \lambda_j}db'_j=o'_j.$ Clearly, $\ell_0(\lambda_j)b'_j = 0.$ Also, for all $i$ with $|\gamma_i| = 2,$ we have $i(j) = \diamond,$ so both sides of the equation $\ell_i(\lambda_j)b'_j=\gamma_i(\lambda_j)\cdot b'_{i(j)}$ vanish. In the real spin case, if $\lambda_j$ is not real, Lemma~\ref{lm:reob} guarantees that $o'_j = 0,$ so we choose $b'_j = 0.$

Given the above choices of $b'_j,$ we claim that $b'_{(l)}$ satisfies the unit and divisor properties modulo $F^{E_l}C$ and is real in the real spin case. Indeed, it suffices to prove this claim for $\big(b'_{(l)}\big)_m = 0$ with $m = 0,\ldots,n.$ Since $b'_j = 0$ for all $j\in\{\kappa_{l-1}+1,\ldots,\kappa_l\}$ such that $\deg \lambda_j = 1-n,$ it follows that $\big(b'_{(l)}\big)_n=\big(b'_{(l-1)}\big)_n.$ So $\big(b'_{(l)}\big)_n$ satisfies the unit and divisor properties and in the real spin case is real. On the other hand, for $m \neq n$ since $(\u)_m = 0,$ we have
$\big(b'_{(l)}\big)_m  = \big(\sum_{\substack{0\le j\le \kappa_l\\ \deg\lambda_j\ne 2}}\lambda_jb'_j\big)_m.$
For each $j$ with $2-n \leq \deg \lambda_j < 2,$ we have shown that $\ell_0(\lambda_j)b'_j = 0$ and $\ell_i(\lambda_j)b'_j=\gamma_i(\lambda_j)\cdot b'_{i(j)}$ for all $i$ such that $|\gamma_i| = 2.$ Thus $\big(b'_{(l)}\big)_m$ satisfies the unit and divisor properties. In the real spin case, we have also shown that $b'_j = 0$ whenever $\lambda_j$ is not real. So, $\big(b'_{(l)}\big)_m$ is real.
\end{proof}

Continue with $b'_{(l-1)}$ and $o'_j$ as above. Suppose we are given $b'_j$ such that $(-1)^{\deg\lambda_j}db'_j=-o'_j$ for $j\in\{\kappa_{l-1}+1,\ldots,\kappa_l\}$ with $\deg\lambda_j\ne 2$, and $b'_j = 0$ for $j$ with $\deg \lambda_j = 1-n.$ In the real spin case, assume $b'_j=0$ for all $j$ such that $\lambda_j$ is not real.
Let~$b$ be a bounding chain for $\mg$ with $G(b) \subset G$. Recall that we write $\pi:I\times X\to X$ for the projection, and let $\tilde \gamma = \pi^*\gamma \in (\mI_Q\mD)_2$. Finally, suppose we are given $\bt_{(l-1)}\in \mC$ such that $\deg_{\mC}\bt_{(l-1)}=1, G(\bt_{(l-1)}) \subset G,$ and
\begin{equation*}
(d\bt_{(l-1)})_{n+1} = 0, \qquad j_0^*\bt_{(l-1)}=b,\qquad j_1^*\bt_{(l-1)}= b'_{(l-1)}.
\end{equation*}
If $l \geq 1,$ assume
\begin{equation*}
\mgt(e^{\bt_{(l-1)}})\equiv \tilde{c}_{(l-1)}\cdot 1\pmod{F^{E_{l-1}}\mC},
\qquad \tilde{c}_{(l-1)}\in(\mI_R\mR)_2,\quad d\ct =0.
\end{equation*}
Let $\ot_j$ be the obstruction chains for $\bt_{(l-1)}$ defined by~\eqref{eq:ojt_dfn}.
In the real spin case, suppose also that $b,b'_{(l-1)},$ and $\bt_{(l-1)}$ are real.

\begin{lm}\label{lm:bd_cond}
Assume $H^{i}(L;\R)=0$ for $i\ne 0,n$. Alternatively, in the real spin case, assume $H^{i}(L;\R)=0$ for $i\equiv 3,n\pmod 4,$ $i\ne 0,n$.
Then, for $j\in\{\kappa_{l-1}+1,\ldots,\kappa_l\}$ with $\deg\lambda_j\ne 2$, there exist
$\bt_j\in A^{1-\deg\lambda_j}(I\times L)$
such that
\[
(-1)^{\deg\lambda_j}d\bt_j=-\ot_j + \ct_j dt,\qquad \ct_j \in \R, \qquad j_0^*\bt_j=0,\qquad j_1^*\bt_j=b'_j.
\]
Moreover, we can take $\bt_j = 0$ when $\deg \lambda_j = 1-n.$
In the real spin case, we can take $\bt_j=0$ for all $j$ such that $\lambda_j$ is not real.
\end{lm}

\begin{proof}
For $j\in\{\kappa_{l-1}+1,\ldots,\kappa_l\}$ with $\deg\lambda_j\ne 2,$ define $O_j\in A^{2-\deg\lambda_j}(I\times L)$ by
\[
O_j:=to'_j-(-1)^{\deg\lambda_j}dt\wedge b'_j.
\]
Then
\[
dO_j=dt\wedge o'_j+tdo'_j+(-1)^{\deg \lambda_j}dt\wedge db'_j=0,
\]
and
\begin{equation}\label{eq:j0O}
j_0^*O_j=0,\qquad j_1^*O_j=o'_j.
\end{equation}
For $j$ such that $\deg \lambda_j = 1-n,$ we claim that $O_j = 0.$ Indeed, $o'_j = 0$ for degree reasons and $b'_j = 0$ by assumption. In the real spin case, for $j$ such that $\lambda_j$ is not real, we claim that $O_j = 0.$ Indeed, $b'_j = 0$ by assumption, and $o'_j = 0$ by Lemma~\ref{lm:reob} together with the assumption that $b'_{(l-1)}$ is real.
Set
\[
\oh_j:=\ot_j-O_j.
\]
By Lemma~\ref{lm:ojt_closed},
\[
d\oh_j=d\ot_j-dO_j=0.
\]
Lemma~\ref{rem:pseudo}, Lemma~\ref{lm:ut_relative} and equation~\eqref{eq:j0O} imply
\[
\oh_j|_{\d(I\times L)}=0.
\]
In the real spin case, if $\lambda_j$ is not real, we claim that
$\hat{o}_j=0.$ Indeed, in this case
we showed above that $O_j = 0,$ and Lemma~\ref{lm:reobt} together with the assumption that $\bt_{(l-1)}$ is real gives $\ot_j = 0$.
So, we take $\hat{b}_j=0$. If $\lambda_j$ is real, then Lemma~\ref{lm:reR} implies $|\hat{o}_j| \equiv 0$ or $1+n \pmod 4$.
If $\deg \lambda_j = 1-n,$
we claim that $\oh_j = 0.$ Indeed, in this case
we showed above that $O_j = 0,$ and Lemma~\ref{lm:deg_ut} implies that $\ot_j = 0.$ So,
we choose $\bh_j = 0.$ If $\deg \lambda_j = 1,$ Lemma~\ref{lm:ob1} gives $\bh_j$ such that $-d \bh_j = -\oh_j+\ct_j \,dt,\,\ct_j \in \R,$ and $\bh_j|_{\d(I\times L)} = 0$. If $1-n < \deg\lambda_j < 1$, then $1 < |\ot_j| < n+1$.
So, the cohomological assumption on $L$ and Lemma~\ref{B_l} guarantee the existence of $\bh_j\in A^{1-\deg\lambda_j}(I\times L)$
such that $(-1)^{\deg \lambda_j}d\bh_j=-\oh_j$ and $\bh_j|_{\d(I\times L)}=0.$ Take
$
\bt_j:=\bh_j+tb'_j.
$
Then
\begin{gather*}
j_0^*\bt_j=0,\qquad j_1^*\bt_j=b'_j,\\
(-1)^{\deg \lambda_j}d\bt_j
=-\oh_j + \ct_j dt -to'_j+(-1)^{\deg \lambda_j}dt\wedge b'_j
=-\oh_j-O_j +\ct_j dt
=-\ot_j+\ct_j dt.
\end{gather*}
For $j$ such that $\deg \lambda_j = 1-n,$ we chose $\bh_j = 0$ and assumed $b'_j = 0,$ so $\bt_j = 0.$ In the real spin case, for $j$ such that $\lambda_j$ is not real, we chose $\bh_j = 0$ and assumed $b'_j = 0$, so again, $\bt_j = 0.$
\end{proof}

\begin{proof}[Proof of Propositions~\ref{div}-~\ref{div_real}]
Take $\gt:=\pi^*\gamma$. Write $G(b)$ in the form of a list as in~\eqref{eq:list}.
Let $\bar{b}\in A^n(L)$ be any representative of the Poincar\'e dual of a point in $L$, and write $a=\int_Lb.$ Set
\[
b'_{(-1)}:=b'_{-1}:=a\cdot \bar{b}.
\]
Then  $\deg_Cb'_{(-1)}=1$, and $db'_{(-1)}=0$. It follows from the assumption on $\int_L b$ that $b'_{(-1)}$ satisfies the unit and divisor properties and that in the real spin case $b'_{(-1)}$ is real.

Choose $\eta\in \mI_R\cdot (A^{n-1}(L)\otimes R)$ with $G(\eta)\subset G(b)$, real in the real spin case, such that
\[
d\eta=b'_{(-1)}-(b)_n
\]
and define
\[
\bt_{(-1)}:=b+t(b'_{(-1)}-b)+dt\wedge\eta.
\]
Then
\begin{gather*}
(d\bt_{(-1)})_{n+1}=dt\wedge(b'_{(-1)}-b)_n-dt\wedge d\eta=0,\\
j_0^*\bt_{(-1)}=b,\qquad j_1^*\bt_{(-1)}=b'_{(-1)},
\end{gather*}
and in the real spin case $\bt_{(-1)}$ is real.

Proceed by induction. Let $l \geq 0.$ Assume we have constructed $b'_{(l-1)}\in C$, real in the real spin case, satisfying the unit and divisor properties modulo $F^{E_{l-1}}C$,
such that
\[
\deg_Cb'_{(l-1)}=1,\qquad G(b'_{(l-1)}) \subset G(b), \qquad \big( b'_{(l-1)} \big)_n=(b'_{(-1)})_n,
\]
and if $l\geq 1,$
\[
\mg(e^{b'_{(l-1)}})\equiv c'_{(l-1)}\cdot 1\pmod{F^{E_{l-1}}C}, \qquad c'_{(l-1)}\in(\mI_R)_2.
\]
Additionally, assume we have constructed $\bt_{(l-1)}$, real in the real spin case, with $\deg_C\bt_{(l-1)}=1,$ $G(\bt_{(l-1)}) \subset G(b),$ such that
\[
(d\bt_{(l-1)})_{n+1}=0,\qquad j_0^*\bt_{(l-1)}=b,\qquad j_1^*\bt_{(l-1)} = b'_{(l-1)},
\]
and if $l\geq 1,$
\[
\mgt(e^{\bt_{(l-1)}})\equiv \ct_{(l-1)}\cdot 1 \pmod{F^{E_{l-1}}\mC}, \qquad \ct_{(l-1)}\in(\mI_R\mR)_2,\quad d\ct=0.
\]

For $j\in\{\kappa_{l-1}+1,\ldots,\kappa_l\}$, define the obstruction chains $o'_j$ and $\ot_j$ as in~\eqref{eq:o'j} and~\eqref{eq:ojt_dfn} respectively.
For $j$ such that $\deg\lambda_j\ne 2,$ apply Lemma~\ref{lm:ojt_closed}, Lemma~\ref{lm:ut_relative}, and Lemma~\ref{rem:pseudo}, to deduce
\[
d\ot_j=0,\qquad j_0^*\ot_j=0,\qquad j_1^*\ot_j=o'_j.
\]
So, Lemma~\ref{lm:homotopy} implies $[o'_j]=0.$
By Lemma~\ref{lm:b'j} there exist $b'_j\in  A^{1-\deg\lambda_j}(L)$ such that
\[
b'_{(l)}:=b'_{(l-1)}+\sum_{\substack{\kappa_{l-1}+1\le j\le \kappa_l\\ \deg\lambda_j\ne 2}}\lambda_jb'_j
\]
satisfies the unit and divisor properties modulo $F^{E_l}C$,
is real in the real spin case,
and
\[
(-1)^{\deg\lambda_j}db'_j=-o'_j,\qquad \big(b'_{(l)}\big)_n=(b'_{(-1)})_n,\qquad  G(b'_{(l)})\subset G(b).
\]
By Lemma~\ref{lm:inductive}, we have
\[
\mg(e^{b'_{(l)}})\equiv c'_{(l)}\cdot 1\pmod{F^{E_l}C},\quad c'_{(l)}\in (\mI_R)_2.
\]
By Lemma~\ref{lm:bd_cond}, there exist $\bt_j\in A^{1-\deg\lambda_j}(I\times L)$ such that
\[
(-1)^{\deg\lambda_j}d\bt_j=-\ot_j + \ct_j dt,\qquad j_0^*\bt_j=0,\qquad j_1^*\bt_j=b'_j.
\]
Moreover, $\bt_j = 0$ when $\deg \lambda_j = 1-n$, and in the real spin case, $\bt_j=0$ unless $\lambda_j$ is real.
Thus, $\bt_{(l)}:=\bt_{(l-1)}+\sum_{\substack{\kappa_{l-1}+1\le j\le \kappa_l\\ \deg\lambda_j\ne 2}}\lambda_j\bt_j$ satisfies
\[
(d\bt_{(l)})_{n+1}= (d\bt_{(l-1)})_{n+1}=0,\qquad j_0^*\bt_{(l)}=b,\qquad j_1^*\bt_{(l)}=b'_{(l)}, \qquad  G(\bt_{(l)})\subset G(b),
\]
is real in the real spin case,
and by Lemma~\ref{lm:ut_exact},
\[
\mgt(e^{\bt_{(l)}})\equiv \ct_{(l)}\cdot 1\pmod{F^{E_l}\mC},\quad \ct_{(l)}\in(\mI_R\mR)_2,\quad d\ct=0.
\]

The limit chain $b'=\lim_lb'_{(l)}$ satisfies the unit and divisor properties and is real in the real spin case. Furthermore, $(\mgt,\bt=\lim_l\bt_{(l)})$ is a pseudoisotopy from $(\gamma,b)$ to $(\gamma,b')$.
\end{proof}

\subsection{Axioms of OGW}\label{ssec:axioms}
Recall that
\[
\ogw_{\beta,k}(\g_{i_1},\ldots,\g_{i_l})=[T^{\beta}](
\d_{t_{i_1}}\cdots\d_{t_{i_l}}\d_s^k\Omega|_{s=t_j=0}).
\]
In order to prove Theorem~\ref{axioms}, we need the following lemmas.

\begin{lm}\label{lm:d_tjD}
For all $j\in\{0,\ldots,N\},$
\[
\d_{t_j}\circ\mathcal{D}=\mathcal{D}\circ\d_{t_j}.
\]
For any $\Theta\in R$,
\[
\D([\lambda](\Theta)\cdot \lambda)= [\lambda](\D(\Theta))\cdot \lambda.
\]
\end{lm}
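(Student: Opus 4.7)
The strategy is to observe that $\mathcal{D}$ is nothing more than a projection of $R$ onto a direct summand characterized by two conditions on monomials: vanishing $s$-degree, and $\beta$ lying in $\Im(H_2(X;\Z)\to\sly).$ Both identities will follow by reading them off on a monomial basis.

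For the first identity, I would write a general element $\Theta \in R$ as a (possibly infinite) sum
\[
\Theta = \sum_{\beta,k,j_0,\ldots,j_N} a_{\beta,k,j_0,\ldots,j_N}\, T^\beta s^k t_0^{j_0}\cdots t_N^{j_N}.
\]
Then by definition,
\[
\mathcal{D}(\Theta) = \sum_{\beta \in \Im(H_2(X;\Z)\to\sly)} \sum_{j_0,\ldots,j_N} a_{\beta,0,j_0,\ldots,j_N}\, T^\beta t_0^{j_0}\cdots t_N^{j_N}.
\]
The operator $\d_{t_j}$ acts termwise on such sums and preserves the defining conditions ``$s$-degree zero'' and ``$\beta \in \Im(H_2(X;\Z)\to\sly)$''; indeed, $\d_{t_j}\bigl(T^\beta s^k t_0^{j_0}\cdots t_N^{j_N}\bigr) = j_j\, T^\beta s^k t_0^{j_0}\cdots t_j^{j_j - 1}\cdots t_N^{j_N}$, which is type-$\mathcal{D}$ iff the original monomial was. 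Applying $\d_{t_j}$ before or after the projection therefore yields the same expression.

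For the second identity, I would split on whether the monomial $\lambda$ is itself of type $\mathcal{D}$. If $\lambda$ is of type $\mathcal{D}$, then $\lambda[\lambda](\Theta)$ is a scalar multiple of $\lambda,$ hence type $\mathcal{D}$, so $\mathcal{D}$ acts on it as the identity; and since $\mathcal{D}(\Theta)$ retains the coefficient of every type-$\mathcal{D}$ monomial of $\Theta$, we have $[\lambda](\mathcal{D}(\Theta)) = [\lambda](\Theta)$. Thus both sides equal $\lambda[\lambda](\Theta).$ If $\lambda$ is not of type $\mathcal{D}$, then $\lambda[\lambda](\Theta)$ is a scalar multiple of a non-type-$\mathcal{D}$ monomial, so $\mathcal{D}(\lambda[\lambda](\Theta)) = 0$; and $\mathcal{D}(\Theta)$ involves only type-$\mathcal{D}$ monomials so $[\lambda](\mathcal{D}(\Theta)) = 0$ as well, giving equality.

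There is essentially no obstacle here: the lemma is a bookkeeping statement about the projection $\mathcal{D}$, and it is presumably recorded separately because it will be invoked repeatedly when differentiating $\Omega = \Oh - \mathcal{D}(\Oh)$ with respect to $t_j$ and $s$ in the proof of Theorem~\ref{axioms}. The only mild subtlety to keep track of is that $\mathcal{D}$ is $\Lambda^{even}[[t_0,\ldots,t_N]]$-linear but not $R$-linear (because it kills $s$), so the second identity must be verified on monomials rather than asserted by general nonsense.
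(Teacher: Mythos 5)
Your proof is correct and follows essentially the same route as the paper: the first identity is checked monomial by monomial using that $\d_{t_j}$ preserves the type-$\mathcal{D}$ condition, and the second is verified by splitting into the cases where $\lambda$ is or is not of type $\mathcal{D}$. Nothing further is needed.
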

\begin{proof}
The first statement holds because
\begin{align*}
\d_{t_j}\D(\Theta)=\sum_{\substack{\lambda=T^\beta\prod_{j=1}^Nt_j^{r_j}\\ \beta\in\Im(H_2(X;\Z)\to \sly)}} [\lambda](\Theta)\cdot \d_{t_j}\lambda
=\D(\d_{t_j}\Theta).
\end{align*}
The second holds because, if $\lambda=T^\beta\prod_{j=1}^Nt_j^{r_j}$ for $\beta\in\Im(H_2(X;\Z)\to \sly)$, then
\begin{align*}
\mathcal{D}([\lambda](\Theta)\cdot \lambda)
=[\lambda](\Theta)\cdot \lambda
=[\lambda](\D(\Theta))\cdot \lambda .
\end{align*}
Otherwise both sides of the equation vanish.
\end{proof}

\begin{lm}\label{homogeneity}
The superpotential is homogeneous of degree $3-n$.
\end{lm}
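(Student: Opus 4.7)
The plan is to verify that each summand in the definition of $\Oh(\gamma,b)$ lives in the degree-$(3-n)$ part of $R$, and then to observe that the projection $\D$ preserves each graded piece of $R$, so that $\Omega=\Oh-\D(\Oh)$ is homogeneous of the same degree. Thus the argument reduces to two independent degree computations plus a remark about $\D$.

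For the terms $\tfrac{1}{k+1}\langle\mg_k(b^{\otimes k}),b\rangle$, I would invoke Proposition~\ref{deg_str_map}, which asserts that $\mg_k:C^{\otimes k}\to C$ is of degree $2-k$. Since $\deg_C b=1$, it follows that $\mg_k(b^{\otimes k})\in C$ has degree $2$, so $\mg_k(b^{\otimes k})\wedge b$ has degree $3$ in $C$. The integration $\int_L$ lowers total degree by $n$, since it selects the form-degree-$n$ part of an element of $A^*(L)\otimes R$, so the pairing $\langle\mg_k(b^{\otimes k}),b\rangle$ lands in degree $3-n$ of $R$. The scalar prefactor $\tfrac{1}{k+1}$ plainly does not alter the degree.

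For $\m_{-1}^\gamma$, each summand $\tfrac{1}{l!}T^\beta\int_{\M_{0,l}(\beta)}\bigwedge_{j=1}^l(evi_j^\beta)^*\gamma$ is the only contribution not covered by Proposition~\ref{deg_str_map}, so I would bookkeep its degree directly. The factor $T^\beta$ has degree $\mu(\beta)$, and the integrand has total degree $2l$ since $\deg_D\gamma=2$. For the integral to be non-zero, the form-degree portion must equal $\dim_{\R}\M_{0,l}(\beta)=n-3+\mu(\beta)+2l$, so the surviving coefficient in $Q$ has variable-degree $2l-(n-3+\mu(\beta)+2l)=3-n-\mu(\beta)$. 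Combining with $T^\beta$ yields total degree $3-n$. This dimensional bookkeeping is the main (though still routine) step in the proof.

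Finally, $\D$ is defined by selecting a family of monomials of type $\D$ and leaving them unchanged, so it visibly respects the $R$-grading. Hence $\D(\Oh)$ is homogeneous of degree $3-n$, and therefore so is $\Omega=\Oh-\D(\Oh)$. All other terms follow immediately from Proposition~\ref{deg_str_map} and from the degree of the pairing $\langle\,,\,\rangle$, so no additional input is required.
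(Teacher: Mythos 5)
Your proposal is correct and follows essentially the same route as the paper: Proposition~\ref{deg_str_map} handles the terms $\tfrac{1}{k+1}\langle\mg_k(b^{\otimes k}),b\rangle$ (degree $2+1-n=3-n$ after pairing), and a direct dimension count $\mu(\beta)+2l-\dim\M_{0,l}(\beta)=3-n$ handles $\m_{-1}^\gamma$. Your closing remark that $\D$ preserves the grading, so homogeneity passes from $\Oh$ to $\Omega$, is a correct (and harmless) addition that the paper leaves implicit.
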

\begin{proof}
First consider summands of the form $\frac{1}{k+1}\langle\m_k^\gamma(b^{\otimes k}),b\rangle$. It follows from Proposition~\ref{deg_str_map} that $\deg_C\m_k^\gamma = 2-k$. Thus, since $\deg_C b=1$, we have
\[
\deg_R \langle\m_k^\gamma(b^{\otimes k}),b\rangle =
\deg_C\m_k^\gamma(b^{\otimes k})+\deg_C b-\dim L
=(2-k+k)+1-n=3-n.
\]
Now consider summands that come in $\m_{-1}^\gamma$.
Recall that $\deg_C\gamma=2$. Therefore,
\begin{align*}
\deg_R T^{\beta}\int_{\M_{0,l}(\beta)}\wedge_{j=1}^l evi_j^*\gamma=&
\mu(\beta)+2l-\dim\M_{0,l}(\beta)\\
=&\mu(\beta)+2l-(n-3+\mu(\beta)+2l) \\
=& 3-n.
\end{align*}
\end{proof}

The following lemma plays the role of the energy zero axiom at the level of bounding chains and their associated potentials $c$.
\begin{lm}\label{lm:db0}
Let $(\gamma,b)$ be a bounding pair with $\mg(e^b)=c\cdot 1$. Then $d([T^{\beta_0}](b))=0$ and $[T^{\beta_0}](c)=0.$
\end{lm}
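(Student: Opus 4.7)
The plan is to take the coefficient of $T^{\beta_0}$ in the bounding pair equation $\mg(e^b)=c\cdot 1$ and analyze the resulting identity in $A^*(L)\otimes\R[[s,t_0,\ldots,t_N]]$. Set $b_0:=[T^{\beta_0}](b)$ and $c_0:=[T^{\beta_0}](c)$. First I would identify the $T^{\beta_0}$ part of $\mg(e^b)=\sum_{k,l,\beta}\frac{T^{\beta}}{l!}\,\q^{\beta}_{k,l}(b^{\otimes k};\gamma^{\otimes l})$. Any class $\beta\in\sly$ with $\q^{\beta}_{k,l}\ne 0$ satisfies $\omega(\beta)\ge 0$, with equality only for $\beta=\beta_0$ (since by positivity of symplectic area, $\M_{k+1,l}(\beta)$ is empty unless $\beta=\beta_0$ or $\omega(\beta)>0$). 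Combined with the fact that every $T^{\beta'}$ appearing in $b\in\mI_R C$ has $\omega(\beta')\ge 0$, the only summands contributing to the $T^{\beta_0}$-coefficient are those with outer class $\beta_0$ and with all $T$-exponents from $b$ equal to $\beta_0$. Hence
\[
[T^{\beta_0}](\mg(e^b))=\sum_{k,l\ge 0}\frac{1}{l!}\,\q^{\beta_0}_{k,l}(b_0^{\otimes k};\gamma^{\otimes l}).
\]

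By the energy zero property (Proposition~\ref{q_zero}), $\q^{\beta_0}_{k,l}$ vanishes unless $(k,l)\in\{(1,0),(2,0),(0,1)\}$, leaving only $db_0$, $\q^{\beta_0}_{2,0}(b_0,b_0)$, and $(-1)^{|\gamma|+1}\gamma|_L$. Since $\gamma\in\mI_Q D\subset A^*(X,L)\otimes Q$ and forms in $A^*(X,L)$ vanish on $L$, the last term is zero, so the bounding pair equation reduces to
\[
db_0+\q^{\beta_0}_{2,0}(b_0,b_0)=c_0\cdot 1.
\]

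Next I would invoke that $n$ is odd. All generators of $R=\L[[s,t_0,\ldots,t_N]]$ have even $R$-degree: $\deg T^{\beta}=\mu(\beta)\in 2\Z$ by orientability of $L$, $\deg s=1-n\in 2\Z$ since $n$ is odd, and $\deg t_j\in 2\Z$ by hypothesis. Combined with $\deg_C b_0=1$, this forces each form-degree component of $b_0$ to have odd form degree. Expanding $b_0=\sum_\lambda\lambda\,b_\lambda$ with $b_\lambda\in A^{\mathrm{odd}}(L)$ and $\lambda\in\R[[s,t_0,\ldots,t_N]]$, swapping the summation indices and using graded-commutativity of $\wedge$ on odd forms yields $b_0\wedge b_0=-b_0\wedge b_0$, hence $\q^{\beta_0}_{2,0}(b_0,b_0)=0$.

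The equation thus reduces to $db_0=c_0\cdot 1$. Since $b_0$ has only components of form degree $\ge 1$, $db_0$ has only components of form degree $\ge 2$, whereas the right-hand side is purely form-degree $0$. Comparing form-degree $0$ components forces $c_0=0$, and the higher components give $db_0=0$. The one nontrivial step is the $T^{\beta_0}$-extraction in the first paragraph, which hinges on positivity of symplectic area on $J$-holomorphic curves; everything else is a direct consequence of the energy-zero axiom together with the parity forced by $n$ odd.
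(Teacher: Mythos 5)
Your argument is essentially the paper's: extract the $T^{\beta_0}$-coefficient of $\mg(e^b)=c\cdot 1$, reduce by the energy-zero property (Proposition~\ref{q_zero}) to the $(1,0)$, $(2,0)$, $(0,1)$ terms, kill the quadratic term using the odd form-parity forced by $n$ odd and the even grading of $R$, and then compare form degrees; this is the same route, spelled out in more detail. One caution about the step where you discard the $(0,1)$ term: in this paper $A^0(X,L)$ consists of functions \emph{constant} on $L$, not vanishing on $L$, so $\q^{\beta_0}_{0,1}(\gamma)=(-1)^{|\gamma|+1}\gamma|_L$ need not vanish when $\gamma$ has a degree-zero component --- which is exactly the situation of Section~\ref{sssec:intro_OGW}, where $\gamma$ contains $t_0\cdot 1$. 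That surviving term is a multiple of $1$, so your form-degree comparison still gives the first conclusion $d([T^{\beta_0}](b))=0$, but in that case $[T^{\beta_0}](c)$ comes out as $-\sum_{|\gamma_j|=0}t_j\,\gamma_j|_L$ (e.g.\ $-t_0$) rather than $0$, so the second conclusion only holds under the implicit assumption that $\gamma$ has no $A^0(X,L)$-component. You are on the same footing as the printed proof here: the paper's own computation passes from $\q^{\beta_0}_{1,0}(b)+\q^{\beta_0}_{2,0}(b,b)$ to $[T^{\beta_0}](\mg(e^b))$ without mentioning this term, and only the statement $d([T^{\beta_0}](b))=0$ is actually used later (in the Zero axiom), so the discrepancy is with the lemma as stated, not between your argument and the paper's.
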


\begin{proof}
On the one hand, keeping in mind that $\deg_Cb=1$ and $\gamma|_L = 0$,
Proposition~\ref{q_zero} gives
\begin{multline*}
d([T^{\beta_0}](b))=[T^{\beta_0}](- \gamma|_L+db-b\wedge b)=\\
=[T^{\beta_0}](\q^{\beta_0}_{0,1}(\gamma)+ \q^{\beta_0}_{1,0}(b)+\q^{\beta_0}_{2,0}(b,b))
=[T^{\beta_0}](\mg(e^b))
=[T^{\beta_0}](c\cdot 1).
\end{multline*}
On the other hand, $db\in A^{\ge 1}(L)\otimes R$ while $c\cdot 1\in A^0(L)\otimes R.$  Therefore, $d[T^{\beta_0}](b)=0$ and $[T^{\beta_0}](c)=0.$
\end{proof}

The following lemma is important for the proof of deformation invariance axiom.
Recall the notation $\sly^J$ and $\L^J=\L^J_\omega$ from Section~\ref{sssec:Jnov}.
\begin{lm}\label{lm:smNov}
Let $J$ be an almost complex structure on $X$ and let $S_L \subset H_2(X,L;\Z)$ be a subgroup contained in the kernel of the Maslov index $\mu : H_2(X,L;\Z) \to \Z.$
Then the Novikov ring $\Lambda^J$ is the same for any symplectic form $\omega$ with respect to which $L$ is Lagrangian and $J$ is tame and such that $S_L$ is contained in the kernel of $\omega : H_2(X,L;\Z) \to \R$.
\end{lm}
\begin{proof}
Let $\omega$ be a symplectic form on $X$ with respect to which $L$ is Lagrangian and $J$ is tame and such that $S_L$ is contained in the kernel of $\omega : H_2(X,L;\Z) \to \R$. Since $J,L,$ and $S_L,$ are fixed, the monoid $\sly^J$ is fixed. Since $S_L$ is contained in the kernel of $\omega : H_2(X,L;\Z) \to \Z,$ there is an induced monoid homomorphism $\omega : \sly^J \to \R.$ The Novikov ring $\Lambda^J_\omega$ is a subring of a completion of the group ring of $\sly^J.$ A priori, both the completion and the subring depend on $\omega,$ but we show they do not.
Let $\lambda = \sum_{i=0}^\infty a_iT^{\beta_i}\in \L^J_\omega$ with $\beta_i \in \sly^J$ distinct. We show $\lambda \in \Lambda^J_{\omega'}$ for any $\omega'$ with respect to which $L$ is Lagrangian and $J$ is tame and such that $S_L$ is contained in the kernel of $\omega' : H_2(X,L;\Z) \to \R$. That is, we show that $\omega'(\beta_i)\ge 0$ for all $i$, and $\lim_{i\to \infty}\omega'(\beta_i)=\infty.$ Indeed, by the argument of Section~\ref{ssec:add_not}, we can write a countable list $B_j \in \sly^J$ for $j = 1,2,\ldots$ of all the classes that can be represented by a non-constant $J$-holomophic disk. By the definition of $\sly^J$, we can write $\beta_i = \sum_{j = 1}^{N_i} x_{ij}B_j$ with $x_{ij} \in \Z_{\geq 0}.$ Since $\omega'(B_j) \geq 0,$ it follows that $\omega'(\beta_i) \geq 0.$ To verify the second condition, suppose by way of contradiction that there exists $E\in \R_{>0}$ such that after passing to a subsequence we have $\omega'(\beta_i) < E$ for all $i.$
Then, for all $i,j,$ such that $x_{ij} \neq 0,$ we have $\omega'(B_j) < E.$ It follows from Gromov compactness that we can take $N_i$ independent of $i.$ Moreover, there is a constant $\hbar > 0$ such that $\omega'(B_j) > \hbar.$ So, $x_{ij} < E/\hbar.$
Consequently, the $\beta_i$ belong to a finite set, which is a contradiction.
\end{proof}

We are now ready to prove the axioms of the OGW invariants, as stated in the introduction.
\begin{proof}[Proof of Theorem~\ref{axioms}]\leavevmode
It is enough to prove the statements for the constraints $A_1,\ldots,A_l,$ chosen from among the basis elements $\g_0,\ldots,\g_N.$ Assume without loss of generality that $\g_0=1\in\Hh^{0}(X,L;Q)$ (resp. $1 \in \Hh^{0}_\phi(X,L;Q)$ in the real case). Recall that in the definition of $\ogw$, we use a bounding chain $b$ that satisfies $\int_L b=s.$ In particular, $\int_L b$ satisfies the unit and divisor properties.
\begin{enumerate}
	\item
By Lemma~\ref{homogeneity} the superpotential $\Omega$ is homogeneous of degree $3-n$.
	Taking the derivative $\d_{t_{i_1}}\cdots\d_{t_{i_l}}\d_s^k\Omega$ reduces the degree by $k(1-n)+\sum_{j=1}^l(2-|\gamma_{i_j}|),$ and taking out $T^{\beta}$ further reduces degree by $\mu(\beta).$ The only monomials that don't vanish after substituting $s=t_j=0$ are those of degree zero. So, in order for $\ogw_{\beta,k}(\gamma_{i_1},\ldots,\gamma_{i_l})$ to be nonzero it is necessary that
	\begin{align*}
	0=&3-n-\big(k(1-n)+\sum_{j=1}^l(2-|\gamma_{i_j}|)+\mu(\beta)\big)\\
	&=3-n-\mu(\beta)-k-2l+kn+\sum_{j=1}^l|\gamma_{i_j}|
	,
	\end{align*}
	as required.
    \item
This is an immediate consequence of the definition since partial derivatives are graded commutative.
	\item
Under the hypothesis of Theorem~\ref{thm1} (resp. Theorem~\ref{thm2}), Proposition~\ref{lm:d_t0b} (resp. Proposition~\ref{div_real})  says we may assume $b$ satisfies the unit property.
	So, by Propositions~\ref{cl:symmetry} and~\ref{q_fund},
	\begin{align*}
(-1)^n	\d_{t_0}\Oh=&\sum_{k,l}\frac{1}{l!(k+1)}\langle\d_{t_0}\qkl(b^{\otimes k};\gamma^{\otimes l}), b\rangle+\d_{t_0}\mg_{-1}\\
	=&\sum_{k,l}\frac{1}{(l-1)!(k+1)}\langle\qkl(b^{\otimes k};1\otimes \gamma^{\otimes l-1}),b\rangle+0\\
	=&-\langle T^{\beta_0}\cdot1,b\rangle =(-1)^{n+1}T^{\beta_0}\int_L b=(-1)^{n+1}T^{\beta_0}s.
	\end{align*}
Lemma~\ref{lm:d_tjD} implies that $\d_{t_0}\mathcal{D}(\Oh)=\mathcal{D}(\d_{t_0}\Oh)=\mathcal{D}(T^{\beta_0}s)=0,$ so
 \[
 \d_{t_0}\Omega=\d_{t_0}\Oh=-T^{\beta_0}s.
 \]
	Therefore, for any multi-index $J$, we have $\d_J\d_{t_0}\Omega|_{s=t_j=0}\ne 0$ only if $J=\{s\}$, and in this case
	\[
	\d_J\d_{t_0}\Omega|_{s=t_j=0}=-T^{\beta_0},
	\]
	as required.
	\item
Abbreviate $\zeta=[T^{\beta_0}](b).$ Observe that
\[
\int_L\zeta=[T^{\beta_0}]\left(\int_L b\right) =s.
\]
So, by Proposition~\ref{q_zero} and Lemma~\ref{lm:db0},
	\begin{align*}
(-1)^n
[T^{\beta_0}](\Oh)=&
	\sum_{k,l}\frac{1}{l!(k+1)}\langle\qkl^{\beta_0}(\zeta^{\otimes k};\gamma^{\otimes l}),\zeta\rangle\\ =&\frac{1}{2}\langle\q^{\beta_0}_{1,0}(\zeta),\zeta\rangle+ \langle\q^{\beta_0}_{0,1}(t_0\cdot 1),\zeta\rangle+\frac{1}{3}\langle\q_{2,0}^{\beta_0}(\zeta,\zeta),\zeta\rangle\\
	=&\frac{1}{2}\langle d\zeta,\zeta\rangle-\langle t_0\cdot 1,\zeta\rangle-\frac{1}{3}\langle \zeta\wedge \zeta,\zeta\rangle\\
	=&0+(-1)^{n+1}t_0s-0=(-1)^{n+1}t_0s.
	\end{align*}
Lemma~\ref{lm:d_tjD} implies
$
[T^{\beta_0}](\mathcal{D}(\Oh))=0,
$	
so
\[
[T^{\beta_0}](\Omega)=[T^{\beta_0}](\Oh)=-t_0s.
 \]
 Therefore, $[T^{\beta_0}](\d_J\Omega|_{s=t_j=0})\ne 0$ only if $J=\{t_0,s\}$, and in this case the resulting invariant takes the value $-1$.
	\item
 	Without loss of generality, assume $A_l=\gamma_i$ with $|\gamma_i|=2$.
Under the hypothesis of Theorem~\ref{thm1} (resp. Theorem~\ref{thm2}),
 Proposition~\ref{div} (resp. Proposition~\ref{div_real}) says we may assume $b$ satisfies the divisor property.
	By a calculation similar to~\eqref{eq:decomp} with
\[
s_2:=s^{\lambda_j}(T^\beta\prod_{m=k+1}^1\lambda_{z_m} \prod_{j=N}^{0}t_j^{r_j}),
\qquad
s_2':=s_2+\sum_{m=1}^{k+1}\deg \lambda_{z_m},
\]
we find that for each $j\in Z_{\ge 0},$
	\begin{align*}
\ell_i(\lambda_j)&\cdot[\lambda_j](\Oh)=
\hspace{-3em}\sum_{\substack{k,l,\beta \\ \sum_{m=0}^N r_m=l \\ T^\beta\prod_{m=k+1}^1\lambda_{z_m}\prod_{j=N}^{0}t_j^{r_j} =(-1)^{s_2}\lambda_j}}
\hspace{-2em}(-1)^{s_2'} r_i
	 \prod_{m=1}^N\frac{1}{r_m!}\frac{1}{k+1}\langle
\qkl^{\beta}(\otimes_{m=1}^kb_{z_m};\otimes_{m=0}^{N}\gamma_{m}^{\otimes r_m}),
b_{z_{k+1}}\rangle+\\
&+
\hspace{-2em}\sum_{\substack{k,l,\beta \\ \sum_{m=0}^N r_m=l \\ T^\beta\prod_{m=k+1}^1\lambda_{z_m}\prod_{j=N}^{0}t_j^{r_j} =(-1)^{s_2}\lambda_j}}\hspace{-3em}	 (-1)^{s_2'}\prod_{m=1}^N\frac{1}{r_m!}\sum_{a=1}^{k}\frac{1}{k+1}\cdot\\
&\hspace{10em}\langle
\qkl^{\beta}(\otimes_{m=1}^{a-1}b_{z_m}\otimes \ell_i(\lambda_{z_a})b_{z_{a}}\otimes\otimes_{m=a+1}^kb_{z_m}; \otimes_{m=0}^{N}\gamma_{m}^{\otimes r_m}), b_{z_{k+1}} \rangle+\\
&+
\hspace{-2em}\sum_{\substack{k,l,\beta \\ \sum_{m=0}^N r_m=l \\ T^\beta\prod_{m=k+1}^1\lambda_{z_m}\prod_{j=N}^{0}t_j^{r_j} =(-1)^{s_2}\lambda_j}}\hspace{-2em}
	(-1)^{s_2'} \prod_{m=1}^N\frac{1}{r_m!}\frac{1}{k+1}\langle
\qkl^{\beta}(\otimes_{m=1}^kb_{z_m}; \otimes_{m=0}^{N}\gamma_{m}^{\otimes r_m}), \ell_i(\lambda_{z_{k+1}})b_{z_{k+1}} \rangle\\
&+
\hspace{-2em}\sum_{\substack{l,\beta \\ \sum_{m=0}^N r_m=l \\T^\beta\prod_{j=N}^{0}t_j^{r_j} =(-1)^{s_2}\lambda_j}}\!\!\!\!
(-1)^{s_2'}	r_i\prod_{m=1}^N\frac{1}{r_m!}
\q_{-1,l}^\beta(\otimes_{m=0}^{N}\gamma_{m}^{\otimes r_m})\\
\end{align*}
By Proposition~\ref{cl:q_div} and the divisor property for $b,$ we continue
\begin{align*}
=&\sum_{\substack{k,l,\beta \\ \sum_{m=0}^N r_m=l\\ T^\beta\prod_{m=k+1}^1\lambda_{z_m}\prod_{j=N}^{0}t_j^{r_j} =\\
\quad =(-1)^{s_2}\lambda_j}}\hspace{-2em}
  (-1)^{s_2'}\cdot\\
&\hspace{5em}\cdot
 	\gamma_i(T^\beta)
    \frac{1}{(r_i-1)!}\prod_{m\ne i}\frac{1}{r_m!}\frac{1}{k+1}\langle
    \qkl^{\beta}(\otimes_{m=1}^kb_{z_m};\gamma_i^{\otimes (r_i-1)}\otimes_{m\ne i}\gamma_{m}^{\otimes r_m}),
    b_{z_{k+1}}\rangle+\\
&+
\hspace{-2em}\sum_{\substack{k,l,\beta \\ \sum_{m=0}^N r_m=l\\ T^\beta\prod_{m=k+1}^1\lambda_{z_m}\prod_{j=N}^{0}t_j^{r_j}=\\ =(-1)^{s_2}\lambda_j}}\hspace{-2em}
(-1)^{s_2'}
\sum_{a=1}^{k}\gamma_i(\lambda_{z_a})
    	 \prod_{m=0}^N\frac{1}{r_m!}
    \frac{1}{k+1}\cdot\\
    &\hspace{12em}\cdot\langle
    \qkl^{\beta}(\otimes_{m=1}^{a-1}b_{z_m}\otimes b_{i(z_{a})}\otimes\otimes_{m=a+1}^kb_{z_m}; \otimes_{m=0}^{N}\gamma_{m}^{\otimes r_m}), b_{z_{k+1}} \rangle+\\
&+
\hspace{-2em}\sum_{\substack{k,l,\beta \\ \sum_{m=0}^N r_m=l\\ T^\beta\prod_{m=k+1}^1\lambda_{z_m}\prod_{j=N}^{0}t_j^{r_j}=\\ =(-1)^{s_2}\lambda_j}}\hspace{-2em}
(-1)^{s_2'}\gamma_i(\lambda_{z_{k+1}})
    	 \prod_{m=0}^N\frac{1}{r_m!}
    \frac{1}{k+1}\langle
\qkl^{\beta}(\otimes_{m=1}^kb_{z_m}; \otimes_{m=0}^{N}\gamma_{m}^{\otimes r_m}), b_{i(z_{k+1})} \rangle+\\
&+
\sum_{\substack{l,\beta \\ \sum_{m=0}^N r_m=l \\T^\beta\prod_{j=N}^{0}t_j^{r_j}=\\ =(-1)^{s_2}\lambda_j}}\!\!\!\!
(-1)^{s_2'}\gamma_i(T^\beta)\frac{1}{(r_i-1)!}\prod_{m\ne i}\frac{1}{r_m!}
    \q_{-1,l}^\beta(\gamma_i^{\otimes (r_i-1)}\otimes_{m\ne i}\gamma_{m}^{\otimes r_m})\\
=&
\gamma_i(\lambda_j)\sum_{\substack{k,l,\beta \\ \sum_{m=0}^N r_m=l\\ T^\beta\prod_{m=k+1}^1\lambda_{z_m}\prod_{j=N}^{0}t_j^{r_j}=\\ =(-1)^{s_2}\lambda_{i(j)}}}\!\!\!\!
(-1)^{s_2'}	\prod_{m=0}^N\frac{1}{r_m!}\frac{1}{k+1}\langle
    \qkl^{\beta}(\otimes_{m=1}^kb_{z_m};\otimes_{m=0}^{N}\gamma_{m}^{\otimes r_m}),
b_{z_{k+1}}\rangle+\\
&+
\gamma_i(\lambda_j)\sum_{\substack{l,\beta \\ \sum_{m=0}^N r_m=l \\T^\beta\prod_{j=N}^{0}t_j^{r_j} =(-1)^{s_2}\lambda_{i(j)}}}\!\!\!\!
(-1)^{s_2'}
\prod_{m=0}^N\frac{1}{r_m!}
\q_{-1,l}^\beta(\otimes_{m=0}^{N}\gamma_{m}^{\otimes r_m})\\
=& \gamma_i(\lambda_j)\cdot [\lambda_{i(j)}](\Oh).
	\end{align*}
Using the fact that $\d_{t_i}\lambda_j=\ell_i(\lambda_j)\lambda_{i(j)}$, Lemma~\ref{lm:d_tjD}, and the preceding computation for $\Oh$,  we have
\begin{align*}
\ell_i(\lambda_j)\lambda_{i(j)}\cdot[\lambda_j](\D(\Oh))=&
\d_{t_i}\left(\lambda_j\cdot[\lambda_j](\D(\Oh))\right)\\
=&\d_{t_i}\D(\lambda_j\cdot[\lambda_j](\Oh))\\
=&\D\left(\d_{t_i}(\lambda_j\cdot[\lambda_j](\Oh))\right)\\
=&\D\left(\ell_i(\lambda_j)\lambda_{i(j)}\cdot[\lambda_j](\Oh)\right)\\
=&\D\left(\lambda_{i(j)}\gamma_i(\lambda_j)\cdot[\lambda_{i(j)}](\Oh)\right)\\
=&\gamma_i(\lambda_j)\lambda_{i(j)}\cdot[\lambda_{i(j)}]\D(\Oh).
\end{align*}
It follows that
\[
\ell_i(\lambda_j)\cdot[\lambda_j](\Omega)=\gamma_i(\lambda_j)\cdot [\lambda_{i(j)}](\Omega).
\]
Thus, writing $\lambda_j=T^\beta s^k \prod_{m=0}^Nt_m^{r_m}$,
\begin{align*}
\ogw_{\beta,k}(\otimes_{m=0}^N\g_m^{\otimes r_m}) = &
k!\prod_{m=0}^Nr_m!\cdot[\lambda_m](\Omega)\\
=&k!(r_i-1)!\prod_{m\ne i}r_m!\cdot\ell_i(\lambda_j)\cdot[\lambda_j](\Omega)\\8
=& \gamma_i(\lambda_j)\cdot k!(r_i-1)!\prod_{m\ne i}r_m!\cdot[\lambda_{i(j)}](\Omega)\\
=&\int_\beta\gamma_i \cdot \ogw_{\beta,k}(\otimes_{m\ne i}=\g_m^{\otimes r_m}\otimes \g_i^{\otimes r_i-1}).
\end{align*}

\item
Consider the space of symplectic structures on $X$ with the topology induced from the $C^0$ topology on $2$-forms on $X.$ It suffices to show that around any symplectic structure $\omega_0$ on $X$ there is a neighborhood $U$ in the space of symplectic structures where the invariants $\ogw_{\beta,k}$ stay constant. Let $J$ be an $\omega_0$ tame almost complex structure on~$X.$
Recall the notation of Section~\ref{sssec:Jnov}.
Take $U$ small enough that $J$ is tame with respect to all $\omega \in U.$
By Lemma~\ref{lm:smNov}, the Novikov ring $\L^J$ is the same for all $\omega\in U$.
Under the hypothesis of Theorem~\ref{thm1} (resp. Theorem~\ref{thm2}), use Proposition~\ref{prop:existJ} (resp. Proposition~\ref{cor:spin_b_existJ}) to choose
$(\gamma,b)\in (\mI_{Q^J}D^J)_2\oplus(\mI_{R^J})_{1-n}$ such that $b$ is a bounding chain for $\m^{J,\gamma}$ and $([\gamma],\int_L b)=(\Gamma,s)$.
The definition of a bounding chain depends on the symplectic form $\omega$ only through $J$ and the Novikov ring $\Lambda^{J}$. So, $b$ is a bounding chain for $\m^{J,\gamma}$ for all $\omega \in U.$ It follows that $b$ is a bounding chain for $\m^\gamma$ as used in the definition of the invariants $\ogw_{\beta,k}$ for all $\omega\in U$. Finally, the superpotential $\Omega$ depends on $\omega$ only through $J.$ So, the superpotential $\Omega(\gamma,b)$ and consequently the invariants $\ogw_{\beta,k}$ remain constant for $\omega \in U.$
\end{enumerate}
\end{proof}

\subsection{Relaxed relative de Rham complex}\label{ssec:relax}
Define subcomplexes $\widehat A^*(X,L) \subset A^*(X)$ and $\widehat A^*(I\times X, I \times L) \subset A^*(I\times X),$ by
\begin{gather*}
\widehat{A}^*(X,L):=\left\{\eta\in A^*(X)\;\bigg|\,\int_Li^*\eta=0\right\},
\\
\widehat{A}^*(I\times X,I\times L):=\left\{\etat\in A^*(I\times X)\;\bigg|\,p_*(\Id\times i)^*\etat=0\right\}.
\end{gather*}
Abbreviate $\Hhh^*(X,L;\R):= H^*(\Ah^*(X,L),d)$. The inclusion of the underlying complexes induces a map $h: \Hh^*(X,L;\R) \to \Hhh^*(X,L;\R)$. In a real setting, let $\Hhh_\phi^{even}(X,L;\R)$ denote the direct sum over $k$ of the $(-1)^{k}$-eigenspace of $\phi^*$ acting on $\Hhh^{2k}(X,L;\R).$ Let $h_\phi : \Hh_\phi^{even}(X,L;\R) \to \Hhh_\phi^{even}(X,L;\R)$ denote the induced map.
\begin{lm}\label{lm:Hrel}\leavevmode
\begin{enumerate}
\item
    If $H^*(L;\R)\simeq H^*(S^n)$, then $h : \Hh^*(X,L;\R) \overset{\sim}{\lrarr} \Hhh^*(X,L;\R)$.
\item
    In a real setting, if $H^j(L;\R)\simeq H^j(S^n;\R)$ for $j \equiv 0,3,n,n+1 \pmod 4$, then $h_\phi : \Hh_\phi^{even}(X,L;\R) \overset{\sim}{\lrarr} \Hhh_\phi^{even}(X,L;\R)$.
\end{enumerate}
\end{lm}
\begin{proof}
Define the graded vector spaces $\Hh^*(L)$ and $\R[-n]$ by
\[
\Hh^j(L)=
\begin{cases}
0, & j=0,\\
H^j(L), & j>0,
\end{cases}
\qquad
\R[-n]^j=
\begin{cases}
0, & j\ne n,\\
\R, & j=n.
\end{cases}
\]
We have a commuting diagram of long exact sequences
\begin{equation}\label{eq:les}
\xymatrix{
\cdots\ar[r]& \Hh^{j-1}(L)\ar[r]\ar[d]^{\int_L} & \Hh^j(X,L)\ar[r]\ar[d]^h & H^j(X)\ar[r]\ar[d]^{\sim} & \Hh^j(L)\ar[r]\ar[d]^{\int_L} & \Hh^{j+1}(X,L)\ar[r]\ar[d]^h & \cdots\\
\cdots\ar[r]& \R[-n]^{j-1}\ar[r] & \Hhh^j(X,L)\ar[r] & H^j(X)\ar[r] & \R[-n]^j\ar[r] & \Hhh^{j+1}(X,L)\ar[r] & \cdots
}
\end{equation}
\begin{enumerate}
    \item
By assumption, the map $\int_L: \Hh^j(L)\to \R[-n]$ is an isomorphism. By the five lemma, $h$ is an isomorphism.
    \item
Note that $\Hh^*(L)$ and $\R[-n]$ are $\phi^*$ invariant. In particular, the eigenspaces of $(-1)$ under the $\phi^*$ action for both groups is $\{0\}$.

Let $j\equiv 2\pmod 4$ and consider the eigenspaces of $(-1)$ in diagram~\eqref{eq:les}. We get
\[
\xymatrix{
0\ar[r]\ar[d]& \Hh^j_\phi(X,L)\ar[r]\ar[d]^{h^j}& H^j_\phi(X) \ar[r]\ar[d]^\sim & 0\ar[d]\\
0\ar[r]& \Hhh^j_\phi(X,L)\ar[r]& H^j_\phi(X) \ar[r] & 0 .
}
\]
In particular, this implies $\Hh^j_\phi(X,L)\simeq \Hhh^j_\phi(X,L)$.

Let $j\equiv 0\pmod 4$ and consider the eigenspaces of $(+1)$ in diagram~\eqref{eq:les}. By assumption on $H^*(L)$, the cohomology $\Hh^j(L)=0$ unless $j=n$. In either case, the map $\int_L:\Hh^j(L)\to \R[-n]^j$ is an isomorphism. Similarly, $\Hh^{j-1}(L)=0$ unless $j=n+1,$ and $\int_L:\Hh^{j-1}(L)\to \R[-n]^{j-1}$ is an isomorphism.
Thus, we get
\[
\xymatrix{
H^{j-1}(X)^{\phi^*}\ar[r]\ar[d]^\sim & \Hh^{j-1}(L)\ar[r]\ar[d]^\sim& \Hh^j_\phi(X,L)\ar[r]\ar[d]^{h^j}& H^j_\phi(X)\ar[r]\ar[d]^\sim & \Hh^j(L) \ar[d]^\sim\\
H^{j-1}(X)^{\phi^*}\ar[r] & \R[-n]^{j-1}\ar[r]& \Hhh^j_\phi(X,L)\ar[r]& H^j_\phi(X)\ar[r]& \R[-n]^j.
}
\]
By the five lemma, this implies $\Hh^j_\phi(X,L)\simeq \Hhh^j_\phi(X,L)$.
\end{enumerate}
\end{proof}

In~\cite[Section 4.2.13]{ST1} we remark that
for $\gamma\in (\mI_Q\widehat{A}^*(X,L))_2$ with $d\gamma=0$, the triple
$(\{\mg_k\}_{k\ge 0},\langle\,,\,\rangle,1)$ is a cyclic unital $A_\infty$ structure on $C$.
Moreover, given closed $\gamma,\gamma'\in (\mI_Q\widehat{A}^*(X,L))_2$ with $[\gamma]=[\gamma']\in \Hhh^*(X,L)$, there exists a cyclic unital pseudoisotopy $\mgt$ from $\mg$ to $\mgp$ with $\gt\in \widehat{A}^*(I\times X,I\times L).$

The results in this paper similarly extend to the case when $\gamma$ is taken in $\widehat{A}^*(X,L;Q)$ rather than $A^*(X,L;Q)$. Specifically, the proof of Theorem~\ref{thm_inv} holds verbatim.
Theorems~\ref{thm1} and~\ref{thm2} hold as well. One replaces $\Hh^*$ with $\Hhh^*$ in the ranges of $\varrho$ resp. $\varrho_\phi.$ However, Lemma~\ref{lm:Hrel} implies that under the hypothesis of Theorem~\ref{thm1} (resp. Theorem~\ref{thm2}) the range of $\varrho$ (resp. $\varrho_\phi$) is, in fact, unchanged up to canonical isomorphism.  The only difference in the proofs of Theorems~\ref{thm1} and~\ref{thm2} is in Lemmas~\ref{lm:td} and~\ref{lm:deg_ut}. In the case of Lemma~\ref{lm:td}, the conclusion becomes that $o_j$ is exact. Indeed, $\int_L(\gamma)_n=\int_L\gamma=0$ implies that $[(i^*\gamma)_n]=0\in H^n(L)\otimes R$. Therefore,
\[
(\mg(e^{b_{(l)}}))_{n} = (\q_{0,1}^{\beta_0}(\gamma)+\q_{1,0}^{\beta_0}(b_{(l)}) +\q_{2,0}^{\beta_0}(b_{(l)},b_{(l)}))_{n}
=(-i^*\gamma+db_{(l)})_{n}
\]
is exact.
The same argument applies in the case of Lemma~\ref{lm:deg_ut} with the conclusion that $\ot_j$ is exact.
Likewise, the proof of Lemma~\ref{lm_rho} carries through and $\varrho$ (resp. $\varrho_\phi$) is well defined.

Finally, to justify the relaxed version of Theorem~\ref{axioms}, let $\g$ be as in Section~\ref{sssec:intro_OGW} and consider a relaxed bounding pair $(\gamma,b)\in \mI_Q \widehat A(X,L;Q)\oplus \mI_D C$ with $([\gamma],b)=(\g,s)$. By Theorem~\ref{thm1} (resp. Theorem~\ref{thm2}), there exists a bounding pair $(\gamma',b')\in \mI_Q D \oplus \mI_R C$ with $\varrho(\gamma',b')=(\g,s)$ (resp. $\varrho_\phi(\gamma',b')=(\g,s)$). By Theorem~\ref{axioms}, the invariants arising from $\Omega(\gamma',b')$ satisfy the open Gromov-Witten axioms.  On the other hand, by the relaxed version of Theorem~\ref{thm1} (resp. Theorem~\ref{thm2}), we have $(\gamma,b)\sim (\gamma',b')$ in the relaxed sense. So, the relaxed version of Theorem~\ref{thm_inv} gives $\Omega(\gamma,b)=\Omega(\gamma',b')$.

\subsection{Comparison with Welschinger invariants}\label{ssec:comparison}
\begin{proof}[Proof of Theorem~\ref{Welsch}]
Assume without loss of generality that $[\gamma_N]=A.$ Abbreviate $k=k_{d,l}.$ By Proposition~\ref{prop:3d} we may choose $b=s\cdot\bb$ where $\bb$ is a representative of the Poincar\'e dual of a point.
Recall that if $n=2$, then $k\le 1$, so $s^k \neq 0.$ By definition,
\begin{align*}
\ogw_{\beta,k}(\gamma_N^{\otimes l})=&
k!l!\cdot[T^\beta s^k t_N^l](\Omega)\\
=&k!l!\left(
\frac{1}{l! k}\langle\q^\beta_{k-1,l}(\bb^{\otimes (k-1)};\gamma_N^{\otimes l}),\bb\rangle\right)\\
=&-(k-1)!\left(\int_{\M_{k,l}(\beta)} \wedge_{j=1}^l(evi_j)^*\gamma_N\wedge \wedge_{j=0}^{k-1} (evb_j)^*\bb\right).
\end{align*}

Denote by $\M_{k,l}^S(\beta)$ the moduli space of genus zero $J$-holomorphic open stable maps $u:(\Sigma,\d \Sigma) \to (X,L)$ of degree $\beta \in \sly$ with one boundary component, $k$ unordered boundary points, and $l$ interior marked points.
It comes with evaluation maps
$evb_j^\beta:\M_{k,l}^S(\beta)\to L$, $j=1,\ldots,k$, and $evi_j^\beta:\M_{k,l}^S(\beta)\to X$, $j=1,\ldots,l$.
The space $\M_{k,l}^S(\beta)$ carries a natural orientation induced by the spin structure on $L$ as in~\cite[Chapter 8]{FOOO} and~\cite{SolomonThesis}. The diffeomorphism of $\M_{k,l}^S(\beta)$ corresponding to relabeling boundary marked points by a permutation $\sigma \in S_k$ preserves or reverses orientation depending on $sgn(\sigma).$ So, in both cases above,
\[
\ogw_{\beta,k}(\gamma_N^{\otimes l})=-\int_{\M_{k,l}^S(\beta)} \wedge_{j=1}^l(evi_j)^*\gamma_N\wedge \wedge_{j=1}^{k}(evb_j)^*\bb.
\]
Thus, the theorem follows from \cite[Theorem 1.8]{SolomonThesis}:
The factor of $2^{1-l}$ arises from two independent sources.
First, as explained in \cite[Theorem 1.8]{SolomonThesis}, each real holomorphic sphere corresponds to two holomorphic disks. This gives a factor of $2.$ Second, each interior constraint $\gamma_N$ is Poincar\'e dual to the homology class of point. On the other hand, Welschinger~\cite{Welschinger3,Welschinger2} considers constraints that are pairs of conjugate points, and thus represent twice the homology class of a point. This gives an additional factor of $2^{-l}.$
\end{proof}

\subsection{The case of Georgieva}\label{sssec:geosuperpot}

\subsubsection{Superpotential invariants}\label{sssec:geosi}

Suppose the hypothesis of Proposition~\ref{prop:altrealspin} is satisfied. Let $W_G = H^*(X;\R)^{-\phi^*}$.
Fix
$\Gg_0,\ldots,\Gg_{N^G},$ a basis of $W_G$, set $\deg \tg_j=2-|\Gg_j|$, and take
\[
\Gg:=\sum_{j=0}^{\;N^G}\tg_j\Gg_j.
\]
By Proposition~\ref{prop:altrealspin}, choose a real bounding pair $(\gag,0)$ such that
\begin{equation}\label{eq:cbpg}
\varrho_\phi^G([\gag,0])=\Gg.
\end{equation}
By Theorem~\ref{thm_inv}, the superpotential $\Omega = \Omega(\gag,0)$ is independent of the choice of $(\gag,0).$
The associated open Gromov-Witten invariants
\[
\ogw_{\beta,k}^{G,L} : W_G^{\otimes l} \lrarr \R
\]
are defined by setting
\[
\ogw_{\beta,k}^{G,L}(\Gg_{i_1},\ldots,\Gg_{i_l}):= \text{ the coefficient of }\Tg^{\beta} \text{ in }
\d_{\tg_{i_1}}\cdots\d_{\tg_{i_l}}\d_\sg^k\Omega|_{\sg=0,\tg_j=0}
\]
and extending linearly to general input.
Observe that for $k>0$ the invariant $\ogw_{\beta,k}^{G,L}$ vanishes since $\d_{\sg}\Omega=0$.

\subsubsection{Comparison with Georgieva's invariants}
\begin{prop}\label{thm:penkasuperpot}
Suppose $(X,\omega,\phi)$ is admissible in the sense of~\cite{Georgieva}, choose a $\phi$-orienting structure, and let $\s_L$ be the associated relative spin structure for the component $L \subset fix(\phi)$. Assume that $\frac{\mu(\beta)}{2} + w_{\s_L}(\chi_L(\beta)) \equiv 0 \pmod 2$ for all components $L \subset fix(\phi)$ and all $\beta\in \sly^G_L$. For $d \in \mathcal{A}$ and $A_j \in W_{G}$, we have
\[
\sum_{\substack{\chi_L(\beta) = d, \\ L \subset fix(\phi)}}\ogw^{G,L}_{\beta,0}(A_1,\ldots,A_l)=
 2^{1-l}
\ogw^{\text{Georgieva}}_{d,0,l}(A_1,\ldots,A_l).
\]
\end{prop}

\begin{proof}
Without loss of generality, assume $A_j = \Gg_{i_j}$ for $j = 1,\ldots,l.$ Let $\gag_j\in A^*(X;Q^G)^{-\phi^*}$ be such that $\gag = \sum_{j = 0}^N \tg_j\gag_j$ and thus, $[\gag_j]=\Gg_j$. By definition,
\[
\ogw^{G,L}_{\beta,0}(\otimes_{j=1}^l\Gg_{i_j})
= [\Tg^{\beta}] (\d_{\tg_{i_1}}\cdots\d_{\tg_{i_l}}\Omega(\gamma,0))|_{\sg=0,\tg_j=0}.
\]
Since $d\in \mathcal{A}$, for each $\beta$ such that $\chi_L(\beta) = d$, we have $\beta\not\in \Im(H_2(X;\Z)\to\sly)$ and therefore $\Tg^\beta \tg_{i_1}\cdots \tg_{i_l}$ is not of type $\D.$ Thus,
\begin{equation*}
\ogw^{G,L}_{\beta,0}(\otimes_{j=1}^l\Gg_{i_j})=
\q_{-1,l}^\beta(\otimes_{j=1}^l\gag_{i_j})
=\int_{\M_{0,l}(\beta)}\wedge_{j=1}^levi_j^*\gag_{i_j}.
\end{equation*}
Summing, we obtain,
\begin{equation}\label{eq:Gsum}
\sum_{\substack{\chi_L(\beta) = d, \\ L \subset fix(\phi)}}\ogw^{G,L}_{\beta,0}(\otimes_{j=1}^l\Gg_{i_j}) = \sum_{\substack{\chi_L(\beta) = d, \\ L \subset fix(\phi)}}\int_{\M_{0,l}(\beta)}\wedge_{j=1}^levi_j^*\gag_{i_j}.
\end{equation}
It follows from~\cite[Lemma 7.3]{Georgieva} that the orientation of the moduli spaces used in defining $\ogw_{d,0,l}^{Georgieva}$ is the one arising from the relative spin structures $\s_L.$ Hence, the right hand side of~\eqref{eq:Gsum} is readily seen to agree with the definition of~\cite{Georgieva}, up to a factor of $2^{1-l}.$ The factor $2^{1-l}$ comes from the decorations $\pm$ introduced in~\cite[Section 3]{Georgieva} for all interior marked points with the exception of the first.
\end{proof}

\subsubsection{Comparison of superpotential invariants}
\leavevmode
Fix a component $L\subset \fix(\phi)$. Let $\s$ be an arbitrary relative spin structure on $L.$
Take $\sly_L$ and $W_L$ as usual according to whether $L$ satisfies the hypothesis of Theorem~\ref{thm1} or Theorem~\ref{thm2}. Let $q : \sly_L \to \sly^G_L$ denote the natural map. Let $W_\phi^L$ denote $H^*(X;\R)^{-\phi^*}$  (resp. $H^{even}_\phi(X;\R)^{-\phi^*}$) in the case that $L$ satisfies the hypothesis of Theorem~\ref{thm1} (resp. Theorem~\ref{thm2}).
Similarly to Section~\ref{sssec:comps}, identify $W_\phi^L$ with a subspace of $W_L.$
\begin{prop}\label{pr:superpots}
Suppose $L$ and $\s$ satisfy the hypotheses of Proposition~\ref{prop:altrealspin} and either Theorem~\ref{thm1} or Theorem~\ref{thm2}. For $\hat\beta \in \sly^G_L$ and $A_1,\ldots,A_l \in W_\phi^L,$ we have
\begin{equation}\label{eq:superpots}
\sum_{q(\beta) = \hat \beta}\ogw_{\beta,0}(A_1,\ldots,A_l) = \ogw_{\hat \beta,0}^G(A_1,\ldots,A_l).
\end{equation}
\end{prop}
\begin{proof}
Let $\g_0,\ldots,\g_N,\g,$ and $(\gamma,b)$ be as in the beginning of Section~\ref{sssec:intro_OGW}.
Without loss of generality, suppose $\g_0,\ldots,\g_M,$ form a basis for $W_\phi^L$,
and assume $A_j=\g_{i_j}$ for $j=1,\ldots,l.$
Let
\[
\g' = \sum_{j = 0}^M t_j\g_j.
\]
Let $\rho_R : R \to R$ denote the $\L$-algebra homomorphism given by
\begin{equation*}
\rho_R(s) = 0, \qquad \qquad
\rho_R(t_j) =
\begin{cases}
t_j, & j = 0,\ldots,M,\\
0, & j = M+1,\ldots,N.
\end{cases}
\end{equation*}
Let $\rho_Q:Q \to Q$ denote the homomorphism obtained by restricting $\rho_R$ to $Q \subset R$ and denote by
\[
\rho_C : C \to C, \qquad  \rho_D : D \to D, \qquad \rho_H : \Hh^{*}(X,L;Q) \to \Hh^{*}(X,L;Q),
\]
the induced module homomorphisms. These homomorphisms commute with the action of $\phi^*.$ Thus, the map $(\gamma,b) \mapsto (\rho_D(\gamma),\rho_C(b))$ preserves bounding pairs as well as the real condition. Also, $\rho_H(\g) = \g'.$
It follows from equation~\eqref{eq:cbp} that in the case that $L$ satisfies the hypothesis of Theorem~\ref{thm1} (resp. Theorem~\ref{thm2}),
\[
\varrho([\rho_D(\gamma),\rho_C(b)])=(\g',0) \qquad  \text{(resp. $\varrho_\phi([\rho_D(\gamma),\rho_C(b)])=(\g',0)).$}
\]

On the other hand, let $\Gg_0,\ldots,\Gg_{N^G},\Gg,$ and $(\gag,0)$ be as in Section~\ref{sssec:geosi}. Assume without loss of generality that $\Gg_0,\ldots,\Gg_M,$ coincide with $\g_0,\ldots,\g_M,$ and thus form a basis of $W_\phi.$
Also, since the sum on the left hand side of equation~\eqref{eq:superpots} is independent of $\sly,$
we assume without loss of generality that $\sly = \sly^G,$ so $q : \sly \to \sly^G$ is the identity map.
Let $\rho_R^G : R^G \to R$ denote the homomorphism given by
\[
\rho_R^G(\Tg^{\beta}) = T^{\beta}, \qquad \rho_R^G(\sg) = 0, \qquad \rho_R^G(\tg_j) =
\begin{cases}
t_j, & j = 0,\ldots,M,\\
0, & j = M+1,\ldots,N^G.
\end{cases}
\]
Let $\rho_Q^G:Q^G \to Q$ denote the homomorphism obtained by restricting $\rho_R$ to $Q^G \subset R^G$ and denote by
\[
\rho_D^G : D^G \to D, \qquad \rho_H^G : H^{*}(X;Q^G)^{-\phi^*} \to \Hh^{*}(X,L;Q)^{-\phi^*},
\]
the induced module homomorphisms. These homomorphisms commute with the action of~$\phi^*.$ Thus, the map $(\gag,0) \mapsto (\rho_D^G(\gag),0)$ preserves bounding pairs as well as the real condition. Also, $\rho_H^G(\Gg) = \g'.$
It follows from equation~\eqref{eq:cbpg} that in the case that $L$ satisfies the hypothesis of Theorem~\ref{thm1} (resp. Theorem~\ref{thm2}),
\[
\varrho([\rho_D^G(\gag),0])=(\g',0) \qquad  \text{(resp. $\varrho_\phi([\rho_D^G(\gag),0])=(\g',0)\;).$}
\]
Thus, Theorem~\ref{thm1} (resp. Theorem~\ref{thm2}) implies that
\[
(\rho_C(\gamma),\rho_D(b)) \sim (\rho_D^G(\gag),0),
\]
and Theorem~\ref{thm_inv} gives
\[
\rho_R(\Omega(\gamma,b)) = \Omega(\rho_D(\gamma),\rho_C(b)) = \Omega(\rho_D^G(\gag),0) = \rho_R^G(\Omega(\gag,0)).
\]
The desired result follows by taking derivatives on both sides of the preceding equation.
\end{proof}

\begin{proof}[Proof of Theorem~\ref{thm:penka}]
Combine Propositions~\ref{thm:penkasuperpot} and~\ref{pr:superpots}.
\end{proof}

\bibliography{../../bibliography_exp}
\bibliographystyle{../../amsabbrvcnobysame}

\end{document}